\numberwithin{equation}{section}
\theoremstyle{plain}
\newtheorem{theorem}{Theorem}[section]
\newtheorem{theo}[theorem]{Theorem}
\newtheorem{prop}[theorem]{Proposition}
\newtheorem{lem}[theorem]{Lemma}   
\newtheorem{cor}[theorem]{Corollary}
\theoremstyle{definition}
\newtheorem{defin}[theorem]{Definition}
\newtheorem{rem}[theorem]{Remark}
\newcommand {\emphatic} [1] {\emph{#1}}
\newcommand {\functor} [1] {\mathcal{#1}}  
\def \id {\operatorname{id}}  
\def \Iso {\operatorname{Iso}}  
\def \EGpd {(\mathbf{\mathcal{\acute{E}}\,\mathcal{G}pd})}  
\def \SETW {\mathbf{W}}  
\def \SETWinv {\mathbf{W}^{-1}} 
\def \WEGpd {\mathbf{W}_{\mathbf{\mathcal{\acute{E}}\,\mathcal{G}pd}}}  
\def \CATA {\mathbf{\mathscr{A}}}  
\def \CATB {\mathbf{\mathscr{B}}}
\def \CATC {\mathbf{\mathscr{C}}}
\def \CATD {\mathbf{\mathscr{D}}}
\newcommand {\thetaa} [3] {\theta_{#1,#2,#3}}  
\newcommand {\thetab} [3] {\theta_{#1,#2,#3}^{-1}}  
\newcommand {\Thetaa} [3] {\Theta_{#1,#2,#3}}  
\newcommand {\Thetab} [3] {\Theta_{#1,#2,#3}^{-1}}  
\renewenvironment{itemize}{\begin{list}{$\bullet$}{\leftmargin=0.5cm}\parindent=0pt}{\end{list}}
\numberwithin{equation}{section}
\begin{document}

\title[Weak fiber products in bicategories of fractions]
{Weak fiber products in bicategories of fractions}

\author{Matteo Tommasini}

\address{\flushright Mathematics Research Unit\newline University of Luxembourg\newline
6, rue Richard Coudenhove-Kalergi\newline L-1359 Luxembourg\newline\newline
website: \href{http://matteotommasini.altervista.org/}
{\nolinkurl{http://matteotommasini.altervista.org/}}\newline\newline
email: \href{mailto:matteo.tommasini2@gmail.com}{\nolinkurl{matteo.tommasini2@gmail.com}}}

\date{\today}
\subjclass[2010]{18A05, 18A30, 22A22}
\keywords{Lie groupoids, bicategories of fractions, weak fiber products, weak pullbacks, 
stacks}

\thanks{This research was performed at the Mathematics Research Unit of the University of Luxembourg,
thanks to the grant 4773242 by Fonds National de la Recherche Luxembourg.}

\begin{abstract}
We fix any pair $(\CATC,\SETW)$ consisting of a bicategory and a class of morphisms in it, admitting
a bicalculus of fractions, i.e.\ a ``localization'' of $\CATC$ with respect to the class $\SETW$.
In the resulting bicategory of fractions, we identify necessary and sufficient conditions for the
existence of weak fiber products.
\end{abstract}

\maketitle

\begingroup{\hypersetup{linkbordercolor=white}
\tableofcontents}\endgroup

\section*{Introduction}
In 1967 Pierre Gabriel and Michel Zisman proved in~\cite{GZ} that given a category $\CATC$ and a class
$\SETW$ of morphisms in it, satisfying $4$ technical conditions (called (\hyperref[CF1]{CF1}) --
(\hyperref[CF4]{CF4}), see Appendix~\ref{sec-04}), it is possible to construct a
``localization'' of $\CATC$ with respect to $\SETW$, i.e.\ a category $\CATC\left[\SETWinv\right]$
(called ``right category of fractions'') obtained from $\CATC$ by formally adding inverses for all the
morphisms in $\SETW$. To be more precise, objects of the category of fractions are the same as those
of $\CATC$; a morphism from $A$ to $B$ consists of an equivalence class of a triple $(A',
\operatorname{w},f)$ as follows

\begin{equation}\label{eq-136}
\begin{tikzpicture}[xscale=1.5,yscale=-1.2]
    \node (A0_0) at (0, 0) {$A$};
    \node (A0_1) at (1, 0) {$A'$};
    \node (A0_2) at (2, 0) {$B$,};
    \path (A0_1) edge [->]node [auto,swap] {$\scriptstyle{\operatorname{w}}$} (A0_0);
    \path (A0_1) edge [->]node [auto] {$\scriptstyle{f}$} (A0_2);
\end{tikzpicture}
\end{equation}
such that $\operatorname{w}$ belongs to $\SETW$ (we refer to Appendix~\ref{sec-04} for the
description of the equivalence relation used here). The technical conditions (\hyperref[CF]{CF})
mentioned before allow to prove that the compositions of such morphisms exists and that it satisfies
the usual properties of categories. Such a construction turned out to be very useful in several
branches of mathematics, for example homotopy theory and triangulated categories.\\

In 1997 Dorette Pronk generalized such a construction from categories to bicategories (see~\cite{Pr}).
To be more precise, given a bicategory $\CATC$ and a class $\SETW$ of morphisms in it,
satisfying  $5$ technical conditions (called (\hyperref[BF1]{BF1}) -- (\hyperref[BF5]{BF5}), see
Appendix~\ref{sec-03}), there is a ``right bicategory of fractions'' $\CATC\left[\SETWinv\right]$.
Such a bicategory in general is not unique, but any $2$ bicategories of fractions for the same
pair $(\CATC,\SETW)$ are equivalent using the axiom of choice.
Objects in $\CATC\left[\SETWinv\right]$ are the same as those of $\CATC$;
morphisms are given by triples $(A',\operatorname{w},f)$ as in \eqref{eq-136} (but not quotiented
by an equivalence relation, differently from the case of categories of fractions). $2$-morphisms
consist of classes of equivalence of quintuples of an object, a pair of morphisms and a pair of 
$2$-morphisms, satisfying some technical conditions (for more details,
we refer to Appendix~\ref{sec-03}).\\

In the case when $\CATC$ is a category (considered as a trivial bicategory), then the $5$
technical conditions (\hyperref[BF]{BF})
coincide with the $4$ technical conditions (\hyperref[CF]{CF}) and any resulting right bicategory
of fractions for $(\CATC,\SETW)$ is equivalent to the (trivial bicategory associated to) the 
right category of fractions for $(\CATC,\SETW)$.\\

Pronk introduced the notion of bicategory of fractions mainly in order to study certain bicategories
of stacks (we refer directly to~\cite{Pr} for details).
More recently, bicategories of fractions were used intensively mainly in relation with the
notion of butterflies; we refer to~\cite{AMMV}, \cite{MMV} and~\cite{R} for some recent interesting
development in this area.\\

The problem that we want to investigate in the present paper is the following: \emph{when
do weak fiber products exist in a bicategory of fractions}{? We recall that weak fiber products
are the natural generalization of (strong) fiber
products from categories to bicategories (in the case when the bicategory is a $2$-category, they
are also called $2$-fiber products; we refer to Definition~\ref{def-01} for the precise notion of
weak fiber product in any bicategory). Weak fiber products are one of the basic tools used whenever
one has to deal with a $2$-category or bicategory of stacks (on a given site). It is known that weak
fiber products of stacks (over a given site) exist because stackification commutes with $2$-fiber
products. However, very 
few is known in general about weak fiber products if
we restrict to a strict sub-$2$-category of stacks (for example, the sub-$2$-category
of differentiable stacks in the $2$-category of stacks over the site of smooth
manifolds, see e.g.\ \cite[Definition~8.1]{J}).
Frequently, such sub-$2$-categories can be described as (equivalent to) bicategories of fractions
(see e.g.~\cite[Corollary~43]{Pr} for a description of the $2$-category of differentiable stacks
as a bicategory of fractions). So it is interesting to understand under which conditions weak
fiber products exist in this framework.\\

If we try to understand the notion of weak fiber products in the case when we work in a
bicategory of fractions, we get soon stuck in a very complicated setup. Roughly
speaking (see Definition~\ref{def-01} for details), given any bicategory $\CATD$, any triple of
objects $A,B^1,B^2$ and any pair of morphisms $g^1:B^1\rightarrow A$ and $g^2:B^2\rightarrow A$,

\begin{enumerate}[(a)]
 \item a weak fiber product of $g^1$ and $g^2$ in
  $\CATD$ is the datum of $1$ object $C$, $2$ of morphisms $r^1:C\rightarrow B^1$, $r^2:C\rightarrow
  B^2$ and $1$ invertible $2$-morphism $\Omega:g^1\circ r^1\Rightarrow g^2\circ r^2$;
 \item in order to verify if a set $(C,r^1,r^2,\Omega)$ as above gives a weak fiber product, one has 
  to compare it against a set of $1$ object $D$, $4$ morphisms $s^1,s^2,t,t'$ and $3$
  $2$-morphisms $\Lambda,\Gamma^1,\Gamma^2$ (satisfying some technical conditions);
 \item the comparison of $(C,r^1,r^2,\Omega)$ against the set of data in (b) has to
  give back $1$ morphism $s$ and $3$ $2$-morphisms $\Lambda^1,\Lambda^2,\Gamma$ (satisfying
  some technical conditions).
\end{enumerate}

In the special case when $\CATD$ is a bicategory of fractions $\CATC\left[\SETWinv\right]$,
then the objects of $\CATD$ are the same as those of $\CATC$, the morphisms of $\CATD$ are triples
of an object and a pair of morphisms as in \eqref{eq-136} and the $2$-morphisms of $\CATD$ are
(classes of equivalence of) quintuples of an object, a pair of morphisms and a pair of $2$-morphisms
of $\CATC$. Therefore, (a) -- (c) above becomes:

\begin{enumerate}[(a)$'$]
 \item given any pair of morphisms with the same target in $\CATC\left[\SETWinv\right]$,
  a weak fiber product of them a priori consists of $4$ objects, $6$ morphisms
  and $2$ $2$-morphisms of $\CATC$;
 \item in order to verify if the set of data as in (a)$'$ is a weak fiber product in
  $\CATC\left[\SETWinv\right]$, a priori one has to
  compare it against a set of $8$ objects, $14$ morphisms and $6$ $2$-morphisms of $\CATC$
  (satisfying some technical conditions);
 \item the comparison of the data of (a)$'$ against the data of (b)$'$ has to
  give back $4$ objects, $8$ morphisms and $6$ $2$-morphisms of $\CATC$ (satisfying
  some technical conditions).
\end{enumerate}

This means that having fixed any pair of morphisms with the same target in a bicategory $\CATD$,

\begin{itemize}
 \item there are $16$ data of $\CATD$ (as in (a) -- (c))
  that we have either to construct (in order to define a weak fiber product) or to consider
  (in order to prove that what we constructed is actually a weak fiber product);
 \item if $\CATD=\CATC\left[\SETWinv\right]$, such data turn out to be given by $58$ data of
  $\CATC$.
\end{itemize}

As such, the problem of constructing a weak fiber product in a bicategory of fractions apparently
is very complicated. In the present paper we prove that such a problem can be considerably
simplified by reducing the $58$ data mentioned above to only $31$. To be more precise, first of all
we will show that it is sufficient to find $4$ data of $\CATC$ in order to define a weak fiber
product in $\CATC\left[\SETWinv\right]$ (instead of the $12$ data needed a priori in (a)$'$ above):

\begin{theo}\label{theo-02}
Let us fix any bicategory $\CATC$ and any class $\SETW$ of morphisms in it, satisfying axioms
\emphatic{(\hyperref[BF]{BF})}, and let us choose any bicategory of fractions
$\CATC\left[\SETWinv\right]$ associated to the
pair $(\CATC,\SETW)$. Given any pair of morphisms $f^1:B^1\rightarrow A$ and $f^2:B^2\rightarrow A$
in $\CATC$, the following facts are equivalent:

\begin{enumerate}[\emphatic{(}i\emphatic{)}]
 \item\label{i} for any pair of morphisms of $\SETW$ of the form $\operatorname{w}^1:B^1\rightarrow
  \overline{B}^1$, $\operatorname{w}^2:B^2\rightarrow\overline{B}^2$, the pair of morphisms
  
  \begin{equation}\label{eq-100}
  \begin{tikzpicture}[xscale=1.5,yscale=-0.8]
    \node (A0_2) at (2, 0) {$\overline{B}^1$};
    \node (A2_0) at (0, 2) {$\overline{B}^2$};
    \node (A2_2) at (2, 2) {$A$};
    
    \path (A0_2) edge [->]node [auto] {$\scriptstyle{(B^1,\operatorname{w}^1,f^1)}$} (A2_2);
    \path (A2_0) edge [->]node [auto,swap] {$\scriptstyle{(B^2,\operatorname{w}^2,f^2)}$} (A2_2);
  \end{tikzpicture}
  \end{equation}
  admits a weak fiber product in $\CATC\left[\SETWinv\right]$;
 
 \item\label{ii} there are an object $C$, a pair of morphisms $p^1:C\rightarrow B^1$, $p^2:C
  \rightarrow B^2$ and an invertible $2$-morphism $\omega:f^1\circ p^1\Rightarrow f^2\circ p^2$ in
  $\CATC$, such that the diagram
  
  \begin{equation}\label{eq-91}
  \begin{tikzpicture}[xscale=4.1,yscale=-0.8]
    \node (A0_0) at (0, 0) {$C$};
    \node (A0_2) at (2, 0) {$B^1$};
    \node (A2_0) at (0, 2) {$B^2$};
    \node (A2_2) at (2, 2) {$A$};
    
    \node (A1_1) [rotate=225] at (0.22, 1) {$\Longrightarrow$};
    \node (A1_2) at (1.08, 1) {$\Omega:=\left[C,\id_C,\id_C,i_{(\id_C\circ\id_C)\circ\id_C},
      \omega\ast i_{\id_C}\right]$};
    
    \path (A0_0) edge [->]node [auto,swap] {$\scriptstyle{(C,\id_C,p^2)}$} (A2_0);
    \path (A0_0) edge [->]node [auto] {$\scriptstyle{(C,\id_C,p^1)}$} (A0_2);
    \path (A0_2) edge [->]node [auto] {$\scriptstyle{(B^1,\id_{B^1},f^1)}$} (A2_2);
    \path (A2_0) edge [->]node [auto,swap] {$\scriptstyle{(B^2,\id_{B^2},f^2)}$} (A2_2);
  \end{tikzpicture}
  \end{equation}
  is a weak fiber product in $\CATC\left[\SETWinv\right]$.
\end{enumerate}
\end{theo}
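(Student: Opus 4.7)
The plan is to prove the two implications separately.

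For the direction $\mathrm{(ii)} \Rightarrow \mathrm{(i)}$, I will use the general fact that weak fiber products in any bicategory are preserved under replacement of the input morphisms by equivalent ones. For arbitrary $\operatorname{w}^i \in \SETW$ with source $B^i$, the morphism $(B^i, \id_{B^i}, \operatorname{w}^i): B^i \rightarrow \overline{B}^i$ of $\CATC[\SETWinv]$ is an internal equivalence (a standard consequence of the construction of a bicategory of fractions), with quasi-inverse $(B^i, \operatorname{w}^i, \id_{B^i})$. A direct computation of the composition in $\CATC[\SETWinv]$ shows that $(B^i, \operatorname{w}^i, f^i) \circ (B^i, \id_{B^i}, \operatorname{w}^i)$ is $2$-isomorphic to $(B^i, \id_{B^i}, f^i)$. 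Thus the span of \eqref{eq-100} is obtained from that of \eqref{eq-91} by pre-composition with equivalences, and since the latter admits a weak fiber product by $\mathrm{(ii)}$, so does the former.

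For the converse direction $\mathrm{(i)} \Rightarrow \mathrm{(ii)}$, first specialize $\mathrm{(i)}$ to $\operatorname{w}^1 = \id_{B^1}$ and $\operatorname{w}^2 = \id_{B^2}$ (which lie in $\SETW$ by axiom (BF1)) to obtain a weak fiber product $(C', q^1, q^2, \Omega')$ in $\CATC[\SETWinv]$ of the resulting pair, with $q^i = (A^i, v^i, h^i)$. Apply axiom (BF3) to the pair $v^1, v^2 \in \SETW$ sharing codomain $C'$: this produces an object $C$, a morphism $u^1: C \rightarrow A^1$ in $\SETW$, a morphism $u^2: C \rightarrow A^2$, and an invertible $2$-morphism $\alpha: v^1 \circ u^1 \Rightarrow v^2 \circ u^2$ in $\CATC$. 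Set $p^i := h^i \circ u^i$. Since $v^1 \circ u^1 \in \SETW$ by (BF2), its image in $\CATC[\SETWinv]$ furnishes an internal equivalence $C \simeq C'$. Transporting $(C', q^1, q^2, \Omega')$ along this equivalence, and using $\alpha$ to simplify the composition for the second leg, the transported legs become $2$-isomorphic to $(C, \id_C, p^i)$, and the resulting data remains a weak fiber product by the same principle used in the first direction.

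The main obstacle will be to exhibit an honest $2$-morphism $\omega: f^1 \circ p^1 \Rightarrow f^2 \circ p^2$ in $\CATC$ inducing the transported $\Omega'$. The latter is a $2$-isomorphism in $\CATC[\SETWinv]$ between the images of the $\CATC$-morphisms $f^1 \circ p^1$ and $f^2 \circ p^2$, presented as an equivalence class of a quintuple. By the standard characterization of $2$-morphisms in a bicategory of fractions as equivalence classes modulo refinement along $\SETW$ — possibly replacing $C$ by a further refinement via an element of $\SETW$, which preserves the weak fiber product property through yet another transport along an internal equivalence — this $2$-morphism is represented by a genuine $2$-morphism $\omega$ in $\CATC$. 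Once $\omega$ is in hand, the explicit quintuple displayed in \eqref{eq-91} defines the required $\Omega$, and the resulting tuple $(C, p^1, p^2, \omega)$ yields the weak fiber product sought in $\mathrm{(ii)}$.
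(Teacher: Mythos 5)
Your proposal is correct and follows essentially the same route as the paper: for (\hyperref[ii]{ii})$\Rightarrow$(\hyperref[i]{i}) you compose with the internal equivalences $(B^m,\operatorname{w}^m,\id_{B^m})$, $(B^m,\id_{B^m},\operatorname{w}^m)$ coming from~\cite[Proposition~20]{Pr}, and for (\hyperref[i]{i})$\Rightarrow$(\hyperref[ii]{ii}) you specialize to $\operatorname{w}^m=\id_{B^m}$, straighten the two legs of the resulting cone by applying (\hyperref[BF3]{BF3}) to the pair $(\operatorname{v}^1,\operatorname{v}^2)$, transport along the induced internal equivalence, and represent the resulting invertible $2$-cell by an honest $2$-morphism of $\CATC$ after one further refinement along $\SETW$ --- exactly the skeleton of the paper's argument. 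The only real divergence is bookkeeping: the paper first normalizes the set of choices \hyperref[C]{C}$(\SETW)$ via condition (\hyperref[C3]{C3}) so that the relevant composites are literally of the form $(C,\id_C,p^m)$, and then transfers the conclusion to an arbitrary set of choices through the comparison pseudofunctor $\functor{Q}$, whereas you work up to $2$-isomorphism throughout; this is fine provided you make explicit the (easy, true) fact that the weak fiber product property is preserved when the cone legs $r^1,r^2$ and $\Omega$ are replaced by coherently $2$-isomorphic data, which is the implicit lemma your ``transport'' steps rely on.
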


Moreover, given any pair of morphisms $\operatorname{w}^1,\operatorname{w}^2$ in $\SETW$ as above,
a weak fiber product for \eqref{eq-100} can be obtained easily as a suitable modification of
diagram \eqref{eq-91} (we refer to Corollary~\ref{cor-03} for details).\\

In addition, we have the following result, where the $2$-morphisms $\theta_{\bullet}$ are the
associators of the bicategory $\CATC$ (they are all trivial if $\CATC$ is a $2$-category).

\begin{theo}\label{theo-04}
Let us fix any bicategory $\CATC$ and any class $\SETW$ of morphisms in it, satisfying axioms
\emphatic{(\hyperref[BF]{BF})}, and let us choose any bicategory of fractions
$\CATC\left[\SETWinv\right]$ associated to the
pair $(\CATC,\SETW)$. Moreover, let us fix any set of data in $\CATC$ as in the following diagram

\begin{equation}\label{eq-106}
\begin{tikzpicture}[xscale=2,yscale=-0.8]
    \node (A0_0) at (0, 0) {$C$};
    \node (A0_2) at (2, 0) {$B^1$};
    \node (A2_0) at (0, 2) {$B^2$};
    \node (A2_2) at (2, 2) {$A$};
    
    \node (A1_1) [rotate=225] at (0.9, 1) {$\Longrightarrow$};
    \node (A1_2) at (1.2, 1) {$\omega$};

    \path (A0_0) edge [->]node [auto,swap] {$\scriptstyle{p^2}$} (A2_0);
    \path (A0_0) edge [->]node [auto] {$\scriptstyle{p^1}$} (A0_2);
    \path (A0_2) edge [->]node [auto] {$\scriptstyle{f^1}$} (A2_2);
    \path (A2_0) edge [->]node [auto,swap] {$\scriptstyle{f^2}$} (A2_2);
\end{tikzpicture}
\end{equation}

Then the induced diagram \eqref{eq-91} is a weak fiber product in $\CATC\left[\SETWinv\right]$
if and only if the following $3$ conditions hold for each object $D$ of $\CATC$:
 
\begin{enumerate}[\emphatic{(}a\emphatic{)}]
 \item\label{a} given any pair of morphisms $q^m:D\rightarrow B^m$ for $m=1,2$
   and any invertible $2$-morphism $\lambda:f^1\circ q^1\Rightarrow f^2\circ q^2$ in $\CATC$,
   there are an object $E$, a morphism $\operatorname{v}:E\rightarrow D$ in $\SETW$, a morphism
   $q:E\rightarrow C$ and a pair of invertible $2$-morphisms $\lambda^m:q^m\circ\operatorname{v}
   \Rightarrow p^m\circ q$ for $m=1,2$ in $\CATC$, such that:

   \begin{gather}
   \nonumber \thetab{f^2}{p^2}{q}\odot\Big(\omega\ast i_{q}\Big)\odot\thetaa{f^1}{p^1}{q}\odot\Big(
    i_{f^1}\ast\lambda^1\Big)= \\
   \label{eq-53} =\Big(i_{f^2}\ast\lambda^2\Big)\odot\thetab{f^2}{q^2}{\operatorname{v}}\odot\Big(
    \lambda\ast i_{\operatorname{v}}\Big)\odot\thetaa{f^1}{q^1}{\operatorname{v}}; 
   \end{gather}
   
  \item\label{b} given any pair of morphisms $t,t':D\rightarrow C$ and any
   pair of invertible $2$-morphisms $\gamma^m:p^m\circ t\Rightarrow p^m\circ t'$ for $m=1,2$ in
   $\CATC$ such that
   
   \begin{gather}
   \nonumber \thetab{f^2}{p^2}{t'}\odot\Big(\omega\ast i_{t'}\Big)\odot\thetaa{f^1}{p^1}{t'}\odot
    \Big(i_{f^1}\ast\gamma^1\Big)=  \\
   \label{eq-13} =\Big(i_{f^2}\ast\gamma^2\Big)\odot\thetab{f^2}{p^2}{t}\odot\Big(\omega\ast i_t
    \Big)\odot\thetaa{f^1}{p^1}{t},  
   \end{gather}
   there are an object $F$, a morphism $\operatorname{u}:F\rightarrow D$ in $\SETW$ and an
   invertible $2$-morphism $\gamma:t\circ\operatorname{u}\Rightarrow t'\circ\operatorname{u}$ in
   $\CATC$, such that
   
   \begin{equation}\label{eq-15}
   \thetaa{p^m}{t'}{\operatorname{u}}\odot\Big(i_{p^m}\ast\gamma\Big)=\Big(\gamma^m\ast
   i_{\operatorname{u}}\Big)\odot\thetaa{p^m}{t}{\operatorname{u}}\quad\textrm{for }m=1,2;
   \end{equation}

  \item\label{c} given any set of data $(t,t',\gamma^1,\gamma^2,F,\operatorname{u},\gamma)$
   as in \emphatic{(}b\emphatic{)},
   if there is another choice of data $\widetilde{F}$, $\widetilde{\operatorname{u}}:\widetilde{F}
   \rightarrow D$ in $\SETW$ and $\widetilde{\gamma}:t\circ\widetilde{\operatorname{u}}\Rightarrow t'
   \circ\widetilde{\operatorname{u}}$ invertible, such that
   
   \begin{equation}\label{eq-34}
   \thetaa{p^m}{t'}{\widetilde{\operatorname{u}}}\odot\Big(i_{p^m}\ast\widetilde{\gamma}\Big)=\Big(
   \gamma^m\ast i_{\widetilde{\operatorname{u}}}\Big)\odot
   \thetaa{p^m}{t}{\widetilde{\operatorname{u}}}\quad\textrm{for }m=1,2,
   \end{equation}
   then there are an object $G$, a morphisms $\operatorname{z}:G\rightarrow F$ in $\SETW$, a morphism
   $\widetilde{\operatorname{z}}:G\rightarrow\widetilde{F}$ and an invertible $2$-morphism $\mu:
   \operatorname{u}\circ\operatorname{z}\Rightarrow\widetilde{\operatorname{u}}\circ
   \widetilde{\operatorname{z}}$, such that
   
   \begin{gather}
   \nonumber \thetaa{t'}{\widetilde{\operatorname{u}}}{\widetilde{\operatorname{z}}}\odot\Big(
    i_{t'}\ast\mu\Big)\odot\thetab{t'}{\operatorname{u}}{\operatorname{z}}\odot
    \Big(\gamma\ast i_{\operatorname{z}}\Big)= \\
   \label{eq-32} =\Big(\widetilde{\gamma}\ast i_{\widetilde{\operatorname{z}}}\Big)\odot\thetaa{t}
    {\widetilde{\operatorname{u}}}{\widetilde{\operatorname{z}}}\odot\Big(i_t\ast\mu
    \Big)\odot\thetab{t}{\operatorname{u}}{\operatorname{z}}.  
   \end{gather}
 \end{enumerate}
\end{theo}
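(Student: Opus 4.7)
The plan is to unpack the universal property of a weak fiber product in $\CATC\left[\SETWinv\right]$, specialised to diagram~\eqref{eq-91}, and to match its three constituent pieces with conditions (a), (b) and (c) of the statement.

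I would first set up a reduction to a ``canonical form''. Any morphism $D\rightarrow B^m$ in the bicategory of fractions is represented by a triple $(D^m,\operatorname{v}^m,q^m)$ with $\operatorname{v}^m\in\SETW$; axiom~\emph{(BF3)} provides a common object $E$ together with a morphism $\operatorname{v}:E\rightarrow D$ in $\SETW$ factoring through both $D^m$. Pre-composing with $\operatorname{v}$, test data against~\eqref{eq-91} at $D$ become genuine $\CATC$-morphisms $q^m:E\rightarrow B^m$ together with a $2$-morphism $\lambda:f^1\circ q^1\Rightarrow f^2\circ q^2$, which is exactly the setup of~(a) with $E$ in the role of $D$. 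The same reduction applies when comparing two factorisations $S,S':D\rightarrow C$: after a common pullback we may assume they come from $\CATC$-morphisms $t,t':D\rightarrow C$, and their $2$-iso witnesses reduce to data $\gamma^m:p^m\circ t\Rightarrow p^m\circ t'$ as in~(b).

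Next I would match each piece of the universal property with the corresponding condition. All left legs in~\eqref{eq-91} are identities, so composing $(B^m,\id_{B^m},f^m)$ with any span $(E,\operatorname{v},q)$ yields, in the chosen bicategory of fractions, the span $(E,\operatorname{v},f^m\circ q)$ up to the canonical associators $\theta_\bullet$. Unwinding the specific representative of $\Omega$ in~\eqref{eq-91}, the compatibility of a candidate factorisation $(E,\operatorname{v},q,\lambda^1,\lambda^2)$ with $\Omega$ and the given $\lambda$ collapses, after cancelling the trivial associator $i_{(\id_C\circ\id_C)\circ\id_C}$ and rearranging, to exactly equation~\eqref{eq-53}; this establishes the equivalence of the ``existence of a factorisation'' clause with~(a). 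Analogously, a $2$-morphism between two factorisations $(E,\operatorname{v},t)$ and $(E,\operatorname{v},t')$ of $D\rightarrow C$ in $\CATC\left[\SETWinv\right]$ is an equivalence class of quintuples whose existence amounts to producing $(F,\operatorname{u},\gamma)$ with $\operatorname{u}\in\SETW$ satisfying~\eqref{eq-15}; the premise~\eqref{eq-13} is precisely the statement that such a $2$-morphism is compatible with $\Omega$, which is what the universal property demands. Finally, the uniqueness clause of the universal property, unpacked via the equivalence relation on $2$-morphisms in the bicategory of fractions (Appendix~\ref{sec-03}), matches~(c): two witnesses $(F,\operatorname{u},\gamma)$ and $(\widetilde{F},\widetilde{\operatorname{u}},\widetilde{\gamma})$ represent the same $2$-morphism of $\CATC\left[\SETWinv\right]$ exactly when they agree on a common refinement $(G,\operatorname{z},\widetilde{\operatorname{z}},\mu)$ via~\eqref{eq-32}.

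The main obstacle will be the combinatorial bookkeeping of associators. Compositions of morphisms in $\CATC\left[\SETWinv\right]$ are built from chosen representatives of the squares produced by axiom~\emph{(BF3)}, each contributing associators $\theta_\bullet$; the vertical and horizontal compositions of $2$-morphisms (themselves classes of quintuples) rearrange further associators in non-obvious ways. Checking that, after all these rearrangements, the universal property's equations collapse exactly to~\eqref{eq-53}, \eqref{eq-15} and~\eqref{eq-32}, with no extraneous associators left over, is the technical heart of the proof but is essentially mechanical once the reduction to canonical form is in place.
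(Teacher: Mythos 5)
Your proposal is correct and follows essentially the same route as the paper: a reduction of conditions \textbf{A1}/\textbf{A2} to test data in ``canonical form'' (morphisms of $\CATC$ with identity left legs, obtained via (BF3) and the invariance of \textbf{B1}/\textbf{B2} under $2$-isomorphisms and precomposition with internal equivalences, i.e.\ Propositions~\ref{prop-03} and~\ref{prop-04} feeding into Lemmas~\ref{lem-11} and~\ref{lem-02}), followed by unpacking representatives of morphisms and $2$-morphisms in $\CATC\left[\SETWinv\right]$ to match (a), (b) and the uniqueness clause with (c) via the equivalence relation on quintuples (Lemmas~\ref{lem-15}, \ref{lem-01} and~\ref{lem-19}). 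You correctly locate the technical weight in the associator bookkeeping, which is where the paper spends its effort.
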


As a consequence of Theorem~\ref{theo-04}, we have the following general principle. Suppose that
we are working in a given bicategory $\CATC$ and that for some reason not all the weak fiber
products exist in $\CATC$, or that not all the ``interesting'' fiber products exist there
(for example, the pullbacks along a certain class of ``good'' maps, etc). Then a possible way
to try to solve this problem is the following:

\begin{enumerate}[(1)]
 \item for each given pair of morphisms $f^m:B^m\rightarrow A$ for $m=1,2$ (or for each given pair
  $(f^1,f^2)$ that is ``interesting'' as above), try to identify
  a ``candidate'' for a weak fiber product in $\CATC$, i.e.\ a quadruple $(C,p^1,p^2,\omega)$ as
  in \eqref{eq-106};
 \item given any data as in (1) and any set of data $(D,q^1,q^2,\lambda,t,t',\gamma^1,\gamma^2)$ as
  in (a) and (b) above,
  try to find a set of data $(E,\operatorname{v},q,\lambda^1,\lambda^2,F,\operatorname{u},\gamma)$
  as in (a) and (b), with the only difference that we don't impose that 
  $\operatorname{v}$ and $\operatorname{u}$ belong to some class $\SETW$ (since for the moment there
  is no such class);
 \item try to identify a class $\SETW$ of morphisms in $\CATC$, such that:
  \begin{itemize}
   \item $\SETW$ contains all the morphisms $\operatorname{v}$ and
    $\operatorname{u}$ obtained from the previous procedure, for any set of data $(A,B^1,B^2,f^1,f^2)$
    as in (1) and for any $(D,q^1,q^2,\lambda,t,t',$ $\gamma^1,\gamma^2)$ as in (2),
   \item $\SETW$ satisfies conditions (\hyperref[BF]{BF}) for a bicategory of fractions;
  \end{itemize}
 \item verify if for any data as in (1), condition (c) holds with the associated ``candidate''
  $(C,p^1,p^2,\omega)$ (with the class $\SETW$ constructed in (3));
 \item if you are successful at each of the previous steps, this means that each pair of morphisms
  $(f^1,f^2)$ (or each ``interesting'' pair of morphisms $(f^1,f^2)$) has a weak fiber product
  if considered in the right bicategory of fractions $\CATC\left[\SETWinv\right]$.
\end{enumerate}

In other terms, if you are lucky then
you are able to construct the desired weak fiber products, provided that you allow some
morphisms of $\CATC$ to become internal equivalences. Note however that in general
there is no guarantee that the bicategory $\CATC\left[\SETWinv\right]$ obtained in this way
is ``interesting''. For example, if we have already managed to solve problem (1) and (2), but
a choice for $\SETW$ as in (3) is given by the entire class of morphisms of $\CATC$, then in the
bicategory of fractions obtained in this way all the morphisms are internal
equivalences; so in certain frameworks this procedure could lead to a bicategory that is not
useful or interesting to work with.\\

As a consequence of Theorem~\ref{theo-04}, we are also able to prove:

\begin{cor}\label{cor-01}
Let us fix any pair $(\CATC,\SETW)$ satisfying axioms \emphatic{(\hyperref[BF]{BF})}
and let us choose any bicategory of fractions $\CATC\left[\SETWinv\right]$
associated to the pair $(\CATC,\SETW)$. Let us fix any pair of morphisms $f^1:B^1\rightarrow A$ and
$f^2:B^2\rightarrow A$. Let us suppose that there is a set of data $(C,p^1,p^2,\omega)$ such
that \eqref{eq-106} is a weak fiber product in $\CATC$. Then conditions
\emphatic{(}\hyperref[a]{a}\emphatic{)}, \emphatic{(}\hyperref[b]{b}\emphatic{)}
and \emphatic{(}\hyperref[c]{c}\emphatic{)} above are satisfied. Therefore,
for each pair of morphisms in $\SETW$ of the form
$\operatorname{w}^1:B^1\rightarrow\overline{B}^1$ and $\operatorname{w}^2:B^2\rightarrow
\overline{B}^2$, there is a weak fiber product in $\CATC\left[\SETWinv\right]$ for the
pair of morphisms $(B^1,\operatorname{w}^1,f^1)$ and $(B^2,\operatorname{w}^2,f^2)$. In particular,
if the bicategory $\CATC$ is closed under weak fiber products, then also the bicategory $\CATC\left[
\SETWinv\right]$ is closed under weak fiber products.
\end{cor}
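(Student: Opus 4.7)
The plan is to apply Theorem~\ref{theo-04} and then Theorem~\ref{theo-02} in sequence. Once we have verified that the given quadruple $(C,p^1,p^2,\omega)$ satisfies conditions \emphatic{(}\hyperref[a]{a}\emphatic{)}, \emphatic{(}\hyperref[b]{b}\emphatic{)} and \emphatic{(}\hyperref[c]{c}\emphatic{)}, Theorem~\ref{theo-04} tells us that the induced diagram \eqref{eq-91} is a weak fiber product in $\CATC\left[\SETWinv\right]$, and then the implication \emphatic{(}\hyperref[ii]{ii}\emphatic{)}$\Rightarrow$\emphatic{(}\hyperref[i]{i}\emphatic{)} of Theorem~\ref{theo-02}, together with the explicit construction supplied by Corollary~\ref{cor-03}, immediately yields the weak fiber product in $\CATC\left[\SETWinv\right]$ for any pair $(B^1,\operatorname{w}^1,f^1)$, $(B^2,\operatorname{w}^2,f^2)$ with $\operatorname{w}^1,\operatorname{w}^2\in\SETW$. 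Hence essentially all the work lies in checking \emphatic{(}\hyperref[a]{a}\emphatic{)}--\emphatic{(}\hyperref[c]{c}\emphatic{)}.

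The guiding observation is that these three conditions differ from the ordinary universal property of a weak fiber product in $\CATC$ only by the requirement that the auxiliary morphisms $\operatorname{v}$, $\operatorname{u}$, $\operatorname{z}$ belong to $\SETW$. Since identities lie in $\SETW$ by axiom \emphatic{(}\hyperref[BF1]{BF1}\emphatic{)}, for condition \emphatic{(}\hyperref[a]{a}\emphatic{)} I would simply set $E:=D$ and $\operatorname{v}:=\id_D$, and then apply the $1$-morphism part of the universal property of $(C,p^1,p^2,\omega)$ in $\CATC$ to the datum $(D,q^1,q^2,\lambda)$: this produces the required $q$, $\lambda^1$, $\lambda^2$, and equation \eqref{eq-53} is obtained from the usual compatibility after absorbing the right unitors generated by $\id_D$. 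Condition \emphatic{(}\hyperref[b]{b}\emphatic{)} is handled analogously: take $F:=D$ and $\operatorname{u}:=\id_D$, observe that \eqref{eq-13} is precisely the hypothesis required to apply the $2$-morphism part of the universal property to $(t,t',\gamma^1,\gamma^2)$, and read off $\gamma$ and equation \eqref{eq-15} from the output after the same bookkeeping of unitors.

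Condition \emphatic{(}\hyperref[c]{c}\emphatic{)} is the most delicate step. Given two choices $(F,\operatorname{u},\gamma)$ and $(\widetilde F,\widetilde{\operatorname{u}},\widetilde\gamma)$ I would apply axiom \emphatic{(}\hyperref[BF3]{BF3}\emphatic{)} to the pair $\operatorname{u}:F\to D$, $\widetilde{\operatorname{u}}:\widetilde F\to D$ (both in $\SETW$) to obtain $G$, $\operatorname{z}:G\to F$ in $\SETW$, $\widetilde{\operatorname{z}}:G\to\widetilde F$ and an invertible $\mu:\operatorname{u}\circ\operatorname{z}\Rightarrow\widetilde{\operatorname{u}}\circ\widetilde{\operatorname{z}}$. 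Both sides of \eqref{eq-32} are then invertible $2$-morphisms from $t\circ\operatorname{u}\circ\operatorname{z}$ to $t'\circ\widetilde{\operatorname{u}}\circ\widetilde{\operatorname{z}}$ (up to the obvious associators); whiskering \eqref{eq-15} with $i_{\operatorname{z}}$ and \eqref{eq-34} with $i_{\widetilde{\operatorname{z}}}$, and using $\mu$ as the bridge, a direct computation should show that their whiskerings with $i_{p^m}$ agree for $m=1,2$. The essential uniqueness of $2$-morphisms in the weak fiber product universal property of $(C,p^1,p^2,\omega)$ in $\CATC$ then forces the two sides to coincide, possibly after a further refinement of $G$ by one more application of \emphatic{(}\hyperref[BF3]{BF3}\emphatic{)} should the universal property only supply essentially unique rather than strictly unique $2$-morphisms.

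The main obstacle I expect is precisely this last comparison: matching the two sides of \eqref{eq-32} under whiskering with $i_{p^m}$ requires a careful interchange of $\mu$ with $\gamma$, $\widetilde\gamma$ and the associators $\thetaa{\bullet}{\bullet}{\bullet}$, and the residual discrepancy produced by the passage from strict to essential uniqueness has to be absorbed by the extra refinement of $G$ mentioned above. The remaining computations are routine coherence-level manipulations of unitors, associators and interchange, which I do not intend to grind through at the level of this plan.
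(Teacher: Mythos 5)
Your proposal follows essentially the same route as the paper: conditions (\hyperref[a]{a}) and (\hyperref[b]{b}) are obtained by taking $E:=D$, $F:=D$ with $\operatorname{v}=\operatorname{u}=\id_D$ and invoking \textbf{\hyperref[A1]{A1}}$(D)$ and \textbf{\hyperref[A2]{A2}}$(D)$ for \eqref{eq-106}, while (\hyperref[c]{c}) is settled by applying (\hyperref[BF3]{BF3}) to $(\operatorname{u},\widetilde{\operatorname{u}})$ and then comparing the two candidate $2$-morphisms via the uniqueness clause of \textbf{\hyperref[A2]{A2}}$(G)$. Your hedge about needing a further refinement of $G$ is unnecessary, since condition \textbf{\hyperref[A2]{A2}} in Definition~\ref{def-01} provides a strictly unique $2$-morphism, so the two sides of \eqref{eq-32} coincide on the nose once their whiskerings with $i_{p^m}$ agree.
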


As a simple application of Theorems~\ref{theo-02} and~\ref{theo-04}, in the last part of this paper
we will examine the particular case when
$\CATC$ is a category (considered as a trivial bicategory) and the pair $(\CATC,\SETW)$ satisfies
conditions (\hyperref[CF]{CF}) for a right calculus of fractions. As we mentioned before, in
this case the pair $(\CATC,\SETW)$ satisfies also conditions (\hyperref[BF]{BF}) for a right
bicalculus of fractions and the right category of fractions associated to $(\CATC,\SETW)$ (considered
as a trivial bicategory) is equivalent to the right bicategory of fractions associated to $(\CATC,\SETW)$.
Moreover in this case weak fiber products are simply (strong) fiber products. Then we will prove the
following result.

\begin{prop}\label{prop-11}
Let us fix any pair $(\CATC,\SETW)$ satisfying axioms \emphatic{(\hyperref[CF]{CF})} for
a right calculus of fractions.
Given any pair of morphisms $f^1:B^1\rightarrow A$ and $f^2:B^2\rightarrow A$ in $\CATC$, the
following facts are equivalent:

\begin{enumerate}[\emphatic{(}i\emphatic{)}]
\setcounter{enumi}{2}
 \item\label{iii} for any pair of morphisms in $\SETW$ of the form $\operatorname{w}^1:B^1\rightarrow
  \overline{B}^1$ and $\operatorname{w}^2:B^2\rightarrow\overline{B}^2$, the pair of morphisms
  
  \begin{equation}\label{eq-116}
  \begin{tikzpicture}[xscale=1.5,yscale=-0.8]
    \node (A0_2) at (2, 0) {$\overline{B}^1$};
    \node (A2_0) at (0, 2) {$\overline{B}^2$};
    \node (A2_2) at (2, 2) {$A$};
    
    \path (A0_2) edge [->]node [auto] {$\scriptstyle{[B^1,\operatorname{w}^1,f^1]}$} (A2_2);
    \path (A2_0) edge [->]node [auto,swap] {$\scriptstyle{[B^2,\operatorname{w}^2,f^2]}$} (A2_2);
  \end{tikzpicture}
  \end{equation}
  admits a \emphatic{(}strong\emphatic{)} fiber product in the right category of fractions
  $\CATC\left[\SETWinv\right]$;
 
 \item\label{iv} there are an object $C$ in $\CATC$ and a pair of morphisms $p^1:C\rightarrow B^1$,
  $p^2:C\rightarrow B^2$, such $f^1\circ p^1=f^2\circ p^2$ and such that the diagram
  
  \begin{equation}\label{eq-83}
  \begin{tikzpicture}[xscale=2.9,yscale=-0.8]
    \node (A0_0) at (0, 0) {$C$};
    \node (A0_2) at (2, 0) {$B^1$};
    \node (A2_0) at (0, 2) {$B^2$};
    \node (A2_2) at (2, 2) {$A$};
    
    \node (A1_2) at (1, 1) {$\curvearrowright$};

    \path (A0_0) edge [->]node [auto,swap] {$\scriptstyle{[C,\id_C,p^2]}$} (A2_0);
    \path (A0_0) edge [->]node [auto] {$\scriptstyle{[C,\id_C,p^1]}$} (A0_2);
    \path (A0_2) edge [->]node [auto] {$\scriptstyle{[B^1,\id_{B^1},f^1]}$} (A2_2);
    \path (A2_0) edge [->]node [auto,swap] {$\scriptstyle{[B^2,\id_{B^2},f^2]}$} (A2_2);
  \end{tikzpicture}
  \end{equation}
  is a \emphatic{(}strong\emphatic{)} fiber product in the right category of
  fractions $\CATC\left[\SETWinv\right]$.
\end{enumerate}

Moreover, given any set of data $(C,p^1:C\rightarrow B^1,p^2:C\rightarrow B^2)$ such that
$f^1\circ p^1=f^2\circ p^2$, diagram \eqref{eq-83}
is a \emphatic{(}strong\emphatic{)} fiber product if and only if the following $2$ conditions hold:

\begin{enumerate}[\emphatic{(}a\emphatic{)}]
\setcounter{enumi}{3}
 \item\label{d} given any object $D$ and any pair of morphisms $q^m:D\rightarrow B^m$ for $m=1,2$,
  such that $f^1\circ q^1=f^2\circ q^2$ in $\CATC$, there are an object $E$, a morphism
  $\operatorname{v}:E\rightarrow D$ in $\SETW$ and a morphism $q:E\rightarrow C$, such that
  $q^m\circ\operatorname{v}=p^m\circ q$ for each $m=1,2$;

 \item\label{e} given any set of data $(D,q^1,q^2,E,\operatorname{v},q)$ as in
  \emphatic{(}d\emphatic{)}, if there is another choice of data $\widetilde{E},
  \widetilde{\operatorname{v}}:\widetilde{E}\rightarrow D$ in $\SETW$ and $\widetilde{q}:
  \widetilde{E}\rightarrow C$, such that $q^m\circ\widetilde{\operatorname{v}}=p^m\circ
  \widetilde{q}$ for each $m=1,2$, then there are an object $F$, a morphism
  $\operatorname{u}:F\rightarrow E$ in $\SETW$ and a morphism $\widetilde{\operatorname{u}}:F
  \rightarrow\widetilde{E}$, such that:
   \begin{itemize}
    \item $\operatorname{v}\circ\operatorname{u}=\widetilde{\operatorname{v}}\circ
     \widetilde{\operatorname{u}}$;
    \item $q\circ\operatorname{u}=\widetilde{q}\circ\widetilde{\operatorname{u}}$.
   \end{itemize}
 \end{enumerate}
\end{prop}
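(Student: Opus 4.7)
The plan is to deduce both parts of the proposition from Theorems~\ref{theo-02} and~\ref{theo-04} applied in the special case where $\CATC$ is a trivial bicategory. Three simplifications are in force throughout: every associator $\theta_\bullet$ is an identity; between any pair of parallel morphisms of $\CATC$ the only $2$-morphism is the identity, so each invertible $2$-morphism in the hypotheses of those theorems (such as $\omega$, $\lambda$, $\lambda^m$, $\gamma^m$, $\gamma$, $\widetilde{\gamma}$, $\mu$) collapses to the equality of its source and target; and, as recalled in the introduction, the right category of fractions $\CATC[\SETWinv]$ is equivalent, viewed as a trivial bicategory, to any right bicategory of fractions of $(\CATC,\SETW)$, while weak fiber products in a trivial bicategory coincide with strong fiber products in the underlying category. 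Under these simplifications, parts (i) and (ii) of Theorem~\ref{theo-02} read verbatim as parts (iii) and (iv) of the proposition, which yields the first equivalence.

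For the characterization, apply Theorem~\ref{theo-04} to the diagram \eqref{eq-91} associated to the data $(C,p^1,p^2,\omega=\id)$. Condition (a) of Theorem~\ref{theo-04} immediately reduces to condition (d) of the proposition, since \eqref{eq-53} is automatically satisfied. Condition (b) reduces to the statement (b$'$): \emph{whenever $t,t':D\to C$ satisfy $p^m\circ t=p^m\circ t'$ for $m=1,2$, there exist $F$ and $\operatorname{u}:F\to D$ in $\SETW$ with $t\circ\operatorname{u}=t'\circ\operatorname{u}$}. Condition (c) reduces to (c$'$): \emph{any two such witnesses $(F,\operatorname{u})$ and $(\widetilde{F},\widetilde{\operatorname{u}})$ admit $G$, a morphism $\operatorname{z}:G\to F$ in $\SETW$, and a morphism $\widetilde{\operatorname{z}}:G\to\widetilde{F}$ with $\operatorname{u}\circ\operatorname{z}=\widetilde{\operatorname{u}}\circ\widetilde{\operatorname{z}}$}. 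Hence it suffices to prove that (d) together with (e) is equivalent to (d) together with (b$'$) and (c$'$).

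For the implication (d) + (b$'$) $\Rightarrow$ (e): given two witnesses $(E,\operatorname{v},q)$ and $(\widetilde{E},\widetilde{\operatorname{v}},\widetilde{q})$ of (d) for the same $(D,q^1,q^2)$, apply axiom (BF3) --- which for a trivial bicategory reduces to (CF3) --- to the pair $(\operatorname{v},\widetilde{\operatorname{v}})$ to obtain $H$, $v':H\to\widetilde{E}$, and $\widetilde{v}':H\to E$ with $\widetilde{v}'\in\SETW$ and $\operatorname{v}\circ\widetilde{v}'=\widetilde{\operatorname{v}}\circ v'$. Setting $t:=q\circ\widetilde{v}'$ and $t':=\widetilde{q}\circ v'$, the defining equalities $q^m\circ\operatorname{v}=p^m\circ q$ and $q^m\circ\widetilde{\operatorname{v}}=p^m\circ\widetilde{q}$ yield $p^m\circ t=p^m\circ t'$; (b$'$) then produces $\operatorname{u}:F\to H$ in $\SETW$ with $t\circ\operatorname{u}=t'\circ\operatorname{u}$, and the choices $G:=F$, $\operatorname{z}:=\widetilde{v}'\circ\operatorname{u}$ (in $\SETW$ by axiom (BF2)), $\widetilde{\operatorname{z}}:=v'\circ\operatorname{u}$ witness (e). Conversely, (d) + (e) yields (b$'$) by applying (e) to the pair $(D,\id_D,t)$ and $(D,\id_D,t')$ viewed as witnesses of (d) for the common datum $(D,p^1\circ t,p^2\circ t)$, and yields (c$'$) by applying (e) to the pair $(F,\operatorname{u},t\circ\operatorname{u})$ and $(\widetilde{F},\widetilde{\operatorname{u}},t\circ\widetilde{\operatorname{u}})$ viewed as witnesses of (d) for the same data. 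The only obstacle is routine bookkeeping, namely checking that all coherence conditions of Theorem~\ref{theo-04} collapse in the trivial bicategory setting; no new conceptual input is required beyond the weak-pullback axiom (BF3)/(CF3).
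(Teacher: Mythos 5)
Your proposal is correct and follows essentially the same route as the paper: both transfer the statement to the trivial-bicategory setting via the equivalence of Proposition~\ref{prop-08} together with Proposition~\ref{prop-07} and Remark~\ref{rem-01}, and then match the degenerate forms of conditions (\hyperref[a]{a})--(\hyperref[c]{c}) of Theorem~\ref{theo-04} against (\hyperref[d]{d}) and (\hyperref[e]{e}) using (\hyperref[CF3]{CF3}). The only organizational difference is that the paper notes the degenerate form of condition (\hyperref[c]{c}) is automatically satisfied by (\hyperref[CF3]{CF3}), whereas you derive it from (\hyperref[d]{d}) and (\hyperref[e]{e}); both are valid.
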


\section{Notations}\label{sec-07}
Through all this paper we will use the \emph{axiom of choice}, that we therefore assume
without further remarks. The reason for this is twofold: first of all, the construction of
bicategories of fractions in~\cite{Pr} in general requires the axiom of choice
(except for some special cases described in~\cite[Corollary~0.6]{T3}); moreover we will use
from time to time the universal property of bicategories of fractions,
that was proved in~\cite[Theorem~21]{Pr} implicitly using that axiom.\\

We mainly refer to~\cite[\S~1]{PW} and~\cite[\S~1.5]{L} for a general overview on bicategories and
pseudofunctors. Given any bicategory $\CATC$, we denote its objects by $A,B,\ldots$,
its morphisms by $f,g,\cdots$ and its $2$-morphisms by $\alpha,\beta,\cdots$ (we will use $A_{\CATC},
f_{\CATC},\alpha_{\CATC},\cdots$ if we have to recall that they belong to $\CATC$ when we are using
more than one bicategory in the computations). Given any triple of morphisms $f:A\rightarrow B,g:B
\rightarrow C,h:C\rightarrow D$ in $\CATC$, we denote by $\thetaa{h}{g}{f}$ the associator
$h\circ(g\circ f)\Rightarrow(h\circ g)\circ f$ that is part of the structure of the
bicategory $\CATC$; we denote by
$\pi_f:f\circ\id_A\Rightarrow f$ and $\upsilon_f:\id_B\circ f\Rightarrow f$ the right and left unitors
for $\CATC$ relative to any morphism $f$ as above. Given another bicategory $\CATD$, we will denote
by $\Theta_{\bullet},\Pi_{\bullet}$ and $\Upsilon_{\bullet}$ its associators,
right and left unitors respectively. We denote by $\functor{F}=
(\functor{F}_0,\functor{F}_1,\functor{F}_2,$ $\Psi_{\bullet}^{\functor{F}},
\Sigma_{\bullet}^{\functor{F}})$ any pseudofunctor $\CATC\rightarrow\CATD$.
Here for each pair of morphisms $f,g$
as above, $\Psi^{\functor{F}}_{g,f}$ is the associator from $\functor{F}_1(g\circ f)$ to
$\functor{F}_1(g)\circ\functor{F}_1(f)$ and for each object $A$, $\Sigma^{\functor{F}}_A$ is the
unitor from $\functor{F}_1(\id_A)$ to $\id_{\functor{F}_0(A)}$.\\

We recall that a morphism $e:A\rightarrow B$ in a bicategory $\CATC$ is called an \emph{internal
equivalence} (or, simply, an \emph{equivalence}) of $\CATC$
if and only if there exists a triple $(d,\delta,\xi)$, where $d$ is a morphism
from $B$ to $A$ and $\delta:\id_A\Rightarrow d\circ e$ and $\xi:e\circ d
\Rightarrow\id_B$ are invertible $2$-morphisms in $\CATC$ (in the literature sometimes the name
``internal equivalence'' is used for denoting the whole quadruple $(e,d,\delta,\xi)$
instead of the morphism $e$ alone). In particular, also $d$ is an internal equivalence
and it is usually called \emph{a
quasi-inverse} (or \emph{pseudo-inverse}) for $e$ (in general, the quasi-inverse of an internal
equivalence is not unique). An \emph{adjoint equivalence} is a quadruple $(e,d,\delta,
\xi)$ as above, such that

\[\upsilon_e\odot\Big(\xi\ast i_e\Big)\odot\thetaa{e}{d}{e}\odot\Big(i_e\ast\delta\Big)
\odot\pi^{-1}_e=i_e\]

and

\[\pi_d\odot\Big(i_d\ast\xi\Big)\odot\thetab{d}{e}{d}
\odot\Big(\delta\ast i_d\Big)\odot\upsilon_d^{-1}=i_d\]
(this more restrictive definition is actually the original definition of internal equivalence used
for example in~\cite[pag.~83]{Mac}). By~\cite[Proposition~1.5.7]{L} a morphism $e$ is (the first
component of) an internal equivalence if and only if it is the first component of a (possibly
different) adjoint equivalence.\\

\section{Weak fiber products in a bicategory}
Let us fix any bicategory $\CATD$ and any diagram in it as follows:

\begin{equation}\label{eq-60}
\begin{tikzpicture}[xscale=1.5,yscale=-0.8]
    \node (A0_0) at (0, 0) {$C$};
    \node (A0_2) at (2, 0) {$B^1$};
    \node (A2_0) at (0, 2) {$B^2$};
    \node (A2_2) at (2, 2) {$A$};
    
    \node (A1_1) [rotate=225] at (0.85, 1) {$\Longrightarrow$};
    \node (B1_1) at (1.15, 1) {$\Omega$};
    
    \path (A0_0) edge [->]node [auto,swap] {$\scriptstyle{r^2}$} (A2_0);
    \path (A0_0) edge [->]node [auto] {$\scriptstyle{r^1}$} (A0_2);
    \path (A0_2) edge [->]node [auto] {$\scriptstyle{g^1}$} (A2_2);
    \path (A2_0) edge [->]node [auto,swap] {$\scriptstyle{g^2}$} (A2_2);
\end{tikzpicture}
\end{equation}
with $\Omega$ invertible.
Given any object $D$ in $\CATD$, we define a $1$-category $\Iso_{\CATD}(D,C)$ whose objects are all
the $1$-morphisms from $D$ to $C$ in $\CATD$ and whose morphisms are all the invertible $2$-morphisms
between such $1$-morphisms (as such, $\Iso_{\CATD}(D,C)$ is an internal groupoid
in $(\operatorname{Sets})$). Moreover, we define also a
groupoid $\Iso_{\CATD}(D,g^1,g^2)$ as follows: its objects are all the triples $(s^1,s^2,\Lambda)$,
where $s^1:D\rightarrow B^1$, $s^2:D\rightarrow B^2$ are morphisms and $\Lambda$ is any invertible
$2$-morphism from $g^1\circ s^1$ to $g^2\circ s^2$ in $\CATD$.
A morphism from a triple $(s^1,s^2,\Lambda)$ to a
triple $(s^{\prime\, 1},s^{\prime\, 2},\Lambda')$ is any pair $(\Gamma^1,\Gamma^2)$ of invertible
$2$-morphisms $\Gamma^1:s^1\Rightarrow s^{\prime\, 1}$ and $\Gamma^2:s^2\Rightarrow s^{\prime\, 2}$, such
that

\[\Lambda'\odot\Big(i_{g^1}\ast\Gamma^1\Big)=\Big(i_{g^2}\ast\Gamma^2\Big)\odot\Lambda:\,\,g^1\circ
s^1\Longrightarrow g^2\circ s^{\prime\, 2}.\]

Then for each object $D$ in $\CATD$, diagram \eqref{eq-60} induces a functor

\[\functor{F}_D:\Iso_{\CATD}(D,C)\longrightarrow\Iso_{\CATD}(D,g^1,g^2)\]
defined on each object $s:D\rightarrow C$ in $\Iso_{\CATD}(D,C)$ by

\[\functor{F}_D(s):=\Big(r^1\circ s,r^2\circ s,\Thetab{g^2}{r^2}{s}\odot(\Omega\ast i_s)\odot
\Thetaa{g^1}{r^1}{s}\Big)\]
and on each invertible $2$-morphism $\Gamma:s\Rightarrow s'$ (i.e.\ each morphism in
$\Iso_{\CATD}(D,C)$ from $s$ to $s'$) by

\begin{gather*}
\functor{F}_D(\Gamma):=\Big(i_{r^1}\ast\Gamma,i_{r^2}\ast\Gamma\Big):\Big(r^1\circ s,r^2\circ s,
 \Thetab{g^2}{r^2}{s}\odot(\Omega\ast i_s)\odot\Thetaa{g^1}{r^1}{s}\Big)\longrightarrow \\
\longrightarrow\Big(r^1\circ s',r^2\circ s',\Thetab{g^2}{r^2}{s'}\odot(\Omega\ast i_{s'})\odot
 \Thetaa{g^1}{r^1}{s'}\Big)
\end{gather*}
(a direct check proves that $\functor{F}_D$ is actually a functor). Then one can give the following
definition (see for example~\cite[pag.~125]{MM} in the case when $\CATD$ is a $2$-category).

\begin{defin}\label{def-01}
Let us fix any bicategory $\CATD$ and any diagram as \eqref{eq-60} in it, with $\Omega$ invertible.
We say that such a diagram
\emph{has the universal property of weak fiber products} if the functor $\functor{F}_D$ described
above is an equivalence of categories (actually, of internal groupoids in $(\operatorname{Sets})$)
for each object $D$ in $\CATD$. In
this case, we say also that \eqref{eq-60} is \emph{a weak fiber product} (also called \emph{weak
pullback} or \emph{$2$-fiber product} when $\CATD$ is a $2$-category) of the pair $(g^1,g^2)$.
Equivalently, \eqref{eq-60} is a weak fiber product if and only if the following $2$ conditions hold
for every object $D$:

\begin{description}
 \item[A1$(D)$]\label{A1} \emph{$\functor{F}_D$ is essentially surjective}, i.e.\ for
  any set of data $(s^1,s^2,\Lambda)$ in $\CATD$ with $\Lambda$ invertible as follows
 
  \[
  \begin{tikzpicture}[xscale=1.5,yscale=-0.8]
    \node (A0_0) at (0, 0) {$D$};
    \node (A0_2) at (2, 0) {$B^1$};
    \node (A2_0) at (0, 2) {$B^2$};
    \node (A2_2) at (2, 2) {$A$,};
    
    \node (A1_1) [rotate=225] at (0.9, 1) {$\Longrightarrow$};
    \node (B1_1) at (1.2, 1) {$\Lambda$};
    
    \path (A0_0) edge [->]node [auto,swap] {$\scriptstyle{s^2}$} (A2_0);
    \path (A0_0) edge [->]node [auto] {$\scriptstyle{s^1}$} (A0_2);
    \path (A0_2) edge [->]node [auto] {$\scriptstyle{g^1}$} (A2_2);
    \path (A2_0) edge [->]node [auto,swap] {$\scriptstyle{g^2}$} (A2_2);
  \end{tikzpicture}
  \]
  there are a morphism $s:D\rightarrow C$ and a pair of invertible $2$-morphisms
  $\Lambda^m:s^m\Rightarrow r^m\circ s$ for $m=1,2$, such that
  
  \begin{equation}\label{eq-42}
  \Big(\Omega\ast i_s\Big)\odot\Thetaa{g^1}{r^1}{s}\odot\Big(i_{g^1}\ast
  \Lambda^1\Big)=\Thetaa{g^2}{r^2}{s}\odot\Big(i_{g^2}\ast\Lambda^2\Big)\odot\Lambda.
  \end{equation}
  For simplicity of exposition, we write below the $2$ diagrams associated to the left and to the
  right hand side of \eqref{eq-42}:
  
  \[
  \begin{tikzpicture}[xscale=3.2,yscale=-2.0]
    \node (A1_2) at (1.5, 1.5) {$B^1$};
    \node (A2_0) at (0, 2) {$D$};
    \node (A2_1) at (1.5, 2.5) {$C$};
    \node (A2_3) at (3, 2) {$A$,};

    \node (A1_1) at (0.7, 2.35) {$\Downarrow\,i_s$};
    \node (A1_3) at (2.3, 1.65) {$\Downarrow i_{g^1}$};
    \node (A2_2) at (2.3, 2.35) {$\Downarrow\,\Omega$};
    \node (A0_1) at (0.7, 1.65) {$\Downarrow\,\Lambda^1$};
    \node (A0_2) at (1.5, 2) {$\Downarrow\,\Thetaa{g^1}{r^1}{s}$};
    
    \path (A1_2) edge [->,bend right=30]node [auto] {$\scriptstyle{g^1}$} (A2_3);
    \path (A1_2) edge [->,bend left=5]node [auto,swap] {$\scriptstyle{g^1}$} (A2_3);
    \path (A2_0) edge [->,bend left=5]node [auto,swap] {$\scriptstyle{r^1\circ s}$} (A1_2);
    \path (A2_0) edge [->,bend right=30]node [auto] {$\scriptstyle{s^1}$} (A1_2);
    \path (A2_1) edge [->,bend right=5]node [auto] {$\scriptstyle{g^1\circ r^1}$} (A2_3);
    \path (A2_1) edge [->,bend left=30]node [auto,swap] {$\scriptstyle{g^2\circ r^2}$} (A2_3);
    \path (A2_0) edge [->,bend left=30]node [auto,swap] {$\scriptstyle{s}$} (A2_1);
    \path (A2_0) edge [->,bend right=5]node [auto] {$\scriptstyle{s}$} (A2_1);
    \end{tikzpicture}
  \]
  
  \[
  \begin{tikzpicture}[xscale=3.2,yscale=-2.2]
    \node (A1_1) at (1.5, 1.5) {$B^1$};
    \node (A2_0) at (0, 2) {$D$};
    \node (A2_2) at (3, 2) {$A$;};
    \node (A3_1) at (1.5, 2) {$B^2$};
    \node (C3_2) at (1.5, 2.5) {$B^2$};

    \node (A3_0) at (0.75, 2) {$\Downarrow\,\Lambda^2$};
    \node (A2_1) at (1.5, 1.75) {$\Downarrow\,\Lambda$};
    \node (A3_2) at (2.25, 2) {$\Downarrow\,i_{g^2}$};
    \node (C0_0) at (1.5, 2.25) {$\Downarrow\,\Thetaa{g^2}{r^2}{s}$};
    
    \path (A2_0) edge [->,bend right=15]node [auto] {$\scriptstyle{s^1}$} (A1_1);
    \path (A2_0) edge [->,bend right=15]node [auto] {$\scriptstyle{s^2}$} (A3_1);
    \path (A2_0) edge [->,bend left=15]node [auto,swap] {$\scriptstyle{r^2\circ s}$} (A3_1);
    \path (A1_1) edge [->,bend right=15]node [auto] {$\scriptstyle{g^1}$} (A2_2);
    \path (A3_1) edge [->,bend right=15]node [auto] {$\scriptstyle{g^2}$} (A2_2);
    \path (A3_1) edge [->,bend left=15]node [auto,swap] {$\scriptstyle{g^2}$} (A2_2);
    \path (A2_0) edge [->,bend left=15]node [auto,swap] {$\scriptstyle{s}$} (C3_2);
    \path (C3_2) edge [->,bend left=15]node [auto,swap] {$\scriptstyle{g^2\circ r^2}$} (A2_2);
  \end{tikzpicture}
  \]

 \item[A2$(D)$]\label{A2} \emph{$\functor{F}_D$ is fully faithful}, i.e.\ for
  any pair of morphisms $t,t':D\rightarrow C$ and for any
  pair of invertible $2$-morphisms $\Gamma^m:r^m\circ t\Rightarrow r^m\circ t'$ for $m=1,2$, such that

  \begin{gather}
  \nonumber \Thetab{g^2}{r^2}{t'}\odot\Big(\Omega\ast i_{t'}\Big)\odot\Thetaa{g^1}{r^1}{t'}\odot
   \Big(i_{g^1}\ast\Gamma^1\Big)= \\
  \label{eq-71} =\Big(i_{g^2}\ast\Gamma^2\Big)\odot\Thetab{g^2}{r^2}{t}\odot\Big(\Omega\ast i_t
   \Big)\odot\Thetaa{g^1}{r^1}{t},  
  \end{gather}
  there is a \emph{unique} invertible $2$-morphism $\Gamma:t\Rightarrow t'$, such that
  $i_{r^m}\ast\Gamma=\Gamma^m$ for each $m=1,2$. The pair of $2$-morphisms in \eqref{eq-71} is given
  as follows:

  \[
  \begin{tikzpicture}[xscale=3.2,yscale=-1.2]
    \node (A2_0) at (0, 2) {$D$};
    \node (A2_2) at (1.5, 0.5) {$B^1$};
    \node (A2_3) at (3, 2) {$A$,};
    \node (A3_1) at (1.5, 2) {$C$};
    \node (A4_2) at (1.5, 3.2) {$B^2$};

    \node (A1_1) at (0.6, 0.95) {$\Downarrow\,\Gamma^1$};
    \node (A1_2) at (2.45, 0.95) {$\Downarrow\,i_{g^1}$};
    \node (A2_1) at (1.5, 1.2) {$\Downarrow\,\Thetaa{g^1}{r^1}{t'}$};
    \node (A3_2) at (2.25, 2) {$\Downarrow\,\Omega$};
    \node (A4_1) at (0.75, 2) {$\Downarrow\,i_{t'}$};
    \node (A5_1) at (1.5, 2.65) {$\Downarrow\,\Thetab{g^2}{r^2}{t'}$};

    \path (A2_0) edge [->,bend right=25]node [auto] {$\scriptstyle{t'}$} (A3_1);
    \path (A2_0) edge [->,bend left=25]node [auto,swap] {$\scriptstyle{t'}$} (A3_1);
    \path (A2_2) edge [->,bend right=15]node [auto,swap] {$\scriptstyle{g^1}$} (A2_3);
    \path (A2_2) edge [->,bend right=45]node [auto] {$\scriptstyle{g^1}$} (A2_3);
    \path (A3_1) edge [->,bend right=25]node [auto] {$\scriptstyle{g^1\circ r^1}$} (A2_3);
    \path (A3_1) edge [->,bend left=25]node [auto,swap] {$\scriptstyle{g^2\circ r^2}$} (A2_3);
    \path (A2_0) edge [->,bend left=30]node [auto,swap] {$\scriptstyle{r^2\circ t'}$} (A4_2);
    \path (A4_2) edge [->,bend left=30]node [auto,swap] {$\scriptstyle{g^2}$} (A2_3);
    \path (A2_0) edge [->,bend right=10]node [auto,swap] {$\scriptstyle{r^1\circ t'}$} (A2_2);
    \path (A2_0) edge [->,bend right=45]node [auto] {$\scriptstyle{r^1\circ t}$} (A2_2);
  \end{tikzpicture}
  \]
  
  \[
  \begin{tikzpicture}[xscale=3.2,yscale=1.2]
    \node (A2_0) at (0, 2) {$D$};
    \node (A2_2) at (1.5, 0.5) {$B^2$};
    \node (A2_3) at (3, 2) {$A$.};
    \node (A3_1) at (1.5, 2) {$C$};
    \node (A4_2) at (1.5, 3.2) {$B^1$};

    \node (A1_1) at (0.55, 0.95) {$\Downarrow\,\Gamma^2$};
    \node (A1_2) at (2.5, 0.95) {$\Downarrow i_{g^2}$};
    \node (A2_1) at (1.5, 1.2) {$\Downarrow\,\Thetab{g^2}{r^2}{t}$};
    \node (A3_2) at (2.25, 2) {$\Downarrow\,\Omega$};
    \node (A4_1) at (0.75, 2) {$\Downarrow\,i_t$};
    \node (A5_1) at (1.5, 2.65) {$\Downarrow\,\Thetaa{g^1}{r^1}{t}$};

    \path (A2_0) edge [->,bend right=25]node [auto,swap] {$\scriptstyle{t}$} (A3_1);
    \path (A2_0) edge [->,bend left=25]node [auto] {$\scriptstyle{t}$} (A3_1);
    \path (A2_2) edge [->,bend right=15]node [auto] {$\scriptstyle{g^2}$} (A2_3);
    \path (A2_2) edge [->,bend right=50]node [auto,swap] {$\scriptstyle{g^2}$} (A2_3);
    \path (A3_1) edge [->,bend right=25]node [auto,swap] {$\scriptstyle{g^2\circ r^2}$} (A2_3);
    \path (A3_1) edge [->,bend left=25]node [auto] {$\scriptstyle{g^1\circ r^1}$} (A2_3);
    \path (A2_0) edge [->,bend left=30]node [auto] {$\scriptstyle{r^1\circ t}$} (A4_2);
    \path (A4_2) edge [->,bend left=30]node [auto] {$\scriptstyle{g^1}$} (A2_3);
    \path (A2_0) edge [->,bend right=10]node [auto] {$\scriptstyle{r^2\circ t}$} (A2_2);
    \path (A2_0) edge [->,bend right=50]node [auto,swap] {$\scriptstyle{r^2\circ t'}$} (A2_2);
  \end{tikzpicture}
  \]
\end{description}
\end{defin}

\begin{rem}\label{rem-02}
Equivalently, \eqref{eq-60} is a weak fiber product in the bicategory $\CATD$ if and only if the
following conditions are satisfied:

\begin{itemize}
 \item for each triple $(D,s^1:D\rightarrow B^1,s^2:D\rightarrow B^2)$ in $\CATD$ the
  following property holds:
  
  \begin{description}
   \item[B1$(D,s^1,s^2)$]\label{B1}
    for any invertible $2$-morphism $\Lambda:g^1\circ s^1\Rightarrow g^2\circ s^2$,
    there are a morphism $s:D\rightarrow C$ and a pair of invertible $2$-morphisms $\Lambda^m:s^m
    \Rightarrow r^m\circ s$ for $m=1,2$, such that \eqref{eq-42} holds;
  \end{description}

 \item for each triple $(D,t:D\rightarrow C,t':D\rightarrow C)$ in $\CATD$ the following
  property holds:
  \begin{description}
   \item[B2$(D,t,t')$]\label{B2}
    for any pair of invertible $2$-morphisms $\Gamma^m:r^m\circ t\Rightarrow r^m
    \circ t'$ for $m=1,2$, such that \eqref{eq-71} holds, there is a unique invertible
    $2$-morphism $\Gamma:t\Rightarrow t'$, such that $i_{r^m}\ast\Gamma=\Gamma^m$ for each $m=1,2$.
  \end{description}
\end{itemize}

As we will see in Proposition~\ref{prop-03} and~\ref{prop-04}, in general it is sufficient
to verify that condition \textbf{\hyperref[B1]{B1}},
respectively \textbf{\hyperref[B2]{B2}}, holds for a (smaller) subset of triples $(D,s^1,s^2)$,
respectively $(D,t,t')$.
\end{rem}

\begin{rem}\label{rem-01}
Given any category $\CATD$, we denote by $\CATD^2$ the trivial bicategory obtained from
$\CATD$, i.e.\ the bicategory whose objects and morphisms are the same as those of $\CATD$ and whose
$2$-morphisms are only the $2$-identities. Then it is easy to see that a $2$-commutative square in
$\CATD^2$ is a weak fiber product if and only if the same square is a (strong) fiber product in
$\CATD$. In other terms, weak fiber products generalize the notion of (strong) fiber products from
categories to $2$-categories.
\end{rem}

In the remaining part of this section we are going to state some useful results
about weak fiber products in any bicategory $\CATD$. All such lemmas will play a crucial role
when $\CATD$ will be a bicategory of fractions $\CATC\left[\SETWinv\right]$.

\begin{prop}\label{prop-01}
Let us suppose that \eqref{eq-60} \emphatic{(}with $\Omega$ invertible\emphatic{)}
is a weak fiber product in a bicategory $\CATD$.
Moreover, let us also fix any set of objects, morphisms and $2$-morphism as follows for each $m=1,2$:

\begin{gather}
\nonumber e^m:\,\overline{B}^m\longrightarrow B^m,\quad\quad d^m:B^m\longrightarrow\overline{B}^m, \\
\label{eq-102}\Delta^m:\,\id_{\overline{B}^m}\Longrightarrow d^m\circ e^m,\quad\quad\Xi^m:\,e^m\circ
 d^m\Longrightarrow\id_{B^m},
\end{gather}
such that the quadruple $(e^m,d^m,\Delta^m,\Xi^m)$ is an \emph{adjoint}
equivalence in $\CATD$ for each $m=1,2$. Moreover, let us define

\begin{gather}
\nonumber \overline{\Omega}:=\Thetab{g^2\circ e^2}{d^2}{r^2}\odot\Big(\Thetaa{g^2}{e^2}{d^2}
 \ast i_{r^2}\Big)\odot\Big(\Big(i_{g^2}\ast\left(\Xi^2\right)^{-1}\Big)\ast i_{r^2}\Big)\odot
 \Big(\Pi_{g^2}^{-1}\ast i_{r^2}\Big)\odot \\
\nonumber\odot\,\Omega\odot\Big(\Pi_{g^1}\ast i_{r^1}\Big)\odot\Big(
 \Big(i_{g^1}\ast\Xi^1\Big)\ast i_{r^1}\Big)
 \odot\Big(\Thetab{g^1}{e^1}{d^1}\ast i_{r^1}\Big)\odot\Thetaa{g^1\circ e^1}{d^1}{r^1}: \\
\label{eq-36} \phantom{\Big(}(g^1\circ e^1)\circ(d^1\circ r^1)\Longrightarrow
 (g^2\circ e^2)\circ(d^2\circ r^2)
\end{gather}
\emphatic{(}where $\Theta_{\bullet}$ and $\Pi_{\bullet}$ are the associators and right unitors
for $\CATD$\emphatic{)}. Then the diagram

\begin{equation}\label{eq-02}
\begin{tikzpicture}[xscale=1.5,yscale=-0.8]
    \node (A0_0) at (0, 0) {$C$};
    \node (A0_2) at (2, 0) {$\overline{B}^1$};
    \node (A2_0) at (0, 2) {$\overline{B}^2$};
    \node (A2_2) at (2, 2) {$A$};
    
    \node (A1_1) [rotate=225] at (0.85, 1) {$\Longrightarrow$};
    \node (A1_2) at (1.15, 1) {$\overline{\Omega}$};
    
    \path (A0_0) edge [->]node [auto,swap] {$\scriptstyle{d^2\circ r^2}$} (A2_0);
    \path (A0_0) edge [->]node [auto] {$\scriptstyle{d^1\circ r^1}$} (A0_2);
    \path (A0_2) edge [->]node [auto] {$\scriptstyle{g^1\circ e^1}$} (A2_2);
    \path (A2_0) edge [->]node [auto,swap] {$\scriptstyle{g^2\circ e^2}$} (A2_2);
\end{tikzpicture}
\end{equation}
is a weak fiber product in $\CATD$.
\end{prop}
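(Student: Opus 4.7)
The plan is to verify directly that for every object $D$ in $\CATD$, the induced functor $\overline{\functor{F}}_D$ associated to diagram~\eqref{eq-02} is essentially surjective and fully faithful, i.e.\ that it satisfies conditions \textbf{\hyperref[A1]{A1}$(D)$} and \textbf{\hyperref[A2]{A2}$(D)$}. The idea is to transport any test datum for~\eqref{eq-02} along the adjoint equivalences $(e^m,d^m,\Delta^m,\Xi^m)$ into a test datum for~\eqref{eq-60}, apply the hypothesis that~\eqref{eq-60} is a weak fiber product, and then transport back.

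For essential surjectivity, fix any triple $(\overline{s}^1,\overline{s}^2,\overline{\Lambda})$ with $\overline{s}^m:D\rightarrow\overline{B}^m$ and $\overline{\Lambda}$ an invertible $2$-morphism from $(g^1\circ e^1)\circ\overline{s}^1$ to $(g^2\circ e^2)\circ\overline{s}^2$. Setting $s^m:=e^m\circ\overline{s}^m$ and using the associators $\Theta_{\bullet}$, one turns $\overline{\Lambda}$ into an invertible $2$-morphism $\Lambda:g^1\circ s^1\Rightarrow g^2\circ s^2$. Since~\eqref{eq-60} is a weak fiber product, condition \textbf{\hyperref[A1]{A1}$(D)$} applied to $(s^1,s^2,\Lambda)$ produces a morphism $s:D\rightarrow C$ together with invertible $2$-morphisms $\Lambda^m:s^m\Rightarrow r^m\circ s$ satisfying~\eqref{eq-42}. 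I would then define $\overline{\Lambda}^m:\overline{s}^m\Rightarrow(d^m\circ r^m)\circ s$ as the composition obtained by first whiskering $\Delta^m$ against $\overline{s}^m$ to insert a $d^m\circ e^m$, then whiskering $\Lambda^m$ on the left by $d^m$, glued together with the appropriate associators and unitors. A direct diagram chase --- expanding the definition~\eqref{eq-36} of $\overline{\Omega}$ and using the triangle identities of the adjoint equivalences so that each occurrence of $\Xi^m$ is cancelled against the corresponding $\Delta^m$ --- reduces the compatibility required of $(\overline{\Lambda}^1,\overline{\Lambda}^2)$ relative to $\overline{\Omega}$ to the compatibility~\eqref{eq-42} of $(\Lambda^1,\Lambda^2)$ relative to $\Omega$, which is already in hand.

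For full faithfulness, fix morphisms $t,t':D\rightarrow C$ and invertible $2$-morphisms $\overline{\Gamma}^m:(d^m\circ r^m)\circ t\Rightarrow(d^m\circ r^m)\circ t'$ satisfying the analogue of~\eqref{eq-71} for~\eqref{eq-02}. By whiskering $\overline{\Gamma}^m$ on the left by $e^m$ and composing with $\Xi^m$, $(\Xi^m)^{-1}$ and the relevant associators and unitors, one manufactures invertible $2$-morphisms $\Gamma^m:r^m\circ t\Rightarrow r^m\circ t'$. Using once more~\eqref{eq-36} and the triangle identities, a routine manipulation shows that $(\Gamma^1,\Gamma^2)$ satisfies~\eqref{eq-71}. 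Condition \textbf{\hyperref[A2]{A2}$(D)$} then yields a unique invertible $2$-morphism $\Gamma:t\Rightarrow t'$ with $i_{r^m}\ast\Gamma=\Gamma^m$ for $m=1,2$. Whiskering this last equality by $d^m$, reassociating and applying the triangle identities gives $i_{d^m\circ r^m}\ast\Gamma=\overline{\Gamma}^m$ up to the canonical associator. Uniqueness follows the same pattern in reverse: any $\Gamma$ satisfying $i_{d^m\circ r^m}\ast\Gamma=\overline{\Gamma}^m$, after being whiskered by $e^m$ and cancelled against $\Xi^m$, must satisfy $i_{r^m}\ast\Gamma=\Gamma^m$, and is therefore unique by \textbf{\hyperref[A2]{A2}$(D)$}.

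The main obstacle is purely bookkeeping: each coherence check reduces, after unwinding~\eqref{eq-36}, to repeated applications of the triangle identities for the adjoint equivalences $(e^m,d^m,\Delta^m,\Xi^m)$ together with naturality of the associators and unitors of $\CATD$. In a strict $2$-category these computations collapse entirely, and the statement reduces to the evident fact that pre-composition by an equivalence induces an equivalence on the relevant hom-groupoids; in a general bicategory the same argument goes through, but the pasting diagrams have to be chased with some patience. The assumption that the $2$-morphisms $\Delta^m$ and $\Xi^m$ come from an \emph{adjoint} equivalence (rather than just an internal equivalence) is precisely what makes these cancellations go through on the nose, and is harmless by~\cite[Proposition~1.5.7]{L}.
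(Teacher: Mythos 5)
Your proposal is correct and follows essentially the same route as the paper's own proof: you transport the test data along the adjoint equivalences using exactly the same constructions (setting $s^m:=e^m\circ\overline{s}^m$, defining $\overline{\Lambda}^m$ by whiskering $\Delta^m$ and then $\Lambda^m$ by $d^m$, and defining $\Gamma^m$ by conjugating $i_{e^m}\ast\overline{\Gamma}^m$ with $\Xi^m$ and $(\Xi^m)^{-1}$), and the cancellations via the triangle identities are precisely how the paper closes each verification. The paper likewise reduces the bookkeeping by writing out the computation only in the $2$-category case, so nothing further is needed.
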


Since each internal equivalence is the first component of an adjoint equivalence
(see~\cite[Proposition~1.5.7]{L}), then this result implies at once that:

\begin{cor}\label{cor-04}
Let us fix any pair of morphisms $g^m:B^m\rightarrow A$ for $m=1,2$ that admit a weak fiber product
in a bicategory $\CATD$; then for every pair of internal
equivalences $e^m:\overline{B}^m\rightarrow B^m$ for $m=1,2$, the morphisms $g^m\circ e^m:
\overline{B}^m\rightarrow A$ for $m=1,2$ have a weak fiber product in $\CATD$.
\end{cor}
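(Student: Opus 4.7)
The plan is to reduce the statement directly to Proposition~\ref{prop-01}, which already does essentially all the work, given that we are allowed to replace each internal equivalence by an \emph{adjoint} equivalence having the same first component.

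First I would unwind what we are given: a weak fiber product diagram $(C,r^1,r^2,\Omega)$ for the pair $(g^1,g^2)$, together with two internal equivalences $e^m:\overline{B}^m\rightarrow B^m$ for $m=1,2$. The hypothesis of Proposition~\ref{prop-01} is stronger in that it requires adjoint equivalences, i.e.\ quadruples $(e^m,d^m,\Delta^m,\Xi^m)$ satisfying the two zig-zag identities. To close this gap, I would invoke the result cited in Section~\ref{sec-07} (Proposition~1.5.7 of~\cite{L}): every internal equivalence is the first component of a (possibly different) adjoint equivalence. Applying this for $m=1,2$ produces, for each $e^m$, a quasi-inverse $d^m:B^m\rightarrow\overline{B}^m$ and invertible $2$-morphisms $\Delta^m:\id_{\overline{B}^m}\Rightarrow d^m\circ e^m$ and $\Xi^m:e^m\circ d^m\Rightarrow\id_{B^m}$ satisfying the adjunction identities.

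With these adjoint equivalences in hand, the hypotheses of Proposition~\ref{prop-01} are all met. I would then simply apply that proposition: it produces, out of $(C,r^1,r^2,\Omega)$ and the two adjoint equivalences, the diagram \eqref{eq-02} with apex $C$, projections $d^m\circ r^m:C\rightarrow\overline{B}^m$, and invertible $2$-morphism $\overline{\Omega}$ defined by \eqref{eq-36}, and asserts that this diagram is a weak fiber product in $\CATD$ for the pair $(g^1\circ e^1,g^2\circ e^2)$. This is exactly a weak fiber product for the pair of morphisms in the statement, which therefore exists, as desired.

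There is no real obstacle beyond citing the two inputs correctly: the content of Proposition~\ref{prop-01} is what makes the construction work, and Proposition~1.5.7 of~\cite{L} is needed only to upgrade ``equivalence'' to ``adjoint equivalence''. No further computation is required, since the proof of Proposition~\ref{prop-01} already handles the coherence bookkeeping built into the definition of $\overline{\Omega}$.
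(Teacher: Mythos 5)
Your proposal is correct and matches the paper's own argument: the paper derives Corollary~\ref{cor-04} from Proposition~\ref{prop-01} in exactly this way, using~\cite[Proposition~1.5.7]{L} to replace each internal equivalence $e^m$ by an adjoint equivalence with the same first component. Nothing further is needed.
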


\begin{proof}[Proof of Proposition~\ref{prop-01}.]
\emph{For simplicity of exposition, we will
give a complete proof only in the case when $\CATD$ is a $2$-category. In the general case, one has
to add associators and unitors of $\CATD$ and use the coherence conditions on the bicategory $\CATD$
wherever it is necessary. Apart from that, the proofs are exactly the same.}\\

Since the quadruple
$(e^m,d^m,\Delta^m,$ $\Xi^m)$ is an adjoint equivalence, then for each $m=1,2$ we have:

\begin{equation}\label{eq-07}
\Big(\Xi^m\ast i_{e^m}\Big)\odot\Big(i_{e^m}\ast\Delta^m\Big)=i_{e^m}\quad\textrm{and}
\quad\Big(i_{d^m}\ast\Xi^m\Big)\odot\Big(\Delta^m\ast i_{d^m}\Big)=i_{d^m}.
\end{equation}

Let us fix any object $\overline{D}$ in $\CATD$ and 
let us prove property \textbf{\hyperref[A1]{A1}}$(\overline{D})$
for diagram \eqref{eq-02}, so let us fix
any set of data $(\overline{s}^1,\overline{s}^2,\overline{\Lambda})$ in $\CATD$ as
follows, with $\overline{\Lambda}$ invertible

\begin{equation}\label{eq-90}
\begin{tikzpicture}[xscale=1.5,yscale=-0.8]
    \node (A0_0) at (0, 0) {$\overline{D}$};
    \node (A0_2) at (2, 0) {$\overline{B}^1$};
    \node (A2_0) at (0, 2) {$\overline{B}^2$};
    \node (A2_2) at (2, 2) {$A$.};
    
    \node (A1_1) [rotate=225] at (0.9, 1) {$\Longrightarrow$};
    \node (A1_2) at (1.2, 1) {$\overline{\Lambda}$};
    
    \path (A2_0) edge [->]node [auto,swap] {$\scriptstyle{g^2\circ e^2}$} (A2_2);
    \path (A0_0) edge [->]node [auto,swap] {$\scriptstyle{\overline{s}^2}$} (A2_0);
    \path (A0_2) edge [->]node [auto] {$\scriptstyle{g^1\circ e^1}$} (A2_2);
    \path (A0_0) edge [->]node [auto] {$\scriptstyle{\overline{s}^1}$} (A0_2);
\end{tikzpicture}
\end{equation}

Since $\CATD$ is a $2$-category, 
we can consider $\overline{\Lambda}$ as defined from $g^1\circ(e^1\circ\overline{s}^1)$
to $g^2\circ(e^2\circ\overline{s}^2)$. Using property \textbf{\hyperref[A1]{A1}}$(\overline{D})$
for diagram \eqref{eq-60},
there are a morphism $\overline{s}:\overline{D}\rightarrow C$ and a pair of
invertible $2$-morphisms $\Lambda^m:e^m\circ\overline{s}^m\Rightarrow r^m\circ\overline{s}$ for $m=1,
2$, such that

\begin{equation}\label{eq-06}
\Big(\Omega\ast i_{\overline{s}}\Big)\odot\Big(i_{g^1}\ast\Lambda^1\Big)=\Big(i_{g^2}\ast\Lambda^2
\Big)\odot\overline{\Lambda}.
\end{equation}

For each $m=1,2$ we define an invertible $2$-morphism

\[\overline{\Lambda}^m:=\Big(i_{d^m}\ast\Lambda^m\Big)\odot\Big(\Delta^m\ast i_{\overline{s}^m}
\Big):\,\,\overline{s}^m\Longrightarrow(d^m\circ r^m)\circ\overline{s}.\]

Then using the definitions of $\overline{\Lambda}^1,\overline{\Lambda}^2$ and $\overline{\Omega}$
(where we omit associators and unitors of $\CATD$ since we are assuming that $\CATD$ is a
$2$-category), we get a series of identities as follows:

\begin{gather}
\nonumber \Big(\overline{\Omega}\ast i_{\overline{s}}\Big)\odot\Big(i_{g^1\circ e^1}\ast
 \overline{\Lambda}^1\Big)= \\
\nonumber =\Big(i_{g^2}\ast\left(\Xi^2\right)^{-1}\ast i_{r^2\circ\overline{s}}\Big)\odot\Big(
 \Omega\ast i_{\overline{s}}\Big)\odot\Big(i_{g^1}\ast\Xi^1\ast i_{r^1\circ\overline{s}}\Big)\odot \\
\nonumber \odot\Big(i_{g^1\circ e^1\circ d^1}\ast\Lambda^1\Big)\odot\Big(i_{g^1\circ e^1}\ast
 \Delta^1\ast i_{\overline{s}^1}\Big)\stackrel{(\ast)}{=} \\
\nonumber \stackrel{(\ast)}{=}\Big(i_{g^2}\ast\left(\Xi^2\right)^{-1}\ast i_{r^2\circ\overline{s}}
 \Big)\odot\Big(\Omega\ast i_{\overline{s}}\Big)\odot\Big(i_{g^1}\ast\Lambda^1\Big)\odot \\
\nonumber \odot\Big(i_{g^1}\ast\Xi^1\ast i_{e^1\circ\overline{s}^1}\Big)\odot\Big(i_{g^1\circ e^1}\ast
 \Delta^1\ast i_{\overline{s}^1}\Big)\stackrel{(\ast\ast)}{=} \\
\nonumber \stackrel{(\ast\ast)}{=}\Big(i_{g^2}\ast\left(\Xi^2\right)^{-1}\ast i_{r^2\circ
 \overline{s}}\Big)\odot\Big(\Omega\ast i_{\overline{s}}\Big)\odot\Big(i_{g^1}\ast\Lambda^1\Big)
 \stackrel{\eqref{eq-06}}{=} \\
\nonumber \stackrel{\eqref{eq-06}}{=}\Big(i_{g^2}\ast\left(\Xi^2\right)^{-1}\ast i_{r^2\circ
 \overline{s}}\Big)\odot\Big(i_{g^2}\ast\Lambda^2\Big)\odot\overline{\Lambda}\stackrel{(\ast)}{=} \\
\nonumber \stackrel{(\ast)}{=}\Big(i_{g^2\circ e^2\circ d^2}\ast\Lambda^2\Big)\odot\Big(i_{g^2}\ast
 \left(\Xi^2\right)^{-1}\ast i_{e^2\circ\overline{s}^2}\Big)\odot\overline{\Lambda}
 \stackrel{(\ast\ast)}{=} \\
\label{eq-58} \stackrel{(\ast\ast)}{=}\Big(i_{g^2\circ e^2\circ d^2}\ast\Lambda^2\Big)\odot\Big(
 i_{g^2\circ e^2} \ast\Delta^2\ast i_{\overline{s}^2}\Big)\odot\overline{\Lambda}=\Big(
 i_{g^2\circ e^2}\ast\overline{\Lambda}^2\Big)\odot\overline{\Lambda},  
\end{gather}
where the identities of the form $\stackrel{(\ast)}{=}$ are a consequence of the interchange law in
$\CATD$ (see~\cite[Proposition~1.3.5]{B})
and the identities denoted by $\stackrel{(\ast\ast)}{=}$ are obtained using \eqref{eq-07}. Then
identity \eqref{eq-58} proves that diagram \eqref{eq-02} satisfies property
\textbf{\hyperref[A1]{A1}}$(\overline{D})$.\\

Now let us prove also property \textbf{\hyperref[A2]{A2}}$(\overline{D})$
for diagram \eqref{eq-02}, so let us fix
any pair of morphisms $\overline{t},\overline{t}':\overline{D}\rightarrow C$ and any
pair of invertible $2$-morphisms $\overline{\Gamma}^m:(d^m\circ r^m)\circ\overline{t}\Rightarrow(d^m
\circ r^m)\circ\overline{t}'$ for $m=1,2$, such that

\begin{equation}\label{eq-10}
\Big(\overline{\Omega}\ast i_{\overline{t}'}\Big)\odot\Big(i_{g^1\circ e^1}\ast\overline{\Gamma}^1
\Big)=\Big(i_{g^2\circ e^2}\ast\overline{\Gamma}^2\Big)\odot\Big(\overline{\Omega}\ast
i_{\overline{t}}\Big).
\end{equation}

Then for each $m=1,2$ we define an invertible $2$-morphism

\begin{equation}\label{eq-38}
\Gamma^m:=\Big(\Xi^m\ast i_{r^m\circ\overline{t}'}\Big)\odot\Big(i_{e^m}\ast\overline{\Gamma}^m
\Big)\odot\Big(\left(\Xi^m\right)^{-1}\ast i_{r^m\circ\overline{t}}\Big):\,\,r^m\circ\overline{t}
\Longrightarrow r^m\circ\overline{t}'.
\end{equation}

Then using the interchange law on $\CATD$, we have:

\begin{gather}
\nonumber \Big(\Omega\ast i_{\overline{t}'}\Big)\odot\Big(i_{g^1}\ast\Gamma^1\Big)
 \stackrel{\eqref{eq-38}}{=}  \\
\nonumber \stackrel{\eqref{eq-38}}{=}\Big(\Omega\ast i_{\overline{t}'}\Big)\odot
 \Big(i_{g^1}\ast\Xi^1\ast i_{r^1\circ
 \overline{t}'}\Big)\odot\Big(i_{g^1\circ e^1}\ast\overline{\Gamma}^1\Big)\odot\Big(i_{g^1}\ast
 \left(\Xi^1\right)^{-1}\ast i_{r^1\circ\overline{t}}\Big)\stackrel{\eqref{eq-36}}{=} \\
\nonumber \stackrel{\eqref{eq-36}}{=}\Big(i_{g^2}\ast\Xi^2\ast i_{r^2\circ\overline{t}'}\Big)
 \odot\Big(\overline{\Omega}\ast
 i_{\overline{t}'}\Big)\odot\Big(i_{g^1\circ e^1}\ast\overline{\Gamma}^1\Big)\odot\Big(i_{g^1}\ast
 \left(\Xi^1\right)^{-1}\ast i_{r^1\circ\overline{t}}\Big)\stackrel{\eqref{eq-10}}{=} \\
\nonumber \stackrel{\eqref{eq-10}}{=}\Big(i_{g^2}\ast\Xi^2\ast i_{r^2\circ\overline{t}'}\Big)\odot
 \Big(i_{g^2\circ e^2}\ast\overline{\Gamma}^2\Big)\odot\Big(\overline{\Omega}\ast i_{\overline{t}}
 \Big)\odot\Big(i_{g^1}\ast\left(\Xi^1\right)^{-1}\ast i_{r^1\circ\overline{t}}\Big)
 \stackrel{\eqref{eq-36}}{=} \\
\nonumber \stackrel{\eqref{eq-36}}{=}\Big(i_{g^2}\ast\Xi^2\ast i_{r^2\circ\overline{t}'}\Big)
 \odot\Big(i_{g^2\circ e^2}\ast
 \overline{\Gamma}^2\Big)\odot\Big(i_{g^2}\ast\left(\Xi^2\right)^{-1}\ast i_{r^2\circ\overline{t}}
 \Big)\odot \\
\nonumber \odot\Big(\Omega\ast i_{\overline{t}}\Big)\odot\Big(i_{g^1}\ast\Xi^1\ast i_{r^1\circ
 \overline{t}}\Big)\odot\Big(i_{g^1}\ast\left(\Xi^1\right)^{-1}\ast i_{r^1\circ\overline{t}}\Big)
  \stackrel{\eqref{eq-38}}{=}\\
\label{eq-11} \stackrel{\eqref{eq-38}}{=}\Big(i_{g^2}\ast\Gamma^2\Big)\odot\Big(\Omega
 \ast i_{\overline{t}}\Big).  
\end{gather}

Since property \textbf{\hyperref[A2]{A2}}$(\overline{D})$ holds for diagram \eqref{eq-60},
then \eqref{eq-11}
implies that there is a unique invertible $2$-morphism $\overline{\Gamma}:\overline{t}\Rightarrow
\overline{t}'$, such that

\begin{equation}\label{eq-12}
i_{r^m}\ast\overline{\Gamma}=\Gamma^m\quad\textrm{for }m=1,2.
\end{equation}

Then for each $m=1,2$, by interchange law we have:

\begin{gather}
\nonumber i_{d^m\circ r^m}\ast\overline{\Gamma}\stackrel{\eqref{eq-12}}{=}i_{d^m}\ast\Gamma^m
 \stackrel{\eqref{eq-38}}{=}\\
\nonumber \stackrel{\eqref{eq-38}}{=}\Big(i_{d^m}\ast\Xi^m\ast i_{r^m\circ\overline{t}'}\Big)
 \odot\Big(i_{d^m\circ e^m}
 \ast\overline{\Gamma}^m\Big)\odot\Big(i_{d^m}\ast\left(\Xi^m\right)^{-1}\ast i_{r^m\circ
 \overline{t}}\Big)\stackrel{\eqref{eq-07}}{=} \\
\nonumber  \stackrel{\eqref{eq-07}}{=}\Big(\left(\Delta^m\right)^{-1}\ast
 i_{d^m\circ r^m\circ\overline{t}'}\Big)\odot\Big(
 i_{d^m\circ e^m}\ast\overline{\Gamma}^m\Big)\odot\Big(\Delta^m\ast i_{d^m\circ r^m\circ\overline{t}}
 \Big)= \\
\label{eq-14} =\overline{\Gamma}^m\odot\Big(\left(\Delta^m\right)^{-1}\ast i_{d^m\circ r^m\circ
 \overline{t}}\Big)\odot\Big(\Delta^m\ast i_{d^m\circ r^m\circ\overline{t}}\Big)=
 \overline{\Gamma}^m.  
\end{gather}

In order to conclude the proof, we need only to prove that $\overline{\Gamma}$ is the unique
invertible $2$-morphism from $\overline{t}$ to $\overline{t}'$, such that \eqref{eq-14} holds for
each $m=1,2$. So let us suppose that there is another invertible $2$-morphism $\overline{\Gamma}':
\overline{t}\Rightarrow\overline{t}'$, such that $i_{d^m\circ r^m}\ast\overline{\Gamma}'=
\overline{\Gamma}^m$ for each $m=1,2$. Then using again the interchange law,
for each $m=1,2$ we have:

\begin{gather*}
\Gamma^m\stackrel{\eqref{eq-38}}{=}\Big(\Xi^m\ast i_{r^m\circ\overline{t}'}\Big)\odot
 \Big(i_{e^m}\ast\overline{\Gamma}^m\Big)
 \odot\Big(\left(\Xi^m\right)^{-1}\ast i_{r^m\circ\overline{t}}\Big)= \\
=\Big(\Xi^m\ast i_{r^m\circ\overline{t}'}\Big)\odot\Big(i_{e^m\circ d^m\circ r^m}\ast
 \overline{\Gamma}'\Big)\odot\Big(\left(\Xi^m\right)^{-1}\ast i_{r^m\circ\overline{t}}\Big)= \\
=\Big(i_{r^m}\ast\overline{\Gamma}'\Big)\odot\Big(\Xi^m\ast i_{r^m\circ\overline{t}}\Big)\odot
 \Big(\left(\Xi^m\right)^{-1}\ast i_{r^m\circ\overline{t}}\Big)=i_{r^m}\ast\overline{\Gamma}'.  
\end{gather*}

Since $\overline{\Gamma}$ is the unique invertible $2$-morphism from $\overline{t}$ to
$\overline{t}'$ such that \eqref{eq-12} holds, we conclude that
$\overline{\Gamma}=\overline{\Gamma}'$. So we have proved that property
\textbf{\hyperref[A2]{A2}}$(\overline{D})$ holds for diagram \eqref{eq-02}.
\end{proof}

\begin{lem}\label{lem-16}
Let us suppose that \eqref{eq-60} \emphatic{(}with $\Omega$ invertible\emphatic{)} is a weak 
fiber product in a bicategory $\CATD$ and let us fix any internal equivalence
$e:A\rightarrow\overline{A}$ in $\CATD$. Then the induced square

\begin{equation}\label{eq-134}
\begin{tikzpicture}[xscale=3.6,yscale=-0.8]
    \node (A0_0) at (0, 0) {$C$};
    \node (A0_2) at (2, 0) {$B^1$};
    \node (A2_0) at (0, 2) {$B^2$};
    \node (A2_2) at (2, 2) {$\overline{A}$};

    \node (A1_1) [rotate=225] at (0.24, 1) {$\Longrightarrow$};
    \node (A1_2) at (1.1, 1) {$\overline{\Omega}:=\Thetaa{e}{g^2}{r^2}\odot
      \Big(i_e\ast\Omega\Big)\odot\Thetab{e}{g^1}{r^1}$};

     \path (A0_0) edge [->]node [auto,swap] {$\scriptstyle{r^2}$} (A2_0);
    \path (A0_0) edge [->]node [auto] {$\scriptstyle{r^1}$} (A0_2);
    \path (A0_2) edge [->]node [auto] {$\scriptstyle{e\circ g^1}$} (A2_2);
    \path (A2_0) edge [->]node [auto,swap] {$\scriptstyle{e\circ g^2}$} (A2_2);
\end{tikzpicture}
\end{equation}
is a weak fiber product in $\CATD$.
\end{lem}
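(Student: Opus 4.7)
The plan is to express the functor $\functor{F}_{\overline{D}}$ associated to diagram \eqref{eq-134} as a composition of two equivalences of groupoids. For each object $\overline{D}$ of $\CATD$ I would introduce an auxiliary functor
\[e_{\ast}:\Iso_{\CATD}(\overline{D},g^1,g^2)\longrightarrow\Iso_{\CATD}(\overline{D},e\circ g^1,e\circ g^2)\]
that acts as the identity on the first two components of each triple and on all morphisms, and that sends a triple $(s^1,s^2,\Lambda)$ to $(s^1,s^2,\Thetaa{e}{g^2}{s^2}\odot(i_e\ast\Lambda)\odot\Thetab{e}{g^1}{s^1})$. Using the pentagon coherence in $\CATD$, a direct check shows that, with the specific $\overline{\Omega}$ appearing in \eqref{eq-134}, the functor $\functor{F}_{\overline{D}}$ associated to \eqref{eq-134} factors as $e_{\ast}\circ\functor{F}_{\overline{D}}$, where the second factor is the functor associated to the original diagram \eqref{eq-60}. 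Since \eqref{eq-60} is a weak fiber product by hypothesis, the second factor is already an equivalence of groupoids, so the task reduces to showing that $e_{\ast}$ is an equivalence of groupoids for every $\overline{D}$.

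For fully faithfulness of $e_{\ast}$, the compatibility defining a morphism in the source groupoid from $(s^1,s^2,\Lambda)$ to $(s^{\prime 1},s^{\prime 2},\Lambda')$ is $\Lambda'\odot(i_{g^1}\ast\Gamma^1)=(i_{g^2}\ast\Gamma^2)\odot\Lambda$. Whiskering on the left with $i_e$ and rearranging via associators turns this identity into the analogous one for $e_{\ast}(\Lambda)$ and $e_{\ast}(\Lambda')$, so $e_{\ast}$ is well-defined on morphisms and full. Faithfulness follows because whiskering with $i_e$ is injective: if $i_e\ast\alpha=i_e\ast\beta$ for two $2$-morphisms $\alpha,\beta$ with a common source and target, then whiskering on the left with $i_d$ (where $d$ is a pseudo-inverse of $e$) and inserting an isomorphism $\delta:\id_A\Rightarrow d\circ e$ yields $\alpha=\beta$.

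For essential surjectivity, I would use \cite[Proposition~1.5.7]{L} to upgrade $e$ to an adjoint equivalence $(e,d,\delta,\xi)$. Given any $(s^1,s^2,\overline{\Lambda})$ in $\Iso_{\CATD}(\overline{D},e\circ g^1,e\circ g^2)$, I would then construct a $2$-morphism $\Lambda:g^1\circ s^1\Rightarrow g^2\circ s^2$ by pre-whiskering $\overline{\Lambda}$ with $i_d$, conjugating by $\xi$ on both sides so as to cancel the pair $e\circ d$, and re-absorbing the result into the correct source and target via the unitors. The essential surjectivity reduces to the identity $e_{\ast}(\Lambda)=\overline{\Lambda}$, which, after a diagram chase using the interchange law and the associators, follows from the triangle identity $(\xi\ast i_e)\odot(i_e\ast\delta)=i_e$ satisfied by any adjoint equivalence; this is exactly the type of argument used in the computation \eqref{eq-58} inside the proof of Proposition~\ref{prop-01}.

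The main obstacle is purely coherence bookkeeping: tracking the associators $\Theta_{\bullet}$ and unitors that arise from the whiskerings inside a general bicategory. As in the proof of Proposition~\ref{prop-01}, I would first carry out the argument in the $2$-category case, where all associators and unitors are trivial and the algebraic content is transparent, and then reinstate the coherence data in the general bicategory case by invoking the pentagon and triangle coherences, with no additional mathematical input required.
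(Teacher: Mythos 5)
Your proof is correct, and its computational core --- transporting $2$-morphisms across $e$ by whiskering with a quasi-inverse $d$, conjugating by the unit/counit, and invoking the triangle identities of an adjoint equivalence --- is exactly the engine of the paper's proof. The organization differs: the paper verifies conditions \textbf{A1}$(D)$ and \textbf{A2}$(D)$ for \eqref{eq-134} directly, converting each datum $(s^1,s^2,\overline{\Lambda})$, respectively $(t,t',\Gamma^1,\Gamma^2)$, for the new square into one for \eqref{eq-60} via the conjugation $\Lambda:=(\Delta^{-1}\ast i_{g^2\circ s^2})\odot(i_d\ast\overline{\Lambda})\odot(\Delta\ast i_{g^1\circ s^1})$ and then checking $i_e\ast\Lambda=\overline{\Lambda}$ with the triangle identity; you instead factor the comparison functor of \eqref{eq-134} as $e_{\ast}\circ\functor{F}_{\overline{D}}$ and prove that the whiskering functor $e_{\ast}$ is an equivalence of groupoids, which packages the same computations into a reusable two-out-of-three statement. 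Your factorization is arguably cleaner and makes the role of $e$ more transparent (in particular, faithfulness and fullness of $e_{\ast}$ together are precisely the paper's \textbf{A2} step, and essential surjectivity of $e_{\ast}$ is its \textbf{A1} step); the paper's version avoids introducing the auxiliary functor and stays closer to the definitions. One small slip in your write-up: after pre-whiskering $\overline{\Lambda}$ with $i_d$, the composite to be cancelled is $d\circ e$, so the conjugation should be by the unit $\delta:\id_A\Rightarrow d\circ e$ rather than by $\xi$; this is exactly what the paper does with $\Delta$ in \eqref{eq-43}, and it does not affect the validity of your argument.
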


See Appendix~\ref{sec-05} for a proof.

\begin{lem}\label{lem-03}
Let us fix any diagram as \eqref{eq-60} \emphatic{(}with $\Omega$ invertible\emphatic{)}
in a bicategory $\CATD$. Moreover, let us fix any pair of morphisms
$\overline{g}^1,\overline{g}^2$ and any pair of invertible $2$-morphisms $\Omega^1$ and $\Omega^2$
as follows:

\[
\begin{tikzpicture}[xscale=1.5,yscale=-1.2]
    \node (A0_2) at (2, 0) {$B^1$};
    \node (A2_0) at (0, 2) {$B^2$};
    \node (A2_2) at (2, 2) {$A$.};
    
    \node (B0_0) at (2, 0.9) {$\Rightarrow$};
    \node (C0_0) at (2, 1.2) {$\Omega^1$};
    \node (B2_2) at (1, 2) {$\Downarrow\,\Omega^2$};
    
    \path (A0_2) edge [->,bend left=25]node [auto,swap] {$\scriptstyle{g^1}$} (A2_2);
    \path (A2_0) edge [->,bend right=25]node [auto] {$\scriptstyle{g^2}$} (A2_2);
    \path (A0_2) edge [->,bend right=25]node [auto] {$\scriptstyle{\overline{g}^1}$} (A2_2);
    \path (A2_0) edge [->,bend left=25]node [auto,swap] {$\scriptstyle{\overline{g}^2}$} (A2_2);
\end{tikzpicture}
\]

Then \eqref{eq-60} is a weak fiber product if and only if the following diagram is a weak fiber
product

\begin{equation}\label{eq-21}
\begin{tikzpicture}[xscale=3.8,yscale=-0.8]
    \node (A0_0) at (0, 0) {$C$};
    \node (A0_2) at (2, 0) {$B^1$};
    \node (A2_0) at (0, 2) {$B^2$};
    \node (A2_2) at (2, 2) {$A$.};
    
    \node (A1_1) [rotate=225] at (0.2, 1) {$\Longrightarrow$};
    \node (A1_2) at (1.1, 1) {$\overline{\Omega}:=\Big(\Omega^2\ast
      i_{r^2}\Big)\odot\Omega\odot\Big(
      \left(\Omega^1\right)^{-1}\ast i_{r^1}\Big)$};
    
    \path (A0_0) edge [->]node [auto,swap] {$\scriptstyle{r^2}$} (A2_0);
    \path (A0_0) edge [->]node [auto] {$\scriptstyle{r^1}$} (A0_2);
    \path (A0_2) edge [->]node [auto] {$\scriptstyle{\overline{g}^1}$} (A2_2);
    \path (A2_0) edge [->]node [auto,swap] {$\scriptstyle{\overline{g}^2}$} (A2_2);
\end{tikzpicture}
\end{equation}
\end{lem}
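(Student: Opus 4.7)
The plan is to verify properties $\mathbf{\hyperref[A1]{A1}}(D)$ and $\mathbf{\hyperref[A2]{A2}}(D)$ of Definition~\ref{def-01} directly for diagram \eqref{eq-21} at each object $D$ of $\CATD$, assuming they hold for \eqref{eq-60}. The converse implication then follows by symmetry, applying the same argument to \eqref{eq-21} with $(\Omega^1)^{-1}$ and $(\Omega^2)^{-1}$ in place of $\Omega^1, \Omega^2$, since this recovers $\Omega$ from $\overline{\Omega}$. As in the proof of Proposition~\ref{prop-01}, I would present the argument as if $\CATD$ were a $2$-category, treating the bicategorical case as a routine insertion of associators and unitors.

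For $\mathbf{\hyperref[A1]{A1}}(D)$, starting from a triple $(s^1, s^2, \overline{\Lambda})$ with $\overline{\Lambda}: \overline{g}^1\circ s^1 \Rightarrow \overline{g}^2\circ s^2$ invertible, I define the auxiliary invertible $2$-morphism
\[
\Lambda := \Big((\Omega^2)^{-1}\ast i_{s^2}\Big)\odot\overline{\Lambda}\odot\Big(\Omega^1\ast i_{s^1}\Big)
\]
from $g^1\circ s^1$ to $g^2\circ s^2$, feed $(s^1,s^2,\Lambda)$ into $\mathbf{\hyperref[A1]{A1}}(D)$ for \eqref{eq-60} to obtain $s:D\rightarrow C$ and invertible $\Lambda^m:s^m\Rightarrow r^m\circ s$ satisfying \eqref{eq-42} for $(\Omega,\Lambda)$, and then check that the same triple $(s,\Lambda^1,\Lambda^2)$ satisfies the $\overline{\Omega}$-version of \eqref{eq-42}. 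The verification reduces to a direct unpacking: substituting the defining formula of $\overline{\Omega}$ into the left-hand side and the defining formula of $\Lambda$ into the right-hand side, the $(\Omega^1)^{\pm 1}$-whiskers cancel on both sides by the interchange law, and the $\Omega^2$-whiskers combine symmetrically, leaving exactly \eqref{eq-42} for $(\Omega,\Lambda)$.

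For $\mathbf{\hyperref[A2]{A2}}(D)$, I fix $t,t':D\rightarrow C$ and invertible $\Gamma^m:r^m\circ t\Rightarrow r^m\circ t'$ satisfying the $\overline{\Omega}$-analogue of \eqref{eq-71}. The plan is to show first that the very same $\Gamma^m$ satisfy \eqref{eq-71} for $\Omega$. This is obtained by substituting the three-fold composition defining $\overline{\Omega}$ into both sides of the $\overline{\Omega}$-analogue and moving $(\Omega^1)^{\pm 1}$ and $\Omega^2$ past each $i_{g^m}\ast\Gamma^m=i_{r^m}\ast\Gamma^m\ast i_{g^m}$ via the interchange law; the $\Omega^m$-whiskers match up on both sides and cancel. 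Applying $\mathbf{\hyperref[A2]{A2}}(D)$ for \eqref{eq-60} then yields the required unique invertible $\Gamma:t\Rightarrow t'$ with $i_{r^m}\ast\Gamma=\Gamma^m$; uniqueness transfers to \eqref{eq-21} at once because the condition $i_{r^m}\ast\Gamma=\Gamma^m$ mentions neither $\Omega$ nor $\overline{\Omega}$.

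I expect the main obstacle to be purely computational bookkeeping: keeping track of the order of the whiskers coming from the three-fold expression defining $\overline{\Omega}$ when substituted into \eqref{eq-42} and \eqref{eq-71}, so that each $(\Omega^m)^{\pm 1}$-contribution pairs up correctly with its cancelling partner. In the general bicategorical setting, one must additionally thread associators $\Theta_{\bullet}$ through the computation, but this follows the same template already carried out in the proof of Proposition~\ref{prop-01} and requires no new bicategorical ingredient beyond coherence and the interchange law.
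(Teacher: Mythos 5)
Your proposal is correct and follows essentially the same route as the paper's proof in Appendix~\ref{sec-05}: one direction is proved directly (with the same auxiliary $2$-morphism $\Lambda:=((\Omega^2)^{-1}\ast i_{s^2})\odot\overline{\Lambda}\odot(\Omega^1\ast i_{s^1})$ for \textbf{A1} and the same transfer of the $\Gamma^m$ for \textbf{A2} via the interchange law), and the converse is obtained by the symmetry of the roles of $(g^1,g^2)$ and $(\overline{g}^1,\overline{g}^2)$.
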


See Appendix~\ref{sec-05} for a proof.

\begin{theo}\label{theo-01}
Let us fix any bicategory $\CATD$, any pair of morphisms $g^1:B^1\rightarrow A$, $g^2:B^2
\rightarrow A$ and any triple of internal equivalences

\[e:A\longrightarrow\overline{A},\quad e^1:\overline{B}^1\longrightarrow B^1\quad\textrm{and}\quad
e^2:\overline{B}^2\longrightarrow B^2.\]

Then the following facts are equivalent:

\begin{enumerate}[\emphatic{(}a\emphatic{)}]
 \item the pair $(g^1,g^2)$ has a weak fiber product;
 \item the pair $(e\circ(g^1\circ e^1),e\circ(g^2\circ e^2))$ has a weak fiber product.
\end{enumerate}

Moreover, if for each $m=1,2$ we fix a triple $(d^m,\Delta^m,\Xi^m)$ such that the quadruple
$(e^m,d^m,\Delta^m,\Xi^m)$ is an adjoint equivalence
and if we assume that 
a weak fiber product for \emphatic{(}a\emphatic{)} is given by diagram \eqref{eq-60},
then a weak fiber product for \emphatic{(}b\emphatic{)} is given by the following diagram

\[
\begin{tikzpicture}[xscale=1.5,yscale=-0.8]
    \node (A0_0) at (0, 0) {$C$};
    \node (A0_2) at (2, 0) {$\overline{B}^1$};
    \node (A2_0) at (0, 2) {$\overline{B}^2$};
    \node (A2_2) at (2, 2) {$\overline{A}$,};
    
    \node (A1_1) [rotate=225] at (0.9, 1) {$\Longrightarrow$};
    \node (A1_2) at (1.2, 1) {$\overline{\Omega}$};
 
    \path (A0_0) edge [->]node [auto,swap] {$\scriptstyle{d^2\circ r^2}$} (A2_0);
    \path (A0_0) edge [->]node [auto] {$\scriptstyle{d^1\circ r^1}$} (A0_2);
    \path (A0_2) edge [->]node [auto] {$\scriptstyle{e\circ(g^1\circ e^1)}$} (A2_2);
    \path (A2_0) edge [->]node [auto,swap] {$\scriptstyle{e\circ(g^2\circ e^2)}$} (A2_2);
\end{tikzpicture}
\]
where:

\begin{gather*}
\overline{\Omega}:=\Thetaa{e}{g^2\circ e^2}{d^2\circ r^2}\odot\Big\{i_e\ast\Big[
 \Thetab{g^2\circ e^2}{d^2}{r^2}\odot\Big(\Thetaa{g^2}{e^2}{d^2}\ast i_{r^2}\Big)\odot \\
\odot\Big(\Big(i_{g^2}\ast\left(\Xi^2\right)^{-1}\Big)\ast i_{r^2}\Big)\odot
 \Big(\Pi_{g^2}^{-1}\ast i_{r^2}\Big)\odot\Omega\odot\Big(\Pi_{g^1}\ast i_{r^1}\Big)\odot \\
\odot\Big(\Big(i_{g^1}\ast\Xi^1\Big)\ast i_{r^1}\Big)\odot\Big(\Thetab{g^1}{e^1}{d^1}\ast i_{r^1}
 \Big)\odot\Thetaa{g^1\circ e^1}{d^1}{r^1}\Big]\Big\}\odot
 \Thetab{e}{g^1\circ e^1}{d^1\circ r^1}: \\
\phantom{(}(e\circ(g^1\circ e^1))\circ(d^1\circ r^1)\Longrightarrow(e\circ(g^2\circ e^2))
 \circ(d^2\circ r^2).
\end{gather*}
\end{theo}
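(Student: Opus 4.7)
The statement is a synthesis of the three preparatory results Proposition~\ref{prop-01}, Lemma~\ref{lem-16}, and Lemma~\ref{lem-03}, together with the fact (\cite[Proposition~1.5.7]{L}) that every internal equivalence is the first component of an adjoint equivalence and that every quasi-inverse of an internal equivalence is itself an internal equivalence.

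\emph{Direction \textup{(a)} $\Rightarrow$ \textup{(b)}.} Assume a weak fiber product for $(g^1,g^2)$ is given by diagram~\eqref{eq-60}, and fix the adjoint equivalences $(e^m,d^m,\Delta^m,\Xi^m)$ provided in the statement. Apply Proposition~\ref{prop-01} first: this yields a weak fiber product whose apex is still $C$, whose legs are $d^m\circ r^m$, whose outer maps are $g^m\circ e^m$, and whose invertible 2-cell is precisely the middle bracketed expression in the stated formula for $\bar{\Omega}$. Next apply Lemma~\ref{lem-16} to the internal equivalence $e:A\to\overline{A}$: this post-composes $e$ with both outer maps and builds the new 2-cell by whiskering the previous one on the inside by $i_e$ and conjugating by $\Thetab{e}{g^1\circ e^1}{d^1\circ r^1}$ and $\Thetaa{e}{g^2\circ e^2}{d^2\circ r^2}$. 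The outcome is exactly the diagram with 2-cell $\bar{\Omega}$ displayed in the theorem.

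\emph{Direction \textup{(b)} $\Rightarrow$ \textup{(a)}.} Let $d:\overline{A}\to A$ be a quasi-inverse of $e$, and for $m=1,2$ let $\bar{d}^m:B^m\to\overline{B}^m$ be a quasi-inverse of $e^m$; each of $d,\bar{d}^1,\bar{d}^2$ is itself an internal equivalence. Apply the direction already proved to the pair $(e\circ(g^1\circ e^1),e\circ(g^2\circ e^2))$ (which admits a weak fiber product by hypothesis) using the three equivalences $d,\bar{d}^1,\bar{d}^2$ in place of $e,e^1,e^2$: this produces a weak fiber product whose outer maps are
\[
d\circ\Big(\big(e\circ(g^m\circ e^m)\big)\circ\bar{d}^m\Big),\qquad m=1,2.
\]
The unit/counit 2-morphisms of the adjoint equivalences $(e,d)$ and $(e^m,\bar{d}^m)$, together with the associators and unitors of $\CATD$, assemble into invertible 2-morphisms from each of these composite outer maps to $g^m$. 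Lemma~\ref{lem-03} then transfers the weak fiber product to the pair $(g^1,g^2)$.

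\textbf{Main obstacle.} The only non-routine point is the bookkeeping of associators and unitors required to identify the explicit formula for $\bar{\Omega}$ as the vertical composition of the 2-cells produced successively by Proposition~\ref{prop-01} and Lemma~\ref{lem-16}. In the $2$-category case the identification is immediate; in the general bicategorical case it is pure coherence, handled in the same style as in the proof of Proposition~\ref{prop-01}, and does not introduce any new ideas.
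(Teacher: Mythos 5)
Your proof is correct and follows essentially the same route as the paper: the forward direction and the explicit formula for $\overline{\Omega}$ are exactly Proposition~\ref{prop-01} followed by Lemma~\ref{lem-16}, and the converse rests on quasi-inverses together with Lemma~\ref{lem-03}. The only (harmless) difference is that for (b)$\Rightarrow$(a) you invoke the already-proved direction wholesale with the quasi-inverse equivalences and then apply Lemma~\ref{lem-03} once at the end, whereas the paper strips off the equivalences one layer at a time via Corollary~\ref{cor-04}, Lemma~\ref{lem-03} and Lemma~\ref{lem-16}.
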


\begin{proof}
The implication $\textrm{(a)}\Rightarrow\textrm{(b)}$ and the last part of the Theorem are given
by Proposition~\ref{prop-01} and Lemma~\ref{lem-16}, so we need only to prove 
$\textrm{(b)}\Rightarrow\textrm{(a)}$.\\

As usual, for simplicity of exposition we assume that $\CATD$ is a $2$-category.
Let us suppose that $e\circ g^1\circ e^1$ and $e\circ g^2\circ e^2$ have a
weak fiber product. Let us fix a pair of triples $(d^m,\Delta^m,\Xi^m)$ for $m=1,2$ as in
\eqref{eq-102}, such that the quadruple $(e^m,d^m,\Delta^m,\Xi^m)$ is an adjoint equivalence for
each $m=1,2$. In particular, both $d^1$ and $d^2$ are internal equivalences. Since
the pair $(e\circ g^1\circ e^1,e\circ g^2\circ e^2)$ has a weak fiber product, then by
Corollary~\ref{cor-04} also the pair of morphisms

\[(\overline{g}^1:=e\circ g^1\circ e^1\circ d^1:B^1\rightarrow\overline{A},\,\,
\overline{g}^2:=e\circ g^2\circ e^2\circ d^2:B^2
\rightarrow\overline{A})\]
has a weak fiber product. Then for each $m=1,2$ we define an invertible $2$-morphism $\Omega^m:=
i_{e\circ g^m}\ast(\Xi^m)^{-1}$ from $e\circ g^m$ to $\overline{g}^m$. Then by Lemma~\ref{lem-03}
we conclude that the pair of morphisms $(e\circ g^1,e\circ g^2)$ has a weak fiber product.\\

Now $e$ is an internal equivalence, so there are an internal equivalence $d:\overline{A}\rightarrow
A$ and an invertible $2$-morphism $\Delta:\id_A\Rightarrow d\circ e$. By Lemma~\ref{lem-16} we get
that the pair of morphisms $(d\circ e\circ g^1,d\circ e\circ g^2)$ has a weak fiber product.
Then by Lemma~\ref{lem-03} applied to the pair of invertible $2$-morphisms
$\Delta\ast i_{g^m}:g^m\Rightarrow d\circ e\circ g^m$ for $m=1,2$, we get that the pair of morphisms
$(g^1,g^2)$ has a weak fiber product, so we have proved that (b) implies (a).
\end{proof}

\begin{lem}\label{lem-04}
Let us suppose that \eqref{eq-60} \emphatic{(}with $\Omega$ invertible\emphatic{)}
is a weak fiber product in a bicategory $\CATD$ and let us suppose that $e:\overline{C}
\rightarrow C$ is an internal equivalence in $\CATD$. Then the induced square

\begin{equation}\label{eq-24}
\begin{tikzpicture}[xscale=3.4,yscale=-0.8]
    \node (A0_0) at (0, 0) {$\overline{C}$};
    \node (A0_2) at (2, 0) {$B^1$};
    \node (A2_0) at (0, 2) {$B^2$};
    \node (A2_2) at (2, 2) {$A$};
    
    \node (A1_1) [rotate=225] at (0.2, 1) {$\Longrightarrow$};
    \node (A1_2) at (1.1, 1) {$\overline{\Omega}:=\Thetab{g^2}{r^2}{e}\odot
      \Big(\Omega\ast i_e\Big)\odot\Thetaa{g^1}{r^1}{e}$};
    
    \path (A0_0) edge [->]node [auto,swap] {$\scriptstyle{r^2\circ e}$} (A2_0);
    \path (A0_0) edge [->]node [auto] {$\scriptstyle{r^1\circ e}$} (A0_2);
    \path (A0_2) edge [->]node [auto] {$\scriptstyle{g^1}$} (A2_2);
    \path (A2_0) edge [->]node [auto,swap] {$\scriptstyle{g^2}$} (A2_2);
\end{tikzpicture}
\end{equation}
is a weak fiber product in $\CATD$.
\end{lem}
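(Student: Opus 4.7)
The plan is to factor the functor $\functor{F}_D$ associated to \eqref{eq-24} through the functor $\functor{F}_D$ associated to \eqref{eq-60}, via post-composition with $e$. For each object $D$ of $\CATD$, I would introduce the functor $e_*:\Iso_{\CATD}(D,\overline{C})\rightarrow\Iso_{\CATD}(D,C)$ sending a morphism $s:D\rightarrow\overline{C}$ to $e\circ s$ and an invertible $2$-morphism $\Gamma:s\Rightarrow s'$ to $i_e\ast\Gamma$. Since $e$ is an internal equivalence in $\CATD$, fixing a quasi-inverse $d:C\rightarrow\overline{C}$ together with invertible $2$-morphisms $\delta:\id_{\overline{C}}\Rightarrow d\circ e$ and $\xi:e\circ d\Rightarrow\id_C$ gives (with the help of associators and unitors of $\CATD$) an analogous functor $d_*$ and natural isomorphisms $d_*\circ e_*\cong\id$ and $e_*\circ d_*\cong\id$, so that $e_*$ is an equivalence of categories.

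Next I would exhibit a natural isomorphism between $\functor{F}_D^{\eqref{eq-24}}$ and $\functor{F}_D^{\eqref{eq-60}}\circ e_*$. Comparing the definitions, $\functor{F}_D^{\eqref{eq-24}}(s)$ and $\functor{F}_D^{\eqref{eq-60}}(e\circ s)$ differ only in that the first two components are $(r^m\circ e)\circ s$ in one case and $r^m\circ(e\circ s)$ in the other; these are identified by the associators $\Thetab{r^m}{e}{s}$ for $m=1,2$. Taking these pairs of associators as the components of the sought natural isomorphism, the check that each pair defines a morphism in $\Iso_{\CATD}(D,g^1,g^2)$ reduces to a coherence identity relating $\overline{\Omega}$, $\Omega$, and the associators with respect to $r^m$, $g^m$, $e$ and $s$; naturality in $s$ follows from the interchange law and naturality of associators.

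Once this is done, $\functor{F}_D^{\eqref{eq-24}}$ is naturally isomorphic to the composition of two equivalences of categories (the equivalence of $\functor{F}_D^{\eqref{eq-60}}$ being guaranteed by the hypothesis that \eqref{eq-60} is a weak fiber product), and hence is itself an equivalence. Since $D$ was arbitrary, the conclusion then follows directly from Definition~\ref{def-01}.

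The conceptual content is not difficult, but the main obstacle I anticipate is the bookkeeping of associators and unitors in the bicategorical coherence verification of the middle paragraph. Following the convention adopted in the proof of Proposition~\ref{prop-01}, the cleanest exposition would first treat the case in which $\CATD$ is a $2$-category, where each $\Thetab{r^m}{e}{s}$ is an identity so that $\functor{F}_D^{\eqref{eq-24}}$ and $\functor{F}_D^{\eqref{eq-60}}\circ e_*$ coincide strictly, and then remark that the general bicategorical case follows by the same argument together with the coherence axioms.
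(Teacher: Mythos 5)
Your proposal is correct, and it takes a more structural route than the paper. The paper proves the lemma by verifying conditions \textbf{\hyperref[A1]{A1}} and \textbf{\hyperref[A2]{A2}} for \eqref{eq-24} directly: for essential surjectivity it lifts a datum $(\overline{s}^1,\overline{s}^2,\overline{\Lambda})$ through \eqref{eq-60} to obtain some $s:\overline{D}\rightarrow C$ and then sets $\overline{s}:=d\circ s$ using a quasi-inverse $d$ of $e$; for full faithfulness it transports $\overline{t},\overline{t}'$ to $e\circ\overline{t},e\circ\overline{t}'$, applies \textbf{\hyperref[A2]{A2}} for \eqref{eq-60}, and recovers the unique $2$-morphism by conjugating with $\Delta$ and the triangle identities. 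Your factorization $\functor{F}_D^{\eqref{eq-24}}\cong\functor{F}_D^{\eqref{eq-60}}\circ e_*$ packages exactly these two computations into the single statement that post-composition with an internal equivalence is an equivalence of hom-groupoids (the paper's construction $\overline{s}=d\circ s$ is precisely the essential surjectivity of $e_*$, and its uniqueness argument is the full faithfulness), after which the conclusion follows from the facts that equivalences of categories compose and are stable under natural isomorphism. What your approach buys is a shorter and more conceptual argument whose only nontrivial verification is that the associators $\Thetab{r^m}{e}{s}$ assemble into a natural isomorphism valued in $\Iso_{\CATD}(D,g^1,g^2)$ — a pentagon-plus-naturality coherence check that, as you note, trivializes when $\CATD$ is a $2$-category, which is also the only case the paper treats in full detail. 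The cost is that you must separately establish that $e_*$ is an equivalence of the $\Iso$-groupoids (not just of the full hom-categories), but since whiskering an invertible $2$-morphism by $i_e$, $i_d$, $\delta$, $\xi$ and the structural constraints stays invertible, this restriction is harmless. No gap.
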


See Appendix~\ref{sec-05} for a proof.\\

In Remark~\ref{rem-02} we described a set of conditions equivalent to conditions
\textbf{\hyperref[A1]{A1}} and \textbf{\hyperref[A2]{A2}}. 
In the following $2$ propositions we will show that given
a diagram as \eqref{eq-60}, it is sufficient to verify property \textbf{\hyperref[B1]{B1}} for it on
a (in general smaller) set of triples $(D,s^1,s^2)$; analogously it is sufficient to verify property
\textbf{\hyperref[B2]{B2}} on a (in general smaller) set of triples $(D,t,t')$. This 
will be very useful in order to simplify the computations
when $\CATD$ is a bicategory of fractions $\CATC\left[\SETWinv\right]$.

\begin{prop}\label{prop-03}
Let us fix any diagram as \eqref{eq-60} in a bicategory $\CATD$, with $\Omega$ invertible,
and any triple $(D,s^1:D\rightarrow B^1,s^2:D\rightarrow B^2)$. Moreover, let us fix
any other pair of morphisms $\overline{s}^m:D\rightarrow B^m$ and any
pair of invertible $2$-morphisms $\Omega^m:s^m\Rightarrow\overline{s}^m$ for
$m=1,2$. Then the following facts are equivalent:

\begin{enumerate}[\emphatic{(}a\emphatic{)}]
 \item condition \emphatic{\textbf{\hyperref[B1]{B1}}}$(D,s^1,s^2)$ for \eqref{eq-60} holds;
 \item condition \emphatic{\textbf{\hyperref[B1]{B1}}}$(D,\overline{s}^1,\overline{s}^2)$ for
  \eqref{eq-60} holds.
\end{enumerate}

Moreover, given any object $\overline{D}$ and any internal equivalence $e:\overline{D}
\rightarrow D$, property \emphatic{(}a\emphatic{)} is equivalent to:

\begin{enumerate}[\emphatic{(}a\emphatic{)}]
\setcounter{enumi}{2}
 \item condition \emphatic{\textbf{\hyperref[B1]{B1}}}$(\overline{D},s^1\circ e,s^2\circ e)$ for
  \eqref{eq-60} holds.
\end{enumerate}
\end{prop}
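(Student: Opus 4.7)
The plan is to prove (a) $\Leftrightarrow$ (b) by transporting the data of \textbf{B1} along the invertible $2$-morphisms $\Omega^1, \Omega^2$, and to prove (a) $\Leftrightarrow$ (c) by upgrading $e$ to an adjoint equivalence and transporting along it. As in the proof of Proposition~\ref{prop-01}, I write the argument assuming $\CATD$ is a $2$-category; the bicategorical version requires only inserting the associators $\Theta_{\bullet}$ and unitors $\Pi_{\bullet}, \Upsilon_{\bullet}$ of $\CATD$ and invoking coherence.

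For (a) $\Leftrightarrow$ (b): given an invertible $\Lambda : g^1 \circ \overline{s}^1 \Rightarrow g^2 \circ \overline{s}^2$, define the invertible $2$-morphism
\[
\Lambda' := \Big(i_{g^2} \ast (\Omega^2)^{-1}\Big) \odot \Lambda \odot \Big(i_{g^1} \ast \Omega^1\Big) : g^1 \circ s^1 \Longrightarrow g^2 \circ s^2
\]
and apply \textbf{B1}$(D, s^1, s^2)$ to obtain $s : D \to C$ together with invertible $2$-morphisms $\Lambda'^m : s^m \Rightarrow r^m \circ s$ satisfying \eqref{eq-42} for $\Lambda'$. Then $s$, together with $\Lambda^m := \Lambda'^m \odot (\Omega^m)^{-1} : \overline{s}^m \Rightarrow r^m \circ s$, witnesses \textbf{B1}$(D, \overline{s}^1, \overline{s}^2)$ for $\Lambda$, as a short interchange-law computation shows. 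The converse is symmetric, with $\Omega^m$ replaced by $(\Omega^m)^{-1}$.

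For (a) $\Leftrightarrow$ (c): by \cite[Proposition~1.5.7]{L} choose a quasi-inverse $d : D \to \overline{D}$ together with invertible $2$-morphisms $\delta : \id_{\overline{D}} \Rightarrow d \circ e$ and $\xi : e \circ d \Rightarrow \id_D$ making $(e, d, \delta, \xi)$ an adjoint equivalence. For (a) $\Rightarrow$ (c), given an invertible $\Lambda : g^1 \circ (s^1 \circ e) \Rightarrow g^2 \circ (s^2 \circ e)$, transport it to
\[
\Lambda' := \Big(i_{g^2 \circ s^2} \ast \xi\Big) \odot \Big(\Lambda \ast i_d\Big) \odot \Big(i_{g^1 \circ s^1} \ast \xi^{-1}\Big) : g^1 \circ s^1 \Longrightarrow g^2 \circ s^2,
\]
apply (a) to obtain $s : D \to C$ and $\Lambda^m : s^m \Rightarrow r^m \circ s$, and set $\overline{s} := s \circ e$ and $\overline{\Lambda}^m := \Lambda^m \ast i_e$. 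Whiskering equation \eqref{eq-42} by $i_e$ on the right and using the identity $\Lambda' \ast i_e = \Lambda$ --- which follows from the interchange law together with the triangle identity $(\xi \ast i_e) \odot (i_e \ast \delta) = i_e$ (so that $\xi^{-1} \ast i_e = i_e \ast \delta$) --- yields the required equation for $(\overline{s}, \overline{\Lambda}^1, \overline{\Lambda}^2)$ and $\Lambda$. The direction (c) $\Rightarrow$ (a) is dual: from $\overline{s}, \overline{\Lambda}^m$ produced by applying (c) to $\Lambda \ast i_e$, set $s := \overline{s} \circ d$ and $\Lambda^m := (\overline{\Lambda}^m \ast i_d) \odot (i_{s^m} \ast \xi^{-1})$, and use the same triangle identity together with the interchange law to verify \eqref{eq-42}.

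The only obstacle is bookkeeping. No new technique beyond the interchange law and the adjoint-equivalence triangle identities \eqref{eq-07} is needed, but in the bicategorical setting the associators and unitors of $\CATD$ must be tracked carefully throughout, exactly as in the proof of Proposition~\ref{prop-01}.
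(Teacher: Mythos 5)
Your proposal is correct and follows essentially the same route as the paper: conjugation by the invertible $2$-morphisms $\Omega^m$ for (a)$\Leftrightarrow$(b), and transport along an adjoint equivalence $(e,d,\delta,\xi)$ for (a)$\Leftrightarrow$(c). The only (cosmetic) difference is that you verify (c)$\Rightarrow$(a) directly by whiskering with $i_d$, whereas the paper reduces it to the already-proven implications by first deducing \textbf{B1}$(D,s^1\circ e\circ d,s^2\circ e\circ d)$ and then invoking (a)$\Leftrightarrow$(b) with $\Omega^m:=i_{s^m}\ast\Xi$; both verifications go through by the interchange law.
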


\begin{proof}
As usual, for simplicity of exposition let us suppose that $\CATD$ is a $2$-category.
Let us firstly prove that (a) implies (b), so let us fix any invertible $2$-morphism
$\overline{\Lambda}:g^1\circ\overline{s}^1\Rightarrow g^2\circ\overline{s}^2$. Then we define
an invertible $2$-morphism

\begin{equation}\label{eq-147}
\Lambda:=\Big(i_{g^2}\ast\left(\Omega^2\right)^{-1}\Big)\odot\overline{\Lambda}\odot\Big(i_{g^1}
\ast\Omega^1\Big):\,g^1\circ s^1\Longrightarrow g^2\circ s^2.
\end{equation}

Since we are assuming (a), then there are a morphism $\overline{s}:D\rightarrow C$ and a pair
of invertible $2$-morphisms $\Lambda^m:s^m\Rightarrow r^m\circ\overline{s}$ for $m=1,2$, such that

\begin{equation}\label{eq-148}
\Big(\Omega\ast i_{\overline{s}}\Big)\odot\Big(i_{g^1}\ast\Lambda^1\Big)=\Big(i_{g^2}\ast
\Lambda^2\Big)\odot\Lambda.
\end{equation}

Then for each $m=1,2$ we define $\overline{\Lambda}^m:=\Lambda^m\odot(\Omega^m)^{-1}:\overline{s}^m
\Rightarrow r^m\circ\overline{s}$, so:

\begin{gather*}
\Big(\Omega\ast i_{\overline{s}}\Big)\odot\Big(i_{g^1}\ast\overline{\Lambda}^1\Big)=
 \Big(\Omega\ast i_{\overline{s}}\Big)\odot\Big(i_{g^1}\ast\Lambda^1\Big)\odot
 \Big(i_{g^1}\ast\left(\Omega^1\right)^{-1}\Big)\stackrel{\eqref{eq-148}}{=} \\
\stackrel{\eqref{eq-148}}{=} \Big(i_{g^2}\ast
\Lambda^2\Big)\odot\Lambda\odot
 \Big(i_{g^1}\ast\left(\Omega^1\right)^{-1}\Big)\stackrel{\eqref{eq-147}}{=}
\Big(i_{g^2}\ast\overline{\Lambda}^2\Big)\odot\overline{\Lambda}.
\end{gather*}

Therefore \textbf{\hyperref[B1]{B1}}$(D,\overline{s}^1,\overline{s}^2)$ holds, i.e.\ (b) is
satisfied.\\

Since $\Omega^1$ and $\Omega^2$ are invertible by hypothesis, then an analogous proof shows that
(b) implies (a).\\

Now let us fix any object $\overline{D}$ and any internal equivalence $e:\overline{D}\rightarrow D$
and let us prove that (a) implies (c).
Since $e$ is an internal equivalence, we choose an internal equivalence
$d:D\rightarrow\overline{D}$ and invertible
$2$-morphisms $\Delta:\id_{\overline{D}}\Rightarrow d\circ e$ and $\Xi:e\circ d\Rightarrow\id_D$,
such that

\begin{equation}\label{eq-107}
\Big(\Xi\ast i_e\Big)\odot\Big(i_e\ast\Delta\Big)=i_e\quad\textrm{and}\quad\Big(i_d\ast\Xi\Big)\odot
\Big(\Delta\ast i_d\Big)=i_d.
\end{equation}

In order to prove that (c) holds, let us fix any
invertible $2$-morphism $\overline{\Lambda}:g^1\circ
s^1\circ e\Rightarrow g^2\circ s^2\circ e$. Then we define an invertible $2$-morphism

\begin{equation}\label{eq-62}
\Lambda:=\Big(i_{g^2\circ s^2}\ast\Xi\Big)\odot\Big(\overline{\Lambda}\ast i_d\Big)\odot\Big(i_{g^1
\circ s^1}\ast\Xi^{-1}\Big):\,g^1\circ s^1\Longrightarrow g^2\circ s^2.
\end{equation}

Since condition \textbf{\hyperref[B1]{B1}}$(D,s^1,s^2)$ holds for \eqref{eq-60}, then there
are a morphism $s:D\rightarrow C$ and a pair of invertible $2$-morphisms $\Lambda^m:s^m
\Rightarrow r^m\circ
s$ for $m=1,2$, such that \eqref{eq-42} holds. Then we set $\overline{s}:=s\circ e:\overline{D}
\rightarrow C$ and

\begin{equation}\label{eq-63}
\overline{\Lambda}^m:=\Lambda^m\ast i_e:\,\,s^m\circ e\Longrightarrow r^m\circ s\circ e=r^m
\circ\overline{s}\quad\textrm{for }m=1,2.
\end{equation}

Then we have:

\begin{gather*}
\Big(\Omega\ast i_{\overline{s}}\Big)\odot\Big(i_{g^1}\ast\overline{\Lambda}^1\Big)
 \stackrel{\eqref{eq-63}}{=}\Big[\Big
 (\Omega\ast i_s\Big)\odot\Big(i_{g^1}\ast\Lambda^1\Big)\Big]\ast i_e\stackrel{\eqref{eq-42}}{=} \\
\stackrel{\eqref{eq-42}}{=}\Big[\Big(i_{g^2}\ast\Lambda^2\Big)\odot\Lambda\Big]\ast i_e
 \stackrel{\eqref{eq-63}}{=}\Big(i_{g^2}
 \ast\overline{\Lambda}^2\Big)\odot\Big(\Lambda\ast i_e\Big)\stackrel{\eqref{eq-62},\eqref{eq-107}}{=}
 \Big(i_{g^2}\ast\overline{\Lambda}^2\Big)\odot\overline{\Lambda}. 
\end{gather*}

So we have proved that condition
\textbf{\hyperref[B1]{B1}}$(\overline{D},s^1\circ e,s^2\circ e)$ holds for \eqref{eq-60}, i.e.\
we have proved that (c) holds.\\

Conversely, let us suppose that (c) holds and let us prove that (a) holds. Let us choose any
internal equivalence $d$ and any pair of invertible $2$-morphisms $\Delta$ and $\Xi$ as above.
By proceeding as in the proof of (a)$\Rightarrow$(c) already given, we have that (c) implies that
\textbf{\hyperref[B1]{B1}}$(D,s^1\circ e\circ d,s^2\circ e\circ d)$ holds for \eqref{eq-60}.
Then using the equivalence
of (b) with (a) and the pair of invertible $2$-morphisms $\Omega^m:=i_{s^m}\ast\Xi$ for $m=1,2$,
we conclude that \textbf{\hyperref[B1]{B1}}$(D,s^1,s^2)$ holds for \eqref{eq-60}, i.e.\ (a)
is satisfied. This suffices to conclude.
\end{proof}

\begin{prop}\label{prop-04}
Let us fix any diagram as \eqref{eq-60} in a bicategory $\CATD$, with $\Omega$ invertible,
and any triple $(D,t:D\rightarrow C,t':D\rightarrow C)$. Moreover, let us fix another pair
of morphisms $\overline{t},\overline{t}':D\rightarrow C$ and any pair of invertible $2$-morphisms
$\Phi:t\Rightarrow\overline{t}$ and $\Phi':t'\Rightarrow\overline{t}'$. Then the following facts
are equivalent:

\begin{enumerate}[\emphatic{(}a\emphatic{)}]
 \item condition \emphatic{\textbf{\hyperref[B2]{B2}}}$(D,t,t')$ holds for \eqref{eq-60};
 \item condition \emphatic{\textbf{\hyperref[B2]{B2}}}$(D,\overline{t},\overline{t}')$ holds
  for \eqref{eq-60}.
\end{enumerate}

Moreover, given any object $\overline{D}$ and any internal equivalence $e:\overline{D}
\rightarrow D$, property \emphatic{(}a\emphatic{)} is equivalent to:

\begin{enumerate}[\emphatic{(}a\emphatic{)}]
\setcounter{enumi}{2} 
 \item condition \emphatic{\textbf{\hyperref[B2]{B2}}}$(\overline{D},t\circ e,t'\circ e)$ holds
  for \eqref{eq-60}.
\end{enumerate}
\end{prop}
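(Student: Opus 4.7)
The proof parallels that of Proposition~\ref{prop-03} with \textbf{\hyperref[B2]{B2}} playing the role of \textbf{\hyperref[B1]{B1}}; following the convention of that proof, I would assume $\CATD$ is a $2$-category, so that associators and unitors drop out (inserting them by the coherence theorem in the general case). The transports of hypotheses between the two sides of each equivalence mirror the constructions appearing in the proofs of Propositions~\ref{prop-03} and~\ref{prop-01}, with the extra requirement of tracking the uniqueness clause intrinsic to \textbf{\hyperref[B2]{B2}}.

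For (a)$\Leftrightarrow$(b), suppose (a) and let $\overline{\Gamma}^m:r^m\circ\overline{t}\Rightarrow r^m\circ\overline{t}'$ for $m=1,2$ be invertible $2$-morphisms satisfying the analogue of~\eqref{eq-71}. The substitution
\[
\Gamma^m:=\Big(i_{r^m}\ast\Phi'\Big)^{-1}\odot\overline{\Gamma}^m\odot\Big(i_{r^m}\ast\Phi\Big):\,\,r^m\circ t\Longrightarrow r^m\circ t'
\]
produces a pair satisfying~\eqref{eq-71} by a direct interchange computation. By (a) there is a unique $\Gamma:t\Rightarrow t'$ with $i_{r^m}\ast\Gamma=\Gamma^m$, and setting $\overline{\Gamma}:=\Phi'\odot\Gamma\odot\Phi^{-1}$ yields $i_{r^m}\ast\overline{\Gamma}=\overline{\Gamma}^m$ (again by interchange). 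Uniqueness of $\overline{\Gamma}$ is immediate because the assignment $\Gamma\mapsto\Phi'\odot\Gamma\odot\Phi^{-1}$ is a bijection on invertible $2$-morphisms $t\Rightarrow t'$. The converse (b)$\Rightarrow$(a) is symmetric, exchanging $\Phi,\Phi'$ with their inverses.

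For (a)$\Leftrightarrow$(c), I would first upgrade $e$ to an adjoint equivalence $(e,d,\Delta,\Xi)$ via~\cite[Proposition~1.5.7]{L}, giving identities analogous to~\eqref{eq-107}; in particular $\Xi\ast i_e=i_e\ast\Delta^{-1}$. For (a)$\Rightarrow$(c), given invertible $\overline{\Gamma}^m:r^m\circ t\circ e\Rightarrow r^m\circ t'\circ e$ satisfying the hypothesis of \textbf{\hyperref[B2]{B2}}$(\overline{D},t\circ e,t'\circ e)$, I mimic the construction from the proof of Proposition~\ref{prop-03}(a)$\Rightarrow$(c) and set
\[
\Gamma^m:=\Big(i_{r^m\circ t'}\ast\Xi\Big)\odot\Big(\overline{\Gamma}^m\ast i_d\Big)\odot\Big(i_{r^m\circ t}\ast\Xi^{-1}\Big);
\]
an interchange computation and~\eqref{eq-107} yield~\eqref{eq-71} for $(\Gamma^1,\Gamma^2)$, so (a) produces a unique $\Gamma:t\Rightarrow t'$ with $i_{r^m}\ast\Gamma=\Gamma^m$, and $\overline{\Gamma}:=\Gamma\ast i_e$ satisfies $i_{r^m}\ast\overline{\Gamma}=\overline{\Gamma}^m$. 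For uniqueness, any alternative candidate $\widetilde{\overline{\Gamma}}$ gives, by the same formula, a $\widetilde{\Gamma}:t\Rightarrow t'$ with $i_{r^m}\ast\widetilde{\Gamma}=\Gamma^m$; uniqueness in (a) forces $\widetilde{\Gamma}=\Gamma$, and a short interchange calculation using~\eqref{eq-107} shows $\widetilde{\Gamma}\ast i_e=\widetilde{\overline{\Gamma}}$, whence $\widetilde{\overline{\Gamma}}=\Gamma\ast i_e=\overline{\Gamma}$. Finally, for (c)$\Rightarrow$(a) I would follow the trick used in the proof of Proposition~\ref{prop-03}: apply the just-proved (a)$\Rightarrow$(c) with $d$ in place of $e$ to derive \textbf{\hyperref[B2]{B2}}$(D,t\circ e\circ d,t'\circ e\circ d)$ from (c), then apply (a)$\Leftrightarrow$(b) with $\Phi:=i_t\ast\Xi^{-1}$ and $\Phi':=i_{t'}\ast\Xi^{-1}$ to reach (a). The key obstacle throughout is the uniqueness clause of \textbf{\hyperref[B2]{B2}}: while existence is a direct transcription of the corresponding steps in Proposition~\ref{prop-03}, uniqueness demands that the transport used to move hypotheses is a bijection at the level of invertible $2$-morphisms --- transparent for conjugation by $\Phi,\Phi'$ in Part~1, but in Part~2 requiring the full adjoint coherence~\eqref{eq-107} rather than the mere existence of an internal quasi-inverse of $e$.
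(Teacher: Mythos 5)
Your proposal is correct and follows essentially the same route as the paper's proof: the conjugation by $\Phi,\Phi'$ for (a)$\Leftrightarrow$(b), the transport $\Gamma^m:=(i_{r^m\circ t'}\ast\Xi)\odot(\overline{\Gamma}^m\ast i_d)\odot(i_{r^m\circ t}\ast\Xi^{-1})$ together with $\overline{\Gamma}:=\Gamma\ast i_e$ for (a)$\Rightarrow$(c), and the reduction of (c)$\Rightarrow$(a) via $d$ and the already-established equivalence (a)$\Leftrightarrow$(b) all match the paper, including the use of the triangle identities \eqref{eq-107} in the uniqueness step. No gaps.
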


\begin{proof}
Let us firstly prove that (a) implies (b), so let us fix any pair of invertible $2$-morphisms
$\overline{\Gamma}^m:r^m\circ\overline{t}\Rightarrow r^m\circ\overline{t}'$ for $m=1,2$, such
that

\begin{equation}\label{eq-149}
\Big(\Omega\ast i_{\overline{t}'}\Big)\odot\Big(i_{g^1}\ast\overline{\Gamma}^1\Big)=
\Big(i_{g^2}\ast\overline{\Gamma}^2\Big)\odot\Big(\Omega\ast i_{\overline{t}}\Big).
\end{equation}

Then for each $m=1,2$ we define an invertible $2$-morphism

\begin{equation}\label{eq-150}
\Gamma^m:=\Big(i_{r^m}\ast\left(\Phi'\right)^{-1}\Big)\odot\overline{\Gamma}^m\odot\Big(i_{r^m}
\ast\Phi\Big):\,r^m\circ t\Longrightarrow r^m\circ t'.
\end{equation}

Then by interchange law we have:

\begin{gather*}
\Big(\Omega\ast i_{t'}\Big)\odot\Big(i_{g^1}\ast\Gamma^1\Big)\stackrel{\eqref{eq-150}}{=} \\
\stackrel{\eqref{eq-150}}{=} \Big(\Omega\ast i_{t'}\Big)
 \odot\Big(i_{g^1\circ r^1}\ast\left(\Phi'\right)^{-1}\Big)\odot\Big(i_{g^1}\ast\overline{\Gamma}^1
 \Big)\odot\Big(i_{g^1\circ r^1}\ast\Phi\Big)= \\
=\Big(i_{g^2\circ r^2}\ast\left(\Phi'\right)^{-1}\Big)\odot\Big(\Omega\ast i_{\overline{t}'}\Big)
 \odot\Big(i_{g^1}\ast\overline{\Gamma}^1\Big)\odot\Big(i_{g^1\circ r^1}\ast\Phi\Big)
 \stackrel{\eqref{eq-149}}{=} \\
\stackrel{\eqref{eq-149}}{=}\Big(i_{g^2\circ r^2}\ast\left(\Phi'\right)^{-1}\Big)\odot
 \Big(i_{g^2}\ast\overline{\Gamma}^2\Big)\odot\Big(\Omega\ast i_{\overline{t}}\Big)
 \odot\Big(i_{g^1\circ r^1}\ast\Phi\Big)= \\
=\Big(i_{g^2\circ r^2}\ast\left(\Phi'\right)^{-1}\Big)\odot\Big(i_{g^2}\ast
 \overline{\Gamma}^2\Big)\odot\Big(i_{g^2\circ r^2}\ast\Phi\Big)\odot\Big(\Omega\ast i_t\Big)
 \stackrel{\eqref{eq-150}}{=} \\
\stackrel{\eqref{eq-150}}{=}\Big(i_{g^2}\ast\Gamma^2\Big)\odot\Big(\Omega\ast i_t\Big).
\end{gather*}

Since we are assuming (a), then there is a unique invertible $2$-morphism $\Gamma:t\Rightarrow t'$
such that $i_{r^m}\ast\Gamma=\Gamma^m$ for each $m=1,2$. Then we define an invertible $2$-morphism
$\overline{\Gamma}:=\Phi'\odot\Gamma\odot\Phi^{-1}:\overline{t}\Rightarrow\overline{t}'$. Therefore,
for each $m=1,2$ we have:

\begin{gather*}
\overline{\Gamma}^m\stackrel{\eqref{eq-150}}{=}\Big(i_{r^m}\ast\Phi'\Big)\odot\Gamma^m\odot
 \Big(i_{r^m}\ast\Phi^{-1}\Big)= \\
=\Big(i_{r^m}\ast\Phi'\Big)\odot\Big(i_{r^m}\ast\Gamma\Big)\odot
 \Big(i_{r^m}\ast\Phi^{-1}\Big)= i_{r^m}\ast\overline{\Gamma}.
\end{gather*}

Now let us suppose that there is another invertible $2$-morphism $\overline{\Gamma}':\overline{t}
\Rightarrow\overline{t}'$, such that $\overline{\Gamma}^m=i_{r^m}\ast\overline{\Gamma}'$ for each
$m=1,2$. Then we define an invertible $2$-morphism $\Gamma':=(\Phi')^{-1}\odot\overline{\Gamma}'
\odot\Phi: t\Rightarrow t'$. Therefore, for each $m=1,2$ we have:

\begin{gather*}
\Gamma^m\stackrel{\eqref{eq-150}}{=}\Big(i_{r^m}\ast\left(\Phi'\right)^{-1}\Big)\odot
 \overline{\Gamma}^m\odot\Big(i_{r^m}\ast\Phi\Big)= \\
=\Big(i_{r^m}\ast\left(\Phi'\right)^{-1}\Big)\odot\Big(i_{r^m}\ast\overline{\Gamma}'\Big)
 \odot\Big(i_{r^m}\ast\Phi\Big)=i_{r^m}\ast\Gamma'.
\end{gather*}

Since $\Gamma$ is the unique invertible $2$-morphism such that $i_{r^m}\ast\Gamma=\Gamma^m$ for
each $m=1,2$, then we get that $\Gamma'=\Gamma$. Therefore, we conclude that $\overline{\Gamma}'=
\overline{\Gamma}$, so we have proved that \textbf{\hyperref[B2]{B2}}$(D,\overline{t},
\overline{t}')$ holds for \eqref{eq-60}, i.e.\ that (b) holds. Since $\Phi$ and $\Phi'$ are
invertible, an analogous proof shows that (b) implies (a).\\

Now let us fix any object $\overline{D}$ and any internal equivalence $e:\overline{D}\rightarrow
D$ and let us prove that (a) implies (c). So let us fix any pair of invertible $2$-morphisms
$\overline{\Gamma}^m:r^m\circ t\circ e\Rightarrow r^m\circ t'\circ e$ for $m=1,2$, such that

\begin{equation}\label{eq-41}
\Big(\Omega\ast i_{t'\circ e}\Big)\odot\Big(i_{g^1}\ast\overline{\Gamma}^1\Big)=\Big(i_{g^2}\ast
\overline{\Gamma}^2\Big)\odot\Big(\Omega\ast i_{t\circ e}\Big).
\end{equation}

Let us choose any internal equivalence $d$ and any pair of invertible $2$-morphisms $\Delta$ and
$\Xi$ as in the proof of Proposition~\ref{prop-03} (in particular, let us assume that \eqref{eq-107}
holds). For each $m=1,2$, let us define an invertible $2$-morphism

\begin{equation}\label{eq-151}
\Gamma^m:=\Big(i_{r^m\circ t'}\ast\Xi\Big)\odot\Big(\overline{\Gamma}^m\ast i_d\Big)\odot\Big(
i_{r^m\circ t}\ast\Xi^{-1}\Big):\,\,r^m\circ t\Longrightarrow r^m\circ t'.
\end{equation}

Then by interchange law we get:

\begin{gather}
\nonumber \Big(\Omega\ast i_{t'}\Big)\odot\Big(i_{g^1}\ast\Gamma^1\Big)\stackrel{\eqref{eq-151}}{=} \\
\nonumber \stackrel{\eqref{eq-151}}{=}\Big(\Omega\ast i_{t'}\Big)\odot\Big(i_{g^1\circ r^1\circ t'}
 \ast\Xi\Big)\odot\Big(i_{g^1}\ast
 \overline{\Gamma}^1\ast i_d\Big)\odot\Big(i_{g^1\circ r^1\circ t}\ast\Xi^{-1}\Big)= \\
\nonumber =\Big(i_{g^2\circ r^2\circ t'}\ast\Xi\Big)\odot\Big(\Omega\ast i_{t'\circ e\circ d}\Big)\odot\Big(
 i_{g^1}\ast\overline{\Gamma}^1\ast i_d \Big)\odot\Big(i_{g^1\circ r^1\circ t}\ast\Xi^{-1}\Big)
 \stackrel{\eqref{eq-41}}{=} \\
\nonumber \stackrel{\eqref{eq-41}}{=}\Big(i_{g^2\circ r^2\circ t'}\ast\Xi\Big)\odot\Big(i_{g^2}\ast
 \overline{\Gamma}^2\ast i_d\Big)
 \odot\Big(\Omega\ast i_{t\circ e\circ d}\Big)\odot\Big(i_{g^1\circ r^1\circ t}\ast\Xi^{-1}\Big)= \\
\nonumber =\Big(i_{g^2\circ r^2\circ t'}\ast\Xi\Big)\odot\Big(i_{g^2}\ast\overline{\Gamma}^2\ast
 i_d\Big)\odot\Big(i_{g^2\circ r^2\circ t}\ast\Xi^{-1}\Big)\odot\Big(\Omega\ast i_t\Big)
 \stackrel{\eqref{eq-151}}{=} \\
\label{eq-120} \stackrel{\eqref{eq-151}}{=}\Big(i_{g^2}\ast\Gamma^2\Big)\odot\Big(\Omega\ast i_t
 \Big).  
\end{gather}

By (a), condition \textbf{\hyperref[B2]{B2}}$(D,t,t')$ holds for diagram \eqref{eq-60}, so by
\eqref{eq-120} there
is a unique invertible $2$-morphism $\Gamma:t\Rightarrow t'$, such that $i_{r^m}\ast\Gamma=\Gamma^m$
for each $m=1,2$. We set $\overline{\Gamma}:=\Gamma\ast i_e:t\circ e\Rightarrow t'\circ e$. Then 
using the interchange law, for each $m=1,2$ we have:

\begin{gather*}
i_{r^m}\ast\overline{\Gamma}=i_{r^m}\ast\Gamma\ast i_e=\Gamma^m\ast i_e
 \stackrel{\eqref{eq-151}}{=}\\
\stackrel{\eqref{eq-151}}{=}\Big(i_{r^m\circ t'}\ast\Xi\ast i_e\Big)\odot\Big(\overline{\Gamma}^m
 \ast i_{d\circ e}\Big)\odot\Big(i_{r^m\circ t}\ast\Xi^{-1}\ast i_e\Big)
 \stackrel{\eqref{eq-107}}{=} \\
\stackrel{\eqref{eq-107}}{=}\Big(i_{r^m\circ t'\circ e}\ast\Delta^{-1}\Big)\odot
 \Big(\overline{\Gamma}^m\ast i_{d\circ e}\Big)
 \odot\Big(i_{r^m\circ t\circ e}\ast\Delta\Big)=\overline{\Gamma}^m.
\end{gather*}

Then we need only to prove the uniqueness of $\overline{\Gamma}$.
Let us suppose that $\overline{\Gamma}'$ is another invertible $2$-morphism from $t\circ e$ to
$t'\circ e$, such that $i_{r^m}\ast\overline{\Gamma}'=\overline{\Gamma}^m$ for each $m=1,2$. We set

\begin{equation}\label{eq-152}
\Gamma':=\Big(i_{t'}\ast\Xi\Big)\odot\Big(\overline{\Gamma}'\ast i_d\Big)\odot\Big(i_t\ast
\Xi^{-1}\Big):\,\,t\Longrightarrow t'.
\end{equation}

Then for each $m=1,2$ we have:

\begin{gather*}
i_{r^m}\ast\Gamma'\stackrel{\eqref{eq-152}}{=}\Big(i_{r^m\circ t'}\ast\Xi\Big)\odot\Big(i_{r^m}
 \ast\overline{\Gamma}'\ast i_d\Big)\odot\Big(i_{r^m\circ t}\ast\Xi^{-1}\Big)= \\
=\Big(i_{r^m\circ t'}\ast\Xi\Big)\odot\Big(\overline{\Gamma}^m\ast i_d\Big)\odot\Big(i_{r^m\circ
 t}\ast\Xi^{-1}\Big)\stackrel{\eqref{eq-151}}{=}\Gamma^m.  
\end{gather*}

By uniqueness of $\Gamma$, we get that $\Gamma=\Gamma'$. Therefore,

\[\overline{\Gamma}=\Gamma\ast i_e=\Gamma'\ast i_e\stackrel{\eqref{eq-152}}{=}\Big(i_{t'}\ast
\Xi\ast i_e\Big)\odot\Big(
\overline{\Gamma}'\ast i_{d\circ e}\Big)\odot\Big(i_t\ast\Xi^{-1}\ast i_e\Big)
\stackrel{\eqref{eq-107}}{=}\overline{\Gamma}'.\]

So \textbf{\hyperref[B2]{B2}}$(\overline{D},t\circ e,t'\circ e)$ holds for diagram \eqref{eq-60},
i.e.\ (c) holds. Now let us assume that (c) holds and let us prove that (a) holds. Since $d$ is an
internal equivalence, using the proof that (a) implies (c) we get that 
\textbf{\hyperref[B2]{B2}}$(\overline{D},t\circ e\circ d,t'\circ e\circ d)$ holds for \eqref{eq-60}.
Then using the equivalence of (a) and (b) and the pair of invertible $2$-morphisms
$\Phi:=i_t\ast\Xi$ and $\Phi':=i_{t'}\ast\Xi$, we get that
\textbf{\hyperref[B2]{B2}}$(\overline{D},t,t')$ holds for \eqref{eq-60}, i.e.\ (a) is satisfied.
\end{proof}

\section{Weak fiber products in equivalent bicategories}
Given any pair $(\CATC,\SETW)$ satisfying conditions (\hyperref[BF]{BF}) for a right 
bicalculus of fractions (see Appendix~\ref{sec-03}), in general the associated right bicategory of
fractions is \emph{unique
only up to equivalences of bicategories}, since the ``standard'' construction 
(as described in~\cite[\S~2.2 and 2.3]{Pr}) depends on a set of choices \hyperref[C]{C}$(\SETW)$
involving axioms (\hyperref[BF]{BF}). In the next pages we will need to
do all the computations of weak fiber products in a \emph{chosen} right bicategory of fractions for
the pair $(\CATC,\SETW)$ and then use this result in order to get a similar result
for any other right bicategory of fractions for $(\CATC,\SETW)$.\\

Therefore, the aim of this section is to prove that \emph{weak fiber products are preserved by
equivalences of bicategories}.
We recall from~\cite[(1.33)]{St} that given any pair of bicategories
$\CATA$ and $\CATB$, a pseudofunctor $\functor{F}:\CATA\rightarrow\CATB$ is a \emph{weak equivalence
of bicategories} (also known as \emph{weak biequivalence}) if and only if the following conditions
hold:

\begin{enumerate}[({X}1)]
 \item\label{X1} for each object $A_{\CATA}$, there are an object $A_{\CATB}$ and an internal
  equivalence from $\functor{F}_0(A_{\CATA})$ to $A_{\CATB}$ in $\CATB$;
 \item\label{X2} for each pair of objects $A_{\CATA},B_{\CATA}$, the functor $\functor{F}(A_{\CATA},
  B_{\CATA})$ is an equivalence of categories from $\CATA(A_{\CATA},B_{\CATA})$ to
  $\CATB(\functor{F}_0(A_{\CATA}),\functor{F}_0(B_{\CATA}))$.
\end{enumerate}

\emph{Since we are assuming the axiom of choice, then each weak equivalence of bicategories is a}
(\emph{strong}) \emph{equivalence of bicategories} (see~\cite[\S~1]{PW}), i.e.\ it admits a 
quasi-inverse. Conversely, each strong equivalence of bicategories is also a weak equivalence.
So from now on we will simply write
``equivalence of bicategories'' for any weak, equivalently strong, equivalence of
bicategories.

\begin{lem}\label{lem-14}
Let us suppose that $\functor{F}:\CATA\rightarrow\CATB$ is an equivalence of bicategories;
moreover, let us fix any weak fiber product in $\CATA$ as follows:

\begin{equation}\label{eq-93}
\begin{tikzpicture}[xscale=1.5,yscale=-0.8]
    \node (A0_0) at (0, 0) {$C_{\CATA}$};
    \node (A0_2) at (2, 0) {$B^1_{\CATA}$};
    \node (A2_0) at (0, 2) {$B^2_{\CATA}$};
    \node (A2_2) at (2, 2) {$A_{\CATA}$.};

    \node (A0_1) [rotate=225] at (0.8, 1) {$\Longrightarrow$};
    \node (A1_1) at (1.2, 1) {$\Omega_{\CATA}$};

    \path (A2_0) edge [->]node [auto,swap] {$\scriptstyle{g^2_{\CATA}}$} (A2_2);
    \path (A0_0) edge [->]node [auto,swap] {$\scriptstyle{r^2_{\CATA}}$} (A2_0);
    \path (A0_2) edge [->]node [auto] {$\scriptstyle{g^1_{\CATA}}$} (A2_2);
    \path (A0_0) edge [->]node [auto] {$\scriptstyle{r^1_{\CATA}}$} (A0_2);
\end{tikzpicture}
\end{equation}

Then the induced diagram

\begin{equation}\label{eq-94}
\begin{tikzpicture}[xscale=4,yscale=-0.8]
    \node (A0_0) at (0, 0) {$\functor{F}_0(C_{\CATA})$};
    \node (A0_2) at (2, 0) {$\functor{F}_0(B^1_{\CATA})$};
    \node (A2_0) at (0, 2) {$\functor{F}_0(B^2_{\CATA})$};
    \node (A2_2) at (2, 2) {$\functor{F}_0(A_{\CATA})$};

    \node (A0_1) [rotate=225] at (0.2, 1) {$\Longrightarrow$};
    \node (A1_1) at (1.07, 1) {$\Omega_{\CATB}:=\Psi^{\functor{F}}_{g^2_{\CATA},r^2_{\CATA}}\odot
      \functor{F}_2(\Omega_{\CATA})\odot\Big(\Psi^{\functor{F}}_{g^1_{\CATA},r^1_{\CATA}}\Big)^{-1}$};
    
    \path (A2_0) edge [->]node [auto,swap] {$\scriptstyle{\functor{F}_1(g^2_{\CATA})}$} (A2_2);
    \path (A0_0) edge [->]node [auto,swap] {$\scriptstyle{\functor{F}_1(r^2_{\CATA})}$} (A2_0);
    \path (A0_2) edge [->]node [auto] {$\scriptstyle{\functor{F}_1(g^1_{\CATA})}$} (A2_2);
    \path (A0_0) edge [->]node [auto] {$\scriptstyle{\functor{F}_1(r^1_{\CATA})}$} (A0_2);
\end{tikzpicture}
\end{equation}
is a weak fiber product in $\CATB$ \emphatic{(}here the $2$-morphisms $\Psi^{\functor{F}}_{\bullet}$
are the associators for $\functor{F}$\emphatic{)}.
\end{lem}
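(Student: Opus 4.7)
The plan is to verify that for every object $D_{\CATB}$ of $\CATB$ the conditions \textbf{\hyperref[A1]{A1}}$(D_{\CATB})$ and \textbf{\hyperref[A2]{A2}}$(D_{\CATB})$ hold for diagram \eqref{eq-94}, by reducing each of them to the corresponding condition for \eqref{eq-93} in $\CATA$. The reduction relies on Propositions~\ref{prop-03} and~\ref{prop-04}, which permit to replace data $(D,s^1,s^2)$ or $(D,t,t')$ by equivalent data obtained either by precomposing with an internal equivalence or by modifying the source morphisms up to invertible $2$-morphisms, as well as on conditions \emphatic{(}\hyperref[X1]{X1}\emphatic{)} and \emphatic{(}\hyperref[X2]{X2}\emphatic{)} of $\functor{F}$, which furnish exactly such equivalences and invertible $2$-morphisms.

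First, by \emphatic{(}\hyperref[X1]{X1}\emphatic{)} I would pick an object $D_{\CATA}$ of $\CATA$ together with an internal equivalence between $\functor{F}_0(D_{\CATA})$ and $D_{\CATB}$, and then invoke the second parts of Propositions~\ref{prop-03} and~\ref{prop-04} to reduce \textbf{\hyperref[B1]{B1}} and \textbf{\hyperref[B2]{B2}} for \eqref{eq-94} to the case where the source is $\functor{F}_0(D_{\CATA})$. Next, by \emphatic{(}\hyperref[X2]{X2}\emphatic{)}, the hom-functor $\functor{F}(D_{\CATA},B^m_{\CATA})$ is an equivalence of categories; hence every morphism $s^m:\functor{F}_0(D_{\CATA})\rightarrow\functor{F}_0(B^m_{\CATA})$ in $\CATB$ is related by an invertible $2$-morphism to one of the form $\functor{F}_1(\widetilde{s}^m)$ for some $\widetilde{s}^m$ in $\CATA$, and analogously for morphisms with target $\functor{F}_0(C_{\CATA})$. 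The first parts of Propositions~\ref{prop-03} and~\ref{prop-04} then allow to assume that all morphisms considered in \textbf{\hyperref[B1]{B1}} and \textbf{\hyperref[B2]{B2}} lie in the image of $\functor{F}_1$.

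For \textbf{\hyperref[B1]{B1}}, given any invertible $2$-morphism $\Lambda:\functor{F}_1(g^1_{\CATA})\circ\functor{F}_1(\widetilde{s}^1)\Rightarrow\functor{F}_1(g^2_{\CATA})\circ\functor{F}_1(\widetilde{s}^2)$ in $\CATB$, I would conjugate it by the pseudofunctor associators $\Psi^{\functor{F}}_{g^m_{\CATA},\widetilde{s}^m}$ to obtain an invertible $2$-morphism between $\functor{F}_1(g^1_{\CATA}\circ\widetilde{s}^1)$ and $\functor{F}_1(g^2_{\CATA}\circ\widetilde{s}^2)$; the full faithfulness granted by \emphatic{(}\hyperref[X2]{X2}\emphatic{)} then lifts this uniquely to an invertible $\widetilde{\Lambda}$ in $\CATA$. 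Applying condition \textbf{\hyperref[B1]{B1}} for \eqref{eq-93} to the triple $(\widetilde{s}^1,\widetilde{s}^2,\widetilde{\Lambda})$ produces a morphism $\widetilde{s}:D_{\CATA}\rightarrow C_{\CATA}$ and invertible $2$-morphisms $\widetilde{\Lambda}^m$, whose images under $\functor{F}$, adjusted by $\Psi^{\functor{F}}_{r^m_{\CATA},\widetilde{s}}$, provide the data needed for \textbf{\hyperref[B1]{B1}} in $\CATB$. Condition \textbf{\hyperref[B2]{B2}} is handled symmetrically: the invertible $2$-morphisms $\Gamma^m$ are pulled back to invertible $2$-morphisms $\widetilde{\Gamma}^m$ in $\CATA$, to which \textbf{\hyperref[B2]{B2}} for \eqref{eq-93} applies yielding a unique $\widetilde{\Gamma}$; setting $\Gamma:=\functor{F}_2(\widetilde{\Gamma})$, both existence and uniqueness in $\CATB$ follow from the analogous properties in $\CATA$ combined with the full faithfulness of $\functor{F}$ on $2$-morphisms. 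The main obstacle I anticipate is the bookkeeping of the associators $\Psi^{\functor{F}}_{\bullet}$ when rewriting the compatibility equations~\eqref{eq-42} and~\eqref{eq-71} \emphatic{(}whose $\CATB$-version involves the associators $\Theta_{\bullet}$ of $\CATB$ and the specific form of $\Omega_{\CATB}$ defined in the statement\emphatic{)} as the $\functor{F}$-image of the corresponding $\CATA$-equations, which requires a careful application of the coherence axioms of the pseudofunctor $\functor{F}$.
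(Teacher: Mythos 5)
Your proof is correct, and its core strategy coincides with the paper's: use (\hyperref[X1]{X1}) to replace $D_{\CATB}$ by $\functor{F}_0(D_{\CATA})$, use (\hyperref[X2]{X2}) to lift the test data $(s^1,s^2,\Lambda)$, respectively $(t,t',\Gamma^1,\Gamma^2)$, to $\CATA$, apply the universal property of \eqref{eq-93} there, and push the answer forward, with uniqueness in \textbf{\hyperref[A2]{A2}} transferred via the bijectivity of $\functor{F}_2$ on $2$-cells between morphisms in the image of $\functor{F}_1$. Where you differ is in the packaging of the reduction: the paper's proof does not invoke Propositions~\ref{prop-03} and~\ref{prop-04} at all, but instead fixes an adjoint-equivalence quadruple $(e_{\CATB},d_{\CATB},\Delta_{\CATB},\Xi_{\CATB})$ and carries it through every formula --- the candidate morphism for \textbf{\hyperref[A1]{A1}} is written explicitly as $\functor{F}_1(s_{\CATA})\circ d_{\CATB}$ and the $2$-morphisms $\Lambda^m_{\CATB}$, $\Gamma_{\CATB}$ are built by conjugating with $\chi^m_{\CATB}$, $\Phi_{\CATB}$, $\Phi'_{\CATB}$ and $\Xi_{\CATB}$, with the zig-zag identities used by hand. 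Your route delegates exactly those conjugation computations to the already-proved equivalences \textbf{\hyperref[B1]{B1}}$(D,s^1,s^2)\Leftrightarrow$\textbf{\hyperref[B1]{B1}}$(\overline D,s^1\circ e,s^2\circ e)$ and their \textbf{\hyperref[B2]{B2}} analogues, which shortens the proof and isolates the only genuinely new work, namely the $\Psi^{\functor{F}}_{\bullet}$ bookkeeping needed to recognize the $\CATB$-compatibility equations as $\functor{F}_2$-images of the $\CATA$-ones; the price is that you rely on the reduction propositions being stated for an arbitrary square, not only for weak fiber products, which they indeed are. Both arguments are complete; yours is the more economical, the paper's the more self-contained.
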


See Appendix~\ref{sec-05} for a proof.

\begin{prop}\label{prop-07}
Let us suppose that $\functor{F}:\CATA\rightarrow\CATB$ is an equivalence of bicategories; moreover,
let us fix any triple of objects $A_{\CATA},B^1_{\CATA},B^2_{\CATA}$ and any pair of morphisms
$g^m_{\CATA}:B^m_{\CATA}\rightarrow A_{\CATA}$ for $m=1,2$. Then the pair $(g^1_{\CATA},g^2_{\CATA})$
has a weak fiber product in $\CATA$ if and only if the pair $(\functor{F}_1(g^1_{\CATA}),
\functor{F}_1(g^2_{\CATA}))$ has a weak fiber product in $\CATB$.
\end{prop}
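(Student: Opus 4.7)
The direction $\Rightarrow$ is immediate: it is exactly the content of Lemma~\ref{lem-14} applied to the equivalence $\functor{F}$ and to any weak fiber product of $(g^1_{\CATA},g^2_{\CATA})$ in $\CATA$. So the work is in the converse $\Leftarrow$, and my plan is to reduce it to the forward direction by composing with a quasi-inverse of $\functor{F}$ and then transporting the result back along the pseudonatural equivalence $\functor{G}\circ\functor{F}\simeq\id_{\CATA}$, using Theorem~\ref{theo-01} and Lemma~\ref{lem-03}.

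First, since $\functor{F}$ is an equivalence and the axiom of choice is in force, I fix a quasi-inverse pseudofunctor $\functor{G}:\CATB\rightarrow\CATA$; this $\functor{G}$ is itself an equivalence of bicategories, and comes equipped with a pseudonatural equivalence $\eta:\functor{G}\circ\functor{F}\Rightarrow\id_{\CATA}$. In particular, for every object $X_{\CATA}$ of $\CATA$ the component $\eta_{X_{\CATA}}:\functor{G}_0\functor{F}_0(X_{\CATA})\rightarrow X_{\CATA}$ is an internal equivalence, and for every morphism $h:X_{\CATA}\rightarrow Y_{\CATA}$ pseudonaturality provides an invertible $2$-morphism $\eta_h:h\circ\eta_{X_{\CATA}}\Rightarrow\eta_{Y_{\CATA}}\circ\functor{G}_1\functor{F}_1(h)$. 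Now suppose the pair $(\functor{F}_1(g^1_{\CATA}),\functor{F}_1(g^2_{\CATA}))$ admits a weak fiber product in $\CATB$. Applying Lemma~\ref{lem-14} to the equivalence $\functor{G}$, I obtain a weak fiber product in $\CATA$ for the pair of morphisms
\[\widetilde{g}^m:=\functor{G}_1\functor{F}_1(g^m_{\CATA}):\,\functor{G}_0\functor{F}_0(B^m_{\CATA})\longrightarrow\functor{G}_0\functor{F}_0(A_{\CATA}),\qquad m=1,2.\]

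Next, I choose a quasi-inverse $d:A_{\CATA}\rightarrow\functor{G}_0\functor{F}_0(A_{\CATA})$ of $\eta_{A_{\CATA}}$ (again by the axiom of choice); in particular there is an invertible $2$-morphism $\xi:d\circ\eta_{A_{\CATA}}\Rightarrow\id_{\functor{G}_0\functor{F}_0(A_{\CATA})}$. Pseudonaturality then gives, for each $m=1,2$, an invertible $2$-morphism
\[\Omega^m:\,d\circ(g^m_{\CATA}\circ\eta_{B^m_{\CATA}})\Longrightarrow\widetilde{g}^m,\]
built from $\eta_{g^m_{\CATA}}$, $\xi$, and the associators/unitors of $\CATA$. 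By Lemma~\ref{lem-03}, applied to the pair of invertible $2$-morphisms $\Omega^1,\Omega^2$, the pair $(\widetilde{g}^1,\widetilde{g}^2)$ admits a weak fiber product in $\CATA$ if and only if the pair $\bigl(d\circ(g^1_{\CATA}\circ\eta_{B^1_{\CATA}}),\,d\circ(g^2_{\CATA}\circ\eta_{B^2_{\CATA}})\bigr)$ does. Since $\widetilde{g}^1,\widetilde{g}^2$ admits one by the previous paragraph, so does the latter pair.

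Finally, I invoke Theorem~\ref{theo-01} with the three internal equivalences $d:A_{\CATA}\rightarrow\functor{G}_0\functor{F}_0(A_{\CATA})$ and $\eta_{B^m_{\CATA}}:\functor{G}_0\functor{F}_0(B^m_{\CATA})\rightarrow B^m_{\CATA}$ for $m=1,2$: its equivalence (a)$\Leftrightarrow$(b) applied to the pair $(g^1_{\CATA},g^2_{\CATA})$ asserts exactly that $(g^1_{\CATA},g^2_{\CATA})$ has a weak fiber product in $\CATA$ if and only if $\bigl(d\circ(g^1_{\CATA}\circ\eta_{B^1_{\CATA}}),\,d\circ(g^2_{\CATA}\circ\eta_{B^2_{\CATA}})\bigr)$ has one. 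This finishes the converse. The only non-formal step in the whole argument is the construction of the invertible $2$-morphisms $\Omega^m$, which amounts to pasting $\eta_{g^m_{\CATA}}$ with $\xi$ and normalizing with the coherence data of $\CATA$; this is routine bookkeeping with associators and unitors, and is the sole place where the detailed bicategorical machinery is used.
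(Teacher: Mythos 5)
Your proof is correct, and its skeleton is the same as the paper's: both directions hinge on Lemma~\ref{lem-14}, first for $\functor{F}$ and then for a quasi-inverse $\functor{G}$, followed by a transport along the pseudonatural equivalence $\functor{G}\circ\functor{F}\Rightarrow\id_{\CATA}$. The one place where you diverge is the transport step, and there your version is the more careful one. The paper simply asserts that pseudonaturality yields invertible $2$-morphisms $\mu(g^m_{\CATA}):\functor{G}_1\circ\functor{F}_1(g^m_{\CATA})\Rightarrow g^m_{\CATA}$ and concludes by Lemma~\ref{lem-03}; taken literally this does not typecheck, since $\functor{G}_1\circ\functor{F}_1(g^m_{\CATA})$ and $g^m_{\CATA}$ have different sources and targets, whereas Lemma~\ref{lem-03} requires a $2$-morphism between parallel $1$-morphisms. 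You repair this implicitly understood step by whiskering with the component equivalences $\eta_{B^m_{\CATA}}$ and a quasi-inverse $d$ of $\eta_{A_{\CATA}}$, applying Lemma~\ref{lem-03} to the resulting parallel pair, and then stripping off the three internal equivalences with Theorem~\ref{theo-01}. This costs you the (routine) construction of the pasted $2$-cells $\Omega^m$, but it makes the argument airtight; the paper's shorter route is the same argument with that bookkeeping suppressed.
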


\begin{proof}
One of the $2$ implication is simply Lemma~\ref{lem-14}. So we need only to prove the opposite
implication. So let us suppose that the pair $(\functor{F}_1(g^1_{\CATA}),\functor{F}_1
(g^2_{\CATA}))$ has a weak fiber product in $\CATB$. Since $\functor{F}$ is an equivalence of
bicategories, then by~\cite[\S~2.2]{L}
there are an equivalence of bicategories $\functor{G}:\CATB\rightarrow\CATA$
and a pseudonatural equivalence of pseudofunctors $\mu:\functor{G}\circ\functor{F}\Rightarrow
\id_{\CATA}$. Since $\functor{G}$ is an equivalence of bicategories,
then by Lemma~\ref{lem-14} (with the roles of
$\CATA$ and $\CATB$ reversed, and $\functor{F}$ replaced by $\functor{G}$) we have that the pair
of morphisms:

\[(\functor{G}_1\circ\functor{F}_1(g^1_{\CATA}),\,\,\functor{G}_1\circ\functor{F}_1(g^2_{\CATA}))\]
has a weak fiber product in $\CATA$. Moreover, since $\mu$ is a pseudonatural equivalence of
pseudofunctors, for each $m=1,2$ we have an invertible $2$-morphism in $\CATA$:

\[\mu(g^m_{\CATA}):\,\,\functor{G}_1\circ\functor{F}_1(g^m_{\CATA})\Longrightarrow g^m_{\CATA}.\]

So by Lemma~\ref{lem-03} we conclude that the pair $(g^1_{\CATA},g^2_{\CATA})$ has a weak
fiber product in $\CATA$.
\end{proof}

\section{Weak fiber products in bicategories of fractions}
In this section and in the following ones, $\CATC$ will be a fixed bicategory and $\SETW$ will be a
fixed class of morphisms in it, such that the pair $(\CATC,\SETW)$ satisfies conditions
(\hyperref[BF]{BF}) for a right bicalculus of fractions (see Appendix~\ref{sec-03}).
We recall that the construction of a bicategory of fractions in~\cite{Pr} depends on a set of choices
\hyperref[C]{C}$(\SETW)$ involving axioms (\hyperref[BF]{BF}). For more details on the construction
of bicategories of fractions, we refer directly to Appendix~\ref{sec-03}. Then we are ready to give
the following:

\begin{proof}[Proof of Theorem~\ref{theo-02}.]
First of all, let us assume (ii) and let us prove that (i) holds. By (ii), there is a weak
fiber product in $\CATC\left[\SETWinv\right]$ for the pair of morphisms $(B^m,\id_{B^m},
f^m)$ for $m=1,2$. Now for each $m=1,2$ we consider the morphism $e^m:=(B^m,
\operatorname{w}^m,\id_{B^m}):\overline{B}^m\rightarrow B^m$. By~\cite[Proposition~20]{Pr}
each $e^m$ is an internal equivalence in $\CATC\left[\SETWinv\right]$ because $\operatorname{w}^m$
belongs to $\SETW$. So by
Corollary~\ref{cor-04} for $\CATD:=\CATC\left[\SETWinv\right]$, the morphisms $g^m\circ e^m$ for
$m=1,2$ have a weak fiber product. Now for each $m=1,2$ we consider an invertible $2$-morphism 
in $\CATC\left[\SETWinv\right]$ as follows:

\begin{gather*}
\Omega^m:=\Big[B^m,\id_{B^m},\id_{B^m},\pi_{\operatorname{w}^m\circ\id_{B^m}},
 \pi_{f^m\circ\id_{B^m}}\Big]: \\
g^m\circ e^m=\Big(B^m,\operatorname{w}^m\circ\id_{B^m},f^m
  \circ\id_{B^m}\Big)\Longrightarrow\Big(B^m,\operatorname{w}^m,f^m\Big)
\end{gather*}
(where the $2$-morphisms $\pi_{\bullet}$ are the right unitors of $\CATC$).
Using Lemma~\ref{lem-03}, we conclude that the pair of morphisms in \eqref{eq-100} has a
weak fiber product, i.e.\ (i) holds.\\

Now let us assume (i) and let us prove that (ii) holds. If we choose $\operatorname{w}^m:=\id_{B^m}$
for each $m=1,2$ and we use the definition of morphisms in a bicategory of fractions, then there 
are $3$ objects $T,T^1,T^2$, a pair of morphisms $\operatorname{v}^m:T^m\rightarrow T$ in $\SETW$
for $m=1,2$, a pair of morphisms $t^m:T^m\rightarrow B^m$ for $m=1,2$ in $\CATC$ and an invertible
$2$-morphism $\Omega^1$ in $\CATC\left[\SETWinv\right]$, such that the following diagram is a
weak fiber product in $\CATC\left[\SETWinv\right]$:

\begin{equation}\label{eq-35}
\begin{tikzpicture}[xscale=1.5,yscale=-0.8]
    \node (A0_0) at (0, 0) {$T$};
    \node (A0_2) at (2, 0) {$B^1$};
    \node (A2_0) at (0, 2) {$B^2$};
    \node (A2_2) at (2, 2) {$A$.};
    
    \node (A1_1) [rotate=225] at (0.8, 1) {$\Longrightarrow$};
    \node (B1_1) at (1.2, 1) {$\Omega^1$};
    
    \path (A2_0) edge [->]node [auto,swap] {$\scriptstyle{(B^2,\id_{B^2},f^2)}$} (A2_2);
    \path (A0_0) edge [->]node [auto,swap]
      {$\scriptstyle{(T^2,\operatorname{v}^2,t^2)}$} (A2_0);
    \path (A0_2) edge [->]node [auto] {$\scriptstyle{(B^1,\id_{B^1},f^1)}$} (A2_2);
    \path (A0_0) edge [->]node [auto]
      {$\scriptstyle{(T^1,\operatorname{v}^1,t^1)}$} (A0_2);
\end{tikzpicture}
\end{equation}

For simplicity of exposition, from now on we assume that $\CATC$ is a $2$-category instead of a
bicategory. Note that even under this restriction, in general $\CATC\left[\SETWinv\right]$ is
only a bicategory, with trivial unitors but possibly non-trivial associators. So we will have
anyway to explicitly write the associators $\Theta_{\bullet}$
for $\CATC\left[\SETWinv\right]$.\\

As we mentioned above, the bicategory $\CATC\left[\SETWinv\right]$ is not unique, but it depends on
a set of choices \hyperref[C]{C}$(\SETW)$ (for any pair of morphisms $(f,\operatorname{v})$
with the same target and
such that $\operatorname{v}$ belongs to $\SETW$). Different sets of choices give equivalent
bicategories. Therefore, the proof that (i) implies (ii) will consist of the following $2$ steps:

\begin{enumerate}[(a)]
 \item first of all, we prove that (ii) holds in any bicategory $\CATC\left[\SETWinv\right]$ obtained
  by fixing a set of choices \hyperref[C]{C}$(\SETW)$ satisfying condition (\hyperref[C3]{C3})
  (see Appendix~\ref{sec-01});
 \item then we use Step (a) in order to prove that (ii) holds for any other set of choices
  \hyperref[C]{C}$\,'(\SETW)$.
\end{enumerate}

So for the moment, we assume that the set of choices \hyperref[C]{C}$(\SETW)$ satisfies
condition (\hyperref[C3]{C3}). In other terms, we assume that the fixed choices in 
\hyperref[C]{C}$(\SETW)$ are trivial for each pair $(\operatorname{v},\operatorname{v})$ (with
$\operatorname{v}$ in $\SETW$).\\

Let us suppose that the fixed choices \hyperref[C]{C}$(\SETW)$ give data as in the
following diagram, with $\operatorname{x}^1$ in $\SETW$ and $\rho$ invertible:

\begin{equation}\label{eq-129}
\begin{tikzpicture}[xscale=1.5,yscale=-0.8]
    \node (A0_1) at (1, 0) {$R$};
    \node (A1_0) at (0, 2) {$T^1$};
    \node (A1_2) at (2, 2) {$T^2$.};
    \node (A2_1) at (1, 2) {$T$};
    
    \node (A1_1) at (1, 1) {$\rho$};
    \node (B1_1) at (1, 1.4) {$\Rightarrow$};
    
    \path (A1_2) edge [->]node [auto] {$\scriptstyle{\operatorname{v}^2}$} (A2_1);
    \path (A0_1) edge [->]node [auto] {$\scriptstyle{\operatorname{x}^2}$} (A1_2);
    \path (A1_0) edge [->]node [auto,swap] {$\scriptstyle{\operatorname{v}^1}$} (A2_1);
    \path (A0_1) edge [->]node [auto,swap] {$\scriptstyle{\operatorname{x}^1}$} (A1_0);
\end{tikzpicture}
\end{equation}

By condition (\hyperref[C3]{C3}) the composition of the following
morphisms in $\CATC\left[\SETWinv\right]$

\[
\begin{tikzpicture}[xscale=1.5,yscale=-1.2]
    \node (A0_0) at (-0.1, 0) {$T^1$};
    \node (A0_1) at (1, 0) {$T^1$};
    \node (A0_2) at (2, 0) {$T$};
    \path (A0_1) edge [->]node [auto,swap] {$\scriptstyle{\id_{T^1}}$} (A0_0);
    \path (A0_1) edge [->]node [auto] {$\scriptstyle{\operatorname{v}^1}$} (A0_2);
    \node (C0_0) at (3, 0) {$\textrm{and}$};
    
    \node (B0_0) at (4, 0) {$T$};
    \node (B0_1) at (5, 0) {$T^1$};
    \node (B0_2) at (6, 0) {$B^1$};
    \path (B0_1) edge [->]node [auto,swap] {$\scriptstyle{\operatorname{v}^1}$} (B0_0);
    \path (B0_1) edge [->]node [auto] {$\scriptstyle{t^1}$} (B0_2);
\end{tikzpicture}
\]
is given by

\[
\begin{tikzpicture}[xscale=1.9,yscale=-1.2]
    \node (A0_0) at (-0.1, 0) {$T^1$};
    \node (A0_1) at (1, 0) {$T^1$};
    \node (A0_2) at (2, 0) {$B^1$;};
    
    \path (A0_1) edge [->]node [auto,swap] {$\scriptstyle{\id_{T^1}}$} (A0_0);
    \path (A0_1) edge [->]node [auto] {$\scriptstyle{t^1}$} (A0_2);
\end{tikzpicture}
\]
by \eqref{eq-129} the composition of

\[
\begin{tikzpicture}[xscale=1.5,yscale=-1.2]
    \node (A0_0) at (-0.1, 0) {$T^1$};
    \node (A0_1) at (1, 0) {$T^1$};
    \node (A0_2) at (2, 0) {$T$};
    \path (A0_1) edge [->]node [auto,swap] {$\scriptstyle{\id_{T^1}}$} (A0_0);
    \path (A0_1) edge [->]node [auto] {$\scriptstyle{\operatorname{v}^1}$} (A0_2);
    \node (C0_0) at (3, 0) {$\textrm{and}$};
    
    \node (B0_0) at (4, 0) {$T$};
    \node (B0_1) at (5, 0) {$T^2$};
    \node (B0_2) at (6, 0) {$B^2$};
    \path (B0_1) edge [->]node [auto,swap] {$\scriptstyle{\operatorname{v}^2}$} (B0_0);
    \path (B0_1) edge [->]node [auto] {$\scriptstyle{t^2}$} (B0_2);
\end{tikzpicture}
\]
is given by

\[
\begin{tikzpicture}[xscale=1.9,yscale=-1.2]
    \node (A0_0) at (-0.1, 0) {$T^1$};
    \node (A0_1) at (1, 0) {$R$};
    \node (A0_2) at (2, 0) {$B^2$.};
    
    \path (A0_1) edge [->]node [auto,swap] {$\scriptstyle{\operatorname{x}^1}$} (A0_0);
    \path (A0_1) edge [->]node [auto] {$\scriptstyle{t^2\circ\operatorname{x}^2}$} (A0_2);
\end{tikzpicture}
\]

Therefore, if we apply Lemma~\ref{lem-04} to $\CATD:=\CATC\left[\SETWinv\right]$, to diagram
\eqref{eq-35} and to the internal equivalence $(T^1,\id_{T^1},
\operatorname{v}^1)$ (see again~\cite[Proposition~20]{Pr}), we get that there is a weak fiber product
in $\CATC\left[\SETWinv\right]$ of the form

\begin{equation}\label{eq-37}
\begin{tikzpicture}[xscale=1.5,yscale=-0.8]
    \node (A0_0) at (0, 0) {$T^1$};
    \node (A0_2) at (2, 0) {$B^1$};
    \node (A2_0) at (0, 2) {$B^2$};
    \node (A2_2) at (2, 2) {$A$.};

    \node (A1_1) [rotate=225] at (0.8, 1) {$\Longrightarrow$};
    \node (B1_1) at (1.2, 1) {$\Omega^2$};
    
    \path (A2_0) edge [->]node [auto,swap] {$\scriptstyle{(B^2,\id_{B^2},f^2)}$} (A2_2);
    \path (A0_0) edge [->]node [auto,swap] {$\scriptstyle{(R,\,
      \operatorname{x}^1,t^2\circ\operatorname{x}^2)}$} (A2_0);
    \path (A0_2) edge [->]node [auto] {$\scriptstyle{(B^1,\id_{B^1},f^1)}$} (A2_2);
    \path (A0_0) edge [->]node [auto] {$\scriptstyle{(T^1,\id_{T^1},t^1)}$} (A0_2);
\end{tikzpicture}
\end{equation}

Now we apply again Lemma~\ref{lem-04} to diagram \eqref{eq-37} and to the internal equivalence
$(R,\id_R,\operatorname{x}^1)$. So using again condition (\hyperref[C3]{C3}),
there is a weak fiber product in $\CATC\left[\SETWinv\right]$ of the form

\begin{equation}\label{eq-40}
\begin{tikzpicture}[xscale=1.8,yscale=-0.8]
    \node (A0_0) at (0, 0) {$R$};
    \node (A0_2) at (2, 0) {$B^1$};
    \node (A2_0) at (0, 2) {$B^2$};
    \node (A2_2) at (2, 2) {$A$.};
    
    \node (A1_1) [rotate=225] at (0.8, 1) {$\Longrightarrow$};
    \node (B1_1) at (1.2, 1) {$\Omega^3$};
    
    \path (A2_0) edge [->]node [auto,swap]{$\scriptstyle{(B^2,\id_{B^2},f^2)}$} (A2_2);
    \path (A0_0) edge [->]node [auto,swap] {$\scriptstyle{(R,\id_R,
      t^2\circ\operatorname{x}^2)}$} (A2_0);
    \path (A0_2) edge [->]node [auto] {$\scriptstyle{(B^1,\id_{B^1},f^1)}$} (A2_2);
    \path (A0_0) edge [->]node [auto] {$\scriptstyle{(R,\id_R,
      t^1\circ\operatorname{x}^1)}$} (A0_2);
\end{tikzpicture}
\end{equation}

Since \eqref{eq-40} is a weak fiber product, then $\Omega^3$ is invertible in
$\CATC\left[\SETWinv\right]$. Its source is the morphism $(R,\id_R,f^1\circ t^1\circ
\operatorname{x}^1)$, while its target is the morphism $(R,\id_R,f^2\circ t^2\circ
\operatorname{x}^2)$. By~\cite[Lemma~6.1]{T3} applied to $\alpha:=i_{\id_R}$ and to $\Omega^3$,
there are an object
$C$, a morphism $\operatorname{z}:C\rightarrow R$ in $\SETW$ and a $2$-morphism

\[\omega:\,f^1\circ t^1\circ\operatorname{x}^1\circ\operatorname{z}\Longrightarrow f^2
\circ t^2\circ\operatorname{x}^2\circ\operatorname{z}\]
in $\CATC$, such that $\Omega^3=[C,\operatorname{z},\operatorname{z},i_{\operatorname{z}},
\omega]$. Using~\cite[Proposition~0.8]{T3}, up to replacing $C$ and $\operatorname{z}$, we can
assume that $\omega$ is invertible in $\CATC$ since $\Omega^3$ is invertible in
$\CATC\left[\SETWinv\right]$. Now by Lemma~\ref{lem-04} applied to the weak
fiber product \eqref{eq-40} and to the internal equivalence $(C,\id_C,\operatorname{z})$
(see again~\cite[Proposition~20]{Pr}), we have
a weak fiber product in $\CATC\left[\SETWinv\right]$ of the form

\begin{equation}\label{eq-156}
\begin{tikzpicture}[xscale=1.8,yscale=-0.8]
    \node (A0_0) at (0, 0) {$C$};
    \node (A0_2) at (2, 0) {$B^1$};
    \node (A2_0) at (0, 2) {$B^2$};
    \node (A2_2) at (2, 2) {$A$,};
    
    \node (A1_1) [rotate=225] at (0.8, 1) {$\Longrightarrow$};
    \node (A1_1) at (1.2, 1) {$\Omega^4$};
    
    \path (A2_0) edge [->]node [auto,swap] {$\scriptstyle{(B^2,\id_{B^2},f^2)}$} (A2_2);
    \path (A0_0) edge [->]node [auto,swap] {$\scriptstyle{(C,\id_C,p^2)}$} (A2_0);
    \path (A0_2) edge [->]node [auto] {$\scriptstyle{(B^1,\id_{B^1},f^1)}$} (A2_2);
    \path (A0_0) edge [->]node [auto] {$\scriptstyle{(C,\id_C,p^1)}$} (A0_2);
\end{tikzpicture}
\end{equation}
where for each $m=1,2$ we set $p^m:=t^m\circ\operatorname{x}^m\circ\operatorname{z}$ and where

\begin{gather*}
\Omega^4:=\Thetab{(B^2,\id_{B^2},f^2)}{(R,\id_R,t^2\circ\operatorname{x}^2)}{(C,\id_C,
 \operatorname{z})}\odot\Big(\Omega^3\ast i_{(C,\id_C,\operatorname{z})}\Big)\odot \\
\odot\,\Thetaa{(B^1,\id_{B^1},f^1)}{(R,\id_R,t^1\circ\operatorname{x}^1)}{(C,\id_C,
 \operatorname{z})}:\,\Big(C,\id_C,f^1\circ p^1\Big)\Longrightarrow\Big(C,\id_C,f^2\circ p^2\Big).  
\end{gather*}

By Lemma~\ref{lem-10}, the associators $\Theta_{\bullet}$ in the previous lines are both
trivial, so

\begin{equation}\label{eq-157}
\Omega^4=\Omega^3\ast i_{(C,\id_C,\operatorname{z})}=
\Big[C,\id_C,\id_C,i_{\id_C},\omega\Big]:\Big(C,\id_C,f^1\circ p^1\Big)\Rightarrow\Big(
C,\id_C,f^2\circ p^2\Big).
\end{equation}

Therefore, we have completely proved Step (a). Now let us fix any other set of choices
\hyperref[C]{C}$\,'(\SETW)$ and let us denote by $\CATC'\left[\SETWinv\right]$ the associated
bicategory of fractions. This bicategory has the same objects, morphisms and $2$-morphisms 
as those of $\CATC\left[\SETWinv\right]$, but compositions of morphisms and $2$-morphisms
are (possibly) different (therefore, we cannot conclude directly that \eqref{eq-156} is 
a weak fiber product also in $\CATC'\left[\SETWinv\right]$). By~\cite[Corollary~3.6]{T4},
there is a pseudofunctor

\[\functor{Q}:\,\CATC\left[\SETWinv\right]\longrightarrow\CATC'\left[\SETWinv\right]\]
that is the identity on objects, morphisms and $2$-morphisms (hence, $\functor{Q}$ is an equivalence
of bicategories). Since $\functor{Q}$ is a pseudofunctor, then its associators
$\Psi^{\functor{Q}}_{\bullet}$
(that are induced by the set of choices \hyperref[C]{C}$(\SETW)$ and \hyperref[C]{C}$\,'(\SETW)$) are
invertible. So for each $m=1,2$ we can consider the invertible $2$-morphism

\begin{gather*}
\Gamma^m:=\Psi^{\functor{Q}}_{(B^m,\id_{B^m},f^m),(C,\id_C,p^m)}:\Big(C,\id_C,f^m\circ p^m\Big)=
 \functor{Q}\Big(C,\id_C,f^m\circ p^m\Big)= \\
=\functor{Q}\Big(\Big(B^m,\id_{B^m},f^m\Big)\circ_{\CATC
 \left[\SETWinv\right]}\Big(C,\id_C,p^m\Big)\Big)\Longrightarrow \\
\Longrightarrow\functor{Q}\Big(B^m,\id_{B^m},f^m\Big)\circ_{\CATC'
 \left[\SETWinv\right]}\functor{Q}\Big(C,\id_C,p^m\Big)= \\
=\Big(B^m,\id_{B^m},f^m\Big)\circ_{\CATC'\left[\SETWinv\right]}\Big(C,\id_C,p^m\Big)
 =\Big(C,\id_C,f^m\circ p^m\Big)
\end{gather*}
(in the lines above $\circ_{\CATC\left[\SETWinv\right]}$ is the composition in
$\CATC\left[\SETWinv\right]$, and analogously for $\circ_{\CATC'\left[\SETWinv\right]}$).
If we apply~\cite[Lemma~6.1]{T3} for $\alpha:=i_{\id_C}$ and for
$(\Gamma^1)^{-1}$, we get an object $C^1$,
a morphism $\operatorname{z}^1:C^1\rightarrow C$ in $\SETW$ and a $2$-morphism

\[\alpha^1:\,f^1\circ p^1\circ\operatorname{z}^1\Longrightarrow
f^1\circ p^1\circ\operatorname{z}^1\]
in $\CATC$, such that

\[(\Gamma^1)^{-1}=\Big[C^1,\operatorname{z}^1,\operatorname{z}^1,i_{\operatorname{z}^1},\alpha^1
\Big].\]

If we apply~\cite[Lemma~6.1]{T3} for $\alpha:=i_{\operatorname{z}^1}$ and for
$\Gamma^2$, there are an object $\overline{C}$,
a morphism $\operatorname{z}^2:\overline{C}\rightarrow C^1$, such that $\operatorname{z}^1\circ
\operatorname{z}^2$ belongs to $\SETW$, and a $2$-morphism

\[\alpha^2:\,f^2\circ p^2\circ\operatorname{z}^1\circ\operatorname{z}^2\Longrightarrow
f^2\circ p^2\circ\operatorname{z}^1\circ\operatorname{z}^2\]
in $\CATC$, such that

\[\Gamma^2=\Big[\overline{C},\operatorname{z}^1\circ\operatorname{z}^2,\operatorname{z}^1\circ
\operatorname{z}^2,i_{\operatorname{z}^1\circ\operatorname{z}^2},\alpha^2\Big].\]

Since \eqref{eq-156} is a weak fiber product in $\CATC\left[\SETWinv\right]$,
then using Lemma~\ref{lem-14} we get that
the following diagram is a weak fiber product in $\CATC'\left[\SETWinv\right]$:

\begin{equation}\label{eq-108}
\begin{tikzpicture}[xscale=2.6,yscale=-0.8]
    \node (A0_0) at (0, 0) {$C$};
    \node (A0_2) at (2, 0) {$B^1$};
    \node (A2_0) at (0, 2) {$B^2$};
    \node (A2_2) at (2, 2) {$A$.};
    
    \node (A1_1) [rotate=225] at (0.25, 1) {$\Longrightarrow$};
    \node (B1_1) at (1.15, 1) {$\Omega^5:=\Gamma^2\odot\Omega^4\odot(\Gamma^1)^{-1}$};
    
    \path (A0_0) edge [->]node [auto,swap] {$\scriptstyle{(C,\id_C,p^2)}$} (A2_0);
    \path (A0_0) edge [->]node [auto] {$\scriptstyle{(C,\id_C,p^1)}$} (A0_2);
    \path (A0_2) edge [->]node [auto] {$\scriptstyle{(B^1,\id_{B^1},f^1)}$} (A2_2);
    \path (A2_0) edge [->]node [auto,swap] {$\scriptstyle{(B^2,\id_{B^2},f^2)}$} (A2_2);
\end{tikzpicture}
\end{equation}

Now for each $m=1,2$ we set $\overline{p}^m:=p^m\circ\operatorname{z}^1\circ\operatorname{z}^2:
\overline{C}\rightarrow B^m$ and

\[\overline{\omega}:=\alpha^2\odot\Big(\omega\ast i_{\operatorname{z}^1\circ\operatorname{z}^2}
\Big)\odot\Big(\alpha^1\ast i_{\operatorname{z}^2}\Big):\,f^1\circ\overline{p}^1
\Longrightarrow f^2\circ\overline{p}^2.\]

Then a direct check proves that

\[\Omega^5=\Big[\overline{C},\operatorname{z}^1\circ
\operatorname{z}^2,\operatorname{z}^1\circ\operatorname{z}^2,i_{\operatorname{z}^1\circ
\operatorname{z}^2},\overline{\omega}\Big].\]

Now by Lemma~\ref{lem-04} applied to \eqref{eq-108} and to the
internal equivalence $(\overline{C},\id_{\overline{C}},\operatorname{z}^1\circ\operatorname{z}^2)$
(see~\cite[Proposition~20]{Pr}), the following diagram is a weak fiber product in
$\CATC'\left[\SETWinv\right]$:

\[
\begin{tikzpicture}[xscale=2.2,yscale=-0.8]
    \node (A0_0) at (0, 0) {$\overline{C}$};
    \node (A0_2) at (2, 0) {$B^1$};
    \node (A2_0) at (0, 2) {$B^2$};
    \node (A2_2) at (2, 2) {$A$,};
    
    \node (A1_1) [rotate=225] at (0.9, 1) {$\Longrightarrow$};
    \node (A1_2) at (1.2, 1) {$\Omega^6$};
    
    \path (A0_0) edge [->]node [auto,swap] {$\scriptstyle{(\overline{C},
      \id_{\overline{C}},\overline{p}^2)}$} (A2_0);
    \path (A0_0) edge [->]node [auto] {$\scriptstyle{(\overline{C},\id_{\overline{C}},
      \overline{p}^1)}$} (A0_2);
    \path (A0_2) edge [->]node [auto] {$\scriptstyle{(B^1,\id_{B^1},f^1)}$} (A2_2);
    \path (A2_0) edge [->]node [auto,swap] {$\scriptstyle{(B^2,\id_{B^2},f^2)}$} (A2_2);
\end{tikzpicture}
\]
where

\begin{gather*}
\Omega^6:=\Thetaa{(B^2,\id_{B^2},f^2)}{(C,\id_C,p^2)}
  {(\overline{C},\id_{\overline{C}},\operatorname{z}^1\circ\operatorname{z}^2)}\odot \\
\odot\Big(\Omega^5\ast
 i_{(\overline{C},\id_{\overline{C}},\operatorname{z}^1\circ\operatorname{z}^2)}\Big)\odot
 \Thetab{(B^1,\id_{B^1},f^1)}{(C,\id_C,p^1)}{(\overline{C},\id_{\overline{C}},\operatorname{z}^1\circ
 \operatorname{z}^2)}.
\end{gather*}

By Lemma~\ref{lem-10}, the associators $\Theta_{\bullet}$ above are both trivial, so

\[\Omega^6=\Omega^5\ast
i_{(\overline{C},\id_{\overline{C}},\operatorname{z}^1\circ\operatorname{z}^2)}=
\Big[\overline{C},\id_{\overline{C}},\id_{\overline{C}},
i_{\id_{\overline{C}}},\overline{\omega}\Big].\]

So the quadruple $(\overline{C},\overline{p}^1,\overline{p}^2,\overline{\omega})$ proves that
Step (b) is satisfied in $\CATC'\left[\SETWinv\right]$.
\end{proof}

\begin{rem}
The previous Theorem proves that for each set of choices \hyperref[C]{C}$(\SETW)$
there is a set of data $(C,p^1,p^2,\omega)$ (a priori depending on \hyperref[C]{C}$(\SETW)$),
inducing a weak fiber product \eqref{eq-91} in the bicategory
$\CATC\left[\SETWinv\right]$. A priori we don't know whether such a set
of data induces a weak fiber product also in the bicategory of fractions associated 
to a different set of choices \hyperref[C]{C}$\,'(\SETW)$ or not. Actually,
given any set of data $(C,p^1,p^2,\omega)$, the following facts are equivalent:

\begin{itemize}
 \item diagram \eqref{eq-91} induced by $(C,p^1,p^2,\omega)$ is a weak fiber product in
  $\CATC\left[\SETWinv\right]$;
 \item the same diagram is a weak fiber product in
  $\CATC'\left[\SETWinv\right]$ for any set of choices \hyperref[C]{C}$\,'(\SETW)$.
\end{itemize}

This will be an obvious consequence of the fact that conditions (a), (b) and (c) in
Theorem~\ref{theo-04} (that we are going to prove below)
do not depend on a set of choices \hyperref[C]{C}$(\SETW)$, but only on the
pair $(\CATC,\SETW)$, hence they are verified for every bicategory of fractions $\CATC\left[
\SETWinv\right]$ associated to $(\CATC,\SETW)$.
\end{rem}

As we mentioned in the Introduction,
Theorem~\ref{theo-02} gives an explicit form for a weak fiber product \eqref{eq-91} (whenever
it exists) in the case when the pair of fixed morphism in $\CATC\left[\SETWinv\right]$
has the special form $(B^m,\id_{B^m},f^m)$
for $m=1,2$. The same Theorem shows that whenever such a special pair of morphisms have a weak
fiber product, then also those pairs with $(\id_{B^1},\id_{B^2})$ replaced by any pair of morphisms
in $\SETW$ have a weak
fiber product. However in Theorem~\ref{theo-02} we did not give any explicit description of
a weak fiber product in that case. The next Corollary fills this gap (as in the previous
pages, the $2$-morphisms $\theta_{\bullet}$ and $\upsilon_{\bullet}$ are the associators, respectively
the left unitors of $\CATC$).

\begin{cor}\label{cor-03}
Let us fix any pair $(\CATC,\SETW)$ satisfying conditions \emphatic{(\hyperref[BF]{BF})},
any bicategory of fractions $\CATC\left[\SETWinv\right]$ \emphatic{(}i.e.\ any set of choices
\emphatic{\hyperref[C]{C}}$(\SETW)$\emphatic{)}, any
pair of morphisms $f^m:B^m\rightarrow A$ for $m=1,2$ and any pair of morphisms
$\operatorname{w}^m:B^m\rightarrow\overline{B}^m$ in $\SETW$ for $m=1,2$. Moreover, let us fix
any object $C$, any pair of morphisms $p^m:C\rightarrow B^m$ for $m=1,2$ and any invertible
$2$-morphism $\omega:f^1\circ p^1\Rightarrow f^2\circ p^2$ in $\CATC$, such that
diagram \eqref{eq-91} is a weak fiber product in $\CATC\left[\SETWinv\right]$. In addition,
let us suppose that for each $m=1,2$ the fixed choices \emphatic{\hyperref[C]{C}}$(\SETW)$
give data as in upper part of the following
diagram, with $\operatorname{v}^m$ in $\SETW$ and $\sigma^m$ invertible:

\begin{equation}\label{eq-146}
\begin{tikzpicture}[xscale=2.0,yscale=-0.8]
    \node (A0_1) at (1, 0) {$C^m$};
    \node (A2_0) at (0, 2) {$C$};
    \node (A2_1) at (1, 2) {$\overline{B}^m$};
    \node (A2_2) at (2, 2) {$B^m$};

    \node (A1_1) at (1, 1) {$\sigma^m$};
    \node (B1_1) at (1, 1.4) {$\Rightarrow$};
    
    \path (A0_1) edge [->]node [auto,swap] {$\scriptstyle{\operatorname{v}^m}$} (A2_0);
    \path (A2_2) edge [->]node [auto] {$\scriptstyle{\operatorname{w}^m}$} (A2_1);
    \path (A2_0) edge [->]node [auto,swap] {$\scriptstyle{\operatorname{w}^m\circ p^m}$} (A2_1);
    \path (A0_1) edge [->]node [auto] {$\scriptstyle{q^m}$} (A2_2);
\end{tikzpicture}
\end{equation}
\emphatic{(}this implies that $(B^m,\operatorname{w}^m,f^m)\circ(C,\id_C,\operatorname{w}^m
\circ p^m)=(C^m,\id_C\circ\operatorname{v}^m,f^m\circ q^m)$ for each $m=1,2$\emphatic{)}.
Then let us choose \emph{any set of data} as follows \emphatic{(}the existence of such data
is a consequence of axioms \emphatic{(\hyperref[BF]{BF})}, see \emphatic{Appendix~\ref{sec-03})}:

\begin{enumerate}[\emphatic{(}i\emphatic{)}]
 \item for each $m=1,2$, an object $C^{\prime m}$, a morphism $\operatorname{u}^m:C^{\prime m}
  \rightarrow C^m$ in $\SETW$ and an invertible $2$-morphism $\tau^m:(p^m\circ\operatorname{v}^m)
   \circ\operatorname{u}^m\Rightarrow q^m\circ\operatorname{u}^m$, such that:
   
   \[i_{\operatorname{w}^m}\ast\tau^m=\thetab{\operatorname{w}^m}{q^m}{\operatorname{u}^m}\odot
   \Big(\Big(\sigma^m\odot\thetaa{\operatorname{w}^m}{p^m}{\operatorname{v}^m}\Big)\ast
   i_{\operatorname{u}^m}\Big)\odot
   \thetaa{\operatorname{w}^m}{p^m\circ\operatorname{v}^m}{\operatorname{u}^m};\]
   
  \item an object $C^{\prime\prime}$, a pair of morphisms $\operatorname{z}^m:C^{\prime\prime}
   \rightarrow C^{\prime m}$ for $m=1,2$, with $\operatorname{z}^1$ in $\SETW$, and an invertible
   $2$-morphism $\mu:\operatorname{v}^1\circ(\operatorname{u}^1\circ\operatorname{z}^1)\Rightarrow
   \operatorname{v}^2\circ(\operatorname{u}^2\circ\operatorname{z}^2)$.
\end{enumerate}

Then the following diagram is a weak fiber product in $\CATC\left[\SETWinv\right]$

\begin{equation}\label{eq-139}
\begin{tikzpicture}[xscale=2.0,yscale=-0.8]
    \node (A0_0) at (0, 0) {$C$};
    \node (A0_2) at (2, 0) {$\overline{B}^1$};
    \node (A2_0) at (0, 2) {$\overline{B}^2$};
    \node (A2_2) at (2, 2) {$A$,};
    
    \node (A1_1) [rotate=225] at (0.9, 1) {$\Longrightarrow$};
    \node (A1_2) at (1.2, 1) {$\overline{\Omega}$};
    
    \path (A0_0) edge [->]node [auto,swap]
      {$\scriptstyle{(C,\id_C,\operatorname{w}^2\circ p^2)}$} (A2_0);
    \path (A0_0) edge [->]node [auto] {$\scriptstyle{(C,\id_C,\operatorname{w}^1\circ p^1)}$} (A0_2);
    \path (A0_2) edge [->]node [auto] {$\scriptstyle{(B^1,\operatorname{w}^1,f^1)}$} (A2_2);
    \path (A2_0) edge [->]node [auto,swap] {$\scriptstyle{(B^2,\operatorname{w}^2,f^2)}$} (A2_2);
\end{tikzpicture}
\end{equation}
where

\begin{gather*}
\overline{\Omega}:=\Big[C^{\prime\prime},\operatorname{u}^1\circ\operatorname{z}^1,
 \operatorname{u}^2\circ\operatorname{z}^2,\Big(\upsilon_{\operatorname{v}^2}^{-1}\ast
 i_{\operatorname{u}^2\circ\operatorname{z}^2}\Big)\odot\mu\odot\Big(\upsilon_{\operatorname{v}^1}
 \ast i_{\operatorname{u}^1\circ\operatorname{z}^1}\Big),\delta\Big]: \\
\Big(C^1,\id_C\circ\operatorname{v}^1,f^1\circ q^1\Big)\Longrightarrow
 \Big(C^2,\id_C\circ\operatorname{v}^2,f^2\circ q^2\Big)
\end{gather*}
and $\delta:(f^1\circ q^1)\circ(\operatorname{u}^1\circ\operatorname{z}^1)\Rightarrow
(f^2\circ q^2)\circ(\operatorname{u}^2\circ\operatorname{z}^2)$ is defined as the following
composition \emphatic{(}associators of $\CATC$ omitted for simplicity\emphatic{)}:

\[
\begin{tikzpicture}[xscale=2.2,yscale=-0.8]
    \node (A0_2) at (3, 0) {$C^1$};
    \node (A1_1) at (1, 0) {$C^{\prime 1}$};
    \node (A1_2) at (2, 1) {$C^1$};
    \node (A1_4) at (4, 1) {$B^1$};
    \node (A2_0) at (0, 2) {$C^{\prime\prime}$};
    \node (A2_3) at (3, 2) {$C$};
    \node (A2_5) at (5, 2) {$A$.};
    \node (A3_1) at (1, 4) {$C^{\prime 2}$};
    \node (A3_2) at (2, 3) {$C^2$};
    \node (A3_4) at (4, 3) {$B^2$};
    \node (A4_2) at (3, 4) {$C^2$};

    \node (A1_3) at (2.7, 0.8) {$\Downarrow\,(\tau^1)^{-1}$};
    \node (A3_3) at (2.7, 3.2) {$\Downarrow\,\tau^2$};
    \node (A2_4) at (4, 2) {$\Downarrow\,\omega$};
    \node (A2_1) at (1, 2) {$\Downarrow\,\mu$};
    
    \path (A2_3) edge [->]node [auto,swap] {$\scriptstyle{p^2}$} (A3_4);
    \path (A1_1) edge [->]node [auto] {$\scriptstyle{\operatorname{u}^1}$} (A0_2);
    \path (A1_2) edge [->]node [auto,swap] {$\scriptstyle{\operatorname{v}^1}$} (A2_3);
    \path (A2_0) edge [->]node [auto] {$\scriptstyle{\operatorname{z}^1}$} (A1_1);
    \path (A3_1) edge [->]node [auto] {$\scriptstyle{\operatorname{u}^2}$} (A3_2);
    \path (A2_0) edge [->]node [auto,swap] {$\scriptstyle{\operatorname{z}^2}$} (A3_1);
    \path (A3_2) edge [->]node [auto] {$\scriptstyle{\operatorname{v}^2}$} (A2_3);
    \path (A1_1) edge [->]node [auto,swap] {$\scriptstyle{\operatorname{u}^1}$} (A1_2);
    \path (A1_4) edge [->]node [auto] {$\scriptstyle{f^1}$} (A2_5);
    \path (A3_4) edge [->]node [auto,swap] {$\scriptstyle{f^2}$} (A2_5);
    \path (A0_2) edge [->]node [auto] {$\scriptstyle{q^1}$} (A1_4);
    \path (A3_1) edge [->]node [auto,swap] {$\scriptstyle{\operatorname{u}^2}$} (A4_2);
    \path (A2_3) edge [->]node [auto] {$\scriptstyle{p^1}$} (A1_4);
    \path (A4_2) edge [->]node [auto,swap] {$\scriptstyle{q^2}$} (A3_4);
\end{tikzpicture}
\]
\end{cor}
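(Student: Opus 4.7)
The plan is to derive the weak fiber product \eqref{eq-139} from the weak fiber product \eqref{eq-91} by pre-composing the morphisms to $A$ with suitable internal equivalences, appealing to Proposition~\ref{prop-01} and Lemma~\ref{lem-03}, and then explicitly computing the resulting $2$-morphism as a representative quintuple in $\CATC\left[\SETWinv\right]$.

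First I would note that, by~\cite[Proposition~20]{Pr}, for each $m=1,2$ the morphism $e^m:=(B^m,\operatorname{w}^m,\id_{B^m}):\overline{B}^m\to B^m$ is an internal equivalence in $\CATC\left[\SETWinv\right]$ with quasi-inverse $d^m:=(B^m,\id_{B^m},\operatorname{w}^m)$, which I upgrade to an adjoint equivalence $(e^m,d^m,\Delta^m,\Xi^m)$ via~\cite[Proposition~1.5.7]{L}. Applying Proposition~\ref{prop-01} to \eqref{eq-91} with these two adjoint equivalences produces a weak fiber product in $\CATC\left[\SETWinv\right]$ whose corner object is $C$, whose morphisms to $\overline{B}^m$ are $d^m\circ(C,\id_C,p^m)$, whose morphisms to $A$ are $(B^m,\id_{B^m},f^m)\circ e^m$, and whose $2$-morphism is given by formula \eqref{eq-36}.

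Next I would identify this weak fiber product with \eqref{eq-139}. Computing in $\CATC\left[\SETWinv\right]$, each composition $d^m\circ(C,\id_C,p^m)$ is canonically linked by an invertible $2$-morphism to $(C,\id_C,\operatorname{w}^m\circ p^m)$, and each composition $(B^m,\id_{B^m},f^m)\circ e^m$ is canonically linked by an invertible $2$-morphism to $(B^m,\operatorname{w}^m,f^m)$; both isomorphisms are built from the fixed choices \hyperref[C]{C}$(\SETW)$ and the structural $2$-morphisms of $\CATC\left[\SETWinv\right]$. Lemma~\ref{lem-03} transports the weak fiber product along the isomorphisms on the morphisms to $A$; the analogous transport along the corner isomorphisms follows by a direct inspection of Definition~\ref{def-01}, observing that if $\alpha^m:r^m\Rightarrow\tilde{r}^m$ are invertible and $\tilde{\Omega}$ is obtained from $\Omega$ by suitable pre- and post-composition with the $\alpha^m$, then the functors $\functor{F}_D$ associated to $(r^1,r^2,\Omega)$ and to $(\tilde{r}^1,\tilde{r}^2,\tilde{\Omega})$ are naturally isomorphic, hence one is an equivalence iff the other is.

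The last and most delicate step is to verify that the resulting $2$-morphism equals the stated $\overline{\Omega}$. Its source and target are forced by \eqref{eq-146} to be $(C^m,\id_C\circ\operatorname{v}^m,f^m\circ q^m)$ for $m=1,2$, so by the representation of $2$-morphisms as equivalence classes of quintuples (Appendix~\ref{sec-03}) I need an apex with morphisms to $C^1$ and $C^2$ and a pair of $2$-morphisms in $\CATC$ satisfying the defining compatibility. Hypothesis (ii) supplies exactly the apex $C^{\prime\prime}$ together with $\operatorname{u}^m\circ\operatorname{z}^m$; the compatibility condition on the $\sigma^m$ in \eqref{eq-146} and the defining equation of $\tau^m$ in hypothesis (i) are precisely what is needed so that $\mu$ and $\omega$ together assemble into the stated $\delta$, while the ``left-leg'' $2$-morphism comes out to $(\upsilon_{\operatorname{v}^2}^{-1}\ast i_{\operatorname{u}^2\circ\operatorname{z}^2})\odot\mu\odot(\upsilon_{\operatorname{v}^1}\ast i_{\operatorname{u}^1\circ\operatorname{z}^1})$. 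The main obstacle lies here: one has to track the associators and unitors of $\CATC\left[\SETWinv\right]$ coming from all the intermediate compositions, use~\cite[Lemma~6.1]{T3} to pick suitable representatives of composed $2$-morphisms, and verify --- modulo the equivalence relation on quintuples, where Lemma~\ref{lem-10} makes several relevant associators trivial --- that the entire expression collapses to the clean form stated in the Corollary.
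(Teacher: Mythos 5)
Your structural skeleton matches the paper's: both arguments build the adjoint equivalences $(e^m,d^m,\Delta^m,\Xi^m)$ with $e^m=(B^m,\operatorname{w}^m,\id_{B^m})$ and $d^m=(B^m,\id_{B^m},\operatorname{w}^m)$, apply Proposition~\ref{prop-01} to \eqref{eq-91}, and identify the legs of the resulting square with $(C,\id_C,\operatorname{w}^m\circ p^m)$ and $(B^m,\operatorname{w}^m,f^m)$. (In fact, by conditions (\hyperref[C1]{C1}) and (\hyperref[C2]{C2}) on the choices \hyperref[C]{C}$(\SETW)$ these compositions are \emph{equal} on the nose, so your extra transport via Lemma~\ref{lem-03} and the ``corner isomorphisms'' is harmless but unnecessary.)

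There is, however, a genuine gap at the last step, which is the actual content of the Corollary. You assert that the $2$-morphism produced by Proposition~\ref{prop-01} ``collapses to the clean form'' $\overline{\Omega}=[C'',\operatorname{u}^1\circ\operatorname{z}^1,\operatorname{u}^2\circ\operatorname{z}^2,\dots,\delta]$, but you give no mechanism for this, and the one tool you cite, Lemma~\ref{lem-10}, does not apply: the associators occurring in \eqref{eq-36} are of the form $\Thetaa{(B^m,\operatorname{w}^m,f^m)}{(B^m,\id_{B^m},\operatorname{w}^m)}{(C,\id_C,p^m)}$, whose outer entry has a non-identity $\SETW$-component, so they are not trivialized by Lemma~\ref{lem-10} and must instead be computed via \cite[Proposition~0.1]{T3}; this drags in the fixed choices for the pairs $(\operatorname{w}^m,\operatorname{w}^m)$ and the auxiliary data produced by (\hyperref[BF3]{BF3}) and (\hyperref[BF4]{BF4}), and likewise $(i_{g^m}\ast\Xi^m)\ast i_{r^m}$ has to be unpacked via \cite[Proposition~0.3]{T3}. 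After composing, one obtains a representative whose $\CATC$-level $2$-cell $\chi^m$ is only known to satisfy $i_{\operatorname{w}^m}\ast\chi^m=i_{\operatorname{w}^m}\ast(\tau^m)^{-1}\ast i_{\cdots}$ --- this is exactly where hypothesis (i) enters --- and to remove the whiskering by $i_{\operatorname{w}^m}$ one must invoke \cite[Lemma~1.1]{T3}, i.e.\ pass to a further refinement $\operatorname{y}^m$ along which $\chi^m$ and $(\tau^m)^{-1}$ actually agree; this cancellation, not a choice of representatives via \cite[Lemma~6.1]{T3}, is the key mechanism, and only after it does the class reduce to the stated quintuple on the apex $C''$. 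As written, your argument shows that \emph{some} weak fiber product with the legs of \eqref{eq-139} exists, but not that its $2$-morphism is the specific $\overline{\Omega}$ claimed in the statement.
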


\begin{proof}
For simplicity of exposition, we give a complete proof only in the case when $\CATC$ is a
$2$-category.
For each $m=1,2$, let us suppose that the fixed choices \hyperref[C]{C}$(\SETW)$ give a set
of data as in the upper part of the following diagram, with $\operatorname{t}^m$ in $\SETW$
and $\xi^m$ invertible:

\begin{equation}\label{eq-180}
\begin{tikzpicture}[xscale=2.2,yscale=-0.8]
    \node (A0_1) at (1, 0) {$\widetilde{B}^m$};
    \node (A2_0) at (0, 2) {$B^m$};
    \node (A2_1) at (1, 2) {$\overline{B}^m$};
    \node (A2_2) at (2, 2) {$B^m$.};
    
    \node (A1_1) at (1, 1) {$\xi^m$};
    \node (B1_1) at (1, 1.4) {$\Rightarrow$};
    
    \path (A0_1) edge [->]node [auto,swap] {$\scriptstyle{\operatorname{t}^m}$} (A2_0);
    \path (A2_2) edge [->]node [auto] {$\scriptstyle{\operatorname{w}^m}$} (A2_1);
    \path (A2_0) edge [->]node [auto,swap] {$\scriptstyle{\operatorname{w}^m}$} (A2_1);
    \path (A0_1) edge [->]node [auto] {$\scriptstyle{\operatorname{s}^m}$} (A2_2);
\end{tikzpicture}
\end{equation}

Note that the choices \hyperref[C]{C}$(\SETW)$ here are arbitrary, so we cannot use
condition (\hyperref[C3]{C3}) for the previous diagram (see Appendix~\ref{sec-01}).
For each $m=1,2$, we apply axioms (\hyperref[BF4a]{BF4a}) and (\hyperref[BF4b]{BF4b}) to the
invertible $2$-morphism $\xi^m$; so there are an object $B^{\prime m}$, a morphism
$\operatorname{r}^m:B^{\prime m}\rightarrow\widetilde{B}^m$ in $\SETW$ and an invertible $2$-morphism
$\varepsilon^m:\operatorname{t}^m\circ\operatorname{r}^m\Rightarrow\operatorname{s}^m\circ
\operatorname{r}^m$, such that

\begin{equation}\label{eq-181}
i_{\operatorname{w}^m}\ast\varepsilon^m=\xi^m\ast i_{\operatorname{r}^m}.
\end{equation}

For each $m=1,2$, we consider the following morphisms in $\CATC\left[\SETWinv\right]$

\[
\begin{tikzpicture}[xscale=1.8,yscale=-1.2]
    \node (A0_0) at (-0.3, 0) {$e^m:=\Big(\overline{B}^m$};
    \node (A0_1) at (1, 0) {$B^m$};
    \node (A0_2) at (2, 0) {$B^m\Big)$};
    \path (A0_1) edge [->]node [auto,swap] {$\scriptstyle{\operatorname{w}^m}$} (A0_0);
    \path (A0_1) edge [->]node [auto] {$\scriptstyle{\id_{B^m}}$} (A0_2);
    \node (B0_0) at (2.5, 0) {$\textrm{and}$};
    
    \node (C0_0) at (3.3, 0) {$d^m:=\Big(B^m$};
    \node (C0_1) at (4.6, 0) {$B^m$};
    \node (C0_2) at (5.6, 0) {$\overline{B}^m\Big)$.};
    \path (C0_1) edge [->]node [auto,swap] {$\scriptstyle{\id_{B^m}}$} (C0_0);
    \path (C0_1) edge [->]node [auto] {$\scriptstyle{\operatorname{w}^m}$} (C0_2);    
\end{tikzpicture}
\]

Using \eqref{eq-180}, for each $m=1,2$ we get

\[
\begin{tikzpicture}[xscale=1.5,yscale=-1.2]
    \node (A0_0) at (-0.6, 0) {$d^m\circ e^m=\Big(\overline{B}^m$};
    \node (A0_1) at (1, 0) {$B^m$};
    \node (A0_2) at (2, 0) {$\overline{B}^m\Big)$};
    \path (A0_1) edge [->]node [auto,swap] {$\scriptstyle{\operatorname{w}^m}$} (A0_0);
    \path (A0_1) edge [->]node [auto] {$\scriptstyle{\operatorname{w}^m}$} (A0_2);
    \node (B0_0) at (2.65, 0) {$\textrm{and}$};
    
    \node (C0_0) at (3.9, 0) {$e^m\circ d^m=\Big(B^m$};
    \node (C0_1) at (5.6, 0) {$\widetilde{B}^m$};
    \node (C0_2) at (6.6, 0) {$B^m\Big)$.};
    \path (C0_1) edge [->]node [auto,swap] {$\scriptstyle{\operatorname{t}^m}$} (C0_0);
    \path (C0_1) edge [->]node [auto] {$\scriptstyle{\operatorname{s}^m}$} (C0_2);    
\end{tikzpicture}
\]

Then for each $m=1,2$ we define an invertible $2$-morphism $\Delta^m:\id_{\overline{B}^m}
\Rightarrow d^m\circ e^m$ in $\CATC\left[\SETWinv\right]$ as the
$2$-morphism represented by the following diagram:

\[
\begin{tikzpicture}[xscale=2.2,yscale=-0.8]
    \node (A0_2) at (2, 0) {$\overline{B}^m$};
    \node (A2_0) at (0, 2) {$\overline{B}^m$};
    \node (A2_2) at (2, 2) {$B^m$};
    \node (A2_4) at (4, 2) {$\overline{B}^m$;};
    \node (A4_2) at (2, 4) {$B^m$};

    \node (A2_1) at (1.2, 2) {$\Downarrow\,i_{\operatorname{w}^m}$};
    \node (A2_3) at (2.8, 2) {$\Downarrow\,i_{\operatorname{w}^m}$};
    
    \path (A0_2) edge [->]node [auto,swap] {$\scriptstyle{\id_{\overline{B}^m}}$} (A2_0);
    \path (A0_2) edge [->]node [auto] {$\scriptstyle{\id_{\overline{B}^m}}$} (A2_4);
    \path (A4_2) edge [->]node [auto] {$\scriptstyle{\operatorname{w}^m}$} (A2_0);
    \path (A4_2) edge [->]node [auto,swap] {$\scriptstyle{\operatorname{w}^m}$} (A2_4);
    \path (A2_2) edge [->]node [auto,swap] {$\scriptstyle{\operatorname{w}^m}$} (A0_2);
    \path (A2_2) edge [->]node [auto] {$\scriptstyle{\id_{B^m}}$} (A4_2);
\end{tikzpicture}
\]

Moreover, for each $m=1,2$ we define an invertible $2$-morphism $\Xi^m:e^m\circ
d^m\Rightarrow\id_{B^m}$ in $\CATC\left[\SETWinv\right]$ as the $2$-morphism represented
by the following diagram:

\begin{equation}\label{eq-109}
\begin{tikzpicture}[xscale=2.2,yscale=-0.8]
    \node (A0_2) at (2, 0) {$\widetilde{B}^m$};
    \node (A2_0) at (0, 2) {$B^m$};
    \node (A2_2) at (2, 2) {$B^{\prime m}$};
    \node (A2_4) at (4, 2) {$B^m$.};
    \node (A4_2) at (2, 4) {$B^m$};

    \node (A2_1) at (1.2, 2) {$\Downarrow\,\varepsilon^m$};
    \node (A2_3) at (2.8, 2) {$\Downarrow\,i_{\operatorname{s}^m\circ\operatorname{r}^m}$};
    
    \path (A0_2) edge [->]node [auto,swap] {$\scriptstyle{\operatorname{t}^m}$} (A2_0);
    \path (A0_2) edge [->]node [auto] {$\scriptstyle{\operatorname{s}^m}$} (A2_4);
    \path (A4_2) edge [->]node [auto] {$\scriptstyle{\id_{B^m}}$} (A2_0);
    \path (A4_2) edge [->]node [auto,swap] {$\scriptstyle{\id_{B^m}}$} (A2_4);
    \path (A2_2) edge [->]node [auto,swap] {$\scriptstyle{\operatorname{r}^m}$} (A0_2);
    \path (A2_2) edge [->]node [auto] {$\scriptstyle{\operatorname{s}^m
      \circ\operatorname{r}^m}$} (A4_2);
\end{tikzpicture}
\end{equation}

Following the proof of~\cite[Proposition~20]{Pr}, the quadruple $(e^m,d^m,\Delta^m,\Xi^m)$ is an
\emph{adjoint} equivalence in $\CATC\left[\SETWinv\right]$ for each $m=1,2$.
For each such $m$, let us set:

\[
\begin{tikzpicture}[xscale=1.8,yscale=-0.8]
    \node (A0_0) at (0, 0) {$g^m:=\Big(B^m$};
    \node (A0_1) at (1.25, 0) {$B^m$};
    \node (A0_2) at (2.2, 0) {$A\Big)$};
    \path (A0_1) edge [->]node [auto,swap] {$\scriptstyle{\id_{B^m}}$} (A0_0);
    \path (A0_1) edge [->]node [auto] {$\scriptstyle{f^m}$} (A0_2);
    \node (B0_0) at (2.95, 0) {$\textrm{and}$};
    
    \node (C0_0) at (3.9, 0) {$r^m:=\Big(C$};
    \node (C0_1) at (5.05, 0) {$C$};
    \node (C0_2) at (6, 0) {$B^m\Big)$.};
    \path (C0_1) edge [->]node [auto,swap] {$\scriptstyle{\id_C}$} (C0_0);
    \path (C0_1) edge [->]node [auto] {$\scriptstyle{p^m}$} (C0_2);   
\end{tikzpicture}
\]

Since we assumed that $\CATC$ is a $2$-category, then the $2$-morphism $\Omega$ of 
$\CATC\left[\SETWinv\right]$ appearing in \eqref{eq-91} is given by

\begin{equation}\label{eq-193}
\Omega=\Big[C,\id_C,\id_C,i_{\id_C},\omega\Big]:\,g^1\circ r^1\Longrightarrow g^2\circ r^2.
\end{equation}

Then we define an invertible $2$-morphism

\[\overline{\Omega}:\,(g^1\circ e^1)\circ(d^1\circ r^1)\Longrightarrow
(g^2\circ e^2)\circ(d^2\circ r^2)\]
as the following composition, where the $2$-morphisms $\Theta_{\bullet}$ are the associators of
$\CATC\left[\SETWinv\right]$:

\begin{equation}\label{eq-182}
\begin{tikzpicture}[xscale=4.5,yscale=-0.8]
    \node (A3_0) at (0, 3) {$C$};
    \node (A3_1) at (2, 3) {$A$.};
    
    \node (D0_1) at (1.3, 0) {$\Downarrow\,\Thetaa{g^1\circ e^1}{d^1}{r^1}$};
    \node (D1_1) at (1.3, 1) {$\Downarrow\,\Thetab{g^1}{e^1}{d^1}\ast i_{r^1}$};
    \node (D2_1) at (1.3, 2) {$\Downarrow\,(i_{g^1}\ast\Xi^1)\ast i_{r^1}$};
    \node (D3_1) at (1.3, 3) {$\Downarrow\,\Omega$};
    \node (D4_1) at (1.3, 4) {$\Downarrow\,(i_{g^2}\ast(\Xi^2)^{-1})\ast i_{r^2}$};
    \node (D5_1) at (1.3, 5) {$\Downarrow\,\Thetaa{g^2}{e^2}{d^2}\ast i_{r^2}$};
    \node (D6_1) at (1.3, 6) {$\Downarrow\,\Thetab{g^2\circ e^2}{d^2}{r^2}$};
    
    \foreach \i in {-1,...,6} {\draw[rounded corners,->] (A3_0) to (0.2,\i+0.5)
      to (1.8,\i+0.5) to (A3_1);}
    
    \node (B1_1) at (0.7, -0.75) {$\scriptstyle{(g^1\circ e^1)\circ(d^1\circ r^1)}$};
    \node (B2_2) at (0.7, 0.25) {$\scriptstyle{((g^1\circ e^1)\circ d^1)\circ r^1}$};
    \node (B3_3) at (0.7, 1.25) {$\scriptstyle{(g^1\circ(e^1\circ d^1))\circ r^1}$};
    \node (B4_4) at (0.63, 2.25) {$\scriptstyle{(g^1\circ\id_{B^1})\circ r^1=g^1\circ r^1}$};
    \node (B5_5) at (0.63, 3.25) {$\scriptstyle{(g^2\circ\id_{B^2})\circ r^2=g^2\circ r^2}$};
    \node (B6_6) at (0.7, 4.25) {$\scriptstyle{(g^2\circ(e^2\circ d^2))\circ r^2}$};
    \node (B7_7) at (0.7, 5.25) {$\scriptstyle{((g^2\circ e^2)\circ d^2)\circ r^2}$};
    \node (B8_8) at (0.7, 6.25) {$\scriptstyle{(g^2\circ e^2)\circ(d^2\circ r^2)}$};
\end{tikzpicture}
\end{equation}

Since we are working in the case when $\CATC$ is a $2$-category, then it is easy to prove
that the unitors of $\CATC\left[\SETWinv\right]$ are trivial. Therefore, the $2$-morphism
$\overline{\Omega}$ above coincides with the $2$-morphism $\overline{\Omega}$ defined in
Proposition~\ref{prop-01} for $\CATD:=\CATC\left[\SETWinv\right]$.
By hypothesis, diagram \eqref{eq-91} is a weak fiber product in
$\CATC\left[\SETWinv\right]$, so by Proposition~\ref{prop-01} we get that also the
following diagram is a weak fiber product in $\CATC\left[\SETWinv\right]$:

\[
\begin{tikzpicture}[xscale=2.6,yscale=-0.8]
    \node (A0_0) at (0, 0) {$C$};
    \node (A0_2) at (2, 0) {$\overline{B}^1$};
    \node (A2_0) at (0, 2) {$\overline{B}^2$};
    \node (A2_2) at (2, 2) {$A$.};
    
    \node (A1_1) [rotate=225] at (0.9, 1) {$\Longrightarrow$};
    \node (A1_2) at (1.1, 1) {$\overline{\Omega}$};
    
    \path (A0_0) edge [->]node [auto,swap] {$\scriptstyle{d^2\circ r^2
      =(C,\id_C,\operatorname{w}^2\circ p^2)}$} (A2_0);
    \path (A0_0) edge [->]node [auto] {$\scriptstyle{d^1\circ r^1
      =(C,\id_C,\operatorname{w}^1\circ p^1)}$} (A0_2);
    \path (A0_2) edge [->]node [auto] {$\scriptstyle{g^1\circ e^1=
      (B^1,\operatorname{w}^1,f^1)}$} (A2_2);
    \path (A2_0) edge [->]node [auto,swap] {$\scriptstyle{g^2\circ e^2=
      (B^2,\operatorname{w}^2,f^2)}$} (A2_2);
\end{tikzpicture}
\]

Then in order to prove the claim we need only to compute all the $2$-morphisms in \eqref{eq-182}
and to prove that their composition is equal to the $2$-morphism in \eqref{eq-139}.\\

Since we are assuming that $\CATC$ is a $2$-category, then for each $m=1,2$ we have

\begin{gather}
\nonumber (g^m\circ e^m)\circ(d^m\circ r^m)=\Big(B^m,\operatorname{w}^m,f^m\Big)\circ\Big(C,\id_C,
 \operatorname{w}^m\circ p^m\Big)\stackrel{\eqref{eq-146}}{=} \\
\label{eq-186} \stackrel{\eqref{eq-146}}{=}\Big(C^m,\operatorname{v}^m,f^m\circ q^m\Big).
\end{gather}

Let us suppose that for each $m=1,2$ the fixed choices \hyperref[C]{C}$(\SETW)$ give data
as in the upper part of the following diagram, with $\operatorname{k}^m$ in $\SETW$ and $\eta^m$
invertible:

\begin{equation}\label{eq-185}
\begin{tikzpicture}[xscale=2.2,yscale=-0.8]
    \node (A0_1) at (1, 0) {$F^m$};
    \node (A2_0) at (0, 2) {$C$};
    \node (A2_1) at (1, 2) {$B^m$};
    \node (A2_2) at (2, 2) {$\widetilde{B}^m$.};
    
    \node (A1_1) at (1, 1) {$\eta^m$};
    \node (B1_1) at (1, 1.4) {$\Rightarrow$};

    \path (A0_1) edge [->]node [auto,swap] {$\scriptstyle{\operatorname{k}^m}$} (A2_0);
    \path (A2_2) edge [->]node [auto] {$\scriptstyle{\operatorname{t}^m}$} (A2_1);
    \path (A2_0) edge [->]node [auto,swap] {$\scriptstyle{p^m}$} (A2_1);
    \path (A0_1) edge [->]node [auto] {$\scriptstyle{h^m}$} (A2_2);
\end{tikzpicture}
\end{equation}

Then for each $m=1,2$ we have:

\begin{gather}
\nonumber ((g^m\circ e^m)\circ d^m)\circ r^m=\Big(\Big(B^m,\operatorname{w}^m,f^m\Big)
 \circ\Big(B^m,\id_{B^m},\operatorname{w}^m\Big)\Big)\circ\Big(C,\id_C,p^m\Big)
 \stackrel{\eqref{eq-180}}{=} \\
\nonumber \stackrel{\eqref{eq-180}}{=}\Big(\widetilde{B}^m,\operatorname{t}^m,f^m\circ
 \operatorname{s}^m\Big)\circ\Big(C,\id_C,p^m\Big)\stackrel{\eqref{eq-185}}{=} \\
\label{eq-187} \stackrel{\eqref{eq-185}}{=}\Big(F^m,\operatorname{k}^m,f^m\circ
 \operatorname{s}^m\circ h^m\Big).
\end{gather}

Now for each $m=1,2$, we want to compute the associators $\Thetaa{g^m\circ e^m}{d^m}{r^m}$
from (\ref{eq-186}) to (\ref{eq-187}) appearing in \eqref{eq-182}.
As a preliminary step, for each $m=1,2$
we use axiom (\hyperref[BF3]{BF3}) in order to get a set of data as in the
upper part of the following diagram, with $\operatorname{a}^m$ in $\SETW$ and
$\gamma^m$ invertible:

\[
\begin{tikzpicture}[xscale=2.4,yscale=-0.8]
    \node (A0_1) at (1, 0) {$G^m$};
    \node (A2_0) at (0, 2) {$C''$};
    \node (A2_1) at (1, 2) {$C$};
    \node (A2_2) at (2, 2) {$F^m$.};

    \node (A1_1) at (1, 1) {$\gamma^m$};
    \node (B1_1) at (1, 1.4) {$\Rightarrow$};

    \path (A0_1) edge [->]node [auto,swap] {$\scriptstyle{\operatorname{a}^m}$} (A2_0);
    \path (A2_2) edge [->]node [auto] {$\scriptstyle{\operatorname{k}^m}$} (A2_1);
    \path (A2_0) edge [->]node [auto,swap] {$\scriptstyle{\operatorname{v}^m\circ
      \operatorname{u}^m\circ\operatorname{z}^m}$} (A2_1);
    \path (A0_1) edge [->]node [auto] {$\scriptstyle{\operatorname{b}^m}$} (A2_2);
\end{tikzpicture}
\]

Then we use (\hyperref[BF4a]{BF4a}) and (\hyperref[BF4b]{BF4b}) in order to get an object $T^m$,
a morphism $\operatorname{j}^m:T^m\rightarrow G^m$ in $\SETW$ and an invertible $2$-morphism

\[\rho^m:\,q^m\circ\operatorname{u}^m\circ\operatorname{z}^m\circ
\operatorname{a}^m\circ\operatorname{j}^m\Longrightarrow\operatorname{s}^m\circ
h^m\circ\operatorname{b}^m\circ\operatorname{j}^m,\]
such that $i_{\operatorname{w}^m}\ast\rho^m$ coincides with the following composition:

\begin{equation}\label{eq-191}
\begin{tikzpicture}[xscale=2.3,yscale=-1.2]
    \node (A0_2) at (2, 0) {$C^m$};
    \node (A0_4) at (4, 0) {$B^m$};
    \node (A1_0) at (0.3, 1) {$T^m$};
    \node (A1_1) at (1, 1) {$G^m$};
    \node (A1_3) at (3, 1) {$C$};
    \node (A1_4) at (4, 1) {$B^m$};
    \node (A1_5) at (5, 1) {$\overline{B}^m$.};
    \node (A2_2) at (1.8, 1) {$F^m$};
    \node (A2_3) at (3, 2) {$\widetilde{B}^m$};
    \node (A2_4) at (4, 2) {$B^m$};
    
    \node (A2_5) at (4, 1.5) {$\Downarrow\,\xi^m$};
    \node (A2_1) at (2.9, 1.4) {$\Downarrow\,\eta^m$};
    \node (A0_1) at (2, 0.55) {$\Downarrow\,\gamma^m$};
    \node (A0_3) at (3.5, 0.4) {$\Downarrow\,(\sigma^m)^{-1}$};

    \path (A2_4) edge [->]node [auto,swap] {$\scriptstyle{\operatorname{w}^m}$} (A1_5);
    \path (A1_4) edge [->]node [auto] {$\scriptstyle{\operatorname{w}^m}$} (A1_5);
    \path (A1_1) edge [->]node [auto] {$\scriptstyle{\operatorname{u}^m\circ
      \operatorname{z}^m\circ\operatorname{a}^m}$} (A0_2);
    \path (A0_4) edge [->]node [auto] {$\scriptstyle{\operatorname{w}^m}$} (A1_5);
    \path (A1_3) edge [->]node [auto] {$\scriptstyle{p^m}$} (A1_4);
    \path (A1_0) edge [->]node [auto] {$\scriptstyle{\operatorname{j}^m}$} (A1_1);
    \path (A2_2) edge [->]node [auto,swap] {$\scriptstyle{\operatorname{k}^m}$} (A1_3);
    \path (A1_1) edge [->]node [auto,swap] {$\scriptstyle{\operatorname{b}^m}$} (A2_2);
    \path (A2_3) edge [->]node [auto,swap] {$\scriptstyle{\operatorname{s}^m}$} (A2_4);
    \path (A0_2) edge [->]node [auto] {$\scriptstyle{q^m}$} (A0_4);
    \path (A0_2) edge [->]node [auto] {$\scriptstyle{\operatorname{v}^m}$} (A1_3);
    \path (A2_2) edge [->]node [auto,swap] {$\scriptstyle{h^m}$} (A2_3);
    \path (A2_3) edge [->]node [auto] {$\scriptstyle{\operatorname{t}^m}$} (A1_4);
\end{tikzpicture}
\end{equation}

Then we compute the associator from (\ref{eq-186}) to (\ref{eq-187})
using~\cite[Proposition~0.1]{T3} for $\underline{f}:=r^m$,
$\underline{g}:=d^m$ and $\underline{h}:=g^m\circ e^m$. Using the previous choices, we have
that the $2$-morphisms appearing in~\cite[Proposition~0.1(0.4)]{T3} are given as follows:

\[\delta:=i_{p^m},\quad\sigma:=\sigma^m,\quad\xi:=\xi^m,\quad\eta:=\eta^m.\]

Then in~\cite[Proposition~0.1]{T3} we choose

\[\gamma:=\gamma^m\ast i_{\operatorname{j}^m},\quad
\omega:=\Big(\eta^m\ast i_{\operatorname{b}^m\circ\operatorname{j}^m}\Big)\odot\Big(i_{p^m}
\ast\gamma^m\ast i_{\operatorname{j}^m}\Big),\quad\rho:=\rho^m.\]

Then we get that the associator $\Thetaa{\underline{g}^m\circ \underline{e}^m}
{\underline{d}^m}{\underline{r}^m}$ from \eqref{eq-186} to
\eqref{eq-187} is represented by the following diagram:

\begin{equation}\label{eq-190}
\begin{tikzpicture}[xscale=2.9,yscale=-0.75]
    \node (A0_2) at (2, 0) {$C^m$};
    \node (A2_0) at (0, 2) {$C$};
    \node (A2_2) at (2, 2) {$T^m$};
    \node (A2_4) at (4, 2) {$A$.};
    \node (A4_2) at (2, 4) {$F^m$};
    
    \node (A2_3) at (2.8, 2) {$\Downarrow\,i_{f^m}\ast\rho^m$};
    \node (A2_1) at (1.2, 2) {$\Downarrow\,\gamma^m\ast i_{\operatorname{j}^m}$};

    \path (A0_2) edge [->]node [auto,swap] {$\scriptstyle{\operatorname{v}^m}$} (A2_0);
    \path (A4_2) edge [->]node [auto] {$\scriptstyle{\operatorname{k}^m}$} (A2_0);
    \path (A4_2) edge [->]node [auto,swap] {$\scriptstyle{f^m
      \circ\operatorname{s}^m\circ h^m}$} (A2_4);
    \path (A2_2) edge [->]node [auto,swap] {$\scriptstyle{\operatorname{u}^m\circ
     \operatorname{z}^m\circ\operatorname{a}^m\circ\operatorname{j}^m}$} (A0_2);
    \path (A2_2) edge [->]node [auto] {$\scriptstyle{\operatorname{b}^m
      \circ\operatorname{j}^m}$} (A4_2);
    \path (A0_2) edge [->]node [auto] {$\scriptstyle{f^m\circ q^m}$} (A2_4);
\end{tikzpicture}
\end{equation}

Now using \eqref{eq-180}, it is easy to prove that

\[g^m\circ(e^m\circ d^m)=\Big(\widetilde{B}^m,\operatorname{t}^m,f^m\circ\operatorname{s}^m\Big)
=(g^m\circ e^m)\circ d^m\]
and that $\Thetaa{g^m}{e^m}{d^m}$ is the $2$-identity of this morphism; hence for each $m=1,2$,
in diagram \eqref{eq-182} we have:

\begin{equation}\label{eq-189}
\Thetaa{g^m}{e^m}{d^m}\ast i_{r^m}=i_{((g^m\circ e^m)\circ d^m)\circ r^m}
\stackrel{\eqref{eq-185}}{=}i_{\left(
F^m,\operatorname{k}^m,f^m\circ\operatorname{s}^m\circ h^m\right)}.
\end{equation}

Now a direct check using \eqref{eq-109} proves that for each $m=1,2$ the $2$-morphism
$i_{g^m}\ast\Xi^m$ is represented by the following diagram:

\[
\begin{tikzpicture}[xscale=2.2,yscale=-0.8]
    \node (A0_2) at (2, 0) {$\widetilde{B}^m$};
    \node (A2_0) at (0, 2) {$B^m$};
    \node (A2_2) at (2, 2) {$B^{\prime m}$};
    \node (A2_4) at (4, 2) {$A$.};
    \node (A4_2) at (2, 4) {$B^m$};

    \node (A2_1) at (1.2, 2) {$\Downarrow\,\varepsilon^m$};
    \node (A2_3) at (2.8, 2) {$\Downarrow\,i_{f^m\circ\operatorname{s}^m\circ\operatorname{r}^m}$};
    
    \path (A0_2) edge [->]node [auto,swap] {$\scriptstyle{\operatorname{t}^m}$} (A2_0);
    \path (A0_2) edge [->]node [auto] {$\scriptstyle{f^m\circ\operatorname{s}^m}$} (A2_4);
    \path (A4_2) edge [->]node [auto] {$\scriptstyle{\id_{B^m}}$} (A2_0);
    \path (A4_2) edge [->]node [auto,swap] {$\scriptstyle{f^m}$} (A2_4);
    \path (A2_2) edge [->]node [auto,swap] {$\scriptstyle{\operatorname{r}^m}$} (A0_2);
    \path (A2_2) edge [->]node [auto] {$\scriptstyle{\operatorname{s}^m
      \circ\operatorname{r}^m}$} (A4_2);
\end{tikzpicture}
\]

Then we need to compute the $2$-morphisms $(i_{g^m}\ast\Xi^m)\ast i_{r^m}$ appearing in
\eqref{eq-182}. For each $m=1,2$ we use axiom
(\hyperref[BF3]{BF3}) in order to get data as in the upper part of the following diagram, with
$\operatorname{c}^m$ in $\SETW$ and $\phi^m$ invertible:

\[
\begin{tikzpicture}[xscale=2.5,yscale=-0.8]
    \node (A0_1) at (1, 0) {$L^m$};
    \node (A2_0) at (0, 2) {$T^m$};
    \node (A2_1) at (1, 2) {$\widetilde{B}^m$};
    \node (A2_2) at (2, 2) {$B^{\prime m}$};

    \node (A1_1) at (1, 1) {$\phi^m$};
    \node (B1_1) at (1, 1.4) {$\Rightarrow$};

    \path (A0_1) edge [->]node [auto,swap] {$\scriptstyle{\operatorname{c}^m}$} (A2_0);
    \path (A2_2) edge [->]node [auto] {$\scriptstyle{\operatorname{r}^m}$} (A2_1);
    \path (A2_0) edge [->]node [auto,swap] {$\scriptstyle{h^m
      \circ\operatorname{b}^m\circ\operatorname{j}^m}$} (A2_1);
    \path (A0_1) edge [->]node [auto] {$\scriptstyle{o^m}$} (A2_2);
\end{tikzpicture}
\]

Then we use~\cite[Proposition~0.3]{T3} in order to compute $(i_{g^m}\ast\Xi^m)\ast
i_{r^m}$. In 
the case under exam, the $2$-morphisms $\alpha$ and $\beta$ appearing in that Proposition
are given by $\varepsilon^m$ and
$i_{f^m\circ\operatorname{s}^m\circ\operatorname{r}^m}$ respectively; moreover, the $2$-morphisms
$\rho^1,\rho^2$ of that Proposition are given by
$\eta^m$ and $i_{p^m}$ respectively. Then we choose the $2$-morphisms $\sigma^1,
\sigma^2$ and $\alpha'$ appearing in that Proposition as follows: we set
$\sigma^1:=\phi^m$, $\alpha':=i_{\operatorname{k}^m\circ\operatorname{b}^m\circ\operatorname{j}^m
\circ\operatorname{c}^m}$ and we define $\sigma^2$ as the following composition:

\[
\begin{tikzpicture}[xscale=2.4,yscale=-1.0]
    \node (A0_2) at (1, 0) {$F^m$};
    \node (A1_0) at (0, 1) {$L^m$};
    \node (A1_2) at (2, 1) {$\widetilde{B}^m$};
    \node (A1_3) at (3, 1) {$B^m$.};
    \node (A2_1) at (1, 2) {$B^{\prime m}$};
    \node (A2_2) at (2, 2) {$\widetilde{B}^m$};
    \node (B2_2) at (2, 0) {$C$};
    
    \node (A2_0) at (2, 1.5) {$\Downarrow\,\varepsilon^m$};
    \node (A0_3) at (2, 0.5) {$\Downarrow\,\eta^m$};
    \node (A1_1) at (0.9, 1) {$\Downarrow\,\phi^m$};

    \path (A0_2) edge [->]node [auto] {$\scriptstyle{\operatorname{k}^m}$} (B2_2);
    \path (B2_2) edge [->]node [auto] {$\scriptstyle{p^m}$} (A1_3);
    \path (A1_0) edge [->]node [auto,swap] {$\scriptstyle{o^m}$} (A2_1);
    \path (A1_0) edge [->]node [auto] {$\scriptstyle{\operatorname{b}^m\circ\operatorname{j}^m
      \circ\operatorname{c}^m}$} (A0_2);
    \path (A0_2) edge [->]node [auto,swap] {$\scriptstyle{h^m}$} (A1_2);
    \path (A2_2) edge [->]node [auto,swap] {$\scriptstyle{\operatorname{s}^m}$} (A1_3);
    \path (A2_1) edge [->]node [auto] {$\scriptstyle{\operatorname{r}^m}$} (A1_2);
    \path (A1_2) edge [->]node [auto] {$\scriptstyle{\operatorname{t}^m}$} (A1_3);
    \path (A2_1) edge [->]node [auto,swap] {$\scriptstyle{\operatorname{r}^m}$} (A2_2);
\end{tikzpicture}
\]

Then replacing all these choices in~\cite[Proposition~0.3(0.12)]{T3} we get the $2$-identity
over $\operatorname{t}^m\circ\operatorname{r}^m\circ\,o^m$. So in that Proposition we can choose
$\delta:=i_{o^m}$. Therefore, replacing in the definition of $\beta'$ in that Proposition, we
conclude that for each $m=1,2$ the $2$-morphism $(i_{g^m}\ast\Xi^m)\ast i_{r^m}$ appearing in
\eqref{eq-182} is represented by the following diagram

\begin{equation}\label{eq-188}
\begin{tikzpicture}[xscale=2.6,yscale=-0.8]
    \node (A0_2) at (2, 0) {$F^m$};
    \node (A2_0) at (0, 2) {$C$};
    \node (A2_2) at (2, 2) {$L^m$};
    \node (A2_4) at (4, 2) {$A$,};
    \node (A4_2) at (2, 4) {$C$};

    \node (A2_1) at (1.2, 2) {$\Downarrow\,i_{\operatorname{k}^m\circ
      \operatorname{b}^m\circ\operatorname{j}^m\circ\operatorname{c}^m}$};
    \node (A2_3) at (2.8, 2) {$\Downarrow\,i_{f^m}\ast\zeta^m$};

    \path (A4_2) edge [->]node [auto,swap] {$\scriptstyle{f^m\circ p^m}$} (A2_4);
    \path (A0_2) edge [->]node [auto] {$\scriptstyle{f^m\circ\operatorname{s}^m\circ h^m}$} (A2_4);
    \path (A0_2) edge [->]node [auto,swap] {$\scriptstyle{\operatorname{k}^m}$} (A2_0);
    \path (A2_2) edge [->]node [auto,swap] {$\scriptstyle{\operatorname{b}^m\circ
      \operatorname{j}^m\circ\operatorname{c}^m}$} (A0_2);
    \path (A2_2) edge [->]node [auto] {$\scriptstyle{\operatorname{k}^m
      \circ\operatorname{b}^m\circ\operatorname{j}^m\circ\operatorname{c}^m}$} (A4_2);
    \path (A4_2) edge [->]node [auto] {$\scriptstyle{\id_C}$} (A2_0);
\end{tikzpicture}
\end{equation}
where $\zeta^m$ is the following composition:

\begin{equation}\label{eq-192}
\begin{tikzpicture}[xscale=3.2,yscale=-1.2]
    \node (A1_2) at (2, 1) {$\widetilde{B}^m$};
    \node (A2_0) at (0, 2) {$L^m$};
    \node (A2_1) at (1, 2) {$B^{\prime m}$};
    \node (A2_2) at (2, 2) {$\widetilde{B}^m$};
    \node (A2_3) at (3, 2) {$B^m$.};
    \node (A3_1) at (1, 3) {$F^m$};
    \node (A3_2) at (2, 3) {$C$};
    \node (A1_1) at (1, 1) {$F^m$};

    \node (A3_3) at (2, 2.5) {$\Downarrow\,(\eta^m)^{-1}$};
    \node (B1_1) at (1, 1.5) {$\Downarrow\,\phi^m$};
    \node (A1_3) at (2, 1.55) {$\Downarrow\,(\varepsilon^m)^{-1}$};
    \node (A3_0) at (0.9, 2.4) {$\Downarrow\,(\phi^m)^{-1}$};
    
    \path (A2_0) edge [->]node [auto] {$\scriptstyle{\operatorname{b}^m\circ
      \operatorname{j}^m\circ\operatorname{c}^m}$} (A1_1);
    \path (A1_1) edge [->]node [auto] {$\scriptstyle{h^m}$} (A1_2);
    \path (A1_2) edge [->]node [auto] {$\scriptstyle{\operatorname{s}^m}$} (A2_3);
    \path (A2_0) edge [->]node [auto] {$\scriptstyle{o^m}$} (A2_1);
    \path (A2_0) edge [->]node [auto,swap] {$\scriptstyle{\operatorname{b}^m
      \circ\operatorname{j}^m\circ\operatorname{c}^m}$} (A3_1);
    \path (A3_2) edge [->]node [auto,swap] {$\scriptstyle{p^m}$} (A2_3);
    \path (A2_2) edge [->]node [auto] {$\scriptstyle{\operatorname{t}^m}$} (A2_3);
    \path (A3_1) edge [->]node [auto] {$\scriptstyle{h^m}$} (A2_2);
    \path (A2_1) edge [->]node [auto] {$\scriptstyle{\operatorname{r}^m}$} (A1_2);
    \path (A2_1) edge [->]node [auto] {$\scriptstyle{\operatorname{r}^m}$} (A2_2);
    \path (A3_1) edge [->]node [auto,swap] {$\scriptstyle{\operatorname{k}^m}$} (A3_2);
\end{tikzpicture}
\end{equation}

Now using \eqref{eq-190}, \eqref{eq-189} and \eqref{eq-188} we get that for each $m=1,2$
the $2$-morphism

\begin{equation}\label{eq-194}
F^m:=\Big(\Big(i_{g^m}\ast\Xi^m\Big)\ast i_{r^m}\Big)\odot\Big(\Thetab{g^m}{e^m}{d^m}\ast
i_{r^m}\Big)\odot\Thetaa{g^m\circ e^m}{d^m}{r^m}
\end{equation}
is represented by the following diagram

\[
\begin{tikzpicture}[xscale=2.8,yscale=-0.8]
    \node (A0_2) at (2, 0) {$C^m$};
    \node (A2_0) at (0, 2) {$C$};
    \node (A2_2) at (2, 2) {$L^m$};
    \node (A2_4) at (4, 2) {$A$.};
    \node (A4_2) at (2, 4) {$C$};

    \node (A2_3) at (2.85, 2) {$\Downarrow\,i_{f^m}\ast(\zeta^m
      \odot(\rho^m\ast i_{\operatorname{c}^m}))$};
    \node (A2_1) at (1.2, 2) {$\Downarrow\,\gamma^m
      \ast i_{\operatorname{j}^m\circ\operatorname{c}^m}$};

    \node (E1_1) at (2.25, 0.9) {$\scriptstyle{\operatorname{u}^m\circ
      \operatorname{z}^m\circ}$};
    \node (E2_2) at (2.35, 1.3) {$\scriptstyle{\circ\operatorname{a}^m
      \circ\operatorname{j}^m\circ\operatorname{c}^m}$};
      
    \path (A4_2) edge [->]node [auto,swap] {$\scriptstyle{f^m\circ p^m}$} (A2_4);
    \path (A0_2) edge [->]node [auto] {$\scriptstyle{f^m\circ q^m}$} (A2_4);
    \path (A4_2) edge [->]node [auto] {$\scriptstyle{\id_C}$} (A2_0);
    \path (A0_2) edge [->]node [auto,swap] {$\scriptstyle{\operatorname{v}^m}$} (A2_0);
    \path (A2_2) edge [->]node [auto,swap] {} (A0_2);
    \path (A2_2) edge [->]node [auto] {$\scriptstyle{\operatorname{k}^m
      \circ \operatorname{b}^m\circ\operatorname{j}^m\circ\operatorname{c}^m}$} (A4_2);
\end{tikzpicture}
\]

Using the definition of $2$-morphism in a bicategory of fractions (see Appendix~\ref{sec-06}),
we get easily that $F^m$ is also represented by the following diagram

\begin{equation}\label{eq-80}
\begin{tikzpicture}[xscale=2.8,yscale=-0.8]
    \node (A0_2) at (2, 0) {$C^m$};
    \node (A2_0) at (0, 2) {$C$};
    \node (A2_2) at (2, 2) {$L^m$};
    \node (A2_4) at (4, 2) {$A$,};
    \node (A4_2) at (2, 4) {$C$};

    \node (A2_3) at (2.85, 2) {$\Downarrow\,i_{f^m}\ast\chi^m$};
    \node (A2_1) at (1.15, 2) {$\Downarrow\,i_{\operatorname{v}^m\circ\operatorname{u}^m
      \circ\operatorname{z}^m\circ\operatorname{a}^m\circ
      \operatorname{j}^m\circ\operatorname{c}^m}$};

    \node (E1_1) at (2.25, 0.9) {$\scriptstyle{\operatorname{u}^m\circ
      \operatorname{z}^m\circ}$};
    \node (E2_2) at (2.35, 1.3) {$\scriptstyle{\circ\operatorname{a}^m
      \circ\operatorname{j}^m\circ\operatorname{c}^m}$};
    \node (E3_3) at (2.38, 2.7) {$\scriptstyle{\operatorname{v}^m
      \circ\operatorname{u}^m\circ\operatorname{z}^m\circ}$};
    \node (E4_4) at (2.35, 3.1) {$\scriptstyle{\circ\operatorname{a}^m\circ
      \operatorname{j}^m\circ\operatorname{c}^m}$};
    
    \path (A4_2) edge [->]node [auto,swap] {$\scriptstyle{f^m\circ p^m}$} (A2_4);
    \path (A0_2) edge [->]node [auto] {$\scriptstyle{f^m\circ q^m}$} (A2_4);
    \path (A4_2) edge [->]node [auto] {$\scriptstyle{\id_C}$} (A2_0);
    \path (A0_2) edge [->]node [auto,swap] {$\scriptstyle{\operatorname{v}^m}$} (A2_0);
    \path (A2_2) edge [->]node [auto,swap] {} (A0_2);
    \path (A2_2) edge [->]node [auto] {} (A4_2);
\end{tikzpicture}
\end{equation}
where

\[\chi^m:=\Big(i_{p^m}\ast\left(\gamma^m\right)^{-1}\ast i_{\operatorname{j}^m\circ
\operatorname{c}^m}\Big)\odot\zeta^m\odot\Big(\rho^m\ast i_{\operatorname{c}^m}\Big).\]

Now we want to write $F^m$ in a shorter form. As a preliminary step, we want to compute
$i_{\operatorname{w}^m}\ast\chi^m$; for that, we replace $i_{\operatorname{w}^m}\ast\rho^m$
with \eqref{eq-191} and $\zeta^m$ with \eqref{eq-192}. So we get that $i_{\operatorname{w}^m}
\ast\chi^m$ coincides with the following composition:

\begin{equation}\label{eq-140}
\begin{tikzpicture}[xscale=2.3,yscale=-1.2]
    \node (A0_2) at (2, 0) {$C^m$};
    \node (A0_4) at (4, 0) {$B^m$};
    \node (A1_0) at (1, 2) {$L^m$};
    \node (A1_1) at (1, 1) {$G^m$};
    \node (B1_1) at (1, 4) {$G^m$};
    \node (B1_2) at (2, 2) {$B^{\prime m}$};
    \node (A1_3) at (3, 1) {$C$};
    \node (A1_4) at (4, 1) {$B^m$};
    \node (A1_5) at (5, 1) {$\overline{B}^m$.};
    \node (A2_2) at (1.8, 1) {$F^m$};
    \node (A2_3) at (3, 2) {$\widetilde{B}^m$};
    \node (B2_3) at (3, 3) {$\widetilde{B}^m$};
    \node (A2_4) at (4, 2) {$B^m$};
    \node (C1_1) at (2, 4) {$F^m$};
    \node (C2_1) at (4, 4) {$C$};
    \node (D1_1) at (2.5, 5) {$C^m$};
    
    \node (E1_1) at (1.8, 1.5) {$\Downarrow\,\phi^m$};
    \node (E2_2) at (1.7, 3) {$\Downarrow\,(\phi^m)^{-1}$};
    \node (E3_3) at (3, 2.45) {$\Downarrow\,(\varepsilon^m)^{-1}$};
    \node (E4_4) at (3.5, 3.35	) {$\Downarrow\,(\eta^m)^{-1}$};
    \node (E5_5) at (2.5, 4.4) {$\Downarrow\,(\gamma^m)^{-1}$};
    \node (A2_5) at (4, 1.5) {$\Downarrow\,\xi^m$};
    \node (A2_1) at (3, 1.4) {$\Downarrow\,\eta^m$};
    \node (A0_1) at (2, 0.55) {$\Downarrow\,\gamma^m$};
    \node (A0_3) at (3.5, 0.4) {$\Downarrow\,(\sigma^m)^{-1}$};

    \path (B1_1) edge [->]node [auto,swap] {$\scriptstyle{\operatorname{u}^m
      \circ\operatorname{z}^m\circ\operatorname{a}^m}$} (D1_1);
    \path (D1_1) edge [->]node [auto,swap] {$\scriptstyle{\operatorname{v}^m}$} (C2_1);
    \path (B1_1) edge [->]node [auto] {$\scriptstyle{\operatorname{b}^m}$} (C1_1);
    \path (C2_1) edge [->]node [auto,swap] {$\scriptstyle{\operatorname{p}^m}$} (A2_4);
    \path (C1_1) edge [->]node [auto] {$\scriptstyle{\operatorname{k}^m}$} (C2_1);
    \path (C1_1) edge [->]node [auto] {$\scriptstyle{h^m}$} (B2_3);
    \path (B2_3) edge [->]node [auto,swap] {$\scriptstyle{\operatorname{t}^m}$} (A2_4);
    \path (B1_2) edge [->]node [auto,swap] {$\scriptstyle{\operatorname{r}^m}$} (A2_3);
    \path (B1_2) edge [->]node [auto,swap] {$\scriptstyle{\operatorname{r}^m}$} (B2_3);
    \path (A1_0) edge [->]node [auto] {$\scriptstyle{\operatorname{o}^m}$} (B1_2);
    \path (A2_4) edge [->]node [auto,swap] {$\scriptstyle{\operatorname{w}^m}$} (A1_5);
    \path (A1_4) edge [->]node [auto] {$\scriptstyle{\operatorname{w}^m}$} (A1_5);
    \path (A1_1) edge [->]node [auto] {$\scriptstyle{\operatorname{u}^m\circ
      \operatorname{z}^m\circ\operatorname{a}^m}$} (A0_2);
    \path (A0_4) edge [->]node [auto] {$\scriptstyle{\operatorname{w}^m}$} (A1_5);
    \path (A1_3) edge [->]node [auto] {$\scriptstyle{p^m}$} (A1_4);
    \path (A1_0) edge [->]node [auto] {$\scriptstyle{\operatorname{j}^m
      \circ\operatorname{c}^m}$} (A1_1);
    \path (A1_0) edge [->]node [auto,swap] {$\scriptstyle{\operatorname{j}^m
      \circ\operatorname{c}^m}$} (B1_1);
    \path (A2_2) edge [->]node [auto] {$\scriptstyle{\operatorname{k}^m}$} (A1_3);
    \path (A1_1) edge [->]node [auto,swap] {$\scriptstyle{\operatorname{b}^m}$} (A2_2);
    \path (A2_3) edge [->]node [auto,swap] {$\scriptstyle{\operatorname{s}^m}$} (A2_4);
    \path (A0_2) edge [->]node [auto] {$\scriptstyle{q^m}$} (A0_4);
    \path (A0_2) edge [->]node [auto] {$\scriptstyle{\operatorname{v}^m}$} (A1_3);
    \path (A2_2) edge [->]node [auto] {$\scriptstyle{h^m}$} (A2_3);
    \path (A2_3) edge [->]node [auto] {$\scriptstyle{\operatorname{t}^m}$} (A1_4);
\end{tikzpicture}
\end{equation}

In such a diagram, using \eqref{eq-181} we can replace the composition of $\xi^m$ and
$(\varepsilon^m)^{-1}$ with a $2$-identity. Then we can simply the terms $\phi^m$, $\eta^m$ and
$\gamma^m$ (in this order) with their inverses. So we get:

\[i_{\operatorname{w}^m}\ast\chi^m=\left(\sigma^m\right)^{-1}\ast i_{\operatorname{u}^m\circ
\operatorname{z}^m\circ\operatorname{a}^m\circ\operatorname{j}^m\circ\operatorname{c}^m}
=i_{\operatorname{w}^m}\ast\left(\tau^m\right)^{-1}\ast i_{\operatorname{z}^m\circ\operatorname{a}^m
\circ\operatorname{j}^m\circ\operatorname{c}^m},\]
where the last identity is a consequence of hypothesis (i). So by~\cite[Lemma~1.1]{T3} there are
an object $H^m$ and a morphism $\operatorname{y}^m:H^m\rightarrow L^m$, such that 

\begin{equation}\label{eq-131}
\chi^m\ast i_{\operatorname{y}^m}=\left(\tau^m\right)^{-1}\ast i_{\operatorname{z}^m\circ\operatorname{a}^m\circ
\operatorname{j}^m\circ\operatorname{c}^m\circ\operatorname{y}^m}.
\end{equation}

So for each $m=1,2$ we have:

\begin{gather}
\nonumber F^m\stackrel{\eqref{eq-80},\eqref{eq-131}}{=}\Big[H^m,\operatorname{u}^m\circ
 \operatorname{z}^m
 \circ\operatorname{a}^m\circ\operatorname{j}^m\circ\operatorname{c}^m\circ\operatorname{y}^m,
 \operatorname{v}^m\circ\operatorname{u}^m\circ\operatorname{z}^m
 \circ\operatorname{a}^m\circ\operatorname{j}^m\circ\operatorname{c}^m\circ\operatorname{y}^m,\\
\nonumber i_{\operatorname{v}^m\circ\operatorname{u}^m\circ\operatorname{z}^m
 \circ\operatorname{a}^m\circ\operatorname{j}^m\circ\operatorname{c}^m\circ\operatorname{y}^m},
 i_{f^m}\ast\left(\tau^m\right)^{-1}\ast i_{\operatorname{z}^m\circ\operatorname{a}^m\circ
 \operatorname{j}^m\circ\operatorname{c}^m\circ\operatorname{y}^m}\Big]= \\
\label{eq-196} =\Big[C'',\operatorname{u}^m\circ\operatorname{z}^m,
 \operatorname{v}^m\circ\operatorname{u}^m\circ\operatorname{z}^m,
 i_{\operatorname{v}^m\circ\operatorname{u}^m\circ\operatorname{z}^m},
 i_{f^m}\ast\left(\tau^m\right)^{-1}\ast i_{\operatorname{z}^m}\Big].
\end{gather}

Now using \eqref{eq-194} together with \eqref{eq-182}, we get that $\overline{\Omega}=
(F^2)^{-1}\odot\Omega\odot F^1$. Using \eqref{eq-196} for $m=1$ and
\eqref{eq-193}, we get that $\Omega\odot F^1$ is represented by the following diagram:

\begin{equation}\label{eq-133}
\begin{tikzpicture}[xscale=2.6,yscale=-0.8]
    \node (A0_2) at (2, 0) {$C^1$};
    \node (A2_0) at (0, 2) {$C$};
    \node (A2_2) at (2, 2) {$C''$};
    \node (A2_4) at (4, 2) {$A$,};
    \node (A4_2) at (2, 4) {$C$};

    \node (A2_3) at (2.6, 2) {$\Downarrow\,\kappa^1$};
    \node (A2_1) at (1.2, 2) {$\Downarrow\,i_{\operatorname{v}^1\circ
      \operatorname{u}^1\circ\operatorname{z}^1}$};

    \path (A4_2) edge [->]node [auto,swap] {$\scriptstyle{f^2\circ p^2}$} (A2_4);
    \path (A0_2) edge [->]node [auto] {$\scriptstyle{f^1\circ q^1}$} (A2_4);
    \path (A4_2) edge [->]node [auto] {$\scriptstyle{\id_C}$} (A2_0);
    \path (A0_2) edge [->]node [auto,swap] {$\scriptstyle{\operatorname{v}^1}$} (A2_0);
    \path (A2_2) edge [->]node [auto,swap] {$\scriptstyle{\operatorname{u}^1
      \circ\operatorname{z}^1}$} (A0_2);
    \path (A2_2) edge [->]node [auto] {$\scriptstyle{\operatorname{v}^1
      \circ\operatorname{u}^1\circ\operatorname{z}^1}$} (A4_2);
\end{tikzpicture}
\end{equation}
where

\[\kappa^1:=\Big(\omega\ast i_{\operatorname{v}^1\circ\operatorname{u}^1\circ\operatorname{z}^1}\Big)
\odot\Big(i_{f^1}\ast(\tau^1)^{-1}\ast i_{\operatorname{z}^1}\Big).\]

Using the inverse of \eqref{eq-196} for $m=2$ and the choices in (ii)
in the claim, we get that $F^2$ is represented by the following diagram:

\begin{equation}\label{eq-142}
\begin{tikzpicture}[xscale=2.6,yscale=-0.8]
    \node (A0_2) at (2, 0) {$C$};
    \node (A2_0) at (0, 2) {$C$};
    \node (A2_2) at (2, 2) {$C''$};
    \node (A2_4) at (4, 2) {$A$,};
    \node (A4_2) at (2, 4) {$C^2$};

    \node (A2_3) at (2.6, 2) {$\Downarrow\,\kappa^2$};
    \node (A2_1) at (1.2, 2) {$\Downarrow\,\mu$};

    \path (A4_2) edge [->]node [auto,swap] {$\scriptstyle{f^2\circ q^2}$} (A2_4);
    \path (A0_2) edge [->]node [auto] {$\scriptstyle{f^1\circ p^2}$} (A2_4);
    \path (A4_2) edge [->]node [auto] {$\scriptstyle{\operatorname{v}^2}$} (A2_0);
    \path (A0_2) edge [->]node [auto,swap] {$\scriptstyle{\id_C}$} (A2_0);
    \path (A2_2) edge [->]node [auto,swap] {$\scriptstyle{\operatorname{v}^1\circ\operatorname{u}^1
      \circ\operatorname{z}^1}$} (A0_2);
    \path (A2_2) edge [->]node [auto] {$\scriptstyle{\operatorname{u}^2\circ
      \operatorname{z}^2}$} (A4_2);
\end{tikzpicture}
\end{equation}
where

\[\kappa^2:=\Big(i_{f^2}\ast\tau^2\ast i_{\operatorname{z}^2}\Big)\odot\Big(i_{f^2\circ p^2}
\ast\mu\Big).\]

Lastly, using \eqref{eq-133}, \eqref{eq-142} and~\cite[Proposition~0.2]{T3}, we get
that the $2$-morphisms
$\overline{\Omega}=(F^2)^{-1}\odot\Omega\odot F^1$ coincides with \eqref{eq-139}, so we conclude.
\end{proof}

Now we want to prove Theorem~\ref{theo-04}, so the problem that we have to solve is the following:
given any set of data in $\CATC$ as follows

\begin{equation}\label{eq-92}
\begin{tikzpicture}[xscale=1.5,yscale=-0.8]
    \node (A0_0) at (0, 0) {$C$};
    \node (A0_2) at (2, 0) {$B^1$};
    \node (A2_0) at (0, 2) {$B^2$};
    \node (A2_2) at (2, 2) {$A$};

    \node (A1_1) [rotate=225] at (0.9, 1) {$\Longrightarrow$};
    \node (B1_1) at (1.2, 1) {$\omega$};

    \path (A2_0) edge [->]node [auto,swap] {$\scriptstyle{f^2}$} (A2_2);
    \path (A0_0) edge [->]node [auto,swap] {$\scriptstyle{p^2}$} (A2_0);
    \path (A0_2) edge [->]node [auto] {$\scriptstyle{f^1}$} (A2_2);
    \path (A0_0) edge [->]node [auto] {$\scriptstyle{p^1}$} (A0_2);
\end{tikzpicture}
\end{equation}
with $\omega$ invertible, when is the associated diagram \eqref{eq-91} a weak fiber product in
$\CATC\left[\SETWinv\right]$? In the next $2$ sections we will consider separately conditions
\textbf{\hyperref[A1]{A1}} and \textbf{\hyperref[A2]{A2}}
for \eqref{eq-91} and we will
manage to give equivalent but simple conditions for both properties.

\section{Condition A1 in a bicategory of fractions}
\begin{lem}\label{lem-11}
Let us fix any pair $(\CATC,\SETW)$ satisfying conditions \emphatic{(\hyperref[BF]{BF})}
and any bicategory of fractions $\CATC\left[\SETWinv\right]$ associated to it \emphatic{(}i.e.\ any
set of choices \emphatic{\hyperref[C]{C}}$(\SETW)$\emphatic{)}. Moreover,
let us also fix any set of data in $\CATC$ as in diagram \eqref{eq-92} with $\omega$ invertible. 
Then the following facts are equivalent:

\begin{enumerate}[\emphatic{(}{i}1\emphatic{)}]
 \item for any object $D$, condition \emphatic{\textbf{\hyperref[A1]{A1}}}$(D)$
  holds for diagram \eqref{eq-91} in $\CATC\left[\SETWinv\right]$;
 \item for any object $D$ and for any pair of morphisms $q^1:D\rightarrow B^1$, $q^2:D\rightarrow
  B^2$ in $\CATC$, condition \emphatic{\textbf{\hyperref[B1]{B1}}}$(D,(D,\id_D,q^1),(D,\id_D,q^2))$
  holds for diagram \eqref{eq-91}.
\end{enumerate}
\end{lem}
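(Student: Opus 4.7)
The implication (i1)$\Rightarrow$(i2) is immediate, since the triples in~(i2) form a subclass of those for which (i1) asserts condition \textbf{\hyperref[A1]{A1}}$(D)$. For the converse (i2)$\Rightarrow$(i1), fix any object $D$ in $\CATC$ and any pair of morphisms $s^m=(D^m,\operatorname{w}^m,q^m):D\to B^m$ in $\CATC\left[\SETWinv\right]$; the plan is to reduce the verification of \textbf{\hyperref[B1]{B1}}$(D,s^1,s^2)$ for diagram~\eqref{eq-91}, via two applications of Proposition~\ref{prop-03}, to a triple of the form covered directly by~(i2).

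The first reduction begins by applying axiom~(\hyperref[BF3]{BF3}) to $(\operatorname{w}^1,\operatorname{w}^2)$, producing an object $E$, morphisms $\operatorname{v}^m:E\to D^m$ with $\operatorname{v}^1\in\SETW$, and an invertible $2$-morphism $\rho:\operatorname{w}^1\circ\operatorname{v}^1\Rightarrow\operatorname{w}^2\circ\operatorname{v}^2$ in $\CATC$; by~(\hyperref[BF1]{BF1}) one then has $\operatorname{w}:=\operatorname{w}^1\circ\operatorname{v}^1\in\SETW$. Setting $\tilde{q}^m:=q^m\circ\operatorname{v}^m$ and $\overline{s}^m:=(E,\operatorname{w},\tilde{q}^m):D\to B^m$, invertible $2$-morphisms $\Phi^m:s^m\Rightarrow\overline{s}^m$ in $\CATC\left[\SETWinv\right]$ can be written down directly from the definition of $2$-morphisms in the bicategory of fractions (using the identity for $m=1$ and $\rho$ for $m=2$), so that Proposition~\ref{prop-03}(b) turns \textbf{\hyperref[B1]{B1}}$(D,s^1,s^2)$ into \textbf{\hyperref[B1]{B1}}$(D,\overline{s}^1,\overline{s}^2)$. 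The second reduction notes that $e:=(E,\id_E,\operatorname{w}):E\to D$ is an internal equivalence in $\CATC\left[\SETWinv\right]$ by~\cite[Proposition~20]{Pr} (because $\operatorname{w}\in\SETW$), so Proposition~\ref{prop-03}(c) further reduces the condition to \textbf{\hyperref[B1]{B1}}$(E,\overline{s}^1\circ e,\overline{s}^2\circ e)$.

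The crux is to identify $\overline{s}^m\circ e$ with $(E,\id_E,\tilde{q}^m)$ up to an invertible $2$-morphism in $\CATC\left[\SETWinv\right]$. Unwinding the composition law, $\overline{s}^m\circ e$ is represented by $(P^m,a^m,\tilde{q}^m\circ b^m)$, where the data $(P^m,a^m\in\SETW,b^m,\alpha^m:\operatorname{w}\circ b^m\Rightarrow\operatorname{w}\circ a^m)$ come from the fixed choices \hyperref[C]{C}$(\SETW)$ for the pair $(\operatorname{w},\operatorname{w})$. Applying axiom~(\hyperref[BF4a]{BF4a}) to $\alpha^m$ yields an object $R^m$, a morphism $r^m:R^m\to P^m$ in $\SETW$ and an invertible $2$-morphism $\nu^m:b^m\circ r^m\Rightarrow a^m\circ r^m$ in $\CATC$; this is precisely the data needed to build an invertible $2$-morphism $\Psi^m:\overline{s}^m\circ e\Rightarrow(E,\id_E,\tilde{q}^m)$ straight from the definition of $2$-morphisms in $\CATC\left[\SETWinv\right]$. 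A final application of Proposition~\ref{prop-03}(b) identifies \textbf{\hyperref[B1]{B1}}$(E,\overline{s}^1\circ e,\overline{s}^2\circ e)$ with \textbf{\hyperref[B1]{B1}}$(E,(E,\id_E,\tilde{q}^1),(E,\id_E,\tilde{q}^2))$, which is exactly the content of~(i2) applied with $D:=E$ and $q^m:=\tilde{q}^m$.

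The main obstacle is precisely this identification of $\overline{s}^m\circ e$ with $(E,\id_E,\tilde{q}^m)$: without assuming~(\hyperref[C3]{C3}) the choices \hyperref[C]{C}$(\SETW)$ for $(\operatorname{w},\operatorname{w})$ need not be trivial, so $\Psi^m$ must be assembled by hand using the output of~(\hyperref[BF4a]{BF4a}). All other steps are formal consequences of Proposition~\ref{prop-03} combined with the specialization provided by~(i2).
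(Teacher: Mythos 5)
Your argument is correct and follows essentially the same route as the paper's proof: a common refinement of $s^1,s^2$ via (\hyperref[BF3]{BF3}), the two transfer principles of Proposition~\ref{prop-03}, the internal equivalence $(E,\id_E,\operatorname{w})$ from \cite[Proposition~20]{Pr}, and (\hyperref[BF4a]{BF4a})/(\hyperref[BF4b]{BF4b}) to compensate for the possibly non-trivial choice $\hyperref[C]{\mathrm{C}}(\SETW)$ on the pair $(\operatorname{w},\operatorname{w})$ --- the paper merely runs the same chain of equivalences in the opposite direction, building up from (i2) to the general triple instead of peeling the general triple down to (i2). One minor slip: membership of $\operatorname{w}^1\circ\operatorname{v}^1$ in $\SETW$ follows from closure under composition, i.e.\ (\hyperref[BF2]{BF2}), not from (\hyperref[BF1]{BF1}).
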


\begin{proof}
For simplicity of exposition, let us suppose that $\CATC$ is a $2$-category.\\

We recall from Remark~\ref{rem-02} that (i1) is equivalent to

\begin{enumerate}[({i}1)$'$]
 \item for any object $D$, and for any pair of morphisms $s^1:D\rightarrow B^1$ and $s^2:D\rightarrow
  B^2$ in $\CATC\left[\SETWinv\right]$, property \textbf{\hyperref[B1]{B1}}$(D,s^1,s^2)$
  holds for diagram \eqref{eq-91}.
\end{enumerate}

Clearly (i1)$'$ implies (i2): it is simply the case when $s^m:=(D,\id_D,q^m)$
for $m=1,2$. Let us assume that (i2) holds and let us prove (i1)$'$. So let us fix any
object $D$ in $\CATC$ and any pair of morphisms
$s^1:D\rightarrow B^1$ and $s^2:D\rightarrow B^2$ in
$\CATC\left[\SETWinv\right]$. By definition of morphisms in $\CATC\left[\SETWinv\right]$,
for each $m=1,2$ there are an object $D^m$, a morphism
$\operatorname{w}^m:D^m\rightarrow D$ in $\SETW$ and a morphism
$t^m:D^m\rightarrow B^m$ in $\CATC$, such that $s^m=(D^m,
\operatorname{w}^m,t^m)$. Now we use (\hyperref[BF3]{BF3}) in order to get data as in the upper
part of the following diagram, with $\operatorname{v}^2$ in $\SETW$ and $\alpha$ invertible:

\[
\begin{tikzpicture}[xscale=-2.2,yscale=-0.8]
    \node (A0_1) at (1, 0) {$D^3$};
    \node (A1_0) at (0.3, 2) {$D^1$.};
    \node (A1_2) at (1.7, 2) {$D^2$};
    \node (A2_1) at (1, 2) {$D$};
    
    \node (A1_1) at (1, 1) {$\alpha$};
    \node (B1_1) at (1, 1.4) {$\Rightarrow$};
    
    \path (A1_2) edge [->]node [auto,swap] {$\scriptstyle{\operatorname{w}^2}$} (A2_1);
    \path (A0_1) edge [->]node [auto,swap] {$\scriptstyle{\operatorname{v}^2}$} (A1_2);
    \path (A1_0) edge [->]node [auto] {$\scriptstyle{\operatorname{w}^1}$} (A2_1);
    \path (A0_1) edge [->]node [auto] {$\scriptstyle{\operatorname{v}^1}$} (A1_0);
\end{tikzpicture}
\]

Moreover, let us suppose that the fixed choices \hyperref[C]{C}$(\SETW)$ give data as in the
upper part of the following diagram, with $\operatorname{r}$ in $\SETW$ and $\varepsilon$ invertible:

\begin{equation}\label{eq-158}
\begin{tikzpicture}[xscale=2.4,yscale=-0.8]
    \node (A0_1) at (1, 0) {$D^4$};
    \node (A1_0) at (0.3, 2) {$D^3$};
    \node (A1_2) at (1.7, 2) {$D^3$};
    \node (A2_1) at (1, 2) {$D$};
    
    \node (A1_1) at (1, 1) {$\varepsilon$};
    \node (B1_1) at (1, 1.4) {$\Rightarrow$};
    
    \path (A1_2) edge [->]node [auto] {$\scriptstyle{\operatorname{w}^2
      \circ\operatorname{v}^2}$} (A2_1);
    \path (A0_1) edge [->]node [auto] {$\scriptstyle{\operatorname{q}}$} (A1_2);
    \path (A1_0) edge [->]node [auto,swap] {$\scriptstyle{\operatorname{w}^2\circ
      \operatorname{v}^2}$} (A2_1);
    \path (A0_1) edge [->]node [auto,swap] {$\scriptstyle{\operatorname{r}}$} (A1_0);
\end{tikzpicture}
\end{equation}
(since the choices \hyperref[C]{C}$(\SETW)$ are arbitrary, then we cannot assume that condition
(\hyperref[C3]{C3}) holds, so we cannot say anything more about the data above).
By construction and (\hyperref[BF2]{BF2}), the morphism $\operatorname{w}^2\circ\operatorname{v}^2$
belongs to $\SETW$. So
using (\hyperref[BF4a]{BF4a}) and (\hyperref[BF4b]{BF4b}), there are an object $D^5$, a morphism
$\operatorname{h}:D^5\rightarrow D^4$ in $\SETW$ and an invertible $2$-morphism $\eta:
\operatorname{r}\circ\operatorname{h}\Rightarrow\operatorname{q}\circ\operatorname{h}$, such that
$\varepsilon\ast i_{\operatorname{h}}=i_{\operatorname{w}^2\circ\operatorname{v}^2}\ast\eta$.
Since we are assuming (i2), then condition \textbf{\hyperref[B1]{B1}}$(D^3,(D^3,\id_{D^3},t^1
\circ\operatorname{v}^1),(D^3,\id_{D^3},t^2
\circ\operatorname{v}^2))$ holds for \eqref{eq-91}. Then for each $m=1,2$
we consider the invertible $2$-morphism

\begin{equation}\label{eq-121}
\Omega^m:\Big(D^3,\id_{D^3},t^m\circ\operatorname{v}^m\Big)\Longrightarrow
\Big(D^4,\operatorname{r},t^m\circ\operatorname{v}^m\circ\operatorname{q}\Big)
\end{equation}
represented by the following diagram:

\[
\begin{tikzpicture}[xscale=2.2,yscale=-0.8]
    \node (A0_2) at (2, 0) {$D^3$};
    \node (A2_0) at (0, 2) {$D^3$};
    \node (A2_2) at (2, 2) {$D^5$};
    \node (A2_4) at (4, 2) {$B^m$.};
    \node (A4_2) at (2, 4) {$D^4$};

    \node (A2_1) at (1.2, 2) {$\Downarrow\,\eta^{-1}$};
    \node (A2_3) at (2.8, 2) {$\Downarrow\,i_{t^m\circ\operatorname{v}^m\circ\operatorname{q}
      \circ\operatorname{h}}$};
    
    \path (A0_2) edge [->]node [auto,swap] {$\scriptstyle{\id_{D^3}}$} (A2_0);
    \path (A4_2) edge [->]node [auto] {$\scriptstyle{\operatorname{r}}$} (A2_0);
    \path (A4_2) edge [->]node [auto,swap] {$\scriptstyle{t^m\circ\operatorname{v}^m
      \circ\operatorname{q}}$} (A2_4);
    \path (A2_2) edge [->]node [auto,swap] {$\scriptstyle{\operatorname{q}
      \circ\operatorname{h}}$} (A0_2);
    \path (A2_2) edge [->]node [auto] {$\scriptstyle{\operatorname{h}}$} (A4_2);
    \path (A0_2) edge [->]node [auto] {$\scriptstyle{t^m\circ\operatorname{v}^m}$} (A2_4);
\end{tikzpicture}
\]

Then using the equivalence of (a) and (b) in Proposition~\ref{prop-03} and \eqref{eq-121}, we get that
condition \textbf{\hyperref[B1]{B1}}$(D^3,(D^4,\operatorname{r},t^1
\circ\operatorname{v}^1\circ\operatorname{q}),(D^4,\operatorname{r},t^2
\circ\operatorname{v}^2\circ\operatorname{q}))$ holds for \eqref{eq-91}. Now using 
\eqref{eq-158}, for each $m=1,2$ we have

\begin{equation}\label{eq-122}
\Big(D^3,\operatorname{w}^2\circ\operatorname{v}^2,
t^m\circ\operatorname{v}^m\Big)\circ\Big(D^3,\id_{D^3},\operatorname{w}^2\circ\operatorname{v}^2
\Big)=\Big(D^4,\operatorname{r},t^m
\circ\operatorname{v}^m\circ\operatorname{q}\Big).
\end{equation}

Since $\operatorname{w}^2\circ\operatorname{v}^2$ belongs to $\SETW$, then the morphism
$e:=(D^3,\id_{D^3},\operatorname{w}^2\circ\operatorname{v}^2)$ is an internal equivalence
in $\CATC\left[\SETWinv\right]$ (see~\cite[Proposition~20]{Pr}).
Therefore, using the equivalence of (a) and (c) in Proposition~\ref{prop-03}, and
\eqref{eq-122}, we get that
\textbf{\hyperref[B1]{B1}}$(D,(D^3,\operatorname{w}^2\circ\operatorname{v}^2,t^1
\circ\operatorname{v}^1),(D^3,\operatorname{w}^2\circ\operatorname{v}^2,t^2\circ\operatorname{v}^2))$
holds for \eqref{eq-91}. Now we consider the pair of invertible $2$-morphisms

\begin{gather*}
\widetilde{\Omega}^1:=\Big[D^3,\id_{D^3},\operatorname{v}^1,\alpha,i_{t^1\circ\operatorname{v}^1}
 \Big]:\Big(D^3,\operatorname{w}^2\circ\operatorname{v}^2,t^1
 \circ\operatorname{v}^1\Big)\Longrightarrow\Big(D^1,\operatorname{w}^1,t^1\Big)=s^1, \\
\widetilde{\Omega}^2:=\Big[D^3,\id_{D^3},\operatorname{v}^2,i_{\operatorname{w}^2
 \circ\operatorname{v}^2},i_{t^2\circ\operatorname{v}^2}
 \Big]:\Big(D^3,\operatorname{w}^2\circ\operatorname{v}^2,t^2
 \circ\operatorname{v}^2\Big)\Longrightarrow\Big(D^2,\operatorname{w}^2,t^2\Big)=s^2.
\end{gather*}

Using the equivalence of (a) and (b) in Proposition~\ref{prop-03}, we get that
\textbf{\hyperref[B1]{B1}}$(D,s^1,s^2)$ holds for \eqref{eq-91}, i.e.\ (i1)$'$ holds.
\end{proof}

\begin{lem}\label{lem-15}
Let us fix the same notations of \emphatic{Lemma~\ref{lem-11}}. Then
the following facts are equivalent:

\begin{enumerate}[\emphatic{(}{i}1\emphatic{)}]
\setcounter{enumi}{1}
 \item for any object $D$ and for any pair of morphisms $q^1:D\rightarrow B^1$, $q^2:D\rightarrow B^2$
  in $\CATC$, condition \emphatic{\textbf{\hyperref[B1]{B1}}}$(D,(D,\id_D,q^1),(D,\id_D,q^2))$ holds
  for diagram \eqref{eq-91};
 \item for any object $D$ of $\CATC$ the following condition holds:
 
  \begin{enumerate}[\emphatic{(}a\emphatic{)}]
   \item given any pair of morphisms $q^m:D\rightarrow B^m$ for $m=1,2$
     and any invertible $2$-morphism $\lambda:f^1\circ q^1\Rightarrow f^2\circ q^2$ in
     $\CATC$, there are an object $E$, a morphism $\operatorname{v}:E\rightarrow D$ in $\SETW$, a 
     morphism $q:E\rightarrow C$ and a pair of invertible $2$-morphisms $\lambda^m:q^m\circ
     \operatorname{v}\Rightarrow p^m\circ q$ for $m=1,2$ in $\CATC$, such that:

     \begin{gather*}
     \thetab{f^2}{p^2}{q}\odot\Big(\omega\ast i_{q}\Big)\odot\thetaa{f^1}{p^1}{q}\odot
      \Big(i_{f^1}\ast\lambda^1\Big)= \\
     =\Big(i_{f^2}\ast\lambda^2\Big)\odot\thetab{f^2}{q^2}{\operatorname{v}}\odot\Big(\lambda\ast
      i_{\operatorname{v}}\Big)\odot\thetaa{f^1}{q^1}{\operatorname{v}}.  
     \end{gather*}
  \end{enumerate}
\end{enumerate}
\end{lem}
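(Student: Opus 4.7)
The plan is to prove the two implications separately, translating condition \textbf{\hyperref[B1]{B1}}$(D,(D,\id_D,q^1),(D,\id_D,q^2))$ in $\CATC\left[\SETWinv\right]$ into condition (a) purely at the level of $\CATC$ and back, using the description of $2$-morphisms in a bicategory of fractions together with the tools developed earlier in the paper (notably \cite[Lemma~6.1 and Proposition~0.8]{T3}). For simplicity of exposition, as in the proofs of Theorem~\ref{theo-02} and Corollary~\ref{cor-03}, I would first assume that $\CATC$ is a $2$-category, then add associators and unitors afterwards.

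For the implication (i3)$\Rightarrow$(i2), I would fix $D$, $q^1,q^2$ and any invertible $2$-morphism $\Lambda:(B^1,\id_{B^1},f^1)\circ(D,\id_D,q^1)\Rightarrow(B^2,\id_{B^2},f^2)\circ(D,\id_D,q^2)$ in $\CATC\left[\SETWinv\right]$. Up to suitable associators and unitors (trivial in the $2$-categorical case), the source and target are the morphisms $(D,\id_D,f^m\circ q^m)$. By \cite[Lemma~6.1]{T3} applied to $\alpha:=i_{\id_D}$, there are $E$, $\operatorname{v}:E\rightarrow D$ in $\SETW$ and a $2$-morphism $\lambda:f^1\circ q^1\circ\operatorname{v}\Rightarrow f^2\circ q^2\circ\operatorname{v}$ in $\CATC$ such that $\Lambda=[E,\operatorname{v},\operatorname{v},i_{\operatorname{v}},\lambda]$; by \cite[Proposition~0.8]{T3} we may assume $\lambda$ invertible in $\CATC$. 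Setting $\widetilde{q}^m:=q^m\circ\operatorname{v}$ and applying (i3) to the triple $(\widetilde{q}^1,\widetilde{q}^2,\lambda)$ (over the object $E$), I obtain $E'$, $\operatorname{v}':E'\rightarrow E$ in $\SETW$, $q:E'\rightarrow C$ and invertible $\lambda^m:\widetilde{q}^m\circ\operatorname{v}'\Rightarrow p^m\circ q$ satisfying \eqref{eq-53}. Then I define $s:=(E',\operatorname{v}\circ\operatorname{v}',q)$ (using that $\operatorname{v}\circ\operatorname{v}'\in\SETW$ by (\hyperref[BF2]{BF2})), and define $\Lambda^m:(D,\id_D,q^m)\Rightarrow r^m\circ s$ from $\lambda^m$ as a suitable class. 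A direct computation using the definition of horizontal composition of $2$-morphisms in $\CATC\left[\SETWinv\right]$ (and Lemma~\ref{lem-10} to trivialize associators of $\CATC\left[\SETWinv\right]$ where applicable) will translate identity \eqref{eq-53} into identity \eqref{eq-42}, giving the required data for \textbf{\hyperref[B1]{B1}}.

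For the converse (i2)$\Rightarrow$(i3), I would fix $D$, $q^1,q^2$ and an invertible $2$-morphism $\lambda:f^1\circ q^1\Rightarrow f^2\circ q^2$ in $\CATC$, and define $\Lambda:=[D,\id_D,\id_D,i_{\id_D},\lambda]$ (modulo trivial unitors), which is an invertible $2$-morphism between $(D,\id_D,f^1\circ q^1)$ and $(D,\id_D,f^2\circ q^2)$ in $\CATC\left[\SETWinv\right]$. Applying (i2), I obtain a morphism $s:D\rightarrow C$ in $\CATC\left[\SETWinv\right]$ and invertible $\Lambda^m:(D,\id_D,q^m)\Rightarrow r^m\circ s$ such that \eqref{eq-42} holds. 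Writing $s=(E,\operatorname{v},q)$ with $\operatorname{v}\in\SETW$, and unpacking each $\Lambda^m$ via \cite[Lemma~6.1]{T3} (applied first to $\Lambda^1$ and then, using the resulting equaliser, to $\Lambda^2$) and \cite[Proposition~0.8]{T3}, I extract, after possibly pre-composing with a further morphism in $\SETW$ (supplied by (\hyperref[BF3]{BF3}) and the corresponding axioms), an object $\overline E$, a morphism $\overline{\operatorname{v}}:\overline E\rightarrow D$ in $\SETW$, a morphism $\overline q:\overline E\rightarrow C$ and invertible $\lambda^m:q^m\circ\overline{\operatorname{v}}\Rightarrow p^m\circ\overline q$ in $\CATC$. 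Translating \eqref{eq-42} back to $\CATC$ using the composition rules in the bicategory of fractions then yields \eqref{eq-53}, which is precisely condition (a).

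The main obstacle will be the bookkeeping of the associators $\theta_\bullet$ of $\CATC$ and the associators $\Theta_\bullet$ of $\CATC\left[\SETWinv\right]$ when passing between \eqref{eq-42} and \eqref{eq-53}; the comparison requires computing several horizontal compositions of $2$-morphisms in $\CATC\left[\SETWinv\right]$ in terms of their representatives in $\CATC$, in the spirit of the computations carried out in the proof of Corollary~\ref{cor-03} (specifically, using \cite[Propositions~0.1, 0.2 and 0.3]{T3} to evaluate the associators $\Theta_\bullet$ and reduce them to compositions of $2$-morphisms of $\CATC$). In the $2$-categorical case this reduces to a purely diagrammatic check; for the general case, Lemma~\ref{lem-10} allows one to discard many of the $\Theta_\bullet$ that appear when composing with identity spans $(D,\id_D,-)$, so that the remaining computation essentially reduces to the $2$-categorical one with a controlled number of $\theta_\bullet$-insertions.
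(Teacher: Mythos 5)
Your proposal is correct and follows essentially the same route as the paper's proof: both directions translate between \textbf{\hyperref[B1]{B1}} and condition (a) by extracting representatives of the relevant $2$-morphisms via \cite[Lemma~6.1 and Proposition~0.8]{T3}, evaluating the horizontal compositions with Lemmas~\ref{lem-09}, \ref{lem-10} and~\ref{lem-12}, and refining along a further morphism of $\SETW$ to compare representatives (the paper invokes \cite[Proposition~0.7]{T3} for this last step, which is the precise tool you gesture at). The bookkeeping you flag is exactly where the paper spends its effort, and your outline handles it in the same way.
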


\begin{proof}
As usual, we assume for simplicity that $\CATC$ is a $2$-category.
Let us suppose that (i2) holds and let us fix any quadruple $(D,q^1,q^2,\lambda)$ as in (i3). Then
we can consider a diagram as follows in $\CATC\left[\SETWinv\right]$:

\begin{equation}\label{eq-159}
\begin{tikzpicture}[xscale=2.8,yscale=-0.8]
    \node (A0_0) at (0, 0) {$D$};
    \node (A0_2) at (2, 0) {$B^1$};
    \node (A2_0) at (0, 2) {$B^2$};
    \node (A2_2) at (2, 2) {$A$.};
    
    \node (A1_1) [rotate=225] at (0.3, 1) {$\Longrightarrow$};
    \node (B0_0) at (1.1, 1) {$\Lambda:=[D,\id_D,\id_D,i_{\id_D},\lambda]$};
    
    \path (A0_0) edge [->]node [auto,swap] {$\scriptstyle{(D,\id_D,q^2)}$} (A2_0);
    \path (A0_0) edge [->]node [auto] {$\scriptstyle{(D,\id_D,q^1)}$} (A0_2);
    \path (A0_2) edge [->]node [auto] {$\scriptstyle{(B^1,\id_{B^1},f^1)}$} (A2_2);
    \path (A2_0) edge [->]node [auto,swap] {$\scriptstyle{(B^2,\id_{B^2},f^2)}$} (A2_2);
\end{tikzpicture}
\end{equation}

Since $\lambda$ is invertible in $\CATC$, then we get easily that $\Lambda$ is
invertible in $\CATC\left[\SETWinv\right]$, so by (i2) there are a morphism

\[
\begin{tikzpicture}[xscale=1.5,yscale=-1.2]
    \node (A0_0) at (-0.2, 0) {$s:=\Big(D$};
    \node (A0_1) at (1, 0) {$\overline{D}$};
    \node (A0_2) at (2.4, 0) {$C\Big):D\longrightarrow C$};
    
    \path (A0_1) edge [->]node [auto,swap] {$\scriptstyle{\operatorname{w}}$} (A0_0);
    \path (A0_1) edge [->]node [auto] {$\scriptstyle{r}$} (A0_2);
\end{tikzpicture}
\]
in $\CATC\left[\SETWinv\right]$ and a pair of invertible $2$-morphisms

\[\Lambda^m:\Big(D,\id_D,q^m\Big)\Longrightarrow\Big(C,\id_C,p^m\Big)\circ\Big(\overline{D},
\operatorname{w},r\Big)\]
for $m=1,2$ in $\CATC\left[\SETWinv\right]$, such that

\begin{gather}
\nonumber \Big(\Omega\ast i_{(\overline{D},\operatorname{w},r)}\Big)\odot\Thetaa{(B^1,\id_{B^1},f^1)}
 {(C,\id_C,p^1)}{(\overline{D},\operatorname{w},r)}\odot\Big(i_{(B^1,\id_{B^1},f^1)}\ast\Lambda^1
 \Big)= \\
\label{eq-49} =\Thetaa{(B^2,\id_{B^2},f^2)}{(C,\id_C,p^2)}{(\overline{D},\operatorname{w},r)}
 \odot\Big(i_{(B^2,\id_{B^2},f^2)}\ast\Lambda^2\Big)\odot\Lambda.  
\end{gather}

For each $m=1,2$, $\Lambda^m$ is defined from $(D,\id_D,
q^m)$ to $(\overline{D},\operatorname{w},p^m\circ r)$. Therefore by~\cite[Lemma~6.1]{T3} applied
to $\alpha:=i_{\operatorname{w}}$ and to $\Lambda^1$, there are an object $D^1$, a morphism
$\operatorname{u}^1:D^1\rightarrow\overline{D}$ such
that $\operatorname{w}\circ\operatorname{u}^1$ belongs to $\SETW$, and a $2$-morphism

\[\alpha^1:\,q^1\circ\operatorname{w}\circ\operatorname{u}^1\Longrightarrow p^1\circ r\circ
\operatorname{u}^1\]
in $\CATC$, such that

\[\Lambda^1=\Big[D^1,\operatorname{w}\circ\operatorname{u}^1,\operatorname{u}^1,i_{\operatorname{w}
\circ\operatorname{u}^1},\alpha^1\Big].\]

By~\cite[Proposition~0.8]{T3}, we can assume that $\alpha^1$ is
invertible in $\CATC$ since $\Lambda^1$ is invertible in $\CATC\left[\SETWinv\right]$.
By~\cite[Lemma~6.1]{T3} applied to $\alpha:=i_{\operatorname{w}\circ\operatorname{u}^1}$ and to
$\Lambda^2$, there are an object $D^2$, a morphism $\operatorname{u}^2:D^2\rightarrow D^1$ such that
$\operatorname{w}\circ\operatorname{u}^1\circ\operatorname{u}^2$ belongs to $\SETW$, and a
$2$-morphism

\[\alpha^2:\,q^2\circ\operatorname{w}\circ\operatorname{u}^1\circ\operatorname{u}^2\Longrightarrow
p^2\circ r\circ\operatorname{u}^1\circ\operatorname{u}^2\]
in $\CATC$, such that

\[\Lambda^2=\Big[D^2,\operatorname{w}\circ\operatorname{u}^1\circ\operatorname{u}^2,
\operatorname{u}^1\circ\operatorname{u}^2,i_{\operatorname{w}
\circ\operatorname{u}^1\circ\operatorname{u}^2},\alpha^2\Big].\]

As above, we can assume that $\alpha^2$ is invertible in $\CATC$ since $\Lambda^2$ is invertible
in $\CATC\left[\SETWinv\right]$. Now by Lemma~\ref{lem-12} (in the special case when $\CATC$ is a
$2$-category), we have:

\begin{equation}\label{eq-48}
i_{(B^1,\id_{B^1},f^1)}\ast\Lambda^1=
\Big[D^2,\operatorname{w}\circ\operatorname{u}^1\circ\operatorname{u}^2,\operatorname{u}^1
\circ\operatorname{u}^2,i_{\operatorname{w}
\circ\operatorname{u}^1\circ\operatorname{u}^2},i_{f^1}\ast\alpha^1\ast i_{\operatorname{u}^2}\Big]
\end{equation}
and

\begin{equation}\label{eq-110}
i_{(B^2,\id_{B^2},f^2)}\ast\Lambda^2=
\Big[D^2,\operatorname{w}\circ\operatorname{u}^1\circ\operatorname{u}^2,\operatorname{u}^1
\circ\operatorname{u}^2,i_{\operatorname{w}
\circ\operatorname{u}^1\circ\operatorname{u}^2},i_{f^2}\ast\alpha^2\Big].  
\end{equation}

Moreover, using \eqref{eq-91} and Lemma~\ref{lem-09} (in the special case when $\CATC$ is a
$2$-category), we have:

\begin{equation}\label{eq-50} 
\Omega\ast i_{(\overline{D},\operatorname{w},r)}=\Big[C,\id_C,\id_C,i_{\id_C},\omega\Big]
\ast i_{(\overline{D},\operatorname{w},r)}=\Big[\overline{D},\id_{\overline{D}},\id_{\overline{D}},
i_{\operatorname{w}},\omega\ast i_r\Big].
\end{equation}

In addition, by Lemma~\ref{lem-10} each $2$-morphism of the form $\Theta_{\bullet}$ in \eqref{eq-49}
is trivial. Therefore, by replacing \eqref{eq-159},
\eqref{eq-48}, \eqref{eq-110} and \eqref{eq-50} in \eqref{eq-49}, we get:

\begin{gather}
\nonumber \Big[\overline{D},\id_{\overline{D}},\id_{\overline{D}},i_{\operatorname{w}},
 \omega\ast i_r\Big]\odot\Big[D^2,\operatorname{w}\circ\operatorname{u}^1\circ\operatorname{u}^2,
 \operatorname{u}^1\circ\operatorname{u}^2,i_{\operatorname{w}\circ\operatorname{u}^1\circ
 \operatorname{u}^2},i_{f^1}\ast\alpha^1\ast i_{\operatorname{u}^2}\Big]= \\
\label{eq-33} =\Big[D^2,\operatorname{w}\circ\operatorname{u}^1\circ\operatorname{u}^2,
 \operatorname{u}^1\circ\operatorname{u}^2,i_{\operatorname{w}\circ\operatorname{u}^1\circ
 \operatorname{u}^2},i_{f^2}\ast\alpha^2\Big]\odot\Big[D,\id_D,\id_D,i_{\id_D},\lambda\Big].  
\end{gather}

This is equivalent to saying that

\begin{gather*}
\Big[D^2,\operatorname{w}\circ\operatorname{u}^1\circ\operatorname{u}^2,\operatorname{u}^1\circ
 \operatorname{u}^2,i_{\operatorname{w}\circ\operatorname{u}^1\circ\operatorname{u}^2},
 \Big(\omega\ast i_{r\circ\operatorname{u}^1\circ\operatorname{u}^2}\Big)\odot\Big(i_{f^1}\ast
 \alpha^1\ast i_{\operatorname{u}^2}\Big)\Big]= \\
=\Big[D^2,\operatorname{w}\circ\operatorname{u}^1\circ\operatorname{u}^2,\operatorname{u}^1\circ
 \operatorname{u}^2,i_{\operatorname{w}\circ\operatorname{u}^1\circ\operatorname{u}^2},
 \Big(i_{f^2}\ast\alpha^2\Big)\odot\Big(\lambda\ast i_{\operatorname{w}\circ\operatorname{u}^1\circ
 \operatorname{u}^2}\Big)\Big].
\end{gather*}

So by~\cite[Proposition~0.7]{T3} there are an object $E$ and a morphisms $\operatorname{u}^3:E
\rightarrow D^2$, such that $\operatorname{w}\circ\operatorname{u}^1\circ\operatorname{u}^2
\circ\operatorname{u}^3$ belongs to $\SETW$ and such that

\begin{equation}\label{eq-57}
\Big(\omega\ast i_{r\circ\operatorname{u}^1\circ\operatorname{u}^2\circ\operatorname{u}^3}\Big)
\odot\Big(i_{f^1}\ast\alpha^1\ast i_{\operatorname{u}^2\circ\operatorname{u}^3}\Big)=
\Big(i_{f^2}\ast\alpha^2\ast i_{\operatorname{u}^3}\Big)\odot\Big(\lambda\ast i_{\operatorname{w}
\circ\operatorname{u}^1\circ\operatorname{u}^2\circ\operatorname{u}^3}\Big).
\end{equation}

Now we define

\begin{gather*}
\operatorname{v}:=\operatorname{w}\circ\operatorname{u}^1\circ\operatorname{u}^2\circ
 \operatorname{u}^3:\,E\longrightarrow
 D,\quad\quad q:=r\circ\operatorname{u}^1\circ\operatorname{u}^2\circ\operatorname{u}^3:\,E
 \longrightarrow C, \\
\lambda^1:=\alpha^1\ast i_{\operatorname{u}^2\circ\operatorname{u}^3}:\,q^1
 \circ\operatorname{v}\Longrightarrow p^1\circ q,\quad\quad\lambda^2:=\alpha^2\ast
 i_{\operatorname{u}^3}:\,\,q^2\circ\operatorname{v}\Longrightarrow p^2\circ q.  
\end{gather*}

So \eqref{eq-57} reads as follows:

\[\Big(\omega\ast i_{q}\Big)\odot\Big(i_{f^1}\ast\lambda^1\Big)= 
\Big(i_{f^2}\ast\lambda^2\Big)\odot\Big(\lambda\ast i_{\operatorname{v}}\Big),\]

hence we have proved that (i2) implies (i3).\\

Conversely, let us assume that (i3) holds. Let us fix any object $D$, any pair of morphisms $q^1:
D\rightarrow B^1$, $q^2:D\rightarrow B^2$ in $\CATC$; then we have to prove that condition
\textbf{\hyperref[B1]{B1}}$(D,(D,\id_D,q^1),(D,\id_D,q^2))$ holds for diagram \eqref{eq-91}.
So let us fix any invertible $2$-morphism 

\begin{equation}\label{eq-111}
\Lambda:\Big(B^1,\id_{B^1},f^1\Big)\circ\Big(D,\id_D,q^1\Big)\Longrightarrow\Big(B^2,\id_{B^2},f^2
\Big)\circ\Big(D,\id_D,q^2\Big)
\end{equation}
in $\CATC\left[\SETWinv\right]$. By~\cite[Lemma~6.1]{T3} applied to $\alpha:=i_{\id_D}$
and $\Lambda$,
there are an object $\overline{D}$,
a morphism $\operatorname{w}:\overline{D}\rightarrow D$ in $\SETW$ and an invertible $2$-morphism 
$\lambda:f^1\circ q^1\circ\operatorname{w}\Rightarrow f^2\circ q^2\circ
\operatorname{w}$ in $\CATC$, such that

\begin{equation}\label{eq-113}
\Lambda=\Big[\overline{D},\operatorname{w},\operatorname{w},
i_{\operatorname{w}},\lambda\Big]:\,\Big(D,\id_D,f^1\circ q^1\Big)\Longrightarrow
\Big(D,\id_D,f^2\circ q^2\Big).
\end{equation}

Now we apply condition (i3) for the set of data $(\overline{D},q^1\circ\operatorname{w},q^2\circ
\operatorname{w},\lambda)$. Then there are an object $E$, a morphism
$\operatorname{v}:E\rightarrow\overline{D}$ in $\SETW$, a morphism
$q:E\rightarrow C$ and a pair of invertible $2$-morphisms

\[\lambda^m:\,q^m\circ\operatorname{w}\circ\operatorname{v}\Longrightarrow p^m\circ q\quad
\textrm{for }m=1,2,\]
in $\CATC$, such that 

\begin{equation}\label{eq-51}
\Big(\omega\ast i_{q}\Big)\odot\Big(i_{f^1}\ast\lambda^1\Big)=
\Big(i_{f^2}\ast\lambda^2\Big)\odot\Big(\lambda\ast i_{\operatorname{v}}\Big).  
\end{equation}

Now we consider the morphism $s:=(E,\operatorname{w}\circ\operatorname{v},q):D\rightarrow C$
in $\CATC\left[\SETWinv\right]$; moreover, for each $m=1,2$ we consider the invertible $2$-morphism

\[\Lambda^m:\Big(D,\id_D,q^m\Big)\Longrightarrow\Big(C,\id_C,p^m\Big)\circ s=
\Big(E,\operatorname{w}\circ\operatorname{v},p^m\circ q\Big),\]
represented by the data in the internal part of the following diagram

\begin{equation}\label{eq-114}
\begin{tikzpicture}[xscale=2.2,yscale=-0.8]
    \node (A0_2) at (2, 0) {$D$};
    \node (A2_2) at (2, 2) {$E$};
    \node (A2_0) at (0, 2) {$D$};
    \node (A2_4) at (4, 2) {$B^m$.};
    \node (A4_2) at (2, 4) {$E$};
    
    \node (A2_3) at (2.8, 2) {$\Downarrow\,\lambda^m$};
    \node (A2_1) at (1.2, 2) {$\Downarrow\,i_{\operatorname{w}\circ\operatorname{v}}$};

    \path (A4_2) edge [->]node [auto,swap] {$\scriptstyle{p^m\circ q}$} (A2_4);
    \path (A0_2) edge [->]node [auto] {$\scriptstyle{q^m}$} (A2_4);
    \path (A2_2) edge [->]node [auto,swap] {$\scriptstyle{\operatorname{w}
      \circ\operatorname{v}}$} (A0_2);
    \path (A2_2) edge [->]node [auto] {$\scriptstyle{\id_E}$} (A4_2);
    \path (A4_2) edge [->]node [auto] {$\scriptstyle{\operatorname{w}\circ\operatorname{v}}$} (A2_0);
    \path (A0_2) edge [->]node [auto,swap] {$\scriptstyle{\id_D}$} (A2_0);
\end{tikzpicture}
\end{equation}

By Lemma~\ref{lem-09} we have:

\begin{equation}\label{eq-76}
\Omega\ast i_s\stackrel{\eqref{eq-91}}{=}
\Big[C,\id_C,\id_C,i_{\id_C},\omega\Big]\ast i_{(E,\operatorname{w}\circ\operatorname{v},q)}=
\Big[E,\id_E,\id_E,i_{\operatorname{w}\circ\operatorname{v}},\omega\ast i_q\Big];  
\end{equation}
moreover, by Lemma~\ref{lem-12} we have the following formula for each $m=1,2$:

\begin{gather}
\nonumber i_{(B^m,\id_{B^m},f^m)}\ast\Lambda^m\stackrel{\eqref{eq-114}}{=}i_{(B^m,\id_{B^m},f^m)}
 \ast\Big[E,\operatorname{w}\circ\operatorname{v},\id_E,
 i_{\operatorname{w}\circ\operatorname{v}},\lambda^m\Big]= \\
\label{eq-64} =\Big[E,\operatorname{w}\circ\operatorname{v},
 \id_E,i_{\operatorname{w}\circ\operatorname{v}},i_{f^m}\ast
 \lambda^m\Big]:\,\Big(D,\id_D,f^m\circ q^m\Big)\Longrightarrow\Big(E,
 \operatorname{w}\circ\operatorname{v},f^m\circ p^m\circ q\Big).  
\end{gather}

Therefore, using \eqref{eq-76}, \eqref{eq-64} for $m=1$ and Lemma~\ref{lem-10}, we get:

\begin{gather}
\nonumber \Big(\Omega\ast i_s\Big)
 \odot\Thetaa{(B^1,\id_{B^1},f^1)}{(C,\id_C,p^1)}{s}\odot\Big(i_{(B^1,
 \id_{B^1},f^1)}\ast\Lambda^1\Big)= \\
\nonumber =\Big[E,
 \id_E,\id_E,i_{\operatorname{w}\circ
 \operatorname{v}},\omega\ast i_q\Big]\odot i_{\left(E,\operatorname{w}\circ\operatorname{v},
 f^1\circ p^1\circ q\right)}\odot \Big[E,\operatorname{w}\circ\operatorname{v},
 \id_E,i_{\operatorname{w}\circ\operatorname{v}},i_{f^1}\ast
 \lambda^1\Big]= \\
\label{eq-61}=\Big[E,\operatorname{w}\circ\operatorname{v},
 \id_E,i_{\operatorname{w}\circ\operatorname{v}},\Big(\omega\ast i_q\Big)\odot
 \Big(i_{f^1}\ast\lambda^1\Big)\Big].  
\end{gather}

Using \eqref{eq-64} for $m=2$, \eqref{eq-113} and Lemma~\ref{lem-10}, we have

\begin{gather}
\nonumber \Thetaa{(B^2,\id_{B^2},f^2)}{(C,\id_C,p^2)}{s}\odot\Big(i_{(B^2,\id_{B^2},f^2)}\ast
  \Lambda^2\Big)\odot\Lambda= \\
\nonumber =i_{\left(E,\operatorname{w}\circ\operatorname{v},f^2\circ p^2\circ q\right)}\odot
 \Big[E,\operatorname{w}\circ\operatorname{v},\id_E,i_{\operatorname{w}\circ\operatorname{v}},
 i_{f^2}\ast\lambda^2\Big]\odot\Big[E,
 \operatorname{w}\circ\operatorname{v},\operatorname{w}\circ\operatorname{v},i_{\operatorname{w}
 \circ\operatorname{v}},\lambda\ast i_{\operatorname{v}}\Big]= \\
\label{eq-08} =\Big[E,\operatorname{w}\circ\operatorname{v},\id_E,i_{\operatorname{w}\circ
 \operatorname{v}},\Big(i_{f^2}\ast\lambda^2\Big)\odot\Big(\lambda\ast i_{\operatorname{v}}\Big)
 \Big].  
\end{gather}

Then using \eqref{eq-51} we get that \eqref{eq-61} and \eqref{eq-08} coincide. So we conclude
that condition \textbf{\hyperref[B1]{B1}}$(D,(D,\id_D,q^1),(D,\id_D,q^2))$ holds for diagram
\eqref{eq-91} in $\CATC\left[\SETWinv\right]$, i.e.\ property (i2) is verified.
\end{proof}

\section{Condition A2 in a bicategory of fractions}
\begin{lem}\label{lem-02}
Let us fix the same notations of \emphatic{Lemma~\ref{lem-11}}. Then the
following facts are equivalent:

\begin{enumerate}[\emphatic{(}{ii}1\emphatic{)}]
 \item for any object $D$, condition \emphatic{\textbf{\hyperref[A2]{A2}}}$(D)$
  holds for diagram \eqref{eq-91} in $\CATC\left[\SETWinv\right]$;
 \item for any object $D$ and for any pair of morphisms $t,t':D\rightarrow C$ in $\CATC$, condition
  \emphatic{\textbf{\hyperref[B2]{B2}}}$(D,(D,\id_D,t),(D,\id_D,t'))$ holds for diagram \eqref{eq-91}.
\end{enumerate}
\end{lem}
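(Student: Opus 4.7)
The plan is to mirror closely the proof of Lemma~\ref{lem-11}, with condition \textbf{\hyperref[B1]{B1}} replaced throughout by condition \textbf{\hyperref[B2]{B2}}, and Proposition~\ref{prop-03} replaced by Proposition~\ref{prop-04}. The implication (ii1)$\Rightarrow$(ii2) is immediate from Remark~\ref{rem-02}: the latter reformulates \textbf{\hyperref[A2]{A2}}$(D)$ as the statement that \textbf{\hyperref[B2]{B2}}$(D,t,t')$ holds for every pair of morphisms $t,t':D\rightarrow C$ in $\CATC\left[\SETWinv\right]$, and (ii2) is the special case where both morphisms have the canonical form $(D,\id_D,-)$.

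For the converse, I would fix any object $D$ and any pair of morphisms $t=(D^1,\operatorname{w}^1,t^1)$ and $t'=(D^2,\operatorname{w}^2,t^2)$ from $D$ to $C$ in $\CATC\left[\SETWinv\right]$, and show that \textbf{\hyperref[B2]{B2}}$(D,t,t')$ holds for diagram~\eqref{eq-91}. First, axiom (\hyperref[BF3]{BF3}) applied to the cospan $(\operatorname{w}^1,\operatorname{w}^2)$ yields an object $D^3$, morphisms $\operatorname{v}^m:D^3\rightarrow D^m$ with $\operatorname{v}^2\in\SETW$, and an invertible $2$-morphism $\alpha:\operatorname{w}^1\circ\operatorname{v}^1\Rightarrow\operatorname{w}^2\circ\operatorname{v}^2$ in $\CATC$. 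Then $\operatorname{w}^2\circ\operatorname{v}^2$ belongs to $\SETW$ by (\hyperref[BF2]{BF2}), so by~\cite[Proposition~20]{Pr} the morphism $e:=(D^3,\id_{D^3},\operatorname{w}^2\circ\operatorname{v}^2)$ is an internal equivalence $D^3\rightarrow D$ in $\CATC\left[\SETWinv\right]$. Applying hypothesis (ii2) to the pair of morphisms $t^m\circ\operatorname{v}^m:D^3\rightarrow C$ for $m=1,2$ gives \textbf{\hyperref[B2]{B2}}$(D^3,(D^3,\id_{D^3},t^1\circ\operatorname{v}^1),(D^3,\id_{D^3},t^2\circ\operatorname{v}^2))$ for~\eqref{eq-91}.

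The plan is then to transport this property to the desired one in three steps that mirror those in the proof of Lemma~\ref{lem-11}. In the first step, using the fixed choices \hyperref[C]{C}$(\SETW)$ for the cospan $(\operatorname{w}^2\circ\operatorname{v}^2,\operatorname{w}^2\circ\operatorname{v}^2)$ together with axioms (\hyperref[BF4a]{BF4a}) and (\hyperref[BF4b]{BF4b}), I would construct for each $m=1,2$ an invertible $2$-morphism $\Omega^m:(D^3,\id_{D^3},t^m\circ\operatorname{v}^m)\Rightarrow(D^4,\operatorname{r},t^m\circ\operatorname{v}^m\circ\operatorname{q})$ in $\CATC\left[\SETWinv\right]$ (with $D^4,\operatorname{r},\operatorname{q}$ given by those choices, in complete analogy with the $2$-morphism denoted $\Omega^m$ in the proof of Lemma~\ref{lem-11}); then Proposition~\ref{prop-04}~(a)$\Leftrightarrow$(b) yields \textbf{\hyperref[B2]{B2}}$(D^3,(D^4,\operatorname{r},t^1\circ\operatorname{v}^1\circ\operatorname{q}),(D^4,\operatorname{r},t^2\circ\operatorname{v}^2\circ\operatorname{q}))$ for~\eqref{eq-91}. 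In the second step, the same choices \hyperref[C]{C}$(\SETW)$ give the equality $(D^4,\operatorname{r},t^m\circ\operatorname{v}^m\circ\operatorname{q})=(D^3,\operatorname{w}^2\circ\operatorname{v}^2,t^m\circ\operatorname{v}^m)\circ e$ of morphisms in $\CATC\left[\SETWinv\right]$ for each $m$, so Proposition~\ref{prop-04}~(a)$\Leftrightarrow$(c) applied to the internal equivalence $e$ yields \textbf{\hyperref[B2]{B2}}$(D,(D^3,\operatorname{w}^2\circ\operatorname{v}^2,t^1\circ\operatorname{v}^1),(D^3,\operatorname{w}^2\circ\operatorname{v}^2,t^2\circ\operatorname{v}^2))$ for~\eqref{eq-91}. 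Finally, the invertible $2$-morphisms
\[
\widetilde{\Omega}^1:=\Big[D^3,\id_{D^3},\operatorname{v}^1,\alpha,i_{t^1\circ\operatorname{v}^1}\Big]:(D^3,\operatorname{w}^2\circ\operatorname{v}^2,t^1\circ\operatorname{v}^1)\Longrightarrow t,
\]
\[
\widetilde{\Omega}^2:=\Big[D^3,\id_{D^3},\operatorname{v}^2,i_{\operatorname{w}^2\circ\operatorname{v}^2},i_{t^2\circ\operatorname{v}^2}\Big]:(D^3,\operatorname{w}^2\circ\operatorname{v}^2,t^2\circ\operatorname{v}^2)\Longrightarrow t'
\]
(defined exactly as in the end of the proof of Lemma~\ref{lem-11}) allow one more application of Proposition~\ref{prop-04}~(a)$\Leftrightarrow$(b) to conclude \textbf{\hyperref[B2]{B2}}$(D,t,t')$ for~\eqref{eq-91}, as required.

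The only technically delicate point is the explicit construction of the $2$-morphisms $\Omega^m$ in the first step and the verification of the compositional identity in the second step; both are, however, line-by-line analogues of computations already carried out in the proof of Lemma~\ref{lem-11} (the hardest part being, as there, a careful bookkeeping of associators and of $2$-morphism representatives in $\CATC\left[\SETWinv\right]$), so no conceptually new argument is required.
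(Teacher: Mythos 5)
Your proposal is correct and is precisely the argument the paper intends: the paper explicitly omits the proof, stating that it "follows the same lines of the proof of Lemma~\ref{lem-11}, using Proposition~\ref{prop-04} instead of Proposition~\ref{prop-03}", and your three-step transport via \textbf{\hyperref[B2]{B2}} and the equivalences (a)$\Leftrightarrow$(b), (a)$\Leftrightarrow$(c) of Proposition~\ref{prop-04} is exactly that adaptation. The intermediate constructions ($D^4$, $\operatorname{r}$, $\operatorname{q}$, the $2$-morphisms $\Omega^m$ and $\widetilde{\Omega}^m$) carry over verbatim since they concern only the $1$-morphisms and not whether one is checking \textbf{\hyperref[B1]{B1}} or \textbf{\hyperref[B2]{B2}}.
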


The proof follows the same lines of the proof of Lemma~\ref{lem-11}, using
Proposition~\ref{prop-04} instead of Proposition~\ref{prop-03}, so we omit the details.

\begin{lem}\label{lem-01}
Let us fix the same notations of \emphatic{Lemma~\ref{lem-11}}. Then the following facts
are equivalent:

\begin{enumerate}[\emphatic{(}{ii}1\emphatic{)}]
\setcounter{enumi}{1}
 \item for any object $D$ and for any pair of morphisms $t,t':D\rightarrow C$ in $\CATC$, condition
  \emphatic{\textbf{\hyperref[B2]{B2}}}$(D,(D,\id_D,t),(D,\id_D,t'))$ holds for diagram \eqref{eq-91};
 \item for any object $D$, the following $2$ conditions hold:
  \begin{enumerate}[\emphatic{(}a\emphatic{)}]
   \setcounter{enumii}{1}
   \item given any pair of morphisms $t,t':D\rightarrow C$ and any pair of
    invertible $2$-morphisms $\gamma^m:p^m\circ t\Rightarrow p^m\circ t'$ for $m=1,2$ in $\CATC$,
    such that
    
    \begin{gather}
    \nonumber \thetab{f^2}{p^2}{t'}\odot\Big(\omega\ast i_{t'}\Big)\odot\thetaa{f^1}{p^1}{t'}\odot
     \Big(i_{f^1}\ast\gamma^1\Big)= \\
    \label{eq-65} =\Big(i_{f^2}\ast\gamma^2\Big)\odot\thetab{f^2}{p^2}{t}\odot\Big(\omega\ast
     i_t\Big)\odot\thetaa{f^1}{p^1}{t},  
    \end{gather}
    there are an object $F$, a morphism $\operatorname{u}:F\rightarrow D$ in $\SETW$ and an
    invertible $2$-morphism $\gamma:t\circ\operatorname{u}\Rightarrow t'\circ\operatorname{u}$ in
    $\CATC$, such that
    
    \begin{equation}\label{eq-141}
    \thetaa{p^m}{t'}{\operatorname{u}}\odot\Big(i_{p^m}\ast\gamma\Big)=\Big(\gamma^m\ast
    i_{\operatorname{u}}\Big)\odot\thetaa{p^m}{t}{\operatorname{u}}\quad\textrm{for}\,\,\,m=1,2;
    \end{equation}

   \item given any set of data $(t,t',\gamma^1,\gamma^2,F,\operatorname{u},\gamma)$
    as in \emphatic{(}b\emphatic{)}, if there is another choice
    of data $\widetilde{F}$, $\widetilde{\operatorname{u}}:\widetilde{F}\rightarrow D$ in $\SETW$
    and $\widetilde{\gamma}:t\circ\widetilde{\operatorname{u}}\Rightarrow t'\circ
    \widetilde{\operatorname{u}}$ invertible, such that
    
    \begin{equation}\label{eq-09}
    \thetaa{p^m}{t'}{\widetilde{\operatorname{u}}}\odot\Big(i_{p^m}\ast\widetilde{\gamma}\Big)=\Big(
    \gamma^m\ast i_{\widetilde{\operatorname{u}}}\Big)\odot\thetaa{p^m}{t}
    {\widetilde{\operatorname{u}}}\quad\textrm{for}\,\,\,m=1,2,
    \end{equation}
    then there are an object $G$, a morphism $\operatorname{z}:G\rightarrow F$ in $\SETW$, a morphism
    $\widetilde{\operatorname{z}}:G\rightarrow\widetilde{F}$ and an invertible $2$-morphism
    $\mu:\operatorname{u}\circ\operatorname{z}\Rightarrow\widetilde{\operatorname{u}}\circ
    \widetilde{\operatorname{z}}$, such that

    \begin{gather}
    \nonumber \thetaa{t'}{\widetilde{\operatorname{u}}}{\widetilde{\operatorname{z}}}\odot
     \Big(i_{t'}\ast\mu\Big)\odot\thetab{t'}{\operatorname{u}}{\operatorname{z}}\odot
     \Big(\gamma\ast i_{\operatorname{z}}\Big)= \\
    \label{eq-86} =\Big(\widetilde{\gamma}\ast i_{\widetilde{\operatorname{z}}}\Big)\odot\thetaa{t}
     {\widetilde{\operatorname{u}}}{\widetilde{\operatorname{z}}}\odot\Big(i_t\ast\mu
     \Big)\odot\thetab{t}{\operatorname{u}}{\operatorname{z}}.  
    \end{gather}
  \end{enumerate}
\end{enumerate}
\end{lem}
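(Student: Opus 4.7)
The proof mirrors that of Lemma~\ref{lem-15}, with (ii3) now consisting of two conditions (b) and (c) that correspond respectively to the existence and uniqueness parts of condition \textbf{\hyperref[B2]{B2}}. As before, for simplicity of exposition one may assume $\CATC$ is a $2$-category; the general case only requires inserting associators of $\CATC$ at the appropriate places.

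For the direction (ii2)$\Rightarrow$(ii3), consider first the existence clause (b). Given a quadruple $(t,t',\gamma^1,\gamma^2)$ satisfying \eqref{eq-65}, define candidate $2$-morphisms $\Gamma^m:=[D,\id_D,\id_D,i_{\id_D},\gamma^m]$ in $\CATC\left[\SETWinv\right]$; they are invertible since each $\gamma^m$ is. Using Lemma~\ref{lem-09}, Lemma~\ref{lem-10} and Lemma~\ref{lem-12} (the tools already employed in the proof of Lemma~\ref{lem-15}), the analog of equation \eqref{eq-71} for the pair $((D,\id_D,t),(D,\id_D,t'))$ reduces exactly to the hypothesis \eqref{eq-65}. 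Thus (ii2) produces a unique invertible $\Gamma:(D,\id_D,t)\Rightarrow(D,\id_D,t')$ with $i_{r^m}\ast\Gamma=\Gamma^m$, where $r^m:=(C,\id_C,p^m)$. Apply \cite[Lemma~6.1]{T3} to $\alpha:=i_{\id_D}$ and $\Gamma$ to obtain an object $F$, a morphism $\operatorname{u}:F\to D$ in $\SETW$, and $\gamma:t\circ\operatorname{u}\Rightarrow t'\circ\operatorname{u}$ with $\Gamma=[F,\operatorname{u},\operatorname{u},i_{\operatorname{u}},\gamma]$; \cite[Proposition~0.8]{T3} lets us take $\gamma$ invertible. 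The identity $i_{r^m}\ast\Gamma=\Gamma^m$, translated through Lemma~\ref{lem-12} and then \cite[Proposition~0.7]{T3} (after passing, if necessary, to a suitable refinement of $(F,\operatorname{u})$), becomes precisely \eqref{eq-141}.

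For the uniqueness clause (c), given two quadruples $(F,\operatorname{u},\gamma)$ and $(\widetilde{F},\widetilde{\operatorname{u}},\widetilde{\gamma})$ satisfying \eqref{eq-141} with the same $\gamma^1,\gamma^2$, form $\Gamma:=[F,\operatorname{u},\operatorname{u},i_{\operatorname{u}},\gamma]$ and $\widetilde{\Gamma}:=[\widetilde{F},\widetilde{\operatorname{u}},\widetilde{\operatorname{u}},i_{\widetilde{\operatorname{u}}},\widetilde{\gamma}]$. The computation of the previous paragraph, run in reverse, shows $i_{r^m}\ast\Gamma=i_{r^m}\ast\widetilde{\Gamma}=[D,\id_D,\id_D,i_{\id_D},\gamma^m]$. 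The uniqueness in (ii2) forces $\Gamma=\widetilde{\Gamma}$ in $\CATC\left[\SETWinv\right]$, and unpacking the equivalence relation on quintuples described in Appendix~\ref{sec-06} yields precisely an object $G$, a morphism $\operatorname{z}:G\to F$ in $\SETW$, a morphism $\widetilde{\operatorname{z}}:G\to\widetilde{F}$, and an invertible $\mu$ satisfying \eqref{eq-86}.

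For the converse (ii3)$\Rightarrow$(ii2), fix $t,t':D\to C$ in $\CATC$ and invertible $\Gamma^m:r^m\circ(D,\id_D,t)\Rightarrow r^m\circ(D,\id_D,t')$ for $m=1,2$ satisfying the analog of \eqref{eq-71}. Using \cite[Lemma~6.1]{T3}, \cite[Proposition~0.8]{T3}, axiom (\hyperref[BF3]{BF3}) and \cite[Proposition~0.7]{T3}, one finds a common representative: an object $F$, a morphism $\operatorname{u}:F\to D$ in $\SETW$, and invertible $2$-morphisms $\gamma^m:p^m\circ t\circ\operatorname{u}\Rightarrow p^m\circ t'\circ\operatorname{u}$ in $\CATC$ that represent the $\Gamma^m$. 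Translated via Lemmas~\ref{lem-09}, \ref{lem-10}, \ref{lem-12}, the analog of \eqref{eq-71} becomes the equation \eqref{eq-65} for $(t\circ\operatorname{u},t'\circ\operatorname{u},\gamma^1,\gamma^2)$. Applying (b) of (ii3) with $F$ in place of $D$ produces $(F',\operatorname{u}',\gamma)$ with $\operatorname{u}'\in\SETW$ satisfying \eqref{eq-141}; then $\Gamma:=[F',\operatorname{u}\circ\operatorname{u}',\operatorname{u}\circ\operatorname{u}',i_{\operatorname{u}\circ\operatorname{u}'},\gamma]$ is invertible and satisfies $i_{r^m}\ast\Gamma=\Gamma^m$. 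Uniqueness of $\Gamma$ is obtained from (c) of (ii3): if $\Gamma'$ is another such $2$-morphism, extract a representative of it, compare it with $\Gamma$'s representative using (c), and read off equality in $\CATC\left[\SETWinv\right]$ from the equivalence relation on quintuples.

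The main obstacle is bookkeeping. Each time one translates between equations in $\CATC\left[\SETWinv\right]$ and in $\CATC$, the refinement lemmas \cite[Lemma~6.1]{T3} and \cite[Proposition~0.7]{T3} force passage to successively smaller objects via morphisms in $\SETW$, and one must repeatedly invoke Lemmas~\ref{lem-09}, \ref{lem-10}, \ref{lem-12} to control how compositions of $1$- and $2$-morphisms behave. The most delicate step is matching the uniqueness statement in \textbf{\hyperref[B2]{B2}} with the existence of the refinement data $(G,\operatorname{z},\widetilde{\operatorname{z}},\mu)$ in (c), which requires carefully unpacking the definition of equivalence of $2$-morphisms in a bicategory of fractions.
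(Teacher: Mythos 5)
Your proposal is correct and follows essentially the same route as the paper: translate between $2$-morphisms of $\CATC\left[\SETWinv\right]$ and $2$-morphisms of $\CATC$ via \cite[Lemma~6.1]{T3}, \cite[Proposition~0.7]{T3} and Lemmas~\ref{lem-09}, \ref{lem-10}, \ref{lem-12}, with (b) matching the existence part and (c) the uniqueness part of \textbf{\hyperref[B2]{B2}}. The only point you gloss over is that ``unpacking the equivalence relation'' in clause (c) does not immediately give $\operatorname{z}$ in $\SETW$ (one only gets that a composite with $\operatorname{z}$ lies in $\SETW$), so an extra saturation step as in Lemmas~\ref{lem-06} and~\ref{lem-19} is needed; this is exactly how the paper handles it.
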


\begin{proof}
Again, we give a complete proof in the case when $\CATC$ is a $2$-category.
Let us suppose that (ii2) holds, let us fix any object $D$ and
let us prove that (b) holds. So let us fix any tuple
$(t,t',\gamma^1,\gamma^2)$ as in (b), such that \eqref{eq-65} is satisfied. Then for each $m=1,2$
we define an invertible $2$-morphism $\Gamma^m$ from

\[\Big(C,\id_C,p^m\Big)\circ\Big(D,\id_D,t\Big)=\Big(D,\id_D,p^m\circ t\Big)\]
to

\[\Big(C,\id_C,p^m\Big)\circ\Big(D,\id_D,t'\Big)=\Big(D,\id_D,p^m\circ t'\Big)\]
in $\CATC\left[\SETWinv\right]$ as the $2$-morphism represented by the following diagram:

\begin{equation}\label{eq-03}
\begin{tikzpicture}[xscale=2.2,yscale=-0.8]
    \node (A0_2) at (2, 0) {$D$};
    \node (A2_2) at (2, 2) {$D$};
    \node (A2_0) at (0, 2) {$D$};
    \node (A2_4) at (4, 2) {$B^m$.};
    \node (A4_2) at (2, 4) {$D$};
    
    \node (A2_3) at (2.8, 2) {$\Downarrow\,\gamma^m$};
    \node (A2_1) at (1.2, 2) {$\Downarrow\,i_{\id_D}$};
    
    \path (A4_2) edge [->]node [auto,swap] {$\scriptstyle{p^m\circ t'}$} (A2_4);
    \path (A0_2) edge [->]node [auto] {$\scriptstyle{p^m\circ t}$} (A2_4);
    \path (A2_2) edge [->]node [auto,swap] {$\scriptstyle{\id_D}$} (A0_2);
    \path (A2_2) edge [->]node [auto] {$\scriptstyle{\id_D}$} (A4_2);
    \path (A4_2) edge [->]node [auto] {$\scriptstyle{\id_D}$} (A2_0);
    \path (A0_2) edge [->]node [auto,swap] {$\scriptstyle{\id_D}$} (A2_0);
\end{tikzpicture}
\end{equation}

Using \eqref{eq-91} and \eqref{eq-03} together with Lemmas~\ref{lem-10}, \ref{lem-09}
and~\ref{lem-12}, we have

\begin{gather}
\nonumber \Thetab{(B^2,\id_{B^2},f^2)}{(C,\id_C,p^2)}{(D,\id_D,t')}\odot\Big(\Omega
 \ast i_{(D,\id_D,t')}\Big)\odot \\
\nonumber \odot\,\Thetaa{(B^1,\id_{B^1},f^1)}{(C,\id_C,p^1)}{(D,\id_D,t')}\odot\Big(
 i_{(B^1,\id_{B^1},f^1)}\ast\Gamma^1\Big)= \\
\nonumber =i_{\left(D,\id_D,f^2\circ p^2\circ t'\right)}\odot\Big[D,\id_D,\id_D,
 i_{\id_D},\omega\ast i_{t'}\Big]\odot \\
\nonumber \odot\, i_{\left(D,\id_D,f^1\circ p^1\circ t'\right)}\odot\Big[D,\id_D,\id_D,
 i_{\id_D},i_{f^1}\ast\gamma^1\Big]= \\
\nonumber =\Big[D,\id_D,\id_D,i_{\id_D},\Big(\omega\ast i_{t'}\Big)
 \odot\Big(i_{f^1}\ast\gamma^1\Big)\Big]\stackrel{\eqref{eq-65}}{=} \\
\nonumber \stackrel{\eqref{eq-65}}{=}\Big[D,\id_D,\id_D,i_{\id_D},\Big(i_{f^2}\ast\gamma^2\Big)\odot
 \Big(\omega\ast i_t\Big)\Big]=  \\
\nonumber =\Big[D,\id_D,\id_D,i_{\id_D},i_{f^2}\ast\gamma^2\Big]\odot i_{\left(D,\id_D,f^2\circ p^2
 \circ t\right)}\odot \\
\nonumber \odot\Big[D,\id_D,\id_D,i_{\id_D},\omega\ast i_t\Big]\odot i_{\left(D,\id_D,f^1\circ p^1
 \circ t\right)}= \\ 
\nonumber =\Big(i_{(B^2,\id_{B^2},f^2)}\ast\Gamma^2\Big)\odot\Thetab{(B^2,\id_{B^2},
 f^2)}{(C,\id_C,p^2)}{(D,\id_D,t)}\odot \\
\label{eq-68} \odot\Big(\Omega\ast i_{(D,\id_D,t)}\Big)\odot
 \Thetaa{(B^1,\id_{B^1},f^1)}{(C,\id_C,p^1)}{(D,\id_D,t)}.  
\end{gather}

Since we are assuming (ii2), then \eqref{eq-68} implies that there is a unique invertible
$2$-morphism $\Gamma:(D,\id_D,t)\Rightarrow(D,\id_D,t')$ in $\CATC\left[\SETWinv\right]$, such that

\begin{equation}\label{eq-66}
\Gamma^m=i_{(C,\id_C,p^m)}\ast\Gamma\quad\textrm{for }m=1,2.
\end{equation}

By~\cite[Lemma~6.1]{T3} for $\alpha:=i_{\id_D}$ and $\Gamma$, there are an object $T$, a morphism
$\operatorname{q}:T\rightarrow D$ in $\SETW$ and a $2$-morphism $\eta:t\circ
\operatorname{q}\Rightarrow t'\circ\operatorname{q}$, such that $\Gamma=[
T,\operatorname{q},\operatorname{q},i_{\operatorname{q}},
\eta]$. Since $\Gamma$ is invertible in $\CATC\left[\SETWinv\right]$, then
by~\cite[Proposition~0.8]{T3} we can assume that $\eta$ is invertible.
Then by Lemma~\ref{lem-12} we have:

\begin{gather*}
\Big[T,\operatorname{q},\operatorname{q},i_{\operatorname{q}},\gamma^1\ast i_{\operatorname{q}}
 \Big]=\Big[D,\id_D,\id_D,i_{\id_D},\gamma^1\Big]\stackrel{\eqref{eq-03}}{=} \\
\stackrel{\eqref{eq-03}}{=}\Gamma^1
 \stackrel{\eqref{eq-66}}{=}i_{(C,\id_C,p^1)}\ast\Gamma=\Big[T,\operatorname{q},
 \operatorname{q},i_{\operatorname{q}},i_{p^1}\ast\eta\Big].
\end{gather*}

By~\cite[Proposition~0.7]{T3}, the previous identity implies that there are an object $R$ and
a morphism $\operatorname{x}^1:R\rightarrow T$, such that $\operatorname{q}\circ\operatorname{x}^1$
belongs to $\SETW$ and such that

\begin{equation}\label{eq-103}
\gamma^1\ast i_{\operatorname{q}\circ\operatorname{x}^1}=i_{p^1}\ast\eta
\ast i_{\operatorname{x}^1}.
\end{equation}

By Lemma~\ref{lem-12} we have:

\begin{gather*}
\Big[R,\operatorname{q}\circ\operatorname{x}^1,\operatorname{q}\circ\operatorname{x}^1,
 i_{\operatorname{q}\circ\operatorname{x}^1},\gamma^2\ast i_{\operatorname{q}\circ\operatorname{x}^1}
 \Big]=\Big[D,\id_D,\id_D,i_{\id_D},\gamma^2\Big]\stackrel{\eqref{eq-03}}{=}
 \Gamma^2\stackrel{\eqref{eq-66}}{=} \\
\stackrel{\eqref{eq-66}}{=}i_{(C,\id_C,p^2)}\ast\Gamma=\Big[T,\operatorname{q},\operatorname{q},
 i_{\operatorname{q}},i_{p^2}\ast\eta\Big]=\Big[R,
 \operatorname{q}\circ\operatorname{x}^1,\operatorname{q}\circ\operatorname{x}^1,
 i_{\operatorname{q}\circ\operatorname{x}^1},i_{p^2}\ast\eta\ast i_{\operatorname{x}^1}\Big].
\end{gather*}

Again by~\cite[Proposition~0.7]{T3}, the previous identity implies that there are an object
$F$ and a morphism $\operatorname{x}^2:F\rightarrow R$, such that $\operatorname{q}\circ
\operatorname{x}^1\circ\operatorname{x}^2$ belongs to $\SETW$ and such that

\begin{equation}\label{eq-104}
\gamma^2\ast i_{\operatorname{q}\circ\operatorname{x}^1\circ\operatorname{x}^2}=i_{p^2}\ast
\eta\ast i_{\operatorname{x}^1\circ\operatorname{x}^2}.
\end{equation}

We set $\operatorname{u}:=\operatorname{q}\circ\operatorname{x}^1\circ\operatorname{x}^2:F
\rightarrow D$ and 

\begin{equation}\label{eq-115}
\gamma:=\eta\ast i_{\operatorname{x}^1\circ\operatorname{x}^2}:\,\,t\circ\operatorname{u}
\Longrightarrow t'\circ\operatorname{u}.
\end{equation}

Then from \eqref{eq-103} and \eqref{eq-104} we get that $i_{p^m}\ast\gamma=\gamma^m\ast
i_{\operatorname{u}}$ for each $m=1,2$; moreover $\gamma$ is invertible because $\eta$ is so
by construction. So we have proved that (ii2) implies condition (b) for each object $D$ of $\CATC$.\\

Let us also prove that
(ii2) implies (c). So let us fix any set of data $(t,t',\gamma^1,\gamma^2,F,\operatorname{u},
\gamma)$ as in (b) and any set of data $(\widetilde{F},
\widetilde{\operatorname{u}},\widetilde{\gamma})$ as in (c). In particular, we assume that
\eqref{eq-141} and \eqref{eq-09} hold. Then we define a pair of invertible
$2$-morphisms in $\CATC\left[\SETWinv\right]$ as follows:

\[\Gamma:=\Big[F,\operatorname{u},\operatorname{u},i_{\operatorname{u}},\gamma\Big],\,
\widetilde{\Gamma}:=\Big[\widetilde{F},\widetilde{\operatorname{u}},\widetilde{\operatorname{u}},
i_{\widetilde{\operatorname{u}}},\widetilde{\gamma}\Big]:\Big(D,\id_D,t\Big)\Longrightarrow
\Big(D,\id_D,t'\Big).\]

Then by Lemma~\ref{lem-12}, for each $m=1,2$ we have

\begin{gather*}
i_{(C,\id_C,p^m)}\ast\Gamma=\Big[F,
 \operatorname{u},\operatorname{u},i_{\operatorname{u}},
 i_{p^m}\ast\gamma\Big]\stackrel{\eqref{eq-141}}{=} \\
\stackrel{\eqref{eq-141}}{=}\Big[F,\operatorname{u},
 \operatorname{u},i_{\operatorname{u}},\gamma^m\ast
 i_{\operatorname{u}}\Big]=\Big[D,\id_D,\id_D,i_{\id_D},\gamma^m
 \Big]=\Big[\widetilde{F},\widetilde{\operatorname{u}},\widetilde{\operatorname{u}},
 i_{\widetilde{\operatorname{u}}},\gamma^m\ast i_{\widetilde{\operatorname{u}}}\Big]
 \stackrel{\eqref{eq-09}}{=} \\
\stackrel{\eqref{eq-09}}{=}\Big[\widetilde{F},\widetilde{\operatorname{u}},
 \widetilde{\operatorname{u}},i_{\widetilde{\operatorname{u}}},i_{p^m}\ast\widetilde{\gamma}
 \Big]=i_{(C,\id_C,p^m)}\ast\widetilde{\Gamma}.
\end{gather*}

Then by the uniqueness part of condition \textbf{\hyperref[B2]{B2}}$(D,(D,\id_D,t),(D,\id_D,t'))$ we
conclude that $\Gamma=\widetilde{\Gamma}$.
Then by Lemma~\ref{lem-19} there are an object $G$, a morphism
$\operatorname{z}:G\rightarrow F$ in $\SETW$, a morphisms
$\widetilde{\operatorname{z}}:G\rightarrow\widetilde{F}$ and an invertible $2$-morphism
$\mu:\operatorname{u}\circ\operatorname{z}\Rightarrow\widetilde{\operatorname{u}}\circ
\widetilde{\operatorname{z}}$, such that

\[\Big(i_{t'}\ast\mu\Big)\odot\Big(\gamma\ast
i_{\operatorname{z}}\Big)\odot\Big(i_t\ast\mu^{-1}
\Big)=\widetilde{\gamma}\ast i_{\widetilde{\operatorname{z}}}.\]

Such an identity is equivalent to \eqref{eq-86} (in the case when $\CATC$ is a $2$-category),
so we have proved that (ii2) implies condition (c) for each object $D$, hence (ii3) holds.\\

Conversely, let us suppose that (ii3) holds and let us prove that (ii2) holds. So let us
fix any object $D$ and any pair of morphisms $t,t':D\rightarrow C$ in $\CATC$; we have to prove that
condition \textbf{\hyperref[B2]{B2}}$(D,(D,\id_D,t),(D,\id_D,t'))$ holds for diagram \eqref{eq-91}.
In order to do that, let us fix any pair of invertible $2$-morphisms

\[\Gamma^m:\Big(C,\id_C,p^m\Big)\circ\Big(D,\id_D,t\Big)\Longrightarrow\Big(C,\id_C,p^m\Big)\circ
\Big(D,\id_D,t'\Big)\quad\textrm{for }m=1,2\]
in $\CATC\left[\SETWinv\right]$, such that

\begin{gather}
\nonumber \Thetab{(B^2,\id_{B^2},f^2)}{(C,\id_C,p^2)}{(D,\id_D,t')}\odot\Big(\Omega\ast i_{(D,
 \id_D,t')}\Big)\odot \\
\nonumber \odot\,\Thetaa{(B^1,\id_{B^1},f^1)}{(C,\id_C,p^1)}{(D,\id_D,t')}\odot\Big(i_{(B^1,
 \id_{B^1},f^1)}\ast\Gamma^1\Big)= \\
\nonumber =\Big(i_{(B^2,\id_{B^2},f^2)}\ast\Gamma^2\Big)\odot\Thetab{(B^2,\id_{B^2},
 f^2)}{(C,\id_C,p^2)}{(D,\id_D,t)}\odot \\
\label{eq-39} \odot\Big(\Omega\ast i_{(D,\id_D,t)}\Big)\odot\Thetaa{(B^1,\id_{B^1},f^1)}{(C,\id_C,
 p^1)}{(D,\id_D,t)}.  
\end{gather}

By~\cite[Lemma~6.1]{T3} applied to $\alpha:=i_{\id_D}$ and to $\Gamma^1$, there are an object
$K$, a morphism
$\operatorname{x}^1:K\rightarrow D$ in $\SETW$ and a $2$-morphism
$\alpha^1:p^1\circ t\circ\operatorname{x}^1\Rightarrow p^1\circ t'\circ\operatorname{x}^1$ in
$\CATC$, such that

\begin{equation}\label{eq-79}
\Gamma^1=\Big[K,\operatorname{x}^1,\operatorname{x}^1,i_{\operatorname{x}^1},
\alpha^1\Big]:\Big(
D,\id_D,p^1\circ t\Big)\Longrightarrow\Big(D,\id_D,p^1\circ t'\Big).
\end{equation}

Since $\Gamma^1$ is invertible in $\CATC\left[\SETWinv\right]$, then by~\cite[Proposition~0.8]{T3}
we can assume that $\alpha^1$ is invertible in $\CATC$.
Now we apply~\cite[Lemma~6.1]{T3} to $\alpha:=i_{\operatorname{x}^1}$ and to $\Gamma^2$. Then 
there are an object $M$, a morphism $\operatorname{x}^2:M\rightarrow K$ such that $\operatorname{x}^1
\circ\operatorname{x}^2$ belongs to $\SETW$,
and a $2$-morphism 

\[\widetilde{\alpha}^2:\,p^2\circ t\circ\operatorname{x}^1\circ\operatorname{x}^2
\Longrightarrow p^2\circ t'\circ\operatorname{x}^1\circ\operatorname{x}^2,\]
such that

\begin{equation}\label{eq-59}
\Gamma^2=\Big[M,\operatorname{x}^1\circ\operatorname{x}^2,\operatorname{x}^1\circ\operatorname{x}^2,
i_{\operatorname{x}^1\circ\operatorname{x}^2},\widetilde{\alpha}^2\Big].
\end{equation}

As above, we can assume that $\widetilde{\alpha}^2$ is invertible in $\CATC$ since $\Gamma^2$
is invertible in $\CATC\left[\SETWinv\right]$.
If we set $\widetilde{\alpha}^1:=\alpha^1\ast i_{\operatorname{x}^2}$, then from \eqref{eq-79}
we get

\begin{equation}\label{eq-01}
\Gamma^1=\Big[M,\operatorname{x}^1\circ\operatorname{x}^2,\operatorname{x}^1\circ\operatorname{x}^2,
i_{\operatorname{x}^1\circ\operatorname{x}^2},\widetilde{\alpha}^1\Big].
\end{equation}

So using Lemma~\ref{lem-12}, for each $m=1,2$ we have

\begin{equation}\label{eq-17}
i_{(B^m,\id_{B^m},f^m)}\ast\Gamma^m=\Big[M,\operatorname{x}^1\circ\operatorname{x}^2,
\operatorname{x}^1\circ\operatorname{x}^2,i_{\operatorname{x}^1\circ\operatorname{x}^2},
i_{f^m}\ast\widetilde{\alpha}^m\Big].
\end{equation}

Moreover, using \eqref{eq-91} and
Lemma~\ref{lem-09}, we have $\Omega\ast i_{(D,\id_D,t)}=[D,\id_D,\id_D,i_{\id_D},
\omega\ast i_t]$ and analogously $\Omega\ast i_{(D,\id_D,t')}=[D,\id_D,\id_D,i_{\id_D},
\omega\ast i_{t'}]$. Using such identities together with Lemma~\ref{lem-10},
we get that

\begin{gather}
\nonumber \Big[M,\operatorname{x}^1\circ\operatorname{x}^2,\operatorname{x}^1\circ\operatorname{x}^2,
 i_{\operatorname{x}^1\circ\operatorname{x}^2},\Big(\omega\ast i_{t'\circ
 \operatorname{x}^1\circ\operatorname{x}^2}\Big)
 \odot \Big(i_{f^1}\ast\widetilde{\alpha}^1\Big)\Big]= \\
\nonumber =\Big[D,\id_D,\id_D,i_{\id_D},\omega\ast i_{t'}\Big]\odot \\
\nonumber \odot\Big[M,\operatorname{x}^1\circ\operatorname{x}^2,\operatorname{x}^1
 \circ\operatorname{x}^2,i_{\operatorname{x}^1\circ\operatorname{x}^2},
 i_{f^1}\ast\widetilde{\alpha}^1\Big]\stackrel{\eqref{eq-17}}{=} \\
\nonumber \stackrel{\eqref{eq-17}}{=}i_{\left(D,\id_D,f^2\circ p^2\circ t'\right)}\odot\Big(\Omega\ast
 i_{\left(D,\id_D,t'\right)}\Big)\odot i_{\left(D,\id_D,f^1\circ p^1\circ t'\right)}\odot
 \Big(i_{\left(B^1,\id_{B^1},f^1\right)}\ast\Gamma^1\Big)\stackrel{\eqref{eq-39}}{=} \\
\nonumber \stackrel{\eqref{eq-39}}{=}\Big(i_{\left(B^2,\id_{B^2},f^2\right)}\ast\Gamma^2\Big)
 \odot i_{\left(D,\id_D,f^2\circ p^2\circ t\right)}\odot\Big(\Omega\ast i_{\left(D,\id_D,t\right)}
 \Big)\odot i_{\left(D,\id_D,f^1\circ p^1\circ t\right)}\stackrel{\eqref{eq-17}}{=} \\
\nonumber \stackrel{\eqref{eq-17}}{=}\Big[M,\operatorname{x}^1\circ\operatorname{x}^2,
 \operatorname{x}^1\circ\operatorname{x}^2,
 i_{\operatorname{x}^1\circ\operatorname{x}^2},i_{f^2}\ast\widetilde{\alpha}^2\Big]\odot \\
\nonumber \odot\Big[D,\id_D,\id_D,i_{\id_D},\omega\ast i_t\Big]= \\
\label{eq-69} =\Big[M,\operatorname{x}^1\circ\operatorname{x}^2,\operatorname{x}^1\circ
 \operatorname{x}^2,i_{\operatorname{x}^1\circ\operatorname{x}^2},\Big(
 i_{f^2}\ast\widetilde{\alpha}^2\Big)\odot\Big(\omega\ast i_{t\circ\operatorname{x}^1
 \circ\operatorname{x}^2}\Big)\Big].  
\end{gather}

Using \eqref{eq-69} and~\cite[Proposition~0.7]{T3}, there are an object $N$ and a morphism
$\operatorname{y}:N\rightarrow M$, such that $\operatorname{x}^1\circ\operatorname{x}^2\circ
\operatorname{y}$ belongs to $\SETW$ and

\begin{gather}
\nonumber \Big(\Big(\omega\ast i_{t'\circ\operatorname{x}^1\circ\operatorname{x}^2}\Big)\odot
 \Big(i_{f^1}\ast\widetilde{\alpha}^1\Big)\Big)\ast i_{\operatorname{y}}= \\
\label{eq-75} =\Big(\Big(i_{f^2}\ast
 \widetilde{\alpha}^2\Big)\odot\Big(\omega\ast i_{t\circ\operatorname{x}^1\circ\operatorname{x}^2}
 \Big)\Big)\ast i_{\operatorname{y}}.  
\end{gather}

Then we set

\begin{equation}\label{eq-23}
n:=t\circ\operatorname{x}^1\circ\operatorname{x}^2\circ\operatorname{y}:\,N\longrightarrow
C,\quad\quad n':=t'\circ\operatorname{x}^1\circ\operatorname{x}^2\circ\operatorname{y}:\,N
\longrightarrow C
\end{equation}
and

\begin{equation}\label{eq-105}
\gamma^m:=\widetilde{\alpha}^m\ast i_{\operatorname{y}}:\,
p^m\circ n\Longrightarrow p^m\circ n'\quad\textrm{for }m=1,2.
\end{equation}

Then \eqref{eq-75} implies that:

\[\Big(\omega\ast i_{n'}\Big)\odot\Big(i_{f^1}\ast\gamma^1\Big)=
\Big(i_{f^2}\ast\gamma^2\Big)\odot\Big(\omega\ast i_n\Big).\]

We recall that we assumed that (ii3) holds. This implies that (b) holds for $D$ replaced by
$N$ and $(t,t')$ replaced by $(n,n')$. So there are an object $F$, a morphism $\operatorname{u}:
F\rightarrow N$ in $\SETW$ and an invertible $2$-morphism $\gamma:
n\circ\operatorname{u}\Rightarrow n'\circ\operatorname{u}$,
such that

\begin{equation}\label{eq-77}
i_{p^m}\ast\gamma=\gamma^m\ast i_{\operatorname{u}}\quad\textrm{for }m=1,2.
\end{equation}

Then we set $\operatorname{a}:=\operatorname{x}^1\circ\operatorname{x}^2\circ\operatorname{y}
\circ\operatorname{u}:F\rightarrow D$ (so that $\gamma$ is defined from $t\circ\operatorname{a}$
to $t'\circ\operatorname{a}$) and 

\begin{equation}\label{eq-29}
\Gamma:=\Big[F,\operatorname{a},\operatorname{a},i_{\operatorname{a}},\gamma\Big]:\Big(D,\id_D,t
\Big)\Longrightarrow\Big(D,\id_D,t'\Big).
\end{equation}

Then for each $m=1,2$ we have:

\begin{equation}\label{eq-112}
i_{p^m}\ast\gamma\stackrel{\eqref{eq-77}}{=}\gamma^m\ast i_{\operatorname{u}}
\stackrel{\eqref{eq-105}}{=}
\widetilde{\alpha}^m\ast i_{\operatorname{y}\circ\operatorname{u}}.
\end{equation}

Then using Lemma~\ref{lem-12}, for each $m=1,2$ we have

\begin{gather*}
i_{(C,\id_C,p^m)}\ast\Gamma=i_{(C,\id_C,p^m)}\ast\Big[F,\operatorname{a},\operatorname{a},
 i_{\operatorname{a}},\gamma\Big]=\Big[F,\operatorname{a},\operatorname{a},
 i_{\operatorname{a}},i_{p^m}\ast\gamma\Big]\stackrel{\eqref{eq-112}}{=} \\
\stackrel{\eqref{eq-112}}{=}\Big[F,\operatorname{x}^1\circ\operatorname{x}^2\circ
 \operatorname{y}\circ\operatorname{u},\operatorname{x}^1\circ\operatorname{x}^2\circ
 \operatorname{y}\circ\operatorname{u},i_{\operatorname{x}^1\circ\operatorname{x}^2\circ
 \operatorname{y}\circ\operatorname{u}},\widetilde{\alpha}^m\ast i_{\operatorname{y}\circ
 \operatorname{u}}\Big]= \\
=\Big[M,\operatorname{x}^1\circ\operatorname{x}^2,\operatorname{x}^1\circ\operatorname{x}^2,
 i_{\operatorname{x}^1\circ\operatorname{x}^2},\widetilde{\alpha}^m\Big]
 \stackrel{\eqref{eq-59},\eqref{eq-01}}{=}\Gamma^m.
\end{gather*}

This proves that the existence part of condition
\textbf{\hyperref[B2]{B2}}$(D,(D,\id_D,t),(D,\id_D,t'))$ is satisfied.
Then we need only to prove that the $2$-morphism $\Gamma$ defined above is the unique invertible
$2$-morphism in $\CATC\left[\SETWinv\right]$ such that $i_{(C,\id_C,p^m)}\ast\Gamma=\Gamma^m$ for
each $m=1,2$. So let us suppose that there is another invertible $2$-morphism $\widetilde{\Gamma}:
(D,\id_D,t)\Rightarrow(D,\id_D,t')$ in $\CATC\left[\SETWinv\right]$, such that $i_{(C,\id_C,p^m)}
\ast\widetilde{\Gamma}=\Gamma^m$ for each $m=1,2$. Then we apply~\cite[Lemma~6.1]{T3} to $\alpha:=
i_{\operatorname{x}^1\circ\operatorname{x}^2\circ\operatorname{y}\circ\operatorname{u}}$ and 
to $\widetilde{\Gamma}$. Then there are an object $L$, a morphism
$\operatorname{b}:L\rightarrow F$ such that $\operatorname{x}^1\circ\operatorname{x}^2\circ
\operatorname{y}\circ\operatorname{u}\circ\operatorname{b}$ belongs to $\SETW$, and a $2$-morphism

\[\beta:\,t\circ\operatorname{x}^1\circ\operatorname{x}^2\circ\operatorname{y}\circ
\operatorname{u}\circ\operatorname{b}\Longrightarrow t'\circ\operatorname{x}^1
\circ\operatorname{x}^2\circ\operatorname{y}\circ\operatorname{u}\circ\operatorname{b},\]
such that

\begin{equation}\label{eq-123}
\widetilde{\Gamma}=\Big[L,\operatorname{x}^1\circ\operatorname{x}^2\circ\operatorname{y}
\circ\operatorname{u}\circ\operatorname{b},\operatorname{x}^1
\circ\operatorname{x}^2\circ\operatorname{y}\circ\operatorname{u}\circ\operatorname{b},
i_{\operatorname{x}^1\circ\operatorname{x}^2\circ\operatorname{y}\circ\operatorname{u}
\circ\operatorname{b}},\beta\Big].
\end{equation}

Then by Lemma~\ref{lem-12}, for each $m=1,2$ we have

\begin{gather}
\nonumber \Big[L,\operatorname{x}^1\circ\operatorname{x}^2\circ\operatorname{y}\circ
 \operatorname{u}\circ\operatorname{b},\operatorname{x}^1\circ\operatorname{x}^2\circ
 \operatorname{y}\circ\operatorname{u}\circ\operatorname{b},i_{\operatorname{x}^1\circ
 \operatorname{x}^2\circ\operatorname{y}\circ\operatorname{u}\circ\operatorname{b}},
 i_{p^m}\ast\beta\Big]= \\
\nonumber =i_{\left(C,\id_C,p^m\right)}\ast\widetilde{\Gamma}=\Gamma^m
 \stackrel{\eqref{eq-59},\eqref{eq-01}}{=}\Big[M,\operatorname{x}^1\circ
 \operatorname{x}^2,\operatorname{x}^1\circ\operatorname{x}^2,i_{\operatorname{x}^1\circ
 \operatorname{x}^2},\widetilde{\alpha}^m\Big]\stackrel{\eqref{eq-105}}{=} \\
\label{eq-78} \stackrel{\eqref{eq-105}}{=}\Big[L,\operatorname{x}^1\circ\operatorname{x}^2
 \circ\operatorname{y}\circ\operatorname{u}\circ
 \operatorname{b},\operatorname{x}^1\circ\operatorname{x}^2\circ\operatorname{y}\circ\operatorname{u}
 \circ\operatorname{b},i_{\operatorname{x}^1\circ\operatorname{x}^2\circ\operatorname{y}\circ
 \operatorname{u}\circ\operatorname{b}},\gamma^m\ast i_{\operatorname{u}\circ\operatorname{b}}\Big].
\end{gather}

Now we apply~\cite[Proposition~0.7]{T3} to \eqref{eq-78} for $m=1$. So there are an object $H$ and a
morphism
$\operatorname{s}:H\rightarrow L$, such that $\operatorname{x}^1\circ\operatorname{x}^2\circ
\operatorname{y}\circ\operatorname{u}\circ\operatorname{b}\circ\operatorname{s}$ belongs to
$\SETW$ and such that

\begin{equation}\label{eq-70}
i_{p^1}\ast\beta\ast i_{\operatorname{s}}=\gamma^1\ast i_{\operatorname{u}\circ\operatorname{b}
\circ\operatorname{s}}.
\end{equation}

Moreover, from \eqref{eq-78} for $m=2$, we get:

\begin{gather*}
\Big[H,\operatorname{x}^1\circ\operatorname{x}^2\circ\operatorname{y}\circ\operatorname{u}\circ
 \operatorname{b}\circ\operatorname{s},\operatorname{x}^1\circ\operatorname{x}^2\circ
 \operatorname{y}\circ\operatorname{u}\circ\operatorname{b}\circ\operatorname{s},i_{\operatorname{x}^1
 \circ\operatorname{x}^2\circ\operatorname{y}\circ\operatorname{u}\circ\operatorname{b}\circ
 \operatorname{s}},i_{p^2}\ast\beta\ast i_{\operatorname{s}}\Big]= \\
=\Big[H,\operatorname{x}^1\circ\operatorname{x}^2\circ\operatorname{y}\circ\operatorname{u}\circ
 \operatorname{b}\circ\operatorname{s},\operatorname{x}^1\circ\operatorname{x}^2\circ
 \operatorname{y}\circ\operatorname{u}\circ\operatorname{b}\circ\operatorname{s},i_{\operatorname{x}^1
 \circ\operatorname{x}^2\circ\operatorname{y}\circ\operatorname{u}\circ\operatorname{b}\circ
 \operatorname{s}},\gamma^2\ast i_{\operatorname{u}\circ\operatorname{b}\circ\operatorname{s}}
 \Big].
\end{gather*}

So again by~\cite[Proposition~0.7]{T3},
there are an object $I$ and a morphism $\operatorname{r}:
I\rightarrow H$, such that $\operatorname{x}^1\circ\operatorname{x}^2\circ
\operatorname{y}\circ\operatorname{u}\circ\operatorname{b}\circ\operatorname{s}\circ
\operatorname{r}$ belongs to $\SETW$ and

\begin{equation}\label{eq-72}
i_{p^2}\ast\beta\ast i_{\operatorname{s}\circ
\operatorname{r}}=\gamma^2\ast i_{\operatorname{u}\circ\operatorname{b}\circ\operatorname{s}
\circ\operatorname{r}}.
\end{equation}

Since also $\operatorname{x}^1\circ\operatorname{x}^2\circ\operatorname{y}$ belongs to $\SETW$
by construction, then by Lemma~\ref{lem-06} there are an object $\widetilde{F}$ and a morphism
$\operatorname{c}:\widetilde{F}\rightarrow I$, such that the morphism $\widetilde{\operatorname{u}}:=
\operatorname{u}\circ\operatorname{b}\circ\operatorname{s}\circ\operatorname{r}\circ\operatorname{c}:
\widetilde{F}\rightarrow N$ belongs to $\SETW$. Then we define $\widetilde{\gamma}:=\beta\ast 
i_{\operatorname{s}\circ\operatorname{r}\circ\operatorname{c}}$, so from \eqref{eq-70} and
\eqref{eq-72} we get that

\begin{equation}\label{eq-128}
i_{p^m}\ast\widetilde{\gamma}=\gamma^m\ast i_{\widetilde{\operatorname{u}}}\quad\textrm{for }
\,\,m=1,2.
\end{equation}

We recall that we already used (b) (for the data $(N,n,n',\gamma^1,\gamma^2)$)
in order to get a set of data $(F,\operatorname{u},\gamma)$
such that \eqref{eq-77} holds. Since \eqref{eq-128} holds, then we can apply
(c) for the data $(\widetilde{F},\widetilde{\operatorname{u}},\widetilde{\gamma})$,
so there are an object $G$, a morphism $\operatorname{z}:G\rightarrow F$ in $\SETW$, a morphism
$\widetilde{\operatorname{z}}:G\rightarrow\widetilde{F}$ and an invertible $2$-morphism

\[\mu:\,\,\operatorname{u}\circ
\operatorname{z}\Longrightarrow\widetilde{\operatorname{u}}\circ\widetilde{\operatorname{z}}=
\operatorname{u}\circ\operatorname{b}\circ\operatorname{s}
\circ\operatorname{r}\circ\operatorname{c}\circ\,\widetilde{\operatorname{z}},\]
such that

\[\Big(i_{n'}\ast\mu\Big)\odot\Big(\gamma\ast i_{\operatorname{z}}\Big)=
\Big(\widetilde{\gamma}\ast i_{\widetilde{\operatorname{z}}}\Big)\odot\Big(i_n\ast\mu\Big).\]

If we replace $\widetilde{\gamma}$ with $\beta\ast i_{\operatorname{s}\circ\operatorname{r}\circ
\operatorname{c}}$,
and $n,n'$ with their definition in \eqref{eq-23}, then the previous identity implies that:

\begin{equation}\label{eq-20}
\gamma\ast i_{\operatorname{z}}=\Big(i_{t'\circ
\operatorname{x}^1\circ\operatorname{x}^2\circ\operatorname{y}}\ast\mu^{-1}\Big)
\odot\Big(\beta\ast i_{\operatorname{s}\circ\operatorname{r}\circ\operatorname{c}\circ
\widetilde{\operatorname{z}}}\Big)\odot\Big(i_{t\circ\operatorname{x}^1\circ\operatorname{x}^2
\circ\operatorname{y}}\ast\mu\Big).  
\end{equation}

So using~\cite[\S~2.3]{Pr} we have

\begin{gather*}
\Gamma\stackrel{\eqref{eq-29}}{=}\Big[F,\operatorname{a},
 \operatorname{a},i_{\operatorname{a}},\gamma\Big]\stackrel{\eqref{eq-20}}{=} \\
\stackrel{\eqref{eq-20}}{=} \Big[L,\operatorname{x}^1\circ\operatorname{x}^2\circ\operatorname{y}
 \circ\operatorname{u}\circ\operatorname{b},\operatorname{x}^1\circ\operatorname{x}^2\circ
 \operatorname{y}\circ\operatorname{u}\circ\operatorname{b},i_{\operatorname{x}^1\circ
 \operatorname{x}^2\circ\operatorname{y}\circ\operatorname{u}
 \circ\operatorname{b}},\beta\Big]\stackrel{\eqref{eq-123}}{=}\widetilde{\Gamma},
\end{gather*}
so we have proved also the uniqueness part of \textbf{\hyperref[B2]{B2}}$(D,(D,\id_D,t),
(D,\id_D,t'))$, i.e.\ we have proved that (ii3) implies (ii2).
\end{proof}

Therefore, we have:

\begin{proof}[Proof of Theorem~\ref{theo-04}.]
Given an object $C$, a pair of morphisms $p^m:C\rightarrow B^m$ for $m=1,2$ and an invertible
$2$-morphism $\omega:f^1\circ p^1\Rightarrow f^2\circ p^2$ in $\CATC$, the induced
diagram \eqref{eq-91} is a weak fiber product if and only if it satisfies conditions
\textbf{\hyperref[A1]{A1}}$(D)$ and \textbf{\hyperref[A2]{A2}}$(D)$
for each object $D$ of $\CATC\left[
\SETWinv\right]$, i.e.\ for each object $D$ of $\CATC$.
Using Lemmas~\ref{lem-11} and~\ref{lem-15}, condition \textbf{\hyperref[A1]{A1}}$(D)$
holds for each object $D$
if and only if property (\hyperref[a]{a}) of Theorem~\ref{theo-04} is satisfied for each $D$.
Using Lemmas~\ref{lem-02} and~\ref{lem-01}, condition
\textbf{\hyperref[A2]{A2}}$(D)$  holds for each object $D$ if and only
if properties (\hyperref[b]{b})
and (\hyperref[c]{c}) are satisfied for each $D$. This suffices to conclude.
\end{proof}

Moreover, we are ready to give also the following proof.

\begin{proof}[Proof of Corollary~\ref{cor-01}.]
As usual, for simplicity of exposition we give the proof assuming that $\CATC$ is a $2$-category.
Let us fix any object $D$ in $\CATC$ and let us start by proving that condition (a) of
Theorem~\ref{theo-04} is satisfied.
So let us suppose that we have fixed any pair of morphisms $q^m:D\rightarrow B^m$
for $m=1,2$ and any invertible $2$-morphism $\lambda:f^1\circ q^1\Rightarrow f^2\circ q^2$ 
in $\CATC$. By hypothesis, \eqref{eq-106} is a weak fiber product in the bicategory $\CATC$; so by
\textbf{\hyperref[A1]{A1}}$(D)$ there are a morphism $q:D\rightarrow C$ and a pair of invertible
$2$-morphisms $\lambda^m:q^m\Rightarrow p^m\circ q$ for $m=1,2$, such that

\[\Big(\omega\ast i_q\Big)\odot\Big(i_{f^1}\ast\lambda^1\Big)=
\Big(i_{f^2}\ast\lambda^2\Big)\odot\lambda.\]

Then condition (\hyperref[a]{a}) holds if we set $E:=D$ and $\operatorname{v}:=\id_D$. Now let us
prove (b), so let us fix any pair of morphisms $t,t':D\rightarrow C$ and any pair of invertible
$2$-morphisms $\gamma^m:p^m\circ t\Rightarrow p^m\circ t'$ for $m=1,2$ in $\CATC$, such that

\begin{equation}\label{eq-84}
\Big(\omega\ast i_{t'}\Big)\odot\Big(i_{f^1}\ast\gamma^1\Big)=
\Big(i_{f^2}\ast\gamma^2\Big)\odot\Big(\omega\ast i_t\Big).  
\end{equation}

Since \eqref{eq-106} is a weak fiber product in $\CATC$, then by \textbf{\hyperref[A2]{A2}}$(D)$ 
there is a unique invertible $2$-morphism $\gamma:t\Rightarrow t'$, such that 

\begin{equation}\label{eq-87}
i_{p^m}\ast\gamma=\gamma^m\quad\textrm{for }m=1,2.
\end{equation}

So condition (\hyperref[b]{b}) is satisfied if we set $F:=D$ and $\operatorname{u}:=\id_D$.\\

Hence, we only need to prove condition (\hyperref[c]{c}). So let us fix any pair of morphisms $t,t':D
\rightarrow C$, any pair of invertible $2$-morphisms $\gamma^m:p^m\circ t\Rightarrow p^m\circ t'$
for $m=1,2$ such that \eqref{eq-84} holds, any pair of objects $F,\widetilde{F}$,
any pair of morphisms $\operatorname{u}:
F\rightarrow D$ and $\widetilde{\operatorname{u}}:\widetilde{F}\rightarrow D$, both in $\SETW$,
and any pair of invertible $2$-morphisms $\gamma:t\circ\operatorname{u}\Rightarrow t'\circ
\operatorname{u}$ and $\widetilde{\gamma}:t\circ\widetilde{\operatorname{u}}\Rightarrow
t'\circ\widetilde{\operatorname{u}}$, such that

\begin{equation}\label{eq-124}
i_{p^m}\ast\gamma=\gamma^m\ast i_{\operatorname{u}}\quad\textrm{and}\quad i_{p^m}\ast
\widetilde{\gamma}=\gamma^m\ast i_{\widetilde{\operatorname{u}}}\quad\textrm{for }m=1,2.
\end{equation}

Using axiom (\hyperref[BF3]{BF3}) there is a set of data as in the upper part of the following
diagram, with $\operatorname{z}$ in $\SETW$ and $\mu$ invertible.

\[
\begin{tikzpicture}[xscale=2.2,yscale=-0.8]
    \node (A0_1) at (1, 0) {$G$};
    \node (A1_0) at (0.3, 2) {$F$};
    \node (A1_2) at (1.7, 2) {$\widetilde{F}$.};
    \node (A2_1) at (1, 2) {$D$};
    
    \node (A1_1) at (1, 1) {$\mu$};
    \node (B1_1) at (1, 1.4) {$\Rightarrow$};
    
    \path (A1_2) edge [->]node [auto] {$\scriptstyle{\widetilde{\operatorname{u}}}$} (A2_1);
    \path (A0_1) edge [->]node [auto] {$\scriptstyle{\widetilde{\operatorname{z}}}$} (A1_2);
    \path (A1_0) edge [->]node [auto,swap] {$\scriptstyle{\operatorname{u}}$} (A2_1);
    \path (A0_1) edge [->]node [auto,swap] {$\scriptstyle{\operatorname{z}}$} (A1_0);
\end{tikzpicture}
\]

For each $m=1,2$, we consider the invertible $2$-morphism

\[\phi^m:=\gamma^m\ast i_{\operatorname{u}\circ\operatorname{z}}:\,\,p^m\circ t\circ\operatorname{u}
\circ\operatorname{z}\Longrightarrow p^m\circ t'\circ\operatorname{u}\circ\operatorname{z}.\]

Then using \eqref{eq-84} we get

\begin{equation}\label{eq-126}
\Big(\omega\ast i_{t'\circ\operatorname{u}\circ\operatorname{z}}\Big)\odot\Big(i_{f^1}\ast
\phi^1\Big)=\Big(i_{f^2}\ast\phi^2\Big)\odot\Big(\omega\ast i_{t\circ\operatorname{u}\circ
\operatorname{z}}\Big).
\end{equation}

From the first part of \eqref{eq-124}, for each $m=1,2$ we have

\begin{equation}\label{eq-125}
i_{p^m}\ast\Big(\gamma\ast i_{\operatorname{z}}\Big)=\phi^m.
\end{equation}

Moreover, from the second part of \eqref{eq-124} and interchange law, for each $m=1,2$ we have:

\begin{gather}
\nonumber i_{p^m}\ast\Big(\Big(i_{t'}\ast\mu^{-1}\Big)\odot\Big(\widetilde{\gamma}\ast
 i_{\widetilde{\operatorname{z}}}\Big)\odot\Big(i_t\ast\mu\Big)\Big)= \\
\label{eq-127} =\Big(i_{p^m\circ t'}\ast\mu^{-1}\Big)\odot\Big(\gamma^m\ast
 i_{\widetilde{\operatorname{u}}\circ\widetilde{\operatorname{z}}}\Big)\odot
 \Big(i_{p^m\circ t}\ast\mu\Big)=\gamma^m\ast i_{\operatorname{u}\circ\operatorname{z}}=\phi^m.
\end{gather}

Since \eqref{eq-106} is a weak fiber product in $\CATC$, then using condition
\textbf{\hyperref[A2]{A2}}$(G)$ 
together with \eqref{eq-126}, \eqref{eq-125} and \eqref{eq-127}, we get that 

\[\gamma\ast i_{\operatorname{z}}=\Big(i_{t'}\ast\mu^{-1}\Big)\odot\Big(\widetilde{\gamma}\ast
i_{\widetilde{\operatorname{z}}}\Big)\odot\Big(i_t\ast\mu\Big).\]

This equation is equivalent to \eqref{eq-32} when $\CATC$ is a $2$-category, so condition (c) holds
for each object $D$.\\

So Theorem~\ref{theo-04} implies that diagram \eqref{eq-91} is a weak fiber product in $\CATC\left[
\SETWinv\right]$. Then by Theorem~\ref{theo-02} for every pair of morphisms in $\SETW$ of the form
$\operatorname{w}^1:B^1\rightarrow\overline{B}^1$ and 
$\operatorname{w}^2:B^2\rightarrow\overline{B}^2$, the pair of morphisms $(B^1,\operatorname{w}^1,
f^1)$ and $(B^2,\operatorname{w}^2,f^2)$ has a weak fiber product in $\CATC\left[
\SETWinv\right]$.
\end{proof}

\section{(Strong) pullbacks in categories of fractions}\label{sec-02}
As we mentioned in the Introduction, the right bicalculus of fractions developed by Dorette Pronk
generalizes the usual right calculus of fractions described by
Pierre Gabriel and Michel Zisman
(see~\cite{GZ}). We refer to Appendix~\ref{sec-04} for more details on axioms
(\hyperref[CF]{CF}) for a right calculus of fractions and on the construction of a right category of
fractions. Then we can give a proof of the last result mentioned in the Introduction.

\begin{proof}[Proof of Proposition~\ref{prop-11}.]
We recall
(see Proposition~\ref{prop-08}) that given any category $\CATC$ and any class $\SETW$ of morphisms in
it, the pair $(\CATC,\SETW)$ satisfies the axioms for a right calculus of fractions if and only if
the pair $(\CATC^2,\SETW)$ satisfies the axioms for a right bicalculus of fractions (here
given any category $\CATC$, we denote by $\CATC^2$ the associated trivial bicategory).
If any of such conditions is satisfied, then there is an equivalence of bicategories

\[\functor{E}:\,\CATC^2\left[\SETWinv\right]\longrightarrow\left(\CATC\left[\SETWinv\right]\right)^2\]
given on objects as the identity and on any morphism $(A',\operatorname{w},f):A\rightarrow B$ as
$\functor{E}(A',\operatorname{w},f):=[A',\operatorname{w},f]$.\\

If we fix any pair of morphisms $g^1:B^1\rightarrow A$ and $g^2:B^2\rightarrow A$ in $\CATC^2
\left[\SETWinv\right]$; then the following facts are equivalent:

\begin{itemize}
 \item the pair $(g^1,g^2)$ has a weak fiber product in the bicategory $\CATC^2\left[\SETWinv
  \right]$;
 \item the pair $(\functor{E}(g^1),\functor{E}(g^2))$ has a weak fiber product in the bicategory
  $(\CATC\left[\SETWinv\right])^2$;
 \item the pair $(\functor{E}(g^1),\functor{E}(g^2))$ has a (strong) fiber product in the category
  $\CATC\left[\SETWinv\right]$.
\end{itemize}

The equivalence of the first $2$ conditions follows from the existence of $\functor{E}$ and
Proposition~\ref{prop-07}; the equivalence of the last $2$ conditions is simply Remark~\ref{rem-01}.\\

Since $\CATC$ is a category, considered as a trivial bicategory, then the $2$-morphism $\omega$ 
appearing in Theorems~\ref{theo-02} is a $2$-identity, i.e.\ $f^1\circ p^1=f^2\circ p^2$.
Using the previous set of equivalent conditions
and the equivalence of (\hyperref[i]{i}) and (\hyperref[ii]{ii}) in Theorem~\ref{theo-02}, this
implies at once the equivalence of (\hyperref[iii]{iii}) and (\hyperref[iv]{iv}) in
Proposition~\ref{prop-11}.\\

Now also all the $2$-morphisms appearing in Theorem~\ref{theo-04} are $2$-identities, hence saying
that there is
a $2$-morphism joining a pair of morphisms is equivalent to saying that such a pair of morphisms 
coincide. Moreover, all the identities from \eqref{eq-53} to \eqref{eq-32} are simply of the form
$i_a=i_a$ for some morphism $a$ in $\CATC$, hence they are automatically satisfied, so they will be 
ignored in the following lines. So let us fix any set of data $(C,p^1,p^2)$ such that $f^1\circ p^1=
f^2\circ p^2$. Then the following facts are equivalent:

\begin{enumerate}[(1)]
 \item for any object $D$, condition (\hyperref[a]{a}) of Theorem~\ref{theo-04} holds;
 \item condition (\hyperref[d]{d}) of Proposition~\ref{prop-11} holds.
\end{enumerate}

Moreover, also the following facts are equivalent:

\begin{enumerate}[(1)]
\setcounter{enumi}{2}
 \item for any object $D$, condition (\hyperref[b]{b}) of Theorem~\ref{theo-04} holds;
 \item given any object $R$ and any pair of morphisms $r,r':R\rightarrow C$ such that $p^m\circ r
  =p^m\circ r'$ for each $m=1,2$, there are an object $S$ and a morphism $\operatorname{h}:S
  \rightarrow R$ in $\SETW$, such that $r\circ\operatorname{h}=r'\circ\operatorname{h}$.
\end{enumerate}

In addition, the following facts are equivalent:

\begin{enumerate}[(1)]
\setcounter{enumi}{4}
  \item for any object $D$, condition (\hyperref[c]{c}) of Theorem~\ref{theo-04} holds;
  \item given any set of data $(R,r,r',S,\operatorname{h})$ as in (4), any object $\widetilde{S}$ and
   any morphism $\widetilde{\operatorname{h}}:\widetilde{S}\rightarrow
   R$ in $\SETW$ such that $r\circ\widetilde{\operatorname{h}}=r'\circ\widetilde{\operatorname{h}}$,
   there are an object $M$ and a pair of morphisms $\operatorname{d}:M\rightarrow S$ in $\SETW$ and
   $\widetilde{\operatorname{d}}:M\rightarrow\widetilde{S}$, such that $\operatorname{h}\circ
   \operatorname{d}=\widetilde{\operatorname{h}}\circ\widetilde{\operatorname{d}}$.
\end{enumerate}

Using condition (\hyperref[CF3]{CF3}) (with $f:=\operatorname{h}$ and $\operatorname{w}:=
\widetilde{\operatorname{h}}$), there are an object $M$ and a pair of morphisms $\operatorname{d}:M
\rightarrow S$ in $\SETW$ and $\widetilde{\operatorname{d}}:M\rightarrow\widetilde{S}$, such that
$\operatorname{h}\circ\operatorname{d}=\widetilde{\operatorname{h}}\circ
\widetilde{\operatorname{d}}$. So (6) is automatically satisfied, hence also (5) is true.
So using Theorem~\ref{theo-04} we have that \eqref{eq-83} is a (strong) fiber product if and only if
conditions (2) and (4) holds.\\

Now we claim that if we assume (2), then (4) is equivalent to:

\begin{enumerate}[(1)]
\setcounter{enumi}{6}
  \item condition (\hyperref[e]{e}) of Proposition~\ref{prop-11} holds.
\end{enumerate}

So first of all, let us assume (2) and (4) and let us prove that (7) holds. So let us fix any set
of data $(D,q^1,q^2,E,\operatorname{v},q,\widetilde{E},\widetilde{\operatorname{v}},\widetilde{q})$
as in Proposition~\ref{prop-11} (\hyperref[d]{d}) and (\hyperref[e]{e}) (in particular, such that
$q^m\circ\operatorname{v}=p^m\circ q$ and $q^m\circ\widetilde{\operatorname{v}}=p^m\circ
\widetilde{q}$ for each $m=1,2$). Let us apply axiom
(\hyperref[CF3]{CF3}) to the pair of morphisms $(\operatorname{v},\widetilde{\operatorname{v}})$.
Then there are an object $R$, a pair of morphisms $\operatorname{w}:R\rightarrow E$ in $\SETW$ and
$\widetilde{\operatorname{w}}:R\rightarrow\widetilde{E}$, such that $\operatorname{v}\circ
\operatorname{w}=\widetilde{\operatorname{v}}\circ\widetilde{\operatorname{w}}$. Then we set

\[r:=q\circ\operatorname{w}:R\longrightarrow C\quad\textrm{and}\quad r':=\widetilde{q}\circ
\widetilde{\operatorname{w}}:R\longrightarrow C.\]

Then for each $m=1,2$ we have:

\[p^m\circ r=p^m\circ q\circ\operatorname{w}=q^m\circ\operatorname{v}\circ\operatorname{w}=
q^m\circ\widetilde{\operatorname{v}}\circ\widetilde{\operatorname{w}}=p^m\circ\widetilde{q}\circ
\widetilde{\operatorname{w}}=p^m\circ r'.\]

So by (4) there are an object $F$ and a morphism $\operatorname{z}:F\rightarrow R$ in $\SETW$,
such that $r\circ\operatorname{z}=r'\circ\operatorname{z}$. 
We set $\operatorname{u}:=\operatorname{w}\circ\operatorname{z}$ and
$\widetilde{\operatorname{u}}:=\widetilde{\operatorname{w}}\circ\operatorname{z}$. So we have

\[\operatorname{v}\circ\operatorname{u}=\operatorname{v}\circ\operatorname{w}\circ\operatorname{z}=
\widetilde{\operatorname{v}}
\circ\widetilde{\operatorname{w}}\circ\operatorname{z}=\widetilde{\operatorname{v}}\circ
\widetilde{\operatorname{u}}\]
and

\[q\circ\operatorname{u}=
q\circ\operatorname{w}\circ\operatorname{z}=r\circ\operatorname{z}=r'\circ\operatorname{z}=
\widetilde{q}\circ\widetilde{\operatorname{w}}\circ\operatorname{z}=\widetilde{q}\circ
\widetilde{\operatorname{u}},\]
so (7) is satisfied.\\

Conversely, let us suppose that (2) and (7) hold and let us prove (4). So let us fix any
object $R$ and any pair of morphisms $r,r':R\rightarrow C$, such that $p^m\circ r=p^m\circ r'$ for
each $m=1,2$. Then let us set $q^m:=p^m\circ r:R\rightarrow B^m$ for $m=1,2$. Then condition
(\hyperref[d]{d}) of Proposition~\ref{prop-11} is satisfied is we choose
$E:=R$, $\operatorname{v}:=\id_R$ and $q:=r$. Moreover, (\hyperref[d]{d}) is also satisfied by
choosing $\widetilde{E}:=R$, $\widetilde{\operatorname{v}}:=\id_R$ and $\widetilde{q}:=r'$. Hence, by
(7) there are an object $S$ and a pair of morphisms $\operatorname{h},\widetilde{\operatorname{h}}:
S\rightarrow R$ in $\SETW$, such that $\id_R\circ\operatorname{h}=\id_R\circ
\widetilde{\operatorname{h}}$ and $r\circ\operatorname{h}=r'\circ\widetilde{\operatorname{h}}$.
This implies that
$r\circ\operatorname{h}=r'\circ\operatorname{h}$, so (4) holds. So \eqref{eq-83} is a (strong) fiber
product if and only if (2) and (4) hold, if and only if (2) and (7) hold. This is sufficient to
conclude.
\end{proof}

Proposition~\ref{prop-11} can also be obtained directly working in the category of fractions,
i.e.\ not relying on Theorems~\ref{theo-02} and~\ref{theo-04}. This gives a check of correctness for
the mentioned $2$ Theorems.

\appendix
\section{Bicategories of fractions}\label{sec-03}
In this and in the next appendix we will recall some basic notions about categories and bicategories
of fractions and we will list a series of lemmas used often in this paper.\\

Let us fix any bicategory $\CATC$ (with the notations already mentioned in \S~\ref{sec-07})
and any class $\SETW$ of morphisms in it. We recall
that $\SETW$ is said \emph{to admit a right bicalculus of fractions} if and only if
the following conditions are satisfied (see~\cite[\S~2.1]{Pr}):

\begin{enumerate}[({BF}1)]\label{BF}
 \item\label{BF1} for every object $A$ of $\CATC$, the $1$-identity $\id_A$ belongs to $\SETW$;
 \item\label{BF2} $\SETW$ is closed under compositions;
 \item\label{BF3} for every morphism $\operatorname{w}:A\rightarrow B$ in $\SETW$ and for every
  morphism $f:C\rightarrow B$, there are an object $D$, a morphism $\operatorname{w}':D\rightarrow
  C$ in $\SETW$, a morphism $f':D\rightarrow A$ and an invertible $2$-morphism $\alpha:
  f\circ\operatorname{w}'\Rightarrow\operatorname{w}\circ f'$;
 \item\label{BF4}
 \begin{enumerate}[(a)]
  \item\label{BF4a} given any morphism $\operatorname{w}:B\rightarrow A$ in $\SETW$, any pair of
   morphisms $f^1,f^2:C\rightarrow B$ and any $2$-morphism
   $\alpha:\operatorname{w}\circ f^1\Rightarrow
   \operatorname{w}\circ f^2$, there are an object $D$, a morphism $\operatorname{v}:D\rightarrow
   C$ in $\SETW$ and a $2$-morphism $\beta:f^1\circ\operatorname{v}\Rightarrow f^2\circ
   \operatorname{v}$, such that
   
   \[\alpha\ast i_{\operatorname{v}}=\thetaa{\operatorname{w}}{f^2}{\operatorname{v}}\odot\Big(
   i_{\operatorname{w}}\ast\beta\Big)\odot\thetab{\operatorname{w}}{f^1}{\operatorname{v}};\]
  
  \item\label{BF4b} if $\alpha$ in (a) is invertible, then so is $\beta$;
  \item\label{BF4c} if $(D',\operatorname{v}':D'\rightarrow C,\beta':f^1\circ\operatorname{v}'
   \Rightarrow f^2\circ\operatorname{v}')$ is another triple with the same properties of $(D,
   \operatorname{v},\beta)$ in (a), then there are an object $E$, a pair of morphisms
   $\operatorname{u}:E\rightarrow D$, $\operatorname{u}':E\rightarrow D'$ and an invertible
   $2$-morphism $\zeta:\operatorname{v}\circ\operatorname{u}\Rightarrow\operatorname{v}'\circ
   \operatorname{u}'$, such that $\operatorname{v}\circ\operatorname{u}$ belongs to $\SETW$ and
   
   \begin{gather*}
   \thetab{f^2}{\operatorname{v}'}{\operatorname{u}'}\odot\Big(\beta'\ast i_{\operatorname{u}'}
    \Big)\odot\thetaa{f^1}{\operatorname{v}'}{\operatorname{u}'}\odot\Big(i_{f^1}\ast\zeta\Big)= \\
    =\Big(i_{f^2}\ast\zeta\Big)\odot\thetab{f^2}{\operatorname{v}}{\operatorname{u}}\odot\Big(
   \beta\ast i_{\operatorname{u}}\Big)\odot\thetaa{f^1}{\operatorname{v}}{\operatorname{u}}.  
   \end{gather*}
  \end{enumerate}
 \item\label{BF5} if $\operatorname{w}:A\rightarrow B$ is a morphism in $\SETW$, $\operatorname{v}:A
  \rightarrow B$ is any morphism and if there exists an invertible $2$-morphism $\alpha:
  \operatorname{v}\Rightarrow\operatorname{w}$, then also $\operatorname{v}$ belongs to $\SETW$.
\end{enumerate}

We recall the following fundamental result:

\begin{theo}
\cite[Theorem~21]{Pr} Given any pair $(\CATC,\SETW)$ satisfying conditions
\emphatic{(\hyperref[BF]{BF})}, there are a bicategory $\CATC\left[\SETWinv\right]$ \emph{(}called
\emph{(right) bicategory of fractions}\emph{)} and a pseudofunctor $\functor{U}_{\SETW}:\CATC
\rightarrow\CATC\left[\SETWinv\right]$ that sends each element of $\SETW$ to an internal
equivalence and that is universal with respect to such property.
\end{theo}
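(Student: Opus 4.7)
The plan is to construct $\CATC\left[\SETWinv\right]$ explicitly, modeled on the Gabriel--Zisman construction of right categories of fractions but with the richer 2-categorical structure handled via axioms \hyperref[BF3]{(BF3)}--\hyperref[BF4]{(BF4)}. First I would take the objects of $\CATC\left[\SETWinv\right]$ to be the objects of $\CATC$; a morphism from $A$ to $B$ to be a span $(C,\operatorname{w},f)$ with $\operatorname{w}:C\rightarrow A$ in $\SETW$ and $f:C\rightarrow B$ arbitrary; and a 2-morphism to be an equivalence class of quintuples of the form $(D,\operatorname{u},\operatorname{u}',\alpha,\beta)$ with $\operatorname{u}$ in $\SETW$, under the natural refinement relation that identifies two representatives sharing a common refinement through morphisms in $\SETW$. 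Horizontal composition of morphisms is defined by applying \hyperref[BF3]{(BF3)} to the cospan produced by the target of the first span and the leg-in-$\SETW$ of the second; this requires fixing a set of choices $\mathbf{C}(\SETW)$, and different choices will yield equivalent bicategories.

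Next I would define vertical and horizontal compositions of 2-morphisms. For vertical composition, given two composable 2-classes I pick representatives over a common object (obtainable via \hyperref[BF3]{(BF3)}) and then compose the underlying pairs of $2$-morphisms in $\CATC$, invoking \hyperref[BF4]{(BF4)} to guarantee that the result is well-defined on equivalence classes. Horizontal composition is defined similarly, using the choices $\mathbf{C}(\SETW)$ to align the interior objects. The associators and unitors of $\CATC\left[\SETWinv\right]$ are then assembled from the invertible 2-morphisms produced by \hyperref[BF3]{(BF3)}, \hyperref[BF4a]{(BF4a)} and \hyperref[BF4b]{(BF4b)}, and the coherence (pentagon/triangle) axioms reduce to the uniqueness-type condition \hyperref[BF4c]{(BF4c)} applied to suitable diagrams. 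This bookkeeping is lengthy but essentially forced, and is where the axiom of choice is first used in order to pick, for each instance of \hyperref[BF3]{(BF3)}, a specific filler.

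Then I would define the pseudofunctor $\functor{U}_{\SETW}:\CATC\rightarrow\CATC\left[\SETWinv\right]$ as the identity on objects, sending $f:A\rightarrow B$ to $(A,\id_A,f)$ and sending a 2-morphism $\alpha:f\Rightarrow g$ to the class of $(A,\id_A,\id_A,i_{\id_A},\alpha)$; its associators and unitors come from comparing two different fillers of \hyperref[BF3]{(BF3)} on identity spans. A direct check, for any $\operatorname{w}:A\rightarrow A'$ in $\SETW$, shows that the span $(A,\operatorname{w},\id_A):A'\rightarrow A$ is a quasi-inverse to $\functor{U}_{\SETW}(\operatorname{w})$, with invertible unit and counit coming from diagrams fitted together via \hyperref[BF3]{(BF3)}, so every element of $\SETW$ is sent to an internal equivalence.

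The hard part is the universal property: given any pseudofunctor $\functor{F}:\CATC\rightarrow\CATD$ sending $\SETW$ to internal equivalences, one must produce an essentially unique pseudofunctor $\widetilde{\functor{F}}:\CATC\left[\SETWinv\right]\rightarrow\CATD$ with $\widetilde{\functor{F}}\circ\functor{U}_{\SETW}$ pseudonaturally equivalent to $\functor{F}$. The definition is forced: choose, for every $\operatorname{w}$ in $\SETW$, a quasi-inverse $\functor{F}(\operatorname{w})^{*}$ of $\functor{F}(\operatorname{w})$ in $\CATD$ (this is a second decisive use of the axiom of choice), and set $\widetilde{\functor{F}}(C,\operatorname{w},f):=\functor{F}(f)\circ\functor{F}(\operatorname{w})^{*}$. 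The main obstacle is showing that this assignment extends coherently to 2-cells, is independent of the choice of representative of a 2-class, and is a pseudofunctor; here \hyperref[BF4a]{(BF4a)}--\hyperref[BF4c]{(BF4c)} are used repeatedly, together with the fact that any two quasi-inverses of the same equivalence in $\CATD$ are canonically isomorphic. Uniqueness of $\widetilde{\functor{F}}$ up to pseudonatural equivalence follows by comparing two candidate extensions on spans, using that both must agree on $\CATC$ via $\functor{U}_{\SETW}$ and that the quasi-inverses of $\functor{F}(\operatorname{w})$ are essentially unique.
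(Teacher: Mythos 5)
This statement is not proved in the paper: it is quoted verbatim as \cite[Theorem~21]{Pr}, and the paper only recalls the ingredients of that construction (the choices \hyperref[C]{C}$(\SETW)$, the description of morphisms and $2$-morphisms, and the remark that only (\hyperref[BF1]{BF1}) rather than (BF1)$'$ is needed). Your sketch follows essentially the same route as the cited construction of Pronk, so it is consistent with what the paper relies on; the only minor imprecision is that in the equivalence classes of quintuples one requires the \emph{composite} $\operatorname{w}^1\circ\operatorname{v}^1$ to lie in $\SETW$ rather than $\operatorname{v}^1$ itself, a harmless difference since representatives can always be refined (cf.\ Lemma~\ref{lem-06}).
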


In the notations of~\cite{Pr}, $\functor{U}_{\SETW}$ is called \emph{bifunctor}, but this notation
is no more in use; for the precise meaning of ``universal'' above, we refer directly to~\cite{Pr}.

\begin{rem}
In~\cite{Pr} the theorem above is stated with (\hyperref[BF1]{BF1}) replaced by the slightly stronger
hypothesis 

\begin{enumerate}[({BF}1)$'$]
\item\label{BF1prime} all the internal equivalences of $\CATC$ are in $\SETW$.
\end{enumerate}

By looking carefully at the proofs in~\cite{Pr}, it is easy to see that the only part of axiom
(\hyperref[BF1prime]{BF1})$'$ that is really used in all the computations is (\hyperref[BF1]{BF1}),
so we are allowed to state the theorem of~\cite{Pr} under such less restrictive hypothesis.
\end{rem}

In order to describe explicitly $\CATC\left[\SETWinv\right]$, one has to make some choices as below.
By~\cite[Theorem~21]{Pr}, \emph{different choices will give equivalent bicategories of
fractions} where objects, $1$-morphisms and $2$-morphisms are the same, but compositions of
$1$-morphisms and $2$-morphisms are (possibly) different.

\subsection{Choices in a bicategory of fractions}\label{sec-01}
Following~\cite[\S~2.2 and 2.3]{Pr} in order to construct a bicategory of fractions, we
have to fix a set of choices as follows: 

\begin{enumerate}[A$(\SETW)$:]
 \setcounter{enumi}{2}
 \item\label{C} for every set of data in $\CATC$ as follows

  \begin{equation}\label{eq-30}
  \begin{tikzpicture}[xscale=1.5,yscale=-1.2]
    \node (A0_0) at (0, 0) {$A'$};
    \node (A0_1) at (1, 0) {$B$};
    \node (A0_2) at (2, 0) {$B'$};
    
    \path (A0_0) edge [->]node [auto] {$\scriptstyle{f}$} (A0_1);
    \path (A0_2) edge [->]node [auto,swap] {$\scriptstyle{\operatorname{v}}$} (A0_1);
  \end{tikzpicture}
  \end{equation}
  with $\operatorname{v}$ in $\SETW$, using (\hyperref[BF3]{BF3}) we \emph{choose} an object $A''$,
  a pair of morphisms $\operatorname{v}':A''\rightarrow A'$ in $\SETW$ and $f':A''\rightarrow B'$ and
  an invertible $2$-morphism $\rho:f
  \circ\operatorname{v}'\Rightarrow\operatorname{v}\circ f'$ in $\CATC$.
\end{enumerate} 
  
The choices using (\hyperref[BF3]{BF3}) in general are not unique; following~\cite[\S~2.2]{Pr} we
have only to impose the following conditions:

\begin{enumerate}[({C}1)]
 \item\label{C1} whenever \eqref{eq-30} is such that $B=A'$ and $f=\id_B$, then we choose the data
   of \hyperref[C]{C}$(\SETW)$ to be given by $A'':=B'$, $f':=\id_B$, $\operatorname{v}':=
  \operatorname{v}$ and $\rho:=\pi^{-1}_{\operatorname{v}}\odot\upsilon_{\operatorname{v}}$;
 \item\label{C2} whenever \eqref{eq-30} is such that $B=B'$ and $\operatorname{v}=\id_B$, then we
   choose the data of \hyperref[C]{C}$(\SETW)$ to be given by $A'':=A'$, $f':=f$,
   $\operatorname{v}':=\id_{A'}$ and $\rho:=\upsilon^{-1}_f\odot\pi_f$.
\end{enumerate}

For simplicity of computations, in some of the proofs of this paper we will consider a set of choices
\hyperref[C]{C}$(\SETW)$ satisfying also the following additional condition:

\begin{enumerate}[({C}1)]
 \setcounter{enumi}{2}
 \item\label{C3} whenever \eqref{eq-30} is such that $A'=B'$ and $f=\operatorname{v}$ (with
  $\operatorname{v}$ in $\SETW$), then we choose the data of \hyperref[C]{C}$(\SETW)$ to be given by
  $A'':=A'$, $f':=\id_{A'}$, $\operatorname{v}':=\id_{A'}$ and $\rho:=i_{f\circ\id_{A'}}$.
\end{enumerate}

Condition (\hyperref[C3]{C3}) is not strictly necessary in order to do a right bicalculus of
fractions, but it simplifies lots of the computations in the present paper. We have only to check
that it is 
compatible with conditions (\hyperref[C1]{C1}) and (\hyperref[C2]{C2}) required by~\cite{Pr},
but this is obvious using the axioms of a bicategory.
In other terms, for each pair $(\CATC,\SETW)$ satisfying condition
(\hyperref[BF3]{BF3}), there is always
a set of choices \hyperref[C]{C}$(\SETW)$ satisfying (\hyperref[C1]{C1}), (\hyperref[C2]{C2})
and (\hyperref[C3]{C3}).
According to~\cite[\S~2.3]{Pr} one should also fix an additional set of choices
depending on axiom (\hyperref[BF4]{BF4}), but actually such additional set of choices is not
necessary (see~\cite[Theorem~0.5]{T3}).

\subsection{Morphisms and 2-morphisms in $\CATC[\SETWinv]$}\label{sec-06}
We recall (see~\cite{Pr}) that the objects of $\CATC\left[\SETWinv\right]$ are the same as those
of $\CATC$.
A morphism from $A$ to $B$ in $\CATC\left[\SETWinv\right]$ is any triple $(A',\operatorname{w},f)$,
where $A'$ is an object of $\CATC$, $\operatorname{w}:A'\rightarrow A$ is an element of $\SETW$ and
$f:A'\rightarrow B$ is a morphism of $\CATC$. Given any pair of morphisms from $A$ to $B$ and from
$B$ to $C$ in $\CATC\left[\SETWinv\right]$ as follows

\[
\begin{tikzpicture}[xscale=1.5,yscale=-1.2]
    \node (A0_0) at (0, 0) {$A$};
    \node (A0_1) at (1, 0) {$A'$};
    \node (A0_2) at (2, 0) {$B$};
    \node (A0_3) at (3, 0) {$\textrm{and}$};
    \path (A0_1) edge [->]node [auto,swap] {$\scriptstyle{\operatorname{w}}$} (A0_0);
    \path (A0_1) edge [->]node [auto] {$\scriptstyle{f}$} (A0_2);
    
    \node (A0_4) at (4, 0) {$B$};
    \node (A0_5) at (5, 0) {$B'$};
    \node (A0_6) at (6, 0) {$C$};
    \path (A0_5) edge [->]node [auto,swap] {$\scriptstyle{\operatorname{v}}$} (A0_4);
    \path (A0_5) edge [->]node [auto] {$\scriptstyle{g}$} (A0_6);
\end{tikzpicture}
\]
(with both $\operatorname{w}$ and $\operatorname{v}$ in $\SETW$), one has to use choices
\hyperref[C]{C}$(\SETW)$ for the pair $(f,\operatorname{v})$ in order to get data
$(A'',\operatorname{v}',f')$ as above and then define the composition of the previous morphisms of
$\CATC\left[\SETWinv\right]$ as $(A'',\operatorname{w}\circ\operatorname{v}',g\circ f')$.\\

Given any pair of objects $A,B$ and any pair of morphisms $(A^m,\operatorname{w}^m,f^m):A\rightarrow
B$ for $m=1,2$, a $2$-morphism from $(A^1,\operatorname{w}^1,f^1)$ to $(A^2,\operatorname{w}^2,f^2)$
is an equivalence class of data $(A^3,\operatorname{v}^1,\operatorname{v}^2,\alpha,\beta)$ in
$\CATC$ as follows

\begin{equation}\label{eq-16}
\begin{tikzpicture}[xscale=2.2,yscale=-0.8]
    \node (A0_2) at (2, 0) {$A^1$};
    \node (A2_2) at (2, 2) {$A^3$};
    \node (A2_0) at (0, 2) {$A$};
    \node (A2_4) at (4, 2) {$B$,};
    \node (A4_2) at (2, 4) {$A^2$};
    
    \node (A2_3) at (2.8, 2) {$\Downarrow\,\beta$};
    \node (A2_1) at (1.2, 2) {$\Downarrow\,\alpha$};

    \path (A4_2) edge [->]node [auto,swap] {$\scriptstyle{f^2}$} (A2_4);
    \path (A0_2) edge [->]node [auto] {$\scriptstyle{f^1}$} (A2_4);
    \path (A2_2) edge [->]node [auto,swap] {$\scriptstyle{\operatorname{v}^1}$} (A0_2);
    \path (A2_2) edge [->]node [auto] {$\scriptstyle{\operatorname{v}^2}$} (A4_2);
    \path (A4_2) edge [->]node [auto] {$\scriptstyle{\operatorname{w}^2}$} (A2_0);
    \path (A0_2) edge [->]node [auto,swap] {$\scriptstyle{\operatorname{w}^1}$} (A2_0);
\end{tikzpicture}
\end{equation}
such that $\operatorname{w}^1\circ\operatorname{v}^1$ belongs to $\SETW$ and such that $\alpha$ is
invertible in $\CATC$ (in~\cite[\S~2.3]{Pr} it is also required that $\operatorname{w}^2\circ
\operatorname{v}^2$ belongs to $\SETW$, but this follows from (\hyperref[BF5]{BF5})). Any other set
of data

\[
\begin{tikzpicture}[xscale=2.2,yscale=-0.8]
    \node (A0_2) at (2, 0) {$A^1$};
    \node (A2_2) at (2, 2) {$A^{\prime 3}$};
    \node (A2_0) at (0, 2) {$A$};
    \node (A2_4) at (4, 2) {$B$};
    \node (A4_2) at (2, 4) {$A^2$};
    
    \node (A2_3) at (2.8, 2) {$\Downarrow\,\beta'$};
    \node (A2_1) at (1.2, 2) {$\Downarrow\,\alpha'$};

    \path (A4_2) edge [->]node [auto,swap] {$\scriptstyle{f^2}$} (A2_4);
    \path (A0_2) edge [->]node [auto] {$\scriptstyle{f^1}$} (A2_4);
    \path (A2_2) edge [->]node [auto,swap] {$\scriptstyle{\operatorname{v}^{\prime 1}}$} (A0_2);
    \path (A2_2) edge [->]node [auto] {$\scriptstyle{\operatorname{v}^{\prime 2}}$} (A4_2);
    \path (A4_2) edge [->]node [auto] {$\scriptstyle{\operatorname{w}^2}$} (A2_0);
    \path (A0_2) edge [->]node [auto,swap] {$\scriptstyle{\operatorname{w}^1}$} (A2_0);
\end{tikzpicture}
\]
(such that $\operatorname{w}^1\circ\operatorname{v}^{\prime 1}$ belongs to $\SETW$ and
$\alpha'$ is invertible) represents the same $2$-morphism in $\CATC\left[\SETWinv\right]$
if and only if there is a set of data $(A^4,\operatorname{z},\operatorname{z}',\sigma^1,
\sigma^2)$ in $\CATC$ as in the following diagram

\[
\begin{tikzpicture}[xscale=2.0,yscale=-0.8]
    \node (A0_4) at (4, 0) {$A^1$};
    \node (A2_2) at (3, 2) {$A^{\prime 3}$};
    \node (A2_4) at (4, 2) {$A^4$};
    \node (A2_6) at (5, 2) {$A^3$,};
    \node (A4_4) at (4, 4) {$A^2$};
    
    \node (A1_4) at (4, 1.4) {$\Rightarrow$};
    \node (B1_4) at (4, 1) {$\sigma^1$};
    \node (A3_4) at (4, 3) {$\Leftarrow$};
    \node (B3_4) at (4, 2.6) {$\sigma^2$};

    \path (A2_2) edge [->]node [auto,swap] {$\scriptstyle{\operatorname{v}^{\prime 2}}$} (A4_4);
    \path (A2_4) edge [->]node [auto] {$\scriptstyle{\operatorname{z}}$} (A2_6);
    \path (A2_6) edge [->]node [auto] {$\scriptstyle{\operatorname{v}^2}$} (A4_4);
    \path (A2_2) edge [->]node [auto] {$\scriptstyle{\operatorname{v}^{\prime 1}}$} (A0_4);
    \path (A2_4) edge [->]node [auto,swap] {$\scriptstyle{\operatorname{z}'}$} (A2_2);
    \path (A2_6) edge [->]node [auto,swap] {$\scriptstyle{\operatorname{v}^1}$} (A0_4);
\end{tikzpicture}
\]
such that $(\operatorname{w}^1\circ\operatorname{v}^1)\circ\operatorname{z}$ belongs to $\SETW$,
$\sigma^1$ and $\sigma^2$ are both invertible,

\begin{gather}
\nonumber \Big(i_{\operatorname{w}^2}\ast\sigma^2\Big)\odot\thetab{\operatorname{w}^2}
 {\operatorname{v}^2}{\operatorname{z}}\odot\Big(\alpha\ast i_{\operatorname{z}}\Big)\odot
 \thetaa{\operatorname{w}^1}{\operatorname{v}^1}{\operatorname{z}}\odot\Big(i_{\operatorname{w}^1}
 \ast\sigma^1\Big)= \\
\label{eq-19} =\thetab{\operatorname{w}^2}{\operatorname{v}^{\prime 2}}{\operatorname{z}'}\odot
 \Big(\alpha'\ast i_{\operatorname{z}'}\Big)\odot\thetaa{\operatorname{w}^1}
 {\operatorname{v}^{\prime 1}}{\operatorname{z}'}  
\end{gather}
and

\begin{gather}
\nonumber \Big(i_{f^2}\ast\sigma^2\Big)\odot\thetab{f^2}{\operatorname{v}^2}{\operatorname{z}}
 \odot\Big(\beta\ast i_{\operatorname{z}}\Big)\odot\thetaa{f^1}{\operatorname{v}^1}
 {\operatorname{z}}\odot\Big(i_{f^1}\ast\sigma^1\Big)= \\
\label{eq-26} =\thetab{f^2}{\operatorname{v}^{\prime 2}}{\operatorname{z}'}\odot\Big(
 \beta'\ast i_{\operatorname{z}'}\Big)\odot\thetaa{f^1}{\operatorname{v}^{\prime 1}}
 {\operatorname{z}'}  
\end{gather}
(in~\cite[\S~2.3]{Pr} it is also required that $(\operatorname{w}^1\circ\operatorname{v}^{\prime 1})
\circ\operatorname{z}'$ belongs to $\SETW$, but this follows from (\hyperref[BF5]{BF5})). We denote by

\[\Big[A^3,\operatorname{v}^1,\operatorname{v}^2,\alpha,\beta\Big]:\Big(A^1,\operatorname{w}^1,
f^1\Big)\Longrightarrow\Big(A^2,\operatorname{w}^2,f^2\Big)\]
the class of any data as in \eqref{eq-16}. We refer to~\cite{Pr} for the description of associators
and compositions of $2$-morphisms in $\CATC\left[\SETWinv\right]$. A simplified
description can be found in~\cite[Propositions~0.1, 0.2, 0.3 and~0.4]{T3}.\\

\subsection{Useful lemmas in a bicategory of fractions}
We denote by $\Theta_{\bullet}$ the associators of a bicategory of fractions
$\CATC\left[\SETWinv\right]$ (constructed as in~\cite[Appendix A.2]{Pr}). Then we have:

\begin{lem}\label{lem-10}
\cite[Corollary~2.2 and Remark~2.3]{T3}
Let us fix any triple of morphisms $h:D\rightarrow C,g:C\rightarrow B,f:B\rightarrow A$ in $\CATC$
and any morphism $\operatorname{w}:D\rightarrow D'$ in $\SETW$. If $\CATC$ is a $2$-category,
then the associator $\Thetaa{(B,\id_B,f)}{(C,\id_C,g)}{(D,\operatorname{w},h)}$
coincides with the $2$-identity of the morphism $(D,\operatorname{w},f\circ g\circ h):D'
\rightarrow A$ in $\CATC\left[\SETWinv\right]$.
\end{lem}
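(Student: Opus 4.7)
The plan is to exploit condition (\hyperref[C2]{C2}) on the fixed choices \hyperref[C]{C}$(\SETW)$ together with the fact that in a $2$-category all associators and unitors of $\CATC$ are strict identities.

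First, I would compute explicitly the source and target of the associator. The iterated composition $(B,\id_B,f)\circ\bigl((C,\id_C,g)\circ(D,\operatorname{w},h)\bigr)$ requires choices from \hyperref[C]{C}$(\SETW)$ for the pairs $(h,\id_C)$ and then $(g\circ h,\id_B)$. In both cases the right-hand morphism is a $1$-identity, so by (\hyperref[C2]{C2}) the chosen data must be trivial: the fibered object is the source, the chosen section and the chosen morphism in $\SETW$ are both identities, and the $2$-morphism $\rho$ is $\upsilon^{-1}\odot\pi$, which is a $2$-identity because $\CATC$ is a $2$-category. Analogously, $\bigl((B,\id_B,f)\circ(C,\id_C,g)\bigr)\circ(D,\operatorname{w},h)$ requires choices for the pairs $(g,\id_B)$ and $(h,\id_C)$, both again governed by (\hyperref[C2]{C2}). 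Using strict unitality of the composition in $\CATC$, both iterated compositions collapse to the \emph{same} triple $(D,\operatorname{w},f\circ g\circ h)$.

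Next, I would recall the explicit formula for the associator in $\CATC\left[\SETWinv\right]$, either from~\cite[Appendix A.2]{Pr} or from the simplified description in~\cite[Proposition~0.1]{T3}. The associator is built as a class of equivalence of a quintuple in $\CATC$ whose invertible $2$-morphisms $\alpha$ and $\beta$ are obtained by pasting the $2$-morphisms $\rho$ coming from the relevant choices in \hyperref[C]{C}$(\SETW)$ with associators and unitors of $\CATC$. Under the trivial choices forced by (\hyperref[C2]{C2}), every such $\rho$ is a $2$-identity, and because $\CATC$ is a $2$-category every associator and unitor of $\CATC$ used in the construction is also a $2$-identity. So $\alpha$ and $\beta$ must themselves be $2$-identities.

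Finally, I would verify that the representative thus obtained lies in the same equivalence class as the canonical representative of the $2$-identity of $(D,\operatorname{w},f\circ g\circ h)$, by exhibiting the obvious cospan $(A^4,\operatorname{z},\operatorname{z}')$ with $\sigma^1$ and $\sigma^2$ both $2$-identities and checking the compatibility conditions \eqref{eq-19} and \eqref{eq-26}, which reduce to tautologies in the $2$-category setting. The main subtlety, rather than an actual obstacle, is keeping track of which instance of (\hyperref[C2]{C2}) is invoked at each step of Pronk's construction of the associator; once this bookkeeping is in place the argument is entirely mechanical, which is why the result is recorded as~\cite[Corollary~2.2 and Remark~2.3]{T3} rather than proved again in this paper.
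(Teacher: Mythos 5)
Your argument is correct, but note that the paper does not actually prove this lemma: it is quoted verbatim from~\cite[Corollary~2.2 and Remark~2.3]{T3}, so there is no internal proof to compare against, and what you have written is a reconstruction of the cited argument. Your bookkeeping of the choices is right: the bracketing $(B,\id_B,f)\circ\bigl((C,\id_C,g)\circ(D,\operatorname{w},h)\bigr)$ invokes \hyperref[C]{C}$(\SETW)$ for the pairs $(h,\id_C)$ and $(g\circ h,\id_B)$, the other bracketing for $(g,\id_B)$ and then $(h,\id_C)$, and condition (\hyperref[C2]{C2}) forces all four choices to be trivial, so both composites are literally the triple $(D,\operatorname{w},f\circ g\circ h)$. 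The one place where your sketch is too glib is the description of the associator as ``a pasting of the $2$-morphisms $\rho$ with associators and unitors of $\CATC$'': Pronk's construction of $\Theta_{\bullet}$ also requires auxiliary comparison data produced via (\hyperref[BF3]{BF3}) and (\hyperref[BF4]{BF4}), which is not canonical, so one cannot conclude directly that ``every $2$-cell in sight is an identity.'' What closes this gap is exactly the independence statement of~\cite[Proposition~0.1]{T3}, which you do invoke: the associator may be computed with \emph{any} admissible choice of that auxiliary data, and since the two composites coincide on the nose, the identity choices are admissible and yield the representative $\bigl[D,\id_D,\id_D,i_{\operatorname{w}},i_{f\circ g\circ h}\bigr]$, i.e.\ the $2$-identity. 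Read with that clarification, your proof is complete and is the expected argument behind the citation.
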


The following is a special case of~\cite[Proposition~0.3]{T3}.

\begin{lem}\label{lem-09}
Let us fix any morphism and any representative of a $2$-morphism in $\CATC\left[\SETWinv\right]$
as follows.

\[
\begin{tikzpicture}[xscale=1.6,yscale=-0.8]
    \node (B0_4) at (-3, 2) {$A$};
    \node (B0_3) at (-2, 2) {$A'$};
    \node (B0_2) at (-1, 2) {$B$,};
    \path (B0_3) edge [->]node [auto,swap] {$\scriptstyle{\operatorname{w}}$} (B0_4);
    \path (B0_3) edge [->]node [auto] {$\scriptstyle{g}$} (B0_2);

    \node (A0_2) at (2, 0) {$B$};
    \node (A2_2) at (2, 2) {$B$};
    \node (A2_0) at (0, 2) {$B$};
    \node (A2_4) at (4, 2) {$C$.};
    \node (A4_2) at (2, 4) {$B$};
    
    \node (A2_3) at (2.8, 2) {$\Downarrow\,\gamma$};
    \node (A2_1) at (1.2, 2) {$\Downarrow\,i_{\id_B\circ\id_B}$};

    \path (A4_2) edge [->]node [auto,swap] {$\scriptstyle{h^2}$} (A2_4);
    \path (A0_2) edge [->]node [auto] {$\scriptstyle{h^1}$} (A2_4);
    \path (A2_2) edge [->]node [auto,swap] {$\scriptstyle{\id_B}$} (A0_2);
    \path (A2_2) edge [->]node [auto] {$\scriptstyle{\id_B}$} (A4_2);
    \path (A4_2) edge [->]node [auto] {$\scriptstyle{\id_B}$} (A2_0);
    \path (A0_2) edge [->]node [auto,swap] {$\scriptstyle{\id_B}$} (A2_0);
\end{tikzpicture}
\]

Then the $2$-morphism

\[\Big[B,\id_B,\id_B,i_{\id_B},\gamma\Big]\ast i_{(A',\operatorname{w},g)}:\,\Big(A',
\operatorname{w}\circ\id_{A'},h^1\circ g\Big)\Longrightarrow\Big(A',\operatorname{w}\circ
\id_{A'},h^2\circ g\Big)\]
is equal to $[A',\id_{A'},\id_{A'},i_{(\operatorname{w}\circ\id_{A'})\circ\id_{A'}},
\pi_{h^2\circ g}^{-1}\odot((\pi_{h^2}\odot\gamma\odot\pi_{h^1}^{-1})\ast i_g)\odot\pi_{h^1\circ g}]$.
\end{lem}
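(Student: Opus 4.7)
\textbf{Plan for the proof of Lemma~\ref{lem-09}.} The plan is to treat this as a direct instantiation of the general formula for horizontal composition of $2$-morphisms in a bicategory of fractions given by~\cite[Proposition~0.3]{T3}, specialized to a composition against a $2$-identity and using the simplifying form of the representative $[B,\id_B,\id_B,i_{\id_B},\gamma]$. Before invoking that proposition, I would identify the source and target of the resulting $2$-morphism: since the middle morphism of $(B,\id_B,h^m)$ is $\id_B\in\SETW$, condition (\hyperref[C2]{C2}) forces the canonical trivial choice in \hyperref[C]{C}$(\SETW)$ for the pair $(g,\id_B)$, namely $A'':=A'$, $\operatorname{v}':=\id_{A'}$, $f':=g$, $\rho:=\upsilon_g^{-1}\odot\pi_g$. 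Consequently the source and target of the horizontal composite are $(A',\operatorname{w}\circ\id_{A'},h^m\circ g)$ for $m=1,2$, which matches the compatibility of the stated representative.

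Next I would unravel~\cite[Proposition~0.3]{T3} applied to our data. That proposition requires auxiliary $(\hyperref[BF3]{\text{BF}3})$--choices to fill in the outer square; in the present setting, each such auxiliary choice is for a pair involving either $\id_B$ or $\id_{A'}$, so by (\hyperref[C1]{C1}) and (\hyperref[C2]{C2}) every one of them can be taken trivially, with $\rho$ given by a canonical combination of $\pi_\bullet$ and $\upsilon_\bullet$. Because the right-hand input is the $2$-identity $i_{(A',\operatorname{w},g)}$ and the left-hand $2$-morphism has left $2$-cell $\alpha=i_{\id_B}$ and both vertical morphisms equal to $\id_B$, the recipe of~\cite[Proposition~0.3]{T3} collapses to a representative over $A'$ with $\operatorname{v}^1=\operatorname{v}^2=\id_{A'}$ and with left $2$-cell the $2$-identity $i_{(\operatorname{w}\circ\id_{A'})\circ\id_{A'}}$, exactly as in the statement.

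The remaining task is to identify the right $2$-cell produced by the formula with the expression $\pi_{h^2\circ g}^{-1}\odot\bigl(\bigl(\pi_{h^2}\odot\gamma\odot\pi_{h^1}^{-1}\bigr)\ast i_g\bigr)\odot\pi_{h^1\circ g}$ in the claim. By construction this $2$-cell is obtained by conjugating $\gamma\ast i_g$ by the unitors and associators coming from the trivial choices of \hyperref[C]{C}$(\SETW)$, composed with the trivial auxiliary choices from~\cite[Proposition~0.3]{T3}. Using the interchange law and naturality of $\pi_\bullet$, one reduces the conjugation to the composite of right unitors displayed in the statement; equivalently, one writes $\pi_{h^2}\odot\gamma\odot\pi_{h^1}^{-1}$ for the ``de-unitored'' form of $\gamma$ as a $2$-morphism $h^1\Rightarrow h^2$, horizontally composes with $i_g$, and then conjugates by $\pi_{h^m\circ g}$ to land in the correct slot $(h^m\circ g)\circ\id_{A'}$.

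The main obstacle is purely bookkeeping of unitors and associators: if $\CATC$ is a $2$-category, all associators and unitors are trivial, $\gamma$ already acts on $h^m$ (rather than $h^m\circ\id_B$), and the right $2$-cell reduces to $\gamma\ast i_g$, making the identity immediate. In the general bicategorical case, the obstruction is to show that the string of unitors and associators coming from \hyperref[C]{C}$(\SETW)$ and from~\cite[Proposition~0.3]{T3} assembles, via the coherence theorem, to exactly the composite involving $\pi_{h^m}$ and $\pi_{h^m\circ g}$. If a direct coherence argument proves tedious, I would fall back on verifying the equality of the two representatives by producing an intermediate object and a pair of invertible $2$-cells $\sigma^1,\sigma^2$ satisfying \eqref{eq-19} and \eqref{eq-26}; the trivial choice of intermediate object $A'$ with $\operatorname{z}=\operatorname{z}'=\id_{A'}$ and $\sigma^1,\sigma^2$ built from unitors should then make both identities collapse to a consequence of the triangle and pentagon axioms for $\CATC$.
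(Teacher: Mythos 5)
Your proposal is correct and follows essentially the same route as the paper, which disposes of this lemma in one line by declaring it a special case of~\cite[Proposition~0.3]{T3}; your instantiation of the choices via (\hyperref[C1]{C1})--(\hyperref[C2]{C2}) and the subsequent unitor bookkeeping are exactly the routine details that citation leaves implicit. The fallback of verifying the equality of representatives directly with trivial intermediate data $\operatorname{z}=\operatorname{z}'=\id_{A'}$ is also a legitimate way to finish if the coherence chase gets unwieldy.
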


The following is a special case of~\cite[Proposition~0.4]{T3}.

\begin{lem}\label{lem-12}
Let us fix any morphism and any representative of a $2$-morphism in $\CATC\left[\SETWinv\right]$
in $\CATC\left[\SETWinv\right]$ as follows:

\[
\begin{tikzpicture}[xscale=1.6,yscale=-0.8]
    \node (A0_2) at (2, 0) {$A^1$};
    \node (A2_2) at (2, 2) {$A^3$};
    \node (A2_0) at (0, 2) {$A$};
    \node (A2_4) at (4, 2) {$B$,};
    \node (A4_2) at (2, 4) {$A^2$};
    
    \node (A2_3) at (2.8, 2) {$\Downarrow\,\beta$};
    \node (A2_1) at (1.2, 2) {$\Downarrow\,\alpha$};

    \path (A4_2) edge [->]node [auto,swap] {$\scriptstyle{f^2}$} (A2_4);
    \path (A0_2) edge [->]node [auto] {$\scriptstyle{f^1}$} (A2_4);
    \path (A2_2) edge [->]node [auto,swap] {$\scriptstyle{\operatorname{u}^1}$} (A0_2);
    \path (A2_2) edge [->]node [auto] {$\scriptstyle{\operatorname{u}^2}$} (A4_2);
    \path (A4_2) edge [->]node [auto] {$\scriptstyle{\operatorname{w}^2}$} (A2_0);
    \path (A0_2) edge [->]node [auto,swap] {$\scriptstyle{\operatorname{w}^1}$} (A2_0);
    \node (B0_4) at (5, 2) {$B$};
    \node (B0_3) at (6, 2) {$B$};
    \node (B0_2) at (7, 2) {$C$.};
    \path (B0_3) edge [->]node [auto,swap] {$\scriptstyle{\id_B}$} (B0_4);
    \path (B0_3) edge [->]node [auto] {$\scriptstyle{g}$} (B0_2);
\end{tikzpicture}
\]

Then the $2$-morphism 

\[i_{(B,\id_B,g)}\ast\Big[A^3,\operatorname{u}^1,\operatorname{u}^2,\alpha,\beta\Big]:\,\Big(A^1,
\operatorname{w}^1\circ\id_{A^1},g\circ f^1\Big)\Longrightarrow\Big(A^2,\operatorname{w}^2
\circ\id_{A^2},g\circ f^2\Big)\]
is equal to $[A^3,\operatorname{u}^1,\operatorname{u}^2,(\pi_{\operatorname{w}^2}^{-1}\ast
i_{\operatorname{u}^2})\odot\alpha\odot(\pi_{\operatorname{w}^1}\ast
i_{\operatorname{u}^1}),\thetaa{g}{f^2}{\operatorname{u}^2}
\odot(i_g\ast\beta)\odot\thetab{g}{f^1}{\operatorname{u}^1}]$.
\end{lem}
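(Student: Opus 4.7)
The plan is to deduce this statement as a direct specialization of~\cite[Proposition~0.4]{T3}, which gives a formula for the horizontal composition $i_{(B',\operatorname{v},g)}\ast[A^3,\operatorname{u}^1,\operatorname{u}^2,\alpha,\beta]$ for an arbitrary morphism $(B',\operatorname{v},g):B\rightarrow C$ in $\CATC\left[\SETWinv\right]$. That general formula is written in terms of the fixed choices \hyperref[C]{C}$(\SETW)$ applied to the pairs $(f^m,\operatorname{v})$ for $m=1,2$, together with the associators and unitors of $\CATC$ that arise from the composition of the morphisms involved in $\CATC\left[\SETWinv\right]$.

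First I would specialize this formula to $(B',\operatorname{v}):=(B,\id_B)$. The key point is that, by condition (\hyperref[C2]{C2}) recalled in \S~\ref{sec-01}, for any morphism $f^m:A^m\rightarrow B$ the choices \hyperref[C]{C}$(\SETW)$ attached to the pair $(f^m,\id_B)$ are forced to be the trivial ones: $A''=A^m$, $\operatorname{v}'=\id_{A^m}$, $f'=f^m$ and the invertible $2$-cell $\rho=\upsilon_{f^m}^{-1}\odot\pi_{f^m}$. In particular the horizontal composite $(B,\id_B,g)\circ(A^m,\operatorname{w}^m,f^m)$ equals $(A^m,\operatorname{w}^m\circ\id_{A^m},g\circ f^m)$, which matches the source and target declared in the statement.

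Substituting these trivial choices into the formula of~\cite[Proposition~0.4]{T3} collapses almost all of the bookkeeping. In the first component of the representative quintuple, the only surviving contribution comes from pre- and post-composing $\alpha$ with the right unitors $\pi_{\operatorname{w}^m}$ (accounting for the fact that the new source and target carry a $\operatorname{w}^m\circ\id_{A^m}$ leg instead of $\operatorname{w}^m$), producing exactly $(\pi_{\operatorname{w}^2}^{-1}\ast i_{\operatorname{u}^2})\odot\alpha\odot(\pi_{\operatorname{w}^1}\ast i_{\operatorname{u}^1})$; in the second component, the only surviving contribution is the expected $\thetaa{g}{f^2}{\operatorname{u}^2}\odot(i_g\ast\beta)\odot\thetab{g}{f^1}{\operatorname{u}^1}$, reflecting the reassociation of $g\circ f^m\circ\operatorname{u}^m$ needed to land in the correct representative.

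The main obstacle is therefore clerical rather than conceptual: one has to keep careful track of how the factors of $\upsilon$ and $\pi$ hidden inside the trivial $\rho$ combine with the $\Theta$-associators of $\CATC\left[\SETWinv\right]$ (which collapse in the spirit of Lemma~\ref{lem-10}, since one of the middle legs is a $1$-identity) and with the triangle and pentagon identities of $\CATC$, so as to simplify precisely to the two expressions above. Once this simplification is verified by a diagram chase, the representative of $i_{(B,\id_B,g)}\ast[A^3,\operatorname{u}^1,\operatorname{u}^2,\alpha,\beta]$ produced by~\cite[Proposition~0.4]{T3} coincides with the one stated in the lemma, which concludes the argument.
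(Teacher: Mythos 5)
Your proposal is correct and follows exactly the route the paper takes: the lemma is stated there as a special case of~\cite[Proposition~0.4]{T3}, obtained by specializing the left factor to $(B,\id_B,g)$ and using condition (\hyperref[C2]{C2}) to force the trivial choices for the pairs $(f^m,\id_B)$. Your additional remarks on how the unitors and associators collapse are consistent with the stated representative.
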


The following is a simple application of the definition of right saturation
(see~\cite[Definition~2.11]{T4}) together with~\cite[Proposition~2.11]{T4}.

\begin{lem}\label{lem-06}
Let us fix any triple of objects $A,B,C$, and any pair of morphisms $\operatorname{w}:B\rightarrow A$
and $\operatorname{v}:C\rightarrow B$, such that both $\operatorname{w}$ and $\operatorname{w}
\circ\operatorname{v}$ belong to $\SETW$. Then there are an object $D$ and a morphism
$\operatorname{z}:D\rightarrow C$, such that $\operatorname{v}\circ\operatorname{z}$ belongs
to $\SETW$.
\end{lem}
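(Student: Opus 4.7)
The plan is to reduce the statement to membership in the \emph{right saturation} $\SETW_{\mathrm{sat}}$ of $\SETW$ introduced in \cite[Definition~2.11]{T4}, which consists of those morphisms of $\CATC$ whose image under $\functor{U}_{\SETW}$ is an internal equivalence in $\CATC\left[\SETWinv\right]$. I would then read off the desired data $(D,\operatorname{z})$ from the concrete characterization of $\SETW_{\mathrm{sat}}$ given by \cite[Proposition~2.11]{T4}.

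The first step I would take is to observe that, since $\operatorname{w}$ and $\operatorname{w}\circ\operatorname{v}$ both belong to $\SETW$, the morphisms $\functor{U}_{\SETW}(\operatorname{w})$ and $\functor{U}_{\SETW}(\operatorname{w}\circ\operatorname{v})$ are internal equivalences of $\CATC\left[\SETWinv\right]$ by the universal property of the bicategory of fractions. The pseudofunctor associator $\Psi^{\functor{U}_{\SETW}}_{\operatorname{w},\operatorname{v}}$ supplies an invertible $2$-morphism between $\functor{U}_{\SETW}(\operatorname{w}\circ\operatorname{v})$ and $\functor{U}_{\SETW}(\operatorname{w})\circ\functor{U}_{\SETW}(\operatorname{v})$. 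Fixing a quasi-inverse $d$ of $\functor{U}_{\SETW}(\operatorname{w})$ and precomposing on the left, I would then exhibit $\functor{U}_{\SETW}(\operatorname{v})$ as $2$-isomorphic to $d\circ\functor{U}_{\SETW}(\operatorname{w}\circ\operatorname{v})$, which is a composition of two internal equivalences; hence $\functor{U}_{\SETW}(\operatorname{v})$ is itself an internal equivalence and $\operatorname{v}\in\SETW_{\mathrm{sat}}$.

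For the second step, I would directly invoke \cite[Proposition~2.11]{T4}: among the equivalent descriptions of membership in $\SETW_{\mathrm{sat}}$, one asserts the existence of an object $D$ and a morphism $\operatorname{z}:D\rightarrow C$ such that $\operatorname{v}\circ\operatorname{z}\in\SETW$. This yields precisely the pair $(D,\operatorname{z})$ required by the Lemma.

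The only point that requires any care is matching the conclusion to the precise wording of \cite[Proposition~2.11]{T4}, i.e.\ ensuring that its statement produces exactly the one-sided data $(D,\operatorname{z})$ and not merely a more elaborate ``sandwich'' condition involving further compositions; granted the stated form of that Proposition, no substantial obstacle is expected, which is consistent with the author's own remark that the argument is a simple application of the results from \cite{T4}.
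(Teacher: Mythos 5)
Your proof is correct and follows essentially the same route as the paper, whose entire argument is the citation of the definition of right saturation and \cite[Proposition~2.11]{T4}: you show that $\functor{U}_{\SETW}(\operatorname{v})$ is an internal equivalence by a two-out-of-three argument, conclude that $\operatorname{v}$ lies in the right saturation of $\SETW$, and extract $(D,\operatorname{z})$ from the characterization of that class. Your worry about the exact wording is harmless, since even the stronger ``sandwich'' form of the saturation condition contains the required pair $(D,\operatorname{z})$ with $\operatorname{v}\circ\operatorname{z}\in\SETW$ as its first half.
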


\begin{lem}\label{lem-19}
Let us fix any set of objects $A,A',B$, any morphism $\operatorname{w}:A'\rightarrow A$ in
$\SETW$ and any pair of morphisms $f^1,f^2:A'\rightarrow B$. Let us also fix any pair of
$2$-morphisms in $\CATC\left[\SETWinv\right]$

\[\Gamma,\,\Gamma':\Big(A',\operatorname{w},f^1\Big)\Longrightarrow\Big(A',\operatorname{w},f^2\Big)\]
and let us suppose that $\Gamma=[C,\operatorname{v},\operatorname{v},i_{\operatorname{w}\circ
\operatorname{v}},\gamma]$ and $\Gamma'=[C',\operatorname{v}',\operatorname{v}',i_{\operatorname{w}
\circ\operatorname{v}'},\gamma']$ \emphatic{(}for some choice of $C,C',\operatorname{v},
\operatorname{v}',\gamma$ and $\gamma'$\emphatic{)}. Then
$\Gamma=\Gamma'$ if and only if there are an object $D$, a morphism $\operatorname{z}:
D\rightarrow C$ in $\SETW$, a morphism $\operatorname{z}':D\rightarrow C'$
and an invertible $2$-morphism
$\mu:\operatorname{v}\circ\operatorname{z}\Rightarrow\operatorname{v}'\circ\operatorname{z}'$,
such that
  
\begin{gather*}
\Big(i_{f^2}\ast\mu\Big)\odot\thetab{f^2}{\operatorname{v}}{\operatorname{z}}\odot
\Big(\gamma\ast i_{\operatorname{z}}\Big)\odot\thetaa{f^1}{\operatorname{v}}{\operatorname{z}}\odot
\Big(i_{f^1}\ast\mu^{-1}\Big)= \\
=\thetab{f^2}{\operatorname{v}'}{\operatorname{z}'}\odot\Big(\gamma'\ast
i_{\operatorname{z}'}\Big)\odot\thetaa{f^1}{\operatorname{v}'}{\operatorname{z}'}.
\end{gather*}
\end{lem}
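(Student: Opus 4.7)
My plan is to unwind both directions of the equivalence directly from the definition of the equivalence relation on $2$-morphisms of $\CATC\left[\SETWinv\right]$ given in \S\ref{sec-06}, specialized to the case where both $\alpha$ and $\alpha'$ are $2$-identities.

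For the direction $(\Leftarrow)$, given the tuple $(D,\operatorname{z},\operatorname{z}',\mu)$ provided by the hypothesis, I exhibit the equivalence of $\Gamma$ and $\Gamma'$ by setting $A^4 := D$ (with the same $\operatorname{z}$ and $\operatorname{z}'$) and taking $\sigma^1 := \mu^{-1}$, $\sigma^2 := \mu$ in the equivalence data. Equation \eqref{eq-26} then becomes literally the hypothesis of the lemma. Equation \eqref{eq-19} collapses because $\alpha = i_{\operatorname{w} \circ \operatorname{v}}$ and $\alpha' = i_{\operatorname{w} \circ \operatorname{v}'}$: its two sides reduce, via the associator identities, to $i_{\operatorname{w}} \ast (\mu \odot \mu^{-1})$ and $i_{\operatorname{w} \circ (\operatorname{v}' \circ \operatorname{z}')}$ respectively, both of which equal $i_{\operatorname{w} \circ (\operatorname{v}' \circ \operatorname{z}')}$. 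The $\SETW$ condition on $(\operatorname{w} \circ \operatorname{v}) \circ \operatorname{z}$ follows from (\hyperref[BF2]{BF2}) together with the fact that $\operatorname{w} \circ \operatorname{v}$ lies in $\SETW$ (this being part of what makes $\Gamma$ a well-defined $2$-morphism).

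For the direction $(\Rightarrow)$, the equality $\Gamma = \Gamma'$ supplies equivalence data $(A^4, \operatorname{\widetilde{z}}, \operatorname{\widetilde{z}}', \sigma^1, \sigma^2)$ with $\sigma^1, \sigma^2$ invertible and $\operatorname{w} \circ \operatorname{v} \circ \operatorname{\widetilde{z}} \in \SETW$. Substituting $\alpha = i_{\operatorname{w} \circ \operatorname{v}}$ and $\alpha' = i_{\operatorname{w} \circ \operatorname{v}'}$ into \eqref{eq-19} and using associators, one obtains $i_{\operatorname{w}} \ast (\sigma^2 \odot \sigma^1) = i_{\operatorname{w} \circ (\operatorname{v}' \circ \operatorname{\widetilde{z}}')}$. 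Applying~\cite[Lemma~1.1]{T3} to cancel the left-whiskering by $\operatorname{w}$ yields an object $H$ and a morphism $\operatorname{y}: H \to A^4$ such that $(\sigma^2 \odot \sigma^1) \ast i_{\operatorname{y}} = i_{\operatorname{v}' \circ \operatorname{\widetilde{z}}' \circ \operatorname{y}}$, whence $\sigma^1 \ast i_{\operatorname{y}}$ and $\sigma^2 \ast i_{\operatorname{y}}$ are mutually inverse. Setting $\mu := \sigma^2 \ast i_{\operatorname{y}}$, $\operatorname{z} := \operatorname{\widetilde{z}} \circ \operatorname{y}$, $\operatorname{z}' := \operatorname{\widetilde{z}}' \circ \operatorname{y}$, the required equation then follows from \eqref{eq-26} by right-whiskering with $\operatorname{y}$ and substituting the identifications $\sigma^2 \ast i_{\operatorname{y}} = \mu$ and $\sigma^1 \ast i_{\operatorname{y}} = \mu^{-1}$.

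The main technical obstacle is ensuring that the morphism $\operatorname{z}$ thus obtained lies in $\SETW$, since the equivalence data only supplies $(\operatorname{w} \circ \operatorname{v}) \circ \operatorname{\widetilde{z}} \in \SETW$, not $\operatorname{\widetilde{z}} \in \SETW$ itself. I resolve this by invoking Lemma~\ref{lem-06}: applied to the morphism $\operatorname{w} \circ \operatorname{v} \in \SETW$ and to $\operatorname{\widetilde{z}}$, it produces a further morphism which can be absorbed into $\operatorname{y}$ to arrange that $\operatorname{\widetilde{z}} \circ \operatorname{y} \in \SETW$. Since all the refinements consist of right-whiskerings, equations \eqref{eq-19} and \eqref{eq-26} persist after each replacement, so the bookkeeping is internally consistent. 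Apart from this $\SETW$-tracking, the argument is a direct translation between the two descriptions of the equivalence relation.
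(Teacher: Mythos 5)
Your proof is correct and follows essentially the same route as the paper: the ``if'' direction is read off from the definition of the equivalence relation with $\sigma^1:=\mu^{-1}$, $\sigma^2:=\mu$, and the ``only if'' direction combines the cancellation lemma \cite[Lemma~1.1]{T3} (to make $\sigma^1$ and $\sigma^2$ mutually inverse after a right-whiskering) with Lemma~\ref{lem-06} (to arrange that the resulting $\operatorname{z}$ lies in $\SETW$). The only difference is that the paper applies Lemma~\ref{lem-06} first and then \cite[Lemma~1.1]{T3}, whereas you do the two refinements in the opposite order and compose the extra morphism onto $\operatorname{y}$; since all the relevant identities are stable under further right-whiskering, this is harmless.
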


\begin{proof}
The ``if'' part is a direct consequence of the definition of $2$-morphism in $\CATC\left[
\SETWinv\right]$. Conversely, let us suppose that $\Gamma=\Gamma'$. Then 
there is a set of data $(\overline{C},\operatorname{t},\operatorname{t}',\sigma^1,
\sigma^2)$ in $\CATC$ as in the following diagram

\[
\begin{tikzpicture}[xscale=2.0,yscale=-0.8]
    \node (A0_4) at (4, 0) {$A'$};
    \node (A2_2) at (3, 2) {$C'$};
    \node (A2_4) at (4, 2) {$\overline{C}$};
    \node (A2_6) at (5, 2) {$C$,};
    \node (A4_4) at (4, 4) {$A'$};
    
    \node (A1_4) at (4, 1.4) {$\Rightarrow$};
    \node (B1_4) at (4, 1) {$\sigma^1$};
    \node (A3_4) at (4, 3) {$\Leftarrow$};
    \node (B3_4) at (4, 2.6) {$\sigma^2$};

    \path (A2_2) edge [->]node [auto,swap] {$\scriptstyle{\operatorname{v}'}$} (A4_4);
    \path (A2_4) edge [->]node [auto] {$\scriptstyle{\operatorname{t}}$} (A2_6);
    \path (A2_6) edge [->]node [auto] {$\scriptstyle{\operatorname{v}}$} (A4_4);
    \path (A2_2) edge [->]node [auto] {$\scriptstyle{\operatorname{v}'}$} (A0_4);
    \path (A2_4) edge [->]node [auto,swap] {$\scriptstyle{\operatorname{t}'}$} (A2_2);
    \path (A2_6) edge [->]node [auto,swap] {$\scriptstyle{\operatorname{v}}$} (A0_4);
\end{tikzpicture}
\]
such that $(\operatorname{w}\circ\operatorname{v})\circ\operatorname{t}$ belongs to $\SETW$,
$\sigma^1$ and $\sigma^2$ are both invertible,

\begin{gather}
\nonumber \Big(i_{\operatorname{w}}\ast\sigma^2\Big)\odot\thetab{\operatorname{w}}
 {\operatorname{v}}{\operatorname{t}}\odot\Big(i_{\operatorname{w}\circ\operatorname{v}}
 \ast i_{\operatorname{t}}\Big)\odot
 \thetaa{\operatorname{w}}{\operatorname{v}}{\operatorname{t}}\odot\Big(i_{\operatorname{w}}
 \ast\sigma^1\Big)= \\
\label{eq-117} =\thetab{\operatorname{w}}{\operatorname{v}'}{\operatorname{t}'}\odot
 \Big(i_{\operatorname{w}\circ\operatorname{v}'}\ast i_{\operatorname{t}'}\Big)\odot
 \thetaa{\operatorname{w}}{\operatorname{v}'}{\operatorname{t}'}  
\end{gather}
and

\begin{gather}
\nonumber \Big(i_{f^2}\ast\sigma^2\Big)\odot\thetab{f^2}{\operatorname{v}}{\operatorname{t}}
 \odot\Big(\gamma\ast i_{\operatorname{t}}\Big)\odot\thetaa{f^1}{\operatorname{v}}
 {\operatorname{t}}\odot\Big(i_{f^1}\ast\sigma^1\Big)= \\
\label{eq-118} =\thetab{f^2}{\operatorname{v}'}{\operatorname{t}'}\odot\Big(
 \gamma'\ast i_{\operatorname{t}'}\Big)\odot\thetaa{f^1}{\operatorname{v}'}
 {\operatorname{t}'}  
\end{gather}

Since $\Gamma$ is a $2$-morphism in a bicategory of fractions, then $\operatorname{w}\circ
\operatorname{v}$ belongs to $\SETW$. Since also $(\operatorname{w}\circ\operatorname{v})\circ
\operatorname{t}$ belongs to $\SETW$, then by Lemma~\ref{lem-06} there are an object $\widetilde{C}$
and a morphism $\operatorname{r}:\widetilde{C}\rightarrow\overline{C}$, such that $\operatorname{t}
\circ\operatorname{r}$ belongs to $\SETW$.\\

From \eqref{eq-117} we get that $i_{\operatorname{w}}\ast(\sigma^2\ast i_{\operatorname{r}})=
i_{\operatorname{w}}\ast(\sigma^1\ast i_{\operatorname{r}})^{-1}$. So using~\cite[Lemma~1.1]{T3}
there are an object $D$ and a morphism
$\operatorname{s}:D\rightarrow\widetilde{C}$ in $\SETW$, such that

\begin{equation}\label{eq-04}
\Big(\sigma^2\ast i_{\operatorname{r}}\Big)\ast i_{\operatorname{s}}=\Big(\sigma^1\ast
i_{\operatorname{r}}\Big)^{-1}\ast i_{\operatorname{s}}.
\end{equation}

By construction and (\hyperref[BF2]{BF2}) the morphism $(\operatorname{t}\circ\operatorname{r})\circ
\operatorname{s}$ belongs to $\SETW$, so by (\hyperref[BF5]{BF5}) we conclude that also the morphism
$\operatorname{z}:=\operatorname{t}\circ(\operatorname{r}\circ\operatorname{s}):D\rightarrow C$
belongs to $\SETW$. We define also $\operatorname{z}':=\operatorname{t}'\circ(\operatorname{r}
\circ\operatorname{s}):D\rightarrow C'$ and

\[\mu:=\thetab{\operatorname{v}'}{\operatorname{t}'}{\operatorname{r}\circ\operatorname{s}}\odot
\Big(\sigma^2\ast i_{\operatorname{r}\circ\operatorname{s}}\Big)\odot
\thetaa{\operatorname{v}}{\operatorname{t}}{\operatorname{r}\circ\operatorname{s}}:\,
\operatorname{v}\circ\operatorname{z}\Longrightarrow\operatorname{v}'\circ\operatorname{z}'.\]

Then we conclude using \eqref{eq-04} and \eqref{eq-118}.
\end{proof}

\section{Categories of fractions}\label{sec-04}

We recall (see~\cite{GZ}) that given a category $\CATC$ and a class $\SETW$ of morphisms in it, the
pair $(\CATC,\SETW)$ is said to \emph{admit a right calculus of fractions} if and only if
the following properties hold:

\begin{enumerate}[({CF}1)]\label{CF}
 \item\label{CF1} $\SETW$ contains all the identities of $\CATC$;
 \item\label{CF2} $\SETW$ is closed under compositions;
 \item\label{CF3} (``right Ore condition'') for every morphism $\operatorname{w}:A\rightarrow B$ in
  $\SETW$ and any morphism $f:C\rightarrow B$, there are an object $D$, a morphism
  $\operatorname{w}':D\rightarrow C$ in $\SETW$ and a morphism $f':D\rightarrow A$, such that
  $f\circ\operatorname{w}'=\operatorname{w}\circ f'$;
 \item\label{CF4} (``right cancellability'') given any morphism $\operatorname{w}:B\rightarrow A$ in
  $\SETW$ and any pair of morphisms $f^1,f^2:C\rightarrow B$ such that $\operatorname{w}\circ f^1=
  \operatorname{w}\circ f^2$, there are an object $D$ and a morphism $\operatorname{v}:D\rightarrow C$
  in $\SETW$, such that $f^1\circ\operatorname{v}=f^2\circ\operatorname{v}$.
\end{enumerate}

Given any pair $(\CATC,\SETW)$ satisfying this set of axioms, the right
category of fractions $\CATC\left[
\SETWinv\right]$ associated to it is described as follows. Its objects are the same as those of
$\CATC$; a morphism from $A$ to $B$ is any equivalence class $[A',\operatorname{w},f]$ of a
triple $(A',\operatorname{w},f)$ as follows:

\[
\begin{tikzpicture}[xscale=1.5,yscale=-1.2]
    \node (A0_0) at (0, 0) {$A$};
    \node (A0_1) at (1, 0) {$A'$};
    \node (A0_2) at (2, 0) {$B$};
    
    \path (A0_1) edge [->]node [auto,swap] {$\scriptstyle{\operatorname{w}}$} (A0_0);
    \path (A0_1) edge [->]node [auto] {$\scriptstyle{f}$} (A0_2);
\end{tikzpicture}
\]
with $\operatorname{w}$ in $\SETW$. Any $2$ triples $(A^1,\operatorname{w}^1,f^1)$ and 
$(A^2,\operatorname{w}^2,f^2)$ (both defined from $A$ to $B$) are declared equivalent if and only
if there are an object $A^3$, a pair of morphisms $\operatorname{v}^1:A^3\rightarrow A^1$ and
$\operatorname{v}^2:A^3\rightarrow A^2$, such that:

\begin{itemize}
 \item $\operatorname{w}^1\circ\operatorname{v}^1$ belongs to $\SETW$;
 \item $\operatorname{w}^1\circ\operatorname{v}^1=\operatorname{w}^2\circ\operatorname{v}^2$;
 \item $f^1\circ\operatorname{v}^1=f^2\circ\operatorname{v}^2$;
\end{itemize}
(the fact that this is an equivalence relation is obvious using the axioms). The composition of
morphisms in $\CATC\left[\SETWinv\right]$ is obtained by choosing representatives, then using 
(\hyperref[CF3]{CF3}) and then taking the class of the resulting composition. As such, composition
is well-defined and associative.\\

Now given any category $\CATC$, we denote by $\CATC^2$ the trivial bicategory obtained from
$\CATC$, i.e.\ the bicategory whose objects and morphisms are the same as those of $\CATC$ and whose
$2$-morphisms are only the $2$-identities. Then a direct check proves the following fact.

\begin{prop}\label{prop-08}
Let us fix any category $\CATC$ and any class $\SETW$ of morphisms in it. Then the pair $(\CATC,
\SETW)$ satisfies the axioms for a right calculus of fractions if and only if the pair $(\CATC^2,
\SETW)$ satisfies the axioms for a right bicalculus of fractions. If any of such conditions is
satisfied, then:

\begin{enumerate}[\emphatic{(}a\emphatic{)}]
 \item given any pair of objects $A,B$ in $\CATC$ and any pair of morphisms $(A^m,
  \operatorname{w}^m,f^m):A\rightarrow B$ for $m=1,2$ in $\CATC^2\left[\SETWinv\right]$, if $[A^1,
  \operatorname{w}^1,f^1]=[A^2,\operatorname{w}^2,f^2]$ in $\CATC\left[\SETWinv\right]$ then
  there is exactly one $2$-morphism $\Gamma$ from $(A^1,\operatorname{w}^1,f^1)$ to $(A^2,
  \operatorname{w}^2,f^2)$ in $\CATC^2\left[\SETWinv\right]$; moreover such a $\Gamma$ is invertible;
 \item given any set of data $(A,B,A^m,\operatorname{w}^m,f^m)$ as before, if $[A^1,
  \operatorname{w}^1,f^1]\neq[A^2,\operatorname{w}^2,f^2]$ in $\CATC\left[\SETWinv\right]$, then
  there are no $2$-morphisms from $(A^1,\operatorname{w}^1,f^1)$ to $(A^2,\operatorname{w}^2,f^2)$ in
  $\CATC^2\left[\SETWinv\right]$; 
 \item there is an equivalence of bicategories

  \[\functor{E}:\,\CATC^2\left[\SETWinv\right]\longrightarrow\left(\CATC\left[\SETWinv\right]
  \right)^2\]
  given on objects as the identity, on any morphism $(A',\operatorname{w},f):A\rightarrow B$ as
  $\functor{E}(A',\operatorname{w},f):=[A',\operatorname{w},f]$ and induced on $2$-morphisms by
  \emphatic{(}a\emphatic{)} and \emphatic{(}b\emphatic{)}.
\end{enumerate}
\end{prop}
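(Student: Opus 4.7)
The plan is to treat the proposition as a routine unpacking of definitions, organized in three stages. First I would verify the equivalence of the two sets of axioms. In the trivial bicategory $\CATC^2$ every $2$-morphism is a $2$-identity, so an invertible $2$-morphism $\alpha\colon g\Rightarrow h$ between morphisms of $\CATC^2$ exists if and only if $g=h$, and in that case $\alpha$ is unique (the identity). Under this translation the axioms pair up directly: (\hyperref[BF1]{BF1})$\leftrightarrow$(\hyperref[CF1]{CF1}), (\hyperref[BF2]{BF2})$\leftrightarrow$(\hyperref[CF2]{CF2}), (\hyperref[BF3]{BF3})$\leftrightarrow$(\hyperref[CF3]{CF3}) (the invertible $2$-cell of (\hyperref[BF3]{BF3}) collapses to the strict equality in (\hyperref[CF3]{CF3})), and (\hyperref[BF4a]{BF4a})--(\hyperref[BF4b]{BF4b})$\leftrightarrow$(\hyperref[CF4]{CF4}); moreover (\hyperref[BF4c]{BF4c}) becomes an immediate application of (\hyperref[CF3]{CF3}) followed by (\hyperref[CF4]{CF4}), and (\hyperref[BF5]{BF5}) is vacuous in $\CATC^2$ since it forces $\operatorname{v}=\operatorname{w}$.

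Next I would prove (a) and (b) simultaneously. By the description of $2$-morphisms in a bicategory of fractions recalled in Appendix~\ref{sec-06}, a $2$-morphism from $(A^1,\operatorname{w}^1,f^1)$ to $(A^2,\operatorname{w}^2,f^2)$ in $\CATC^2\left[\SETWinv\right]$ consists of data $(A^3,\operatorname{v}^1,\operatorname{v}^2,\alpha,\beta)$ with $\alpha,\beta$ invertible; in the trivial bicategory $\alpha$ and $\beta$ must be identities, which forces $\operatorname{w}^1\circ\operatorname{v}^1=\operatorname{w}^2\circ\operatorname{v}^2$ and $f^1\circ\operatorname{v}^1=f^2\circ\operatorname{v}^2$ with $\operatorname{w}^1\circ\operatorname{v}^1$ in $\SETW$. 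This is exactly the data witnessing $[A^1,\operatorname{w}^1,f^1]=[A^2,\operatorname{w}^2,f^2]$ in $\CATC\left[\SETWinv\right]$, which gives (b) and the existence half of (a). For invertibility, the obvious candidate $[A^3,\operatorname{v}^2,\operatorname{v}^1,i,i]$ in the opposite direction verifies the conditions trivially because all $2$-cells involved are identities.

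The uniqueness in (a) is the main thing to check with care: given two such data $(A^3,\operatorname{v}^1,\operatorname{v}^2,i,i)$ and $(A^{\prime 3},\operatorname{v}^{\prime 1},\operatorname{v}^{\prime 2},i,i)$, I need an object $A^4$ and morphisms $\operatorname{z}\colon A^4\to A^3$, $\operatorname{z}'\colon A^4\to A^{\prime 3}$ with $(\operatorname{w}^1\circ\operatorname{v}^1)\circ\operatorname{z}$ in $\SETW$, $\operatorname{v}^1\circ\operatorname{z}=\operatorname{v}^{\prime 1}\circ\operatorname{z}'$ and $\operatorname{v}^2\circ\operatorname{z}=\operatorname{v}^{\prime 2}\circ\operatorname{z}'$ (so that the required $\sigma^1,\sigma^2$ of \eqref{eq-19}, \eqref{eq-26} are identities). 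I would apply (\hyperref[CF3]{CF3}) to the pair $\operatorname{w}^1\circ\operatorname{v}^1\in\SETW$ and $\operatorname{w}^1\circ\operatorname{v}^{\prime 1}$ to obtain $B$, $\operatorname{u}\in\SETW$ and $\operatorname{u}'$ with $\operatorname{w}^1\circ\operatorname{v}^{\prime 1}\circ\operatorname{u}=\operatorname{w}^1\circ\operatorname{v}^1\circ\operatorname{u}'$; then (\hyperref[CF4]{CF4}) applied to $\operatorname{w}^1$ produces $\operatorname{s}\in\SETW$ killing the discrepancy on $A^1$, and a second application of (\hyperref[CF4]{CF4}) to $\operatorname{w}^2$ produces $\operatorname{t}\in\SETW$ killing the discrepancy on $A^2$. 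Setting $\operatorname{z}:=\operatorname{u}'\circ\operatorname{s}\circ\operatorname{t}$ and $\operatorname{z}':=\operatorname{u}\circ\operatorname{s}\circ\operatorname{t}$ gives the desired equivalence, and closure of $\SETW$ under composition ensures the $\SETW$-condition.

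Finally, for (c), I would define $\functor{E}$ as stated on objects and morphisms, and on $2$-morphisms as the unique (identity) $2$-cell dictated by (a)--(b). The associators $\Psi^{\functor{E}}_{\bullet}$ are all $2$-identities because the composition of $(B',\operatorname{v},g)\circ(A',\operatorname{w},f)$ in $\CATC^2\left[\SETWinv\right]$, built from a choice \hyperref[C]{C}$(\SETW)$ (where $\rho$ is forced to be an identity, hence the chosen square is strictly commutative), coincides strictly with the corresponding composition in $\CATC\left[\SETWinv\right]$ defined in Appendix~\ref{sec-04}; similarly $\Sigma^{\functor{E}}_{\bullet}$ is trivial. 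Pseudofunctoriality then reduces to bicategorical coherence in the trivial target $(\CATC\left[\SETWinv\right])^2$, which is automatic. For the equivalence properties (\hyperref[X1]{X1}) and (\hyperref[X2]{X2}): $\functor{E}_0$ is the identity, and each hom-functor is essentially surjective by definition and fully faithful exactly by (a) and (b), so $\functor{E}$ is a weak biequivalence and hence an equivalence of bicategories.
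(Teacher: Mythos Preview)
Your proposal is correct; the paper gives no proof at all, merely asserting that ``a direct check proves the following fact,'' and what you have written is precisely that direct check, carried out with the right level of care (in particular, your use of (\hyperref[CF3]{CF3}) followed by two applications of (\hyperref[CF4]{CF4}) to establish uniqueness in (a) is the appropriate argument). One small slip of phrasing: in the definition of $2$-morphisms in a bicategory of fractions only $\alpha$ is required to be invertible, not $\beta$; but since in $\CATC^2$ every $2$-cell is an identity this has no effect on your argument.
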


This makes precise the informal concept (stated in the Introduction) that the right bicalculus of
fractions generalizes the right calculus of fractions.

\section{Proofs of some technical lemmas}\label{sec-05}

\begin{proof}[Proof of Lemma~\ref{lem-16}.]
As usual, we give a complete proof assuming for simplicity that $\CATD$ is a $2$-category;
in this case $\overline{\Omega}=i_e\ast\Omega$.
Since $e$ is an internal equivalence, then by~\cite[Proposition~1.5.7]{L} $e$ is the first
component of an adjoint equivalence. So there are an internal equivalence $d:\overline{A}
\rightarrow A$ and invertible $2$-morphisms $\Delta:\id_A\Longrightarrow d\circ e$ and
$\Xi:e\circ d\Longrightarrow\id_{\overline{A}}$ such that

\begin{equation}\label{eq-137}
\Big(\Xi\ast i_e\Big)\odot\Big(i_e\ast\Delta\Big)=i_e\quad\textrm{and}\quad\Big(i_d\ast\Xi\Big)\odot
\Big(\Delta\ast i_d\Big)=i_d.
\end{equation}

Let us fix any object $D$ in $\CATD$ and let us prove conditions \textbf{\hyperref[A1]{A1}}$(D)$ and
\textbf{\hyperref[A2]{A2}}$(D)$ for diagram \eqref{eq-134}. First of all, we prove 
\textbf{\hyperref[A1]{A1}}$(D)$, so we fix any set of data
$(s^1,s^2,\overline{\Lambda})$ with $\overline{\Lambda}$ invertible as follows

\[
\begin{tikzpicture}[xscale=1.5,yscale=-0.8]
    \node (A0_0) at (0, 0) {$D$};
    \node (A0_2) at (2, 0) {$B^1$};
    \node (A2_0) at (0, 2) {$B^2$};
    \node (A2_2) at (2, 2) {$\overline{A}$.};
    
    \node (A1_1) [rotate=225] at (0.9, 1) {$\Longrightarrow$};
    \node (B1_1) at (1.2, 1) {$\overline{\Lambda}$};
    
    \path (A0_0) edge [->]node [auto,swap] {$\scriptstyle{s^2}$} (A2_0);
    \path (A0_0) edge [->]node [auto] {$\scriptstyle{s^1}$} (A0_2);
    \path (A0_2) edge [->]node [auto] {$\scriptstyle{e\circ g^1}$} (A2_2);
    \path (A2_0) edge [->]node [auto,swap] {$\scriptstyle{e\circ g^2}$} (A2_2);
\end{tikzpicture}
\]

Then we consider the invertible $2$-morphism

\begin{equation}\label{eq-43}
\Lambda:=\Big(\Delta^{-1}\ast i_{g^2\circ s^2}\Big)\odot\Big(i_d\ast\overline{\Lambda}\Big)\odot
\Big(\Delta\ast i_{g^1\circ s^1}\Big):\,\,g^1\circ s^1\Longrightarrow g^2\circ s^2.
\end{equation}

Since \eqref{eq-60} satisfies condition \textbf{\hyperref[A1]{A1}}$(D)$, then
there are a morphism $s:D\rightarrow C$ and a pair of
invertible $2$-morphisms $\Lambda^m:s^m\Rightarrow r^m\circ s$ for $m=1,2$, such that

\begin{equation}\label{eq-135}
\Big(\Omega\ast i_s\Big)\odot\Big(i_{g^1}\ast\Lambda^1\Big)=\Big(i_{g^2}\ast\Lambda^2\Big)\odot
\Lambda.
\end{equation}

Now by interchange law we have

\begin{gather}
\nonumber i_e\ast\Lambda\stackrel{\eqref{eq-43}}{=}\Big(i_e\ast\Delta^{-1}\ast i_{g^2\circ s^2}
 \Big)\odot\Big(i_{e\circ d}\ast\overline{\Lambda}\Big)\odot\Big(i_e\ast\Delta\ast
 i_{g^1\circ s^1}\Big)\stackrel{\eqref{eq-137}}{=} \\
\label{eq-138} \stackrel{\eqref{eq-137}}{=}\Big(\Xi\ast i_{e\circ g^2\circ s^2}\Big)\odot\Big(
 i_{e\circ d}\ast\overline{\Lambda}\Big)\odot\Big(\Xi^{-1}\ast i_{e\circ g^1\circ s^1}\Big)=
 \overline{\Lambda},
\end{gather}
so

\begin{gather*}
\Big(\overline{\Omega}\ast i_s\Big)\odot\Big(i_{e\circ g^1}\ast\Lambda^1\Big)=i_e\ast\Big(\Big(
 \Omega\ast i_s\Big)\odot\Big(i_{g^1}\ast\Lambda^1\Big)\Big)
 \stackrel{\eqref{eq-135}}{=} \\
\stackrel{\eqref{eq-135}}{=}\Big(i_{e\circ g^2}\ast\Lambda^2\Big)\odot\Big(i_e\ast\Lambda\Big)
 \stackrel{\eqref{eq-138}}{=}\Big(i_{e\circ g^2}\ast\Lambda^2\Big)\odot\overline{\Lambda}.
\end{gather*}

This proves that condition \textbf{\hyperref[A1]{A1}}$(D)$
holds for diagram \eqref{eq-134}. Let us also prove condition
\textbf{\hyperref[A2]{A2}}$(D)$, so let us fix any pair of morphisms $t,t':D\rightarrow C$ and
any pair of invertible $2$-morphisms $\Gamma^m:r^m\circ t\Rightarrow r^m\circ t'$ for $m=1,2$, such
that

\begin{equation}\label{eq-132}
\Big(\overline{\Omega}\ast i_{t'}\Big)\odot\Big(i_{e\circ g^1}\ast\Gamma^1\Big)=\Big(i_{e\circ
g^2}\ast\Gamma^2\Big)\odot\Big(\overline{\Omega}\ast i_t\Big).
\end{equation}

Then by interchange law we have

\begin{gather*}
\Big(\Omega\ast i_{t'}\Big)\odot\Big(i_{g^1}\ast\Gamma^1\Big)= \\
=\Big(\Delta^{-1}\ast i_{g^2\circ r^2\circ t'}\Big)\odot\Big\{i_{d\circ e}\ast\Big[\Big(\Omega\ast
 i_{t'}\Big)\odot\Big(i_{g^1}\ast\Gamma^1\Big)\Big]\Big\}\odot\Big(\Delta\ast i_{g^1\circ r^1\circ t}
 \Big)\stackrel{\eqref{eq-132}}{=} \\
=\Big(\Delta^{-1}\ast i_{g^2\circ r^2\circ t'}\Big)\odot\Big\{i_{d\circ e}\ast\Big[\Big(i_{g^2}\ast
 \Gamma^2\Big)\odot\Big(\Omega\ast i_t\Big)\Big]\Big\}\odot\Big(\Delta\ast i_{g^1\circ r^1\circ t}
 \Big)= \\
=\Big(i_{g^2}\ast\Gamma^2\Big)\odot\Big(\Omega\ast i_t\Big).
\end{gather*}

Since \eqref{eq-60} satisfies condition \textbf{\hyperref[A2]{A2}}$(D)$, then
there is a unique invertible $2$-morphism
$\Gamma:t\Rightarrow t'$, such that $i_{r^m}\ast\Gamma=\Gamma^m$ for each $m=1,2$. This proves that
condition \textbf{\hyperref[A2]{A2}}$(D)$ holds also for diagram \eqref{eq-134}.
\end{proof}

\begin{proof}[Proof of Lemma~\ref{lem-03}.]
As usual, we give the proof in the case when $\CATD$ is a $2$-category.
Since $\Omega^1$ and $\Omega^2$ are invertible, then the roles of $(g^1,g^2)$ and $(\overline{g}^1,
\overline{g}^2)$ are interchangeable. Hence, we will only prove that if \eqref{eq-60} is
a weak fiber product, then \eqref{eq-21} is also a weak fiber product.\\

So let us fix any object $D$ in $\CATD$ and let us prove condition \textbf{\hyperref[A1]{A1}}$(D)$
for \eqref{eq-21}, so let us consider any set of data $(s^1,s^2,\overline{\Lambda})$
in $\CATD$ as follows, with $\overline{\Lambda}$ invertible:

\[
\begin{tikzpicture}[xscale=1.5,yscale=-0.8]
    \node (A0_0) at (0, 0) {$D$};
    \node (A0_2) at (2, 0) {$B^1$};
    \node (A2_0) at (0, 2) {$B^2$};
    \node (A2_2) at (2, 2) {$A$.};
    
    \node (A1_1) [rotate=225] at (0.9, 1) {$\Longrightarrow$};
    \node (A1_2) at (1.2, 1) {$\overline{\Lambda}$};
    
    \path (A0_0) edge [->]node [auto,swap] {$\scriptstyle{s^2}$} (A2_0);
    \path (A0_0) edge [->]node [auto] {$\scriptstyle{s^1}$} (A0_2);
    \path (A0_2) edge [->]node [auto] {$\scriptstyle{\overline{g}^1}$} (A2_2);
    \path (A2_0) edge [->]node [auto,swap] {$\scriptstyle{\overline{g}^2}$} (A2_2);
\end{tikzpicture}
\]

Then we define an invertible $2$-morphism

\begin{equation}\label{eq-44}
\Lambda:=\Big(\left(\Omega^2\right)^{-1}\ast i_{s^2}\Big)\odot\overline{\Lambda}\odot
\Big(\Omega^1\ast i_{s^1}\Big):\,\,g^1\circ s^1\Longrightarrow g^2
\circ s^2.
\end{equation}

Since \eqref{eq-60} is a weak fiber product, then by \textbf{\hyperref[A1]{A1}}$(D)$
for \eqref{eq-60} there are a morphism $s:D
\rightarrow C$ and a pair of invertible $2$-morphisms $\Lambda^m:s^m\Rightarrow
r^m\circ s$ for $m=1,2$, such that:

\begin{equation}\label{eq-18}
\Big(\Omega\ast i_s\Big)\odot\Big(i_{g^1}\ast\Lambda^1\Big)=
\Big(i_{g^2}\ast\Lambda^2\Big)\odot\Lambda.
\end{equation}

Therefore, by interchange law we have:

\begin{gather*}
\Big(\overline{\Omega}\ast i_s\Big)\odot\Big(i_{\overline{g}^1}\ast\Lambda^1
 \Big)\stackrel{\eqref{eq-21}}{=} \\
\stackrel{\eqref{eq-21}}{=}\Big(\Omega^2\ast i_{r^2\circ s}\Big)
 \odot\Big(\Omega\ast i_s\Big)\odot\Big(\left(\Omega^1\right)^{-1}\ast
 i_{r^1\circ s}\Big)\odot\Big(i_{\overline{g}^1}\ast\Lambda^1\Big)= \\ 
  =\Big(\Omega^2\ast i_{r^2\circ s}\Big)\odot\Big(\Omega\ast
 i_s\Big)\odot\Big(i_{g^1}\ast\Lambda^1\Big)
 \odot\Big(\left(\Omega^1\right)^{-1}\ast i_{s^1}\Big)
 \stackrel{\eqref{eq-18}}{=} \\ 
  \stackrel{\eqref{eq-18}}{=}\Big(\Omega^2\ast i_{r^2\circ s}\Big)\odot
 \Big(i_{g^2}\ast\Lambda^2\Big)\odot\Lambda\odot\Big(\left(\Omega^1\right)^{-1}\ast
 i_{s^1}\Big)= \\
  =\Big(i_{\overline{g}^2}\ast\Lambda^2\Big)\odot\Big(\Omega^2\ast
 i_{s^2}\Big)\odot\Lambda\odot\Big(\left(\Omega^1\right)^{-1}\ast
 i_{s^1}\Big)\stackrel{\eqref{eq-44}}{=}\Big(i_{\overline{g}^2}\ast
 \Lambda^2\Big)\odot\overline{\Lambda}.  
\end{gather*}

Therefore, property \textbf{\hyperref[A1]{A1}}$(D)$ holds for diagram \eqref{eq-21}. Let
us prove also condition \textbf{\hyperref[A2]{A2}}$(D)$ for \eqref{eq-21}, so let us fix any
pair of morphisms $t,t':D\rightarrow C$ and any
pair of invertible $2$-morphisms $\Gamma^m:r^m\circ t
\Rightarrow r^m\circ t'$ for $m=1,2$, such that:

\begin{equation}\label{eq-22}
\Big(\overline{\Omega}\ast i_{t'}\Big)\odot\Big(i_{\overline{g}^1}\ast
\Gamma^1\Big)=\Big(i_{\overline{g}^2}\ast\Gamma^2\Big)\odot\Big(
\overline{\Omega}\ast i_t\Big).
\end{equation}

By interchange law we have:

\begin{gather*}
\Big(\Omega\ast i_{t'}\Big)\odot\Big(i_{g^1}\ast\Gamma^1\Big)= \\
=\Big(\Omega\ast i_{t'}\Big)\odot\Big(\left(\Omega^1\right)^{-1}\ast i_{r^1\circ
 t'}\Big)\odot\Big(i_{\overline{g}^1}\ast\Gamma^1\Big)\odot
 \Big(\Omega^1\ast i_{r^1\circ t}\Big)\stackrel{\eqref{eq-21}}{=} \\
\stackrel{\eqref{eq-21}}{=}\Big(\left(\Omega^2\right)^{-1}\ast i_{r^2\circ t'}
 \Big)\odot\Big(\overline{\Omega}
 \ast i_{t'}\Big)\odot\Big(i_{\overline{g}^1}\ast\Gamma^1\Big)
 \odot\Big(\Omega^1\ast i_{r^1\circ t}\Big)\stackrel{\eqref{eq-22}}{=} \\
\stackrel{\eqref{eq-22}}{=}\Big(\left(\Omega^2\right)^{-1}\ast i_{r^2\circ t'}\Big)
 \odot\Big(i_{\overline{g}^2}\ast\Gamma^2\Big)\odot\Big(\overline{\Omega}
 \ast i_t\Big)\odot\Big(\Omega^1\ast i_{r^1
 \circ t}\Big)= \\
=\Big(i_{g^2}\ast\Gamma^2\Big)\odot\Big(\left(\Omega^2\right)^{-1}\ast
 i_{r^2\circ t}\Big)\odot\Big(\overline{\Omega}\ast i_t\Big)\odot\Big(\Omega^1\ast
 i_{r^1\circ t}\Big)\stackrel{\eqref{eq-21}}{=}\\
\stackrel{\eqref{eq-21}}{=}\Big(i_{g^2}\ast\Gamma^2\Big)\odot\Big(\Omega
 \ast i_t\Big).  
\end{gather*}

Since \eqref{eq-60} is a weak fiber product, then by \textbf{\hyperref[A2]{A2}}$(D)$ for
\eqref{eq-60} there is a unique invertible
$2$-morphism $\Gamma:t\Rightarrow t'$, such that
$i_{r^m}\ast\Gamma=\Gamma^m$ for each $m=1,2$.
Therefore, property \textbf{\hyperref[A2]{A2}}$(D)$ holds also for diagram \eqref{eq-21}.
\end{proof}

\begin{proof}[Proof of Lemma~\ref{lem-04}.]
We give a proof in the case when $\CATD$ is a $2$-category; this implies that $\overline{\Omega}=
\Omega\ast i_e$. Since $e$ is an internal equivalence, we can choose a morphism
$d:C\rightarrow\overline{C}$ and invertible
$2$-morphisms $\Delta:\id_{\overline{C}}\Rightarrow d\circ e$ and $\Xi:e\circ d\Rightarrow
\id_C$, such that

\begin{equation}\label{eq-25}
\Big(\Xi\ast i_e\Big)\odot\Big(i_e\ast\Delta\Big)=i_e\quad\textrm{and}\quad\Big(i_d\ast\Xi\Big)\odot
\Big(\Delta\ast i_d\Big)=i_d.
\end{equation}

We fix any object $\overline{D}$ in $\CATD$ and we prove conditions
\textbf{\hyperref[A1]{A1}}$(\overline{D})$ 
and \textbf{\hyperref[A2]{A2}}$(\overline{D})$
for diagram \eqref{eq-24}. In order to prove the first property, let us
fix any set of data
$(\overline{s}^1,\overline{s}^2,\overline{\Lambda})$ in $\CATD$ as in the following
diagram, with $\overline{\Lambda}$ invertible:

\[
\begin{tikzpicture}[xscale=1.5,yscale=-0.8]
    \node (A0_0) at (0, 0) {$\overline{D}$};
    \node (A0_2) at (2, 0) {$B^1$};
    \node (A2_0) at (0, 2) {$B^2$};
    \node (A2_2) at (2, 2) {$A$.};
    
    \node (A1_1) [rotate=225] at (0.9, 1) {$\Longrightarrow$};
    \node (A1_2) at (1.2, 1) {$\overline{\Lambda}$};
    
    \path (A0_0) edge [->]node [auto,swap] {$\scriptstyle{\overline{s}^2}$} (A2_0);
    \path (A0_0) edge [->]node [auto] {$\scriptstyle{\overline{s}^1}$} (A0_2);
    \path (A0_2) edge [->]node [auto] {$\scriptstyle{g^1}$} (A2_2);
    \path (A2_0) edge [->]node [auto,swap] {$\scriptstyle{g^2}$} (A2_2);
\end{tikzpicture}
\]

Since \eqref{eq-60} is a weak fiber product, then by \textbf{\hyperref[A1]{A1}}$(\overline{D})$
for \eqref{eq-60} there are a morphism $s:
\overline{D}\rightarrow C$ and a pair of 
invertible $2$-morphisms $\Lambda^m:\overline{s}^m\Rightarrow r^m\circ s$ for $m=1,2$, such that:

\begin{equation}\label{eq-27}
\Big(\Omega\ast i_s\Big)\odot\Big(i_{g^1}\ast\Lambda^1\Big)=\Big(i_{g^2}\ast\Lambda^2\Big)\odot\
\overline{\Lambda}.
\end{equation}

Then we set $\overline{s}:=d\circ s:\overline{D}\rightarrow\overline{C}$; for each $m=1,2$ we define

\begin{equation}\label{eq-45}
\overline{\Lambda}^m:=\Big(i_{r^m}\ast\Xi^{-1}\ast i_s\Big)\odot\Lambda^m:\,\,\overline{s}^m
\Longrightarrow r^m\circ e\circ d\circ s=r^m\circ e\circ\overline{s}.
\end{equation}

By definition of $\overline{s}$ and $\overline{\Omega}$ and interchange law, we have:

\begin{gather*}
\Big(\overline{\Omega}\ast i_{\overline{s}}\Big)\odot\Big(i_{g^1}\ast\overline{\Lambda}^1
 \Big)\stackrel{\eqref{eq-45}}{=} \\
\stackrel{\eqref{eq-45}}{=}\Big(\Omega\ast i_{e\circ d\circ s}\Big)\odot\Big(i_{g^1\circ r^1}
 \ast\Xi^{-1}\ast i_s\Big)\odot\Big(i_{g^1}\ast\Lambda^1\Big)= \\
=\Big(i_{g^2\circ r^2}\ast\Xi^{-1}\ast i_s\Big)\odot\Big(\Omega\ast i_s\Big)\odot\Big(i_{g^1}\ast
 \Lambda^1\Big)\stackrel{\eqref{eq-27}}{=} \\
  \stackrel{\eqref{eq-27}}{=}\Big(i_{g^2\circ r^2}\ast\Xi^{-1}\ast i_s\Big)\odot\Big(i_{g^2}\ast
 \Lambda^2\Big)\odot\overline{\Lambda}\stackrel{\eqref{eq-45}}{=}
 \Big(i_{g^2}\ast\overline{\Lambda}^2\Big)\odot\overline{\Lambda}.  
\end{gather*}

Therefore diagram \eqref{eq-24} satisfies property \textbf{\hyperref[A1]{A1}}$(\overline{D})$.\\

Let us prove also property \textbf{\hyperref[A2]{A2}}$(\overline{D})$
for \eqref{eq-24}, so let us fix any pair of
morphisms $\overline{t},\overline{t}':\overline{D}\rightarrow\overline{C}$ and any pair of invertible 
$2$-morphisms $\overline{\Gamma}^m:(r^m\circ e)\circ\overline{t}\Rightarrow(r^m\circ e)\circ
\overline{t}'$ for $m=1,2$, such that

\begin{equation}\label{eq-28}
\Big(\overline{\Omega}\ast i_{\overline{t}'}\Big)\odot\Big(i_{g^1}\ast\overline{\Gamma}^1\Big)
=\Big(i_{g^2}\ast\overline{\Gamma}^2\Big)\odot\Big(\overline{\Omega}\ast i_{\overline{t}}\Big).
\end{equation}

Let us consider the morphisms $t:=e\circ\overline{t}$ and $t':=e\circ\overline{t}'$, both defined
from $\overline{D}$ to $C$. Then by \eqref{eq-28} we have $(\Omega\ast i_{t'})\odot(i_{g^1}\ast
\overline{\Gamma}^1)=(i_{g^2}\ast\overline{\Gamma}^2)\odot(\Omega\ast i_t)$. Since \eqref{eq-60} is a
weak fiber product, then by \textbf{\hyperref[A2]{A2}}$(\overline{D}$)
there is a unique invertible $2$-morphism $\Gamma:t
\Rightarrow t'$, such that $i_{r^m}\ast\Gamma=\overline{\Gamma}^m$ for each $m=1,2$. Then we define

\[\overline{\Gamma}:=\Big(\Delta^{-1}\ast i_{\overline{t}'}\Big)\odot\Big(i_d\ast\Gamma\Big)
\odot\Big(\Delta\ast i_{\overline{t}}\Big):\,\,\overline{t}\Longrightarrow\overline{t}'.\]

A direct computation using \eqref{eq-25} and the interchange law shows that $i_e\ast\overline{\Gamma}
=\Gamma$. Therefore

\begin{equation}\label{eq-31}
i_{r^m\circ e}\ast\overline{\Gamma}=i_{r^m}\ast\Gamma=\overline{\Gamma}^m\quad\textrm{for }m=1,2.
\end{equation}

So in order to conclude we need only to prove that $\overline{\Gamma}$ is the unique invertible
$2$-morphism $\overline{t}\Rightarrow\overline{t}'$ such that \eqref{eq-31} holds. So let us fix
another invertible $2$-morphism $\overline{\Gamma}':\overline{t}\Rightarrow\overline{t}'$, such that
$i_{r^m\circ e}\ast\overline{\Gamma}'=\overline{\Gamma}^m$ for each $m=1,2$. Then we have $i_{r^m}
\ast(i_e\ast\overline{\Gamma}')=\overline{\Gamma}^m$ for each $m=1,2$; by uniqueness of $\Gamma$ we
conclude that $i_e\ast\overline{\Gamma}'=\Gamma$. So we get $i_e\ast\overline{\Gamma}=\Gamma=i_e\ast
\overline{\Gamma}'$, hence by interchange law we have:

\begin{gather*}
\overline{\Gamma}=\Big(\Delta^{-1}\ast i_{\overline{t}'}\Big)\odot\Big(i_{d\circ e}\ast
 \overline{\Gamma}\Big)\odot\Big(\Delta\ast i_{\overline{t}}\Big)= \\
=\Big(\Delta^{-1}\ast i_{\overline{t}'}\Big)\odot\Big(i_{d\circ e}\ast\overline{\Gamma}'\Big)
 \odot\Big(\Delta\ast i_{\overline{t}}\Big)=\overline{\Gamma}'.  
\end{gather*}

This suffices to conclude.
\end{proof}

\begin{proof}[Proof of Lemma~\ref{lem-14}.]
For simplicity of exposition, in this proof we assume that both $\CATA$ and $\CATB$ are $2$-categories
and that $\functor{F}$ is a strict pseudofunctor between them, i.e.\ a $2$-functor (in other
terms, we assume that $\functor{F}$ preserves compositions and identities). So in particular
$\Omega_{\CATB}=\functor{F}_2(\Omega_{\CATA})$. In the more general case
the proof is analogous: it suffices to add unitors and associators for $\functor{F}$ wherever it
is necessary and to use the coherence conditions on the pseudofunctor $\functor{F}$.\\

So let us fix any object $D_{\CATB}$ in $\CATB$ and let us prove conditions
\textbf{\hyperref[A1]{A1}}$(D_{\CATB})$ and \textbf{\hyperref[A2]{A2}}$(D_{\CATB})$
for diagram \eqref{eq-94}.
By property (\hyperref[X1]{X1}) for $\functor{F}$, there are an object $D_{\CATA}$ and an internal
equivalence $e_{\CATB}:\functor{F}_0(D_{\CATA})\rightarrow D_{\CATB}$. Since $e_{\CATB}$ is an
internal equivalence then there are an internal equivalence $d_{\CATB}:D_{\CATB}\rightarrow
\functor{F}_0(D_{\CATA})$ and a pair of invertible $2$-morphisms $\Xi_{\CATB}:e_{\CATB}
\circ d_{\CATB}\Rightarrow\id_{D_{\CATB}}$ and $\Delta_{\CATB}:
\id_{\functor{F}_0(D_{\CATA})}\Rightarrow d_{\CATB}\circ e_{\CATB}$, such that

\begin{equation}\label{eq-67}
\Big(\Xi_{\CATB}\ast i_{e_{\CATB}}\Big)\odot\Big(i_{e_{\CATB}}\ast\Delta_{\CATB}\Big)=i_{e_{\CATB}}
\quad\textrm{and}\quad\Big(i_{d_{\CATB}}\ast\Xi_{\CATB}\Big)\odot\Big(\Delta_{\CATB}\ast
i_{d_{\CATB}}\Big)=i_{d_{\CATB}}.
\end{equation}

In order to prove condition \textbf{\hyperref[A1]{A1}}$(D_{\CATB})$ for \eqref{eq-94},
let us fix any set $(s^1_{\CATB},s^2_{\CATB},\Lambda_{\CATB})$ in $\CATB$ as
follows, with $\Lambda_{\CATB}$ invertible

\[
\begin{tikzpicture}[xscale=1.5,yscale=-0.8]
    \node (A0_0) at (0, 0) {$D_{\CATB}$};
    \node (A0_2) at (2, 0) {$\functor{F}_0(B^1_{\CATA})$};
    \node (A2_0) at (0, 2) {$\functor{F}_0(B^2_{\CATA})$};
    \node (A2_2) at (2, 2) {$\functor{F}_0(A_{\CATA})$.};

    \node (A0_1) [rotate=225] at (0.9, 1) {$\Longrightarrow$};
    \node (A1_1) at (1.3, 1) {$\Lambda_{\CATB}$};

    \path (A2_0) edge [->]node [auto,swap] {$\scriptstyle{\functor{F}_1(g^2_{\CATA})}$} (A2_2);
    \path (A0_0) edge [->]node [auto,swap] {$\scriptstyle{s^2_{\CATB}}$} (A2_0);
    \path (A0_2) edge [->]node [auto] {$\scriptstyle{\functor{F}_1(g^1_{\CATA})}$} (A2_2);
    \path (A0_0) edge [->]node [auto] {$\scriptstyle{s^1_{\CATB}}$} (A0_2);
\end{tikzpicture}
\]

By property (\hyperref[X2]{X2}) for $\functor{F}$, for each $m=1,2$ there are a morphism
$s^m_{\CATA}:D_{\CATA}\rightarrow B^m_{\CATA}$ and an invertible $2$-morphism $\chi^m_{\CATB}:
\functor{F}_1(s^m_{\CATA})\Rightarrow s^m_{\CATB}\circ e_{\CATB}$. Again by (\hyperref[X2]{X2})
there is a (unique) invertible $2$-morphism $\Lambda_{\CATA}:g^1_{\CATA}\circ s^1_{\CATA}\Rightarrow
g^2_{\CATA}\circ s^2_{\CATA}$, such that

\begin{gather}
\nonumber \functor{F}_2(\Lambda_{\CATA})=\Big(i_{\functor{F}_1(g^2_{\CATA})}\ast\left(
 \chi^2_{\CATB}\right)^{-1}\Big)
 \odot\Big(\Lambda_{\CATB}\ast i_{e_{\CATB}}\Big)\odot\Big(
 i_{\functor{F}_1(g^1_{\CATA})}\ast\chi^1_{\CATB}\Big): \\
\label{eq-95} \functor{F}_1(g^1_{\CATA}\circ
 s^1_{\CATA})\Longrightarrow\functor{F}_1(g^2_{\CATA}\circ s^2_{\CATA}).
\end{gather}

Since \eqref{eq-93} satisfies property \textbf{\hyperref[A1]{A1}}$(D_{\CATA})$,
then there are a morphism $s_{\CATA}:
D_{\CATA}\rightarrow C_{\CATA}$ and a pair of invertible $2$-morphisms $\Lambda^m_{\CATA}:s^m_{\CATA}
\Rightarrow r^m_{\CATA}\circ s_{\CATA}$ for $m=1,2$, such that

\begin{equation}\label{eq-101}
\Big(\Omega_{\CATA}\ast i_{s_{\CATA}}\Big)\odot\Big(i_{g^1_{\CATA}}\ast\Lambda^1_{\CATA}\Big)=
\Big(i_{g^2_{\CATA}}\ast\Lambda^2_{\CATA}\Big)\odot\Lambda_{\CATA}.
\end{equation}

Since we are assuming that $\functor{F}$ is a strict pseudofunctor, then
by applying $\functor{F}_2$ to \eqref{eq-101} and using \eqref{eq-95}, we get:

\begin{gather*}
\Big(\functor{F}_2(\Omega_{\CATA})\ast i_{\functor{F}_1(s_{\CATA})}\Big)\odot
 \Big(i_{\functor{F}_1(g^1_{\CATA})}\ast\functor{F}_2(\Lambda^1_{\CATA})\Big)= \\
=\Big(i_{\functor{F}_1(g^2_{\CATA})}\ast\functor{F}_2(\Lambda^2_{\CATA})\Big)\odot\Big(
 i_{\functor{F}_1(g^2_{\CATA})}\ast\left(\chi^2_{\CATB}\right)^{-1}\Big)\odot
 \Big(\Lambda_{\CATB}\ast
 i_{e_{\CATB}}\Big)\odot\Big(
 i_{\functor{F}_1(g^1_{\CATA})}\ast\chi^1_{\CATB}\Big).  
\end{gather*}

This implies that:

\begin{gather}
\nonumber \Big(\functor{F}_2(\Omega_{\CATA})\ast i_{\functor{F}_1(s_{\CATA})}\Big)\odot 
 \Big(i_{\functor{F}_1(g^1_{\CATA})}\ast\Big(\functor{F}_2(\Lambda^1_{\CATA})\odot
 \left(\chi^1_{\CATB}\right)^{-1}\Big)\Big)= \\
\label{eq-96} =\Big(i_{\functor{F}_1(g^2_{\CATA})}\ast\Big(\functor{F}_2(\Lambda^2_{\CATA})\odot
 \left(\chi^2_{\CATB}\right)^{-1}\Big)\Big)\odot\Big(\Lambda_{\CATB}\ast i_{e_{\CATB}}\Big).
\end{gather}

Then by interchange law we have:

\begin{gather}
\nonumber \Big(\functor{F}_2(\Omega_{\CATA})\ast i_{\functor{F}_1
 (s_{\CATA})\circ d_{\CATB}}\Big)\odot \\
\nonumber \odot\Big\{i_{\functor{F}_1(g^1_{\CATA})}\ast\Big[
 \Big(\functor{F}_2(\Lambda^1_{\CATA})\ast
 i_{d_{\CATB}}\Big)\odot\Big(\left(\chi^1_{\CATB}\right)^{-1}\ast i_{d_{\CATB}}\Big)\odot
 \Big(i_{\operatorname{s}^1_{\CATB}}\ast
 \Xi_{\CATB}^{-1}\Big)\Big]\Big\}\stackrel{\eqref{eq-96}}{=} \\
\label{eq-97} \stackrel{\eqref{eq-96}}{=}\Big\{i_{\functor{F}_1(g^2_{\CATA})}\ast
 \Big[\Big(\functor{F}_2(\Lambda^2_{\CATA})\ast i_{d_{\CATB}}\Big)
 \odot\Big(\left(\chi^2_{\CATB}\right)^{-1}\ast i_{d_{\CATB}}\Big)\odot
 \Big(i_{\operatorname{s}^2_{\CATB}}\ast\Xi_{\CATB}^{-1}\Big)\Big]\Big\}\odot\Lambda_{\CATB}.  
\end{gather}

Then we set $s_{\CATB}:=\functor{F}_1(s_{\CATA})\circ d_{\CATB}:D_{\CATB}\rightarrow
\functor{F}_0(C_{\CATA})$ and for each $m=1,2$:

\[\Lambda^m_{\CATB}:=\Big(\functor{F}_2(\Lambda^m_{\CATA})\ast i_{d_{\CATB}}
\Big)\odot\Big(\left(\chi^m_{\CATB}\right)^{-1}\ast i_{d_{\CATB}}\Big)\odot
\Big(i_{\operatorname{s}^m_{\CATB}}
\ast\Xi_{\CATB}^{-1}\Big):\,\,s^m_{\CATB}\Longrightarrow\functor{F}_1(r^m_{\CATA})\circ s_{\CATB},\]
so \eqref{eq-97} reads as follows:

\[\Big(\functor{F}_2(\Omega_{\CATA})\ast i_{s_{\CATB}}\Big)\odot
\Big(i_{\functor{F}_1(g^1_{\CATA})}\ast\Lambda^1_{\CATB}
\Big)=\Big(i_{\functor{F}_1(g^2_{\CATA})}\ast\Lambda^2_{\CATB}\Big)\odot\Lambda_{\CATB}.\]

This shows that condition \textbf{\hyperref[A1]{A1}}$(D_{\CATB})$
holds for diagram \eqref{eq-94}.\\

Now let us also prove \textbf{\hyperref[A2]{A2}}$(D_{\CATB})$ for \eqref{eq-94}, so let us fix
any pair of morphisms $t_{\CATB},t'_{\CATB}:D_{\CATB}\rightarrow\functor{F}_0
(C_{\CATA})$ any pair of invertible $2$-morphisms $\Gamma^m_{\CATB}:\functor{F}_1(r^m_{\CATA})\circ
t_{\CATB}\Rightarrow\functor{F}_1(r^m_{\CATA})\circ t'_{\CATB}$, such that

\begin{equation}\label{eq-98}
\Big(\functor{F}_2(\Omega_{\CATA})\ast i_{t'_{\CATB}}\Big)\odot\Big(i_{\functor{F}_1
(g^1_{\CATA})}\ast\Gamma^1_{\CATB}\Big)=\Big(i_{\functor{F}_1(g^2_{\CATA})}\ast
\Gamma^2_{\CATB}\Big)\odot\Big(\functor{F}_2(\Omega_{\CATA})\ast i_{t_{\CATB}}\Big).
\end{equation}

By property (\hyperref[X2]{X2}) there are a pair of morphisms $t_{\CATA},t'_{\CATA}:D_{\CATA}
\rightarrow C_{\CATA}$ and a pair of invertible $2$-morphisms

\[\Phi_{\CATB}:\functor{F}_1(t_{\CATA})\Longrightarrow t_{\CATB}\circ e_{\CATB}\quad\textrm{and}\quad
\Phi'_{\CATB}:\functor{F}_1(t'_{\CATA})\Longrightarrow t'_{\CATB}\circ e_{\CATB}.\]

Then using the interchange law we get:

\begin{gather}
\nonumber \Big(\functor{F}_2(\Omega_{\CATA})\ast i_{\functor{F}_1(t'_{\CATA})}
 \Big)\odot \\
\nonumber \odot\Big\{i_{\functor{F}_1(g^1_{\CATA})}\ast\Big[\Big(i_{\functor{F}_1(r^1_{\CATA})}\ast
 \left(\Phi'_{\CATB}\right)^{-1}\Big)\odot
 \Big(\Gamma^1_{\CATB}\ast i_{e_{\CATB}}\Big)\odot
 \Big(i_{\functor{F}_1(r^1_{\CATA})}\ast\Phi_{\CATB}\Big)\Big]\Big\}= \\
\nonumber =\Big(i_{\functor{F}_1(g^2_{\CATA}\circ r^2_{\CATA})}\ast\left(\Phi'_{\CATB}
 \right)^{-1}\Big)\odot\Big\{\Big[\Big(\functor{F}_2(\Omega_{\CATA})\ast i_{t'_{\CATB}}\Big)\odot \\
\nonumber \odot\Big(i_{\functor{F}_1(g^1_{\CATA})}\ast\Gamma^1_{\CATB}\Big)\Big]\ast i_{e_{\CATB}}
 \Big\}\odot\Big(i_{\functor{F}_1(g^1_{\CATA}\circ r^1_{\CATA})}\ast\Phi_{\CATB}\Big)
 \stackrel{\eqref{eq-98}}{=} \\
\nonumber \stackrel{\eqref{eq-98}}{=}\Big(i_{\functor{F}_1(g^2_{\CATA}\circ r^2_{\CATA})}\ast\left(
 \Phi'_{\CATB}\right)^{-1}\Big)\odot\Big\{\Big[\Big(i_{\functor{F}_1(g^2_{\CATA})}\ast
  \Gamma^2_{\CATB}\Big)\odot \\
\nonumber \odot\Big(\functor{F}_2(\Omega_{\CATA})\ast i_{t_{\CATB}}\Big)\Big]
 \ast i_{e_{\CATB}}\Big\}\odot\Big(i_{\functor{F}_1(g^1_{\CATA}\circ r^1_{\CATA})}\ast\Phi_{\CATB}
 \Big)= \\
\nonumber =\Big\{i_{\functor{F}_1(g^2_{\CATA})}\ast\Big[\Big(i_{\functor{F}_1(r^2_{\CATA})}\ast
 \left(\Phi'_{\CATB}\right)^{-1}\Big)\odot\Big(\Gamma^2_{\CATB}\ast i_{e_{\CATB}}\Big)\odot
 \Big(i_{\functor{F}_1(r^2_{\CATA})}\ast\Phi_{\CATB}\Big)\Big]\Big\}\odot \\
\label{eq-99} \odot\Big(\functor{F}_2(\Omega_{\CATA})\ast i_{\functor{F}_1(t_{\CATA})}\Big). 
\end{gather}

Again by property (\hyperref[X2]{X2}) for $\functor{F}$, for each $m=1,2$ there is a (unique)
invertible $2$-morphism $\Gamma^m_{\CATA}:r^m_{\CATA}\circ t_{\CATA}\Rightarrow r^m_{\CATA}\circ
t'_{\CATA}$, such that

\begin{gather}
\nonumber \functor{F}_2(\Gamma^m_{\CATA})=\Big(i_{\functor{F}_1(r^m_{\CATA})}\ast\left(\Phi'_{\CATB}
 \right)^{-1}\Big)\odot\Big(\Gamma^m_{\CATB}\ast i_{e_{\CATB}}\Big)\odot\Big(i_{\functor{F}_1
 (r^m_{\CATA})}\ast\Phi_{\CATB}\Big): \\
\label{eq-119}\functor{F}_1(r^m_{\CATA}\circ t_{\CATA})\Longrightarrow\functor{F}_1(r^m_{\CATA}
 \circ t'_{\CATA}).
\end{gather}

Then \eqref{eq-99} reads as follows:

\[\functor{F}_2\Big[\Big(\Omega_{\CATA}\ast i_{t'_{\CATA}}\Big)\odot\Big(i_{g^1_{\CATA}}\ast
\Gamma^1_{\CATA}\Big)\Big]=\functor{F}_2\Big[\Big(i_{g^2_{\CATA}}\ast\Gamma^2_{\CATA}\Big)\odot
\Big(\Omega_{\CATA}\ast i_{t_{\CATA}}\Big)\Big].\]

Then again by property (\hyperref[X2]{X2}) we conclude that

\[\Big(\Omega_{\CATA}\ast i_{t'_{\CATA}}\Big)\odot\Big(i_{g^1_{\CATA}}\ast\Gamma^1_{\CATA}\Big)=
\Big(i_{g^2_{\CATA}}\ast\Gamma^2_{\CATA}\Big)\odot\Big(\Omega_{\CATA}\ast i_{t_{\CATA}}\Big).\]

Since diagram \eqref{eq-93} satisfies property \textbf{\hyperref[A2]{A2}}$(D_{\CATA})$,
then the previous identity
implies that there is a unique invertible $2$-morphism $\Gamma_{\CATA}:t_{\CATA}\Rightarrow
t'_{\CATA}$, such that $i_{r^m_{\CATA}}\ast\Gamma_{\CATA}=\Gamma^m_{\CATA}$ for each $m=1,2$.
So by interchange law, for each $m=1,2$ we have:

\begin{gather}
\nonumber \Gamma^m_{\CATB}=\Big(i_{\functor{F}_1(r^m_{\CATA})\circ t'_{\CATB}}\ast\Xi_{\CATB}\Big)
 \odot\Big(\Gamma^m_{\CATB}\ast i_{e_{\CATB}\circ d_{\CATB}}\Big)\odot
 \Big(i_{\functor{F}_1(r^m_{\CATA})\circ t_{\CATB}}\ast\Xi_{\CATB}^{-1}\Big)
 \stackrel{\eqref{eq-119}}{=} \\
\nonumber \stackrel{\eqref{eq-119}}{=}\Big(i_{\functor{F}_1(r^m_{\CATA})\circ t'_{\CATB}}\ast
 \Xi_{\CATB}\Big)\odot\Big(i_{\functor{F}_1(r^m_{\CATA})}\ast\Phi'_{\CATB}\ast i_{d_{\CATB}}\Big)
 \odot\Big(\functor{F}_2(\Gamma^m_{\CATA})\ast i_{d_{\CATB}}\Big)\odot \\
\nonumber \odot\Big(i_{\functor{F}_1(r^m_{\CATA})}\ast\Phi_{\CATB}^{-1}\ast i_{d_{\CATB}}\Big)\odot
 \Big(i_{\functor{F}_1(r^m_{\CATA})\circ t_{\CATB}}\ast\Xi^{-1}_{\CATB}\Big)= \\
\label{eq-54} =i_{\functor{F}_1(r^m_{\CATA})}\ast\Big\{\Big(i_{t'_{\CATB}}\ast\Xi_{\CATB}\Big)
 \odot\Big[\Big(\Phi'_{\CATB}\odot\functor{F}_2(\Gamma_{\CATA})\odot
 \Phi^{-1}_{\CATB}\Big)\ast i_{d_{\CATB}}\Big]\odot\Big(i_{t_{\CATB}}\ast\Xi_{\CATB}^{-1}\Big)
 \Big\}.
\end{gather}

Hence, if we set

\begin{equation}\label{eq-55}
\Gamma_{\CATB}:=\Big(i_{t'_{\CATB}}\ast\Xi_{\CATB}\Big)\odot
\Big[\Big(\Phi'_{\CATB}\odot\functor{F}_2(\Gamma_{\CATA})\odot
\Phi^{-1}_{\CATB}\Big)\ast i_{d_{\CATB}}\Big]\odot\Big(i_{t_{\CATB}}\ast
\Xi_{\CATB}^{-1}\Big):\,\,t_{\CATB}\Rightarrow t'_{\CATB},  
\end{equation}
then \eqref{eq-54} implies that $i_{\functor{F}_1(r^m_{\CATA})}\ast\Gamma_{\CATB}=\Gamma^m_{\CATB}$
for each $m=1,2$.\\

Now in order to conclude that \eqref{eq-94} satisfies property
\textbf{\hyperref[A2]{A2}}$(D_{\CATB})$, we need
only to prove that $\Gamma_{\CATB}$ is the unique invertible $2$-morphism with such a property. So
let us fix another invertible $2$-morphism $\Gamma'_{\CATB}:t_{\CATB}\Rightarrow t'_{\CATB}$ such
that $i_{\functor{F}_1(r^m_{\CATA})}\ast\Gamma'_{\CATB}=\Gamma^m_{\CATB}$ for each $m=1,2$. By
(\hyperref[X2]{X2}) there is a unique invertible $2$-morphism $\Gamma'_{\CATA}:t_{\CATA}
\Rightarrow t'_{\CATA}$, such that

\begin{equation}\label{eq-56}
\functor{F}_2(\Gamma'_{\CATA})=\left(\Phi'_{\CATB}\right)^{-1}\odot\Big(\Gamma'_{\CATB}\ast
 i_{e_{\CATB}}\Big)\odot\Phi_{\CATB}:\,\,\functor{F}_1(t_{\CATA})\Longrightarrow\functor{F}_1
(t'_{\CATA}).
\end{equation}

Now by interchange law, we have:

\begin{gather}
\nonumber \Gamma_{\CATB}\ast i_{e_{\CATB}}\stackrel{\eqref{eq-55},\eqref{eq-67}}{=}
 \Big(i_{t'_{\CATB}\circ e_{\CATB}}\ast\Delta^{-1}_{\CATB}\Big)\odot
 \Big(\Big(\Phi'_{\CATB}\odot\functor{F}_2(\Gamma_{\CATA})\odot\Phi^{-1}_{\CATB}\Big)\ast
 i_{d_{\CATB}\circ e_{\CATB}}\Big)\odot \\
\label{eq-47} \odot\Big(i_{t_{\CATB}\circ e_{\CATB}}\ast\Delta_{\CATB}\Big)=
 \Phi'_{\CATB}\odot\functor{F}_2(\Gamma_{\CATA})\odot\Phi^{-1}_{\CATB}.  
\end{gather}

Therefore, for each $m=1,2$ we have:

\begin{gather*}
\functor{F}_2\Big(i_{r^m_{\CATA}}\ast\Gamma'_{\CATA}\Big)=i_{\functor{F}_1(r^m_{\CATA})}\ast
 \functor{F}_2(\Gamma'_{\CATA})\stackrel{\eqref{eq-56}}{=} \\
\stackrel{\eqref{eq-56}}{=}\Big(i_{\functor{F}_1(r^m_{\CATA})}\ast\left(\Phi'_{\CATB}\right)^{-1}
 \Big)\odot\Big(i_{\functor{F}_1(r^m_{\CATA})}\ast\Gamma'_{\CATB}\ast i_{e_{\CATB}}\Big)
 \odot\Big(i_{\functor{F}_1(r^m_{\CATA})}\ast\Phi_{\CATB}\Big)= \\
=\Big(i_{\functor{F}_1(r^m_{\CATA})}\ast\left(\Phi'_{\CATB}\right)^{-1}\Big)\odot
 \Big(\Gamma^m_{\CATB}\ast i_{e_{\CATB}}\Big)\odot\Big(i_{\functor{F}_1
 (r^m_{\CATA})}\ast\Phi_{\CATB}\Big)= \\
=\Big(i_{\functor{F}_1(r^m_{\CATA})}\ast\left(\Phi'_{\CATB}\right)^{-1}\Big)\odot
 \Big(i_{\functor{F}_1(r^m_{\CATA})}\ast\Gamma_{\CATB}\ast i_{e_{\CATB}}\Big)\odot
 \Big(i_{\functor{F}_1(r^m_{\CATA})}\ast\Phi_{\CATB}\Big)= \\
=i_{\functor{F}_1(r^m_{\CATA})}\ast\Big(\left(\Phi'_{\CATB}\right)^{-1}\odot\Big(\Gamma_{\CATB}\ast
 i_{e_{\CATB}}\Big)\odot\Phi_{\CATB}\Big)\stackrel{\eqref{eq-47}}{=} \\
\stackrel{\eqref{eq-47}}{=}i_{\functor{F}_1(r^m_{\CATA})}\ast\functor{F}_2(\Gamma_{\CATA})=
 \functor{F}_2\Big(i_{r^m_{\CATA}}\ast\Gamma_{\CATA}\Big)=\functor{F}_2(\Gamma^m_{\CATA}).  
\end{gather*}

By (\hyperref[X2]{X2}), this implies that $i_{r^m_{\CATA}}\ast\Gamma'_{\CATA}=\Gamma^m_{\CATA}$
for each $m=1,2$. By construction, $\Gamma_{\CATA}$ is the unique invertible $2$-morphism from
$t_{\CATA}$ to $t'_{\CATA}$ such that $i_{r^m_{\CATA}}\ast\Gamma_{\CATA}=\Gamma^m_{\CATA}$ for
each $m=1,2$, hence $\Gamma'_{\CATA}=\Gamma_{\CATA}$. Therefore,

\begin{gather}
\nonumber \Gamma'_{\CATB}\ast i_{e_{\CATB}}\stackrel{\eqref{eq-56}}{=}\Phi'_{\CATB}\odot
 \functor{F}_2(\Gamma'_{\CATA})\odot\Phi^{-1}_{\CATB}= \\
\label{eq-154} =\Phi'_{\CATB}\odot\functor{F}_2(\Gamma_{\CATA})\odot\Phi^{-1}_{\CATB}
 \stackrel{\eqref{eq-47}}{=}\Gamma_{\CATB}\ast i_{e_{\CATB}}.  
\end{gather}

Hence by interchange law we have:

\begin{gather*}
\Gamma'_{\CATB}=\Big(i_{t'_{\CATB}}\ast\Xi_{\CATB}\Big)\odot\Big(\Gamma'_{\CATB}\ast i_{e_{\CATB}
 \circ d_{\CATB}}\Big)\odot\Big(i_{t_{\CATB}}\ast\Xi_{\CATB}^{-1}
 \Big)\stackrel{\eqref{eq-154}}{=} \\
\stackrel{\eqref{eq-154}}{=}\Big(i_{t'_{\CATB}}\ast\Xi_{\CATB}\Big)\odot\Big(\Gamma_{\CATB}\ast
 i_{e_{\CATB}\circ d_{\CATB}}\Big)\odot\Big(i_{t_{\CATB}}\ast\Xi_{\CATB}^{-1}\Big)=\Gamma_{\CATB},
\end{gather*}
so we have proved also the uniqueness part of condition \textbf{\hyperref[A2]{A2}}$(D_{\CATB})$.
Therefore diagram \eqref{eq-94} is a weak fiber product in $\CATB$.
\end{proof}


\end{document}